\newtheorem{theorem}[equation]{Theorem}
\newtheorem{lemma}[equation]{Lemma}
\newtheorem{proposition}[equation]{Proposition}
\newtheorem{corollary}[equation]{Corollary}
\theoremstyle{definition}
\newtheorem{definition}[equation]{Definition}
\theoremstyle{remark}
\newtheorem{remark}[equation]{Remark}
\numberwithin{equation}{section}
\newcommand{\newf}[3]{{#1}:{#2}\longrightarrow {#3}}					 
\newcommand{\la}[1]{\text{$\mathcal{#1}$}}
\newcommand{\lb}[1]{\text{$\mathscr{#1}$}}
\newcommand{\powerset}[1]{\text{$\lb{P}(#1)$}}								
\newcommand{\eword}{\text{$\omega$}}											
\newcommand{\xia}{\text{$\xi^\alpha$}}											
\newcommand{\ph}{\text{$\hat{\phi}$}}											
\newcommand{\uset}[1]{\text{$\uparrow\hspace{-0.1cm}{#1}$}}				
\newcommand{\usetr}[2]{\text{$\uparrow_{\scriptscriptstyle{#2}}\hspace{-0.1cm}{#1}$}} 
\newcommand{\lset}[1]{\text{$\downarrow\hspace{-0.1cm}{#1}$}}				
\newcommand{\lsetr}[2]{\text{$\downarrow_{\scriptscriptstyle{#2}}\hspace{-0.1cm}{#1}$}} 
\newcommand{\dgraphg}[1]{\text{$\lb{#1}$}}									
\newcommand{\dgraphupleg}[3]{\text{$\dgraphg{#1}=(\dgraphg{#1}^0,\dgraphg{#1}^1,#2,#3)$}}	
\newcommand{\dgraph}{\text{$\dgraphg{E}$}}									
\newcommand{\dgraphuple}{\text{$\dgraphupleg{E}{r}{s}$}}					
\newcommand{\alfg}[1]{\text{$\lb{#1}$}}										
\newcommand{\acfg}[1]{\text{$\lb{#1}$}}										
\newcommand{\lbfg}[1]{\text{$\lb{#1}$}}										
\newcommand{\acfrg}[1]{\text{$\acfg{B}_{#1}$}}								
\newcommand{\alf}{\text{$\alfg{A}$}}											
\newcommand{\acf}{\text{$\acfg{B}$}}											
\newcommand{\lbf}{\text{$\lbfg{L}$}}											
\newcommand{\acfra}{\text{$\acfg{B}_{\alpha}$}}								
\newcommand{\lgraphg}[2]{\text{$(\dgraphg{#1},\lbfg{#2})$}}				
\newcommand{\lspaceg}[3]{\text{$(\dgraphg{#1},\lbfg{#2},\acfg{#3})$}}		
\newcommand{\lgraph}{\text{$\lgraphg{E}{L}$}}								
\newcommand{\lspace}{\text{$\lspaceg{E}{L}{B}$}}							
\newcommand{\awsetg}[2]{\text{$\lbfg{#1}^{#2}$}}	
\newcommand{\awplus}{\text{$\awsetg{L}{\scriptscriptstyle{\geq 1}}$}}								
\newcommand{\awn}[1]{\text{$\awsetg{L}{#1}$}}								
\newcommand{\awstar}{\text{$\awsetg{L}{\ast}$}}							
\newcommand{\awinf}{\text{$\awsetg{L}{\infty}$}}							
\newcommand{\awleinf}{\text{$\awsetg{L}{\scriptscriptstyle{\leq\infty}}$}}					
\newcommand{\F}{\mathbb{F}}
\newcommand{\fset}[1]{\text{$\textsf{#1}$}}										
\newcommand{\filt}{\text{$\fset{F}$}}											
\newcommand{\ftight}{\text{$\textsf{T}$}}										
\newcommand{\ftightw}[1]{\text{$\textsf{T}_{#1}$}}
\newcommand{\ftg}[1]{\text{$\la{#1}$}}											
\newcommand{\ft}{\text{$\ftg{F}$}}												
\newcommand{\card}[1]{\# #1}
\newcommand{\scj}{\subseteq}
\newcommand{\nn}{\mathbb{N}}
\newcommand{\ssf}{\textsf{X}}
\newcommand{\sat}{\mathcal{S}}
\newcommand{\her}{\mathcal{H}}
\begin{document}

\title[]{Labelled space $C^*$-algebras as partial crossed products and a simplicity characterization }

\author[G. de Castro \and D.W. van Wyk]{Gilles G. de Castro and Daniel W. van Wyk*}
\address{Departamento de Matem\'atica, Universidade Federal de Santa Catarina, 88040-970 Florian\'opolis SC, Brazil.\\ *Current address: Department of Mathematics, Dartmouth College, Hanover, NH 03755 }
\email{gilles.castro@ufsc.br \\ daniel.w.van.wyk@dartmouth.edu}
\keywords{$C^*$-algebra, labelled space, partial actions, partial crossed products, groupoid}
\subjclass[2010]{Primary: 46L55, Secondary: 20M18, 05C20, 05C78}


\begin{abstract}
A partial action is associated with a normal weakly left resolving labelled space such that the crossed product and labelled space $C^*$-algebras are isomorphic. An improved characterization of simplicity for labelled space $C^*$-algebras is given and applied to $C^*$-algebras of subshifts.
\end{abstract}

\maketitle

\section{Introduction}
Examples of $C^*$-algebras have proved very effective in exhibiting abstract structural properties of $C^*$-algebras in a tangible way. Typically, a $C^*$-algebra is associated with an underlying mathematical object such that properties of the $C^*$-algebra is mirrored in the structure of the underlying object. Three such examples that have been extensively studied are directed graphs, dynamical systems and groupoids. A particular property that has been well studied is simplicity. 

In some instances there is more than one model for an associated $C^*$-algebra. 
For example, it is well known that a directed graph has a groupoid model such that their associated $C^*$-algebras are isomorphic \cite{MR1432596,MR1962477}. In \cite{MR3539347} it is shown that a directed graph also has a partial dynamical system associated with it  such that the graph $C^*$-algebra and the crossed product $C^*$-algebra are isomorphic. Simplicity is characterized for graph $C^*$-algebras in \cite{MR1432596,MR1847155,MR1962477,MR2117597},  for partial actions in \cite{MR1905819} and for \'etale groupoids in \cite{MR3189105}. 

Graph $C^*$-algebras generalize Cuntz-Krieger algebras \cite{MR1432596}. Another generalization is Exel-Laca algebras \cite{MR1703078}. In \cite{MR2050134} Tomforde introduces ultragraphs and their $C^*$-algebras as a common framework for Exel-Laca algebras and graph algebras. Simplicity of ultragraph $C^*$-algebras is studied in \cite{MR2001938}. In \cite{MR3938320} a partial action is associated with ultragraphs such that the ultragraph $C^*$-algebra and partial crossed product are isomorphic. A groupoid model for ultragraphs is developed in \cite{MR2457327}. 

Labelled graphs and their $C^*$-algebras are introduced in \cite{MR2304922} as a common framework for  Tomforde's ultragraphs and the shift spaces studied by Matsumoto and  Carlsen \cite{MR2091486,MR2380472}. In \cite{MR3680957} the authors apply Exel's tight spectrum of an inverse semigroup (\cite{MR2419901}) to labelled spaces, and therefore adapt the definition of a labelled space $C^*$-algebra. Independently, the same adapted definition is proposed in \cite{MR3614028}. In this paper we associate a partial action with a labelled space and show that the labelled space $C^*$-algebra and the partial crossed product are isomorphic. Since we define a partial action on the tight spectrum of a labelled space, the appropriate definition of a labelled space $C^*$-algebra is that of \cite{MR3680957} and \cite{MR3614028}. We show that the groupoid associated with a labelled space (as in \cite{Gil3}) is the same groupoid that is obtained from the partial action (as in \cite{MR2045419}). Hence, we show that analogous to graphs and ultragraphs, the $C^*$-algebra of a labelled space has both a groupoid and partial action model. 

We apply our results to characterize simplicity of labelled space $C^*$-algebras in terms of the tight spectrum of the labelled space. Sufficient conditions for simplicity of labelled space $C^*$-algebras are given in \cite{MR2542653}, and a converse is given in \cite{MR2834773,MR3765497}. However, in both instances it is assumed that the labelled space is set-finite and receiver set-finite, the underlying graph has no sinks and no sources, and that the accommodating family of the labelled space is the smallest such family. We do not make any of these assumptions in this paper. In \cite{MR3606190} Boolean dynamical systems are studied, which generalize labelled spaces. A characterization of simplicity for a $C^*$-algebra associated with a Boolean dynamical system is also given in \cite{MR3606190} . In this instance, though, the Boolean dynamical system is assumed to be countable with a certain domain condition. Without assuming countability or any domain conditions we give a new simplicity characterization of labelled space $C^*$-algebras in terms of the tight spectrum of the labelled space and its associated partial action, recovering some of the simplicity results of \cite{MR3606190}.  In particular, we  remove the domain  condition in \cite[Theorem 3.7]{MR3765497}.

In \cite{MR3606190}, the simplicity characterization for the $C^*$-algebra of a Boolean dynamical system is applied to the Cuntz-Pimsner algebra of a one-sided subshift. It is stated that the $C^*$-algebra of the subshift is simple if and only if there is no cyclic point isolated in past equivalence and the subshift is cofinal in past equivalence, \cite[Example 11.4]{MR3606190}.  However, a different simplicity characterization  of these algebras is given in \cite{MR3639522}. We apply our results to $C^*$-algebras of one-sided subshifts and recover the results in \cite{MR3639522}. Then we give an example where the subshift has no cyclic points isolated in past equivalence and is cofinal in past equivalence, but the associated algebra is not simple, which shows that a stronger condition than cofinality is needed in  \cite[Example 11.4]{MR3606190}.

This paper is structured as follows. Section \ref{sec:preliminaries} contains some preliminaries and notation on labelled spaces and their $C^*$-algebras which are used throughout this paper. In Section \ref{s:partialaction} we define a partial action on the tight spectrum of a labelled space. Section \ref{s:crossedprodisom} contains our first main result, Theorem \ref{thm:IsomorphismThm}. Here we show that the partial crossed product, obtained from the partial action of the previous section, is isomorphic to the labelled space $C^*$ -algebra. In section \ref{sec:groupoids} we show that the groupoid associated with a labelled space (as in \cite{Gil3}) is the same groupoid that is obtained from the partial action (as in \cite{MR2045419}). In Section \ref{section:simplicity} we apply our results on the partial action to characterize simplicity of labelled space $C^*$-algebras in terms of the tight spectrum (Theorems \ref{thm:simplicity.tight.spectrum} and \ref{theorem:simple.labelled.space.algebra}). Finally, in Section \ref{sec:subshift}, by applying our  results to subshifts, we  characterize simplicity of certain $C^*$-algebras associated with subshifts, and recover some of the results in \cite{MR3639522}. We also give an example to show that  a stronger condition is needed for simplicity of $C^*$-algebras associated with subshifts than that given in \cite[Example 11.4]{MR3606190}.

\medskip

\noindent\textbf{Acknowledgements:} This study was financed in part by the Coordenação de Aperfeiçoamento de Pessoal de Nível Superior - Brasil (CAPES) - Finance Code 001.

\section{Preliminaries}\label{sec:preliminaries}

\subsection{Filters and characters}\label{subsection:filters.and.characters}

A  \emph{filter} in a partially ordered set $P$ with least element $0$ is a subset $\xi$ of $P$ such that
\begin{enumerate}[(i)]
	\item $0\notin\xi$;
	\item if $x\in\xi$ and $x\leq y$, then $y\in\xi$;
	\item if $x,y\in\xi$, there exists $z\in\xi$ such that $z\leq x$ and $z\leq y$.
\end{enumerate} If $P$ is a (meet) semilattice, condition (iii) may be replaced by $x\wedge y\in\xi$ if $x,y\in\xi$. An \emph{ultrafilter} is a filter which is not properly contained in any filter. 

For  $x\in P$, we define \[\uset{x}=\{y\in P \ | \ x\leq y\}\ \ ,\ \  \lset{x}=\{y\in P \ | \ y\leq x\},\] and for subsets $X,Y$ of $P$ define \[\uset{X} = \bigcup_{x\in X}\uset{x} = \{y\in P \ | \ x\leq y \ \mbox{for some} \ x\in X\},\]
and $\usetr{X}{Y} = Y\cap\uset{X}$; the sets $\usetr{x}{Y}$, $\lsetr{x}{Y}$, $\lset{X}$ and $\lsetr{X}{Y}$ are defined analogously.

\begin{lemma}\label{lemma:ultrafilter}
	Let $P$ be a partially ordered set $P$ with least element $0$. Then, for all $x\in P\setminus\{0\}$, there exists an ultrafilter $\xi$ such that $x\in\xi$.
\end{lemma}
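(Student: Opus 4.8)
The plan is to use a Zorn's lemma argument on the collection of filters containing the fixed element $x$. First I would fix $x \in P \setminus \{0\}$ and consider the set $\mathcal{F}_x$ of all filters $\xi$ with $x \in \xi$, ordered by inclusion. This set is nonempty, since the principal filter $\uset{x} = \{y \in P \mid x \leq y\}$ is a filter: it does not contain $0$ (because $x \neq 0$ and $0$ is the least element, so $x \leq 0$ would force $x = 0$), it is upward closed by transitivity of $\leq$, and any two elements of $\uset{x}$ have $x$ itself as a common lower bound lying in $\uset{x}$. Hence $\uset{x} \in \mathcal{F}_x$.

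Next I would verify that $\mathcal{F}_x$ satisfies the hypothesis of Zorn's lemma, i.e.\ every nonempty chain has an upper bound in $\mathcal{F}_x$. Given a chain $\{\xi_i\}_{i \in I}$ in $\mathcal{F}_x$, I claim $\eta := \bigcup_{i \in I} \xi_i$ is again a filter containing $x$. The element $x$ lies in every $\xi_i$, hence in $\eta$, and $0 \notin \eta$ since $0 \notin \xi_i$ for each $i$. Upward closure is immediate: if $y \in \eta$ and $y \leq z$, then $y \in \xi_i$ for some $i$, so $z \in \xi_i \subseteq \eta$. For the downward-directedness condition (iii), given $y, z \in \eta$, pick $i, j$ with $y \in \xi_i$ and $z \in \xi_j$; since the family is a chain, without loss of generality $\xi_i \subseteq \xi_j$, so $y, z \in \xi_j$, and as $\xi_j$ is a filter there is $w \in \xi_j \subseteq \eta$ with $w \leq y$ and $w \leq z$. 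Thus $\eta \in \mathcal{F}_x$ and it is an upper bound for the chain.

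By Zorn's lemma, $\mathcal{F}_x$ has a maximal element $\xi$. It remains to check that $\xi$ is an ultrafilter in $P$, i.e.\ that it is not properly contained in any filter of $P$ (not just in any filter containing $x$). But if $\xi \subsetneq \zeta$ for some filter $\zeta$, then $x \in \xi \subseteq \zeta$, so $\zeta \in \mathcal{F}_x$, contradicting maximality of $\xi$ in $\mathcal{F}_x$. Hence $\xi$ is an ultrafilter with $x \in \xi$, completing the proof. The only mild subtlety — and the one place to be careful — is the step handling condition (iii) for the union over a chain, where one genuinely needs the chain hypothesis (an arbitrary union of filters need not be a filter); everything else is a routine verification.
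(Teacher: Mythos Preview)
Your proof is correct and follows exactly the approach the paper indicates: apply Zorn's lemma to the collection of filters containing $x$, using that $\uset{x}$ is such a filter. The paper compresses this into a single sentence, while you have written out the routine verifications in full.
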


\begin{proof}
	It follows from Zorn's lemma observing that $\uset{x}$ is a filter containing $x$.
\end{proof}

A \emph{lattice} $L$ is a partially ordered set such that every finite subset $\{x_1,\ldots,x_n\}$ has an least upper bound, denoted by $x_1\vee\cdots\vee x_n$, and a greatest lower bound, denoted by $x_1\wedge\cdots\wedge x_n$. If $\ft$ is a filter in a lattice $L$ with least element $0$, we say that $\xi$ is \emph{prime} if for every $x,y\in L$, if $x\vee y\in \xi$, then $x\in\ft$ or $y\in\xi$.

The following result is well known in order theory.

\begin{proposition}
	Let $\xi$ be a filter in a Boolean algebra $\acf$. Then, $\xi$ is an ultrafilter if and only if $\xi$ is a prime filter.
\end{proposition}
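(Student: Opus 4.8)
The plan is to prove the two implications separately, using throughout the complementation $x\mapsto x^{c}$ of the Boolean algebra $\acf$ together with two elementary observations. First, any filter in a lattice is closed under finite meets: if $x,y\in\xi$ then axiom (iii) gives $z\in\xi$ with $z\leq x\wedge y$, whence $x\wedge y\in\xi$ by (ii); in particular $x$ and $x^{c}$ never both lie in $\xi$, since that would give $0=x\wedge x^{c}\in\xi$. Second, for $z,x\in\acf$ one has $z\wedge x^{c}=0$ if and only if $z\leq x$, as follows by distributivity from $z=z\wedge(x\vee x^{c})=(z\wedge x)\vee(z\wedge x^{c})$. I will also use that a filter is nonempty, so $1\in\xi$ (the usual convention; the empty set is vacuously prime but is not an ultrafilter, so it must be excluded here).

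For the forward direction, let $\xi$ be an ultrafilter and suppose $x\vee y\in\xi$ with $x\notin\xi$; the goal is $y\in\xi$. I would form $\eta=\{\,w\in\acf \mid w\geq z\wedge x^{c}\text{ for some }z\in\xi\,\}$, the upward closure of $\{\,z\wedge x^{c}\mid z\in\xi\,\}$. A direct check shows $\eta$ is a filter: it is upward closed by construction; given $w_{1},w_{2}\in\eta$ with $w_{i}\geq z_{i}\wedge x^{c}$, the element $(z_{1}\wedge z_{2})\wedge x^{c}$ belongs to $\eta$ and lies below both $w_{1}$ and $w_{2}$; and $0\notin\eta$ because $z\wedge x^{c}=0$ would force $z\leq x$, hence $x\in\xi$, contrary to assumption. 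Moreover $\xi\subseteq\eta$ since each $z\in\xi$ satisfies $z\geq z\wedge x^{c}$, and $x^{c}\in\eta$ since $x^{c}\geq 1\wedge x^{c}$ with $1\in\xi$. By maximality of $\xi$ we get $\eta=\xi$, so $x^{c}\in\xi$; then $(x\vee y)\wedge x^{c}\in\xi$, and since $(x\vee y)\wedge x^{c}=(x\wedge x^{c})\vee(y\wedge x^{c})=y\wedge x^{c}\leq y$ we conclude $y\in\xi$. Hence $\xi$ is prime.

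For the converse, let $\xi$ be a prime filter and suppose, for contradiction, that a filter $\eta$ contains $\xi$ properly; choose $x\in\eta\setminus\xi$. As $1=x\vee x^{c}\in\xi$ and $\xi$ is prime, $x\notin\xi$ yields $x^{c}\in\xi\subseteq\eta$, so $0=x\wedge x^{c}\in\eta$, contradicting that $\eta$ is a filter. Thus no filter properly contains $\xi$, i.e.\ $\xi$ is an ultrafilter.

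I do not anticipate a genuine difficulty, since this is a classical fact of order theory; the step that most deserves care, and the only place where the Boolean (rather than merely distributive lattice) structure is essential, is verifying that the enlarged set $\eta$ is a \emph{proper} filter, which rests precisely on the identity $z\wedge x^{c}=0\iff z\leq x$.
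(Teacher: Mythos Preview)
Your proof is correct. Both directions are argued cleanly: the forward implication via the standard filter-extension argument (adjoining $x^{c}$ and checking properness), and the converse via the decomposition $1=x\vee x^{c}$. Your aside about nonemptiness is also apt, since the paper's definition of filter does not explicitly exclude $\xi=\emptyset$, and the empty set is vacuously prime but not maximal.

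There is nothing to compare against, however: the paper does not prove this proposition. It is introduced with the sentence ``The following result is well known in order theory'' and stated without proof, as a standard fact used later (e.g., in Theorems~\ref{thm:IsomorphismThm} and \ref{thm:minimality.labelled.spaces}, where primeness of ultrafilters in $\acf$ is invoked). So your argument simply supplies what the paper omits.
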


\subsection{Labelled spaces}\label{subsect.labelled.spaces}

A \emph{(directed) graph} $\dgraphuple$ consists of non-empty sets $\dgraph^0$ (of \emph{vertices}), $\dgraph^1$ (of \emph{edges}), and \emph{range} and \emph{source} functions $r,s:\dgraph^1\to \dgraph^0$. A vertex $v$ such that $s^{-1}(v)=\emptyset$ is called a \emph{sink}, and the set of all sinks is denoted by $\dgraph^0_{sink}$. The graph is countable if both $\dgraph^0$ and $\dgraph^1$ are countable.

A \emph{path of length $n$} on a graph $\dgraph$ is a sequence $\lambda=\lambda_1\lambda_2\ldots\lambda_n$ of edges such that $r(\lambda_i)=s(\lambda_{i+1})$ for all $i=1,\ldots,n-1$. We write $|\lambda|=n$ for the length of $\lambda$ and regard vertices as paths of length $0$. $\dgraph^n$ stands for the set of all paths of length $n$ and $\dgraph^{\ast}=\cup_{n\geq 0}\dgraph^n$. Similarly, we define a \emph{path of infinite length} (or an \emph{infinite path}) as an infinite sequence $\lambda=\lambda_1\lambda_2\ldots$ of edges such that $r(\lambda_i)=s(\lambda_{i+1})$ for all $i\geq 1$; for such a path, we write $|\lambda|=\infty$ and we let $\dgraph^{\infty}$ denote the set of all infinite paths.

A \emph{labelled graph} consists of a graph $\dgraph$ together with a surjective \emph{labelling map} $\lbf:\dgraph^1\to\alf$, where $\alf$ is a fixed non-empty set, called an \emph{alphabet}, and whose elements are called \emph{letters}. $\alf^{\ast}$ stands for the set of all finite \emph{words} over $\alf$, together with the \emph{empty word} \eword, and $\alf^{\infty}$ is the set of all infinite words over $\alf$. 	We consider $\alf^{\ast}$ as a monoid with operation given by concatenation. In particular, given $\alpha\in \alf^{\ast}\setminus\{\eword\}$ and $n\in\nn^*$, $\alpha^n$ represents $\alpha$ concatenated $n$ times and $\alpha^{\infty}\in \alf^{\infty}$ is $\alpha$ concatenated infinitely many times.

The labelling map $\lbf$ extends in the obvious way to $\lbf:\dgraph^n\to\alf^{\ast}$ and $\lbf:\dgraph^{\infty}\to\alf^{\infty}$. $\awn{n}=\lbf(\dgraph^n)$ is the set of \emph{labelled paths $\alpha$ of length $|\alpha|=n$}, and $\awinf=\lbf(\dgraph^{\infty})$ is the set of \emph{infinite labelled paths}. We consider $\eword$ as a labelled path with $|\eword|=0$, and set $\awplus=\cup_{n\geq 1}\awn{n}$, $\awstar=\{\eword\}\cup\awplus$, and $\awleinf=\awstar\cup\awinf$.  We say that $\alpha$ is a \emph{circuit} if $\alpha^{\infty}=\alpha\alpha\cdots\in\awinf$.

For $\alpha\in\awstar$ and $A\in\powerset{\dgraph^0}$ (the power set of $\dgraph^0$), the \emph{relative range of $\alpha$ with respect to} $A$ is the set
\[r(A,\alpha)=\{r(\lambda)\ |\ \lambda\in\dgraph^{\ast},\ \lbf(\lambda)=\alpha,\ s(\lambda)\in A\}\],
if $\alpha\in\awplus$, and $r(A,\eword)=A$ if $\alpha=\eword$. The \emph{range of $\alpha$}, denoted by $r(\alpha)$, is the set \[r(\alpha)=r(\dgraph^0,\alpha),\]
so that $r(\eword)=\dgraph^0$ and, if $\alpha\in\awplus$, then  $r(\alpha)=\{r(\lambda)\in\dgraph^0\ |\ \lbf(\lambda)=\alpha\}$.

We also define \[\lbf(A\dgraph^1)=\{\lbf(e)\ |\ e\in\dgraph^1\ \mbox{and}\ s(e)\in A\}=\{a\in\alf\ |\ r(A,a)\neq\emptyset\}.\]

A labelled path $\alpha$ is a \emph{beginning} of a labelled path $\beta$ if $\beta=\alpha\beta'$ for some labelled path $\beta'$. Labelled paths  $\alpha$ and $\beta$ are \emph{comparable} if either one is a beginning of the other. If $1\leq i\leq j\leq |\alpha|$, let $\alpha_{i,j}=\alpha_i\alpha_{i+1}\ldots\alpha_{j}$ if $j<\infty$ and $\alpha_{i,j}=\alpha_i\alpha_{i+1}\ldots$ if $j=\infty$. If $j<i$ set $\alpha_{i,j}=\eword$. Define $\overline{\awinf}=\overline{\lbf(\dgraph^{\infty})}=\{\alpha\in\alf^{\infty}\mid \alpha_{1,n}\in\awstar,\forall n\in\nn\}$, that is, it is the set of all infinite words such that all beginnings are finite labelled paths. Clearly $\awinf\scj\overline{\awinf}$, however the inclusion may be proper (see for instance the example below \cite[Definition 2.10]{MR3765497}). Also we write $\overline{\awleinf}=\awstar\cup\overline{\awinf}$.

A \emph{labelled space} is a triple $\lspace$ where $\lgraph$ is a labelled graph and $\acf$ is a family of subsets of $\dgraph^0$ which is closed under finite intersections and finite unions, contains all $r(\alpha)$ for $\alpha\in\awplus$, and is \emph{closed under relative ranges}, that is, $r(A,\alpha)\in\acf$ for all $A\in\acf$ and all $\alpha\in\awstar$. A labelled space $\lspace$ is \emph{weakly left-resolving} if for all $A,B\in\acf$ and all $\alpha\in\awplus$ we have $r(A\cap B,\alpha)=r(A,\alpha)\cap r(B,\alpha)$. A weakly left-resolving labelled space such that $\acf$ is closed under relative complements will be called \emph{normal}\footnote{Note this definition differs from \cite{MR3614028}. However since all labelled spaces considered in this paper are weakly left-resolving, we include `weakly-left resolving' in the definition of a normal labelled space.}. A non-empty set $A\in\acf$ is called \emph{regular} if for all $\emptyset\neq B\scj A$, we have that $0<|\lbf(B\dgraph^1)|<\infty$. The subset of all regular element of $\acf$ together with the empty set is denoted by $\acf_{reg}$.

For $\alpha\in\awstar$, define \[\acfra=\acf\cap\powerset{r(\alpha)}=\{A\in\acf\mid A\scj r(\alpha)\}.\] If a labelled space is normal, then  $\acfra$ is a Boolean algebra for each $\alpha\in\awplus$, and  $\acfrg{\eword}=\acf$ is a generalized Boolean algebra as in \cite{MR1507106}. By the Stone duality, every with $\acfra$, $\alpha\in\awstar$,  is associated with a topological space  $X_{\alpha}$, which consists of the set of ultrafilters in $\acfra$. A basis for $X_{\alpha}$ is given by the family $\{U_A\}_{A\in\acfra}$, where $U_A=\{\ft\in X_{\alpha}|A\in\ft\}$. 

\subsection{The inverse semigroup of a labelled space}\label{subsection:inverse.semigroup}

Let $\lspace$ be normal labelled space and consider the set \[S=\{(\alpha,A,\beta)\ |\ \alpha,\beta\in\awstar\ \mbox{and}\ A\in\acfrg{\alpha}\cap\acfrg{\beta}\ \mbox{with}\ A\neq\emptyset\}\cup\{0\}.\]
Define a  binary operation on $S$  as follows: $s\cdot 0= 0\cdot s=0$ for all $s\in S$ and, if $s=(\alpha,A,\beta)$ and $t=(\gamma,B,\delta)$ are in $S$, then \[s\cdot t=\left\{\begin{array}{ll}
(\alpha\gamma ',r(A,\gamma ')\cap B,\delta), & \mbox{if}\ \  \gamma=\beta\gamma '\ \mbox{and}\ r(A,\gamma ')\cap B\neq\emptyset,\\
(\alpha,A\cap r(B,\beta '),\delta\beta '), & \mbox{if}\ \  \beta=\gamma\beta '\ \mbox{and}\ A\cap r(B,\beta ')\neq\emptyset,\\
0, & \mbox{otherwise}.
\end{array}\right. \]
If  $s=(\alpha,A,\beta)\in S$, we define $s^*=(\beta,A,\alpha)$. Then  $S$ endowed with the operations above is an inverse semigroup with zero element $0$ (\cite{MR3648984}, Proposition 3.4), whose semilattice of idempotents is \[E(S)=\{(\alpha, A, \alpha) \ | \ \alpha\in\awstar \ \mbox{and} \ A\in\acfra\}\cup\{0\}.\]

The natural order in the semilattice $E(S)$ is described in the next proposition.

\begin{proposition}\cite[ Proposition 4.1]{MR3648984}
	Let $p=(\alpha, A, \alpha)$ and $q=(\beta, B, \beta)$ be non-zero elements in $E(S)$. Then $p\leq q$ if and only if $\alpha=\beta\alpha'$ and $A\subseteq r(B,\alpha')$.
\end{proposition}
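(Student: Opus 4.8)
The plan is to work directly from the definition of the natural order in an inverse semigroup, namely $p \leq q$ if and only if $p = qp$ (equivalently $p = pq$, since we are in the semilattice of idempotents). Write $p = (\alpha, A, \alpha)$ and $q = (\beta, B, \beta)$ with $A \in \acfra$, $B \in \acfrg{\beta}$, both nonzero, and compute $qp$ using the multiplication rule for $S$. The product $q \cdot p$ is nonzero only in one of the first two cases of the definition, so the argument splits according to whether $\alpha$ is a beginning of $\beta$ or $\beta$ is a beginning of $\alpha$.

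First I would treat the ``forward'' direction. Suppose $p \leq q$, so $p = q \cdot p$. Since $p \neq 0$, the product $q \cdot p$ falls into case one or case two of the multiplication. In case one we would have $\beta = \alpha\gamma'$ for some $\gamma' \in \awstar$ and $q \cdot p = (\beta\gamma', r(B,\gamma') \cap A, \alpha)$; comparing third coordinates with $p = (\alpha, A, \alpha)$ forces $\beta\gamma' = \alpha$, and together with $\beta = \alpha\gamma'$ this gives $\gamma' = \eword$, hence $\beta = \alpha$ and then $A = B \cap A \subseteq B = r(B,\eword)$, which is the claimed conclusion with $\alpha' = \eword$. In case two we have $\alpha = \beta\alpha'$ for some $\alpha' \in \awstar$ and $q \cdot p = (\beta, B \cap r(A,\alpha'), \alpha\alpha')$; comparing first coordinates with $p$ gives $\beta = \alpha$, which combined with $\alpha = \beta\alpha'$ again collapses to $\alpha' = \eword$, and one finds $A \subseteq r(B,\eword) = B$ — but wait, this case actually does allow a genuinely longer $\alpha$; re-examining, the correct reading is that comparing the \emph{third} coordinates of $q\cdot p = p$ gives $\alpha\alpha' = \alpha$ so $\alpha' = \eword$ only if we had used $p = q\cdot p$; instead one should compute $p = p \cdot q$ to pick up the genuinely nontrivial case, or equivalently observe directly that for $p \leq q$ with $|\alpha| > |\beta|$ one must use the product in the order that extends $\beta$ to $\alpha$. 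So more carefully: I would compute $p \cdot q$ (not $q \cdot p$), landing in the case $\alpha = \beta\alpha'$, giving $p \cdot q = (\alpha, A \cap r(B,\alpha'), \alpha)$ (using $\beta\alpha' = \alpha$ in the third slot of $p\cdot q = (\alpha, A\cap r(B,\alpha'), \beta\alpha')$), and $p \cdot q = p$ forces $A \cap r(B,\alpha') = A$, i.e.\ $A \subseteq r(B,\alpha')$, exactly the desired condition.

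For the ``backward'' direction, assume $\alpha = \beta\alpha'$ and $A \subseteq r(B,\alpha')$. Then I would simply verify $p \cdot q = p$ by direct substitution into the second case of the multiplication rule: $\beta = \gamma$ is replaced here by matching $q$'s label $\beta$ against the prefix of $p$'s label $\alpha$, giving $p \cdot q = (\alpha, A \cap r(B,\alpha'), \beta\alpha') = (\alpha, A, \alpha) = p$, where $A \cap r(B,\alpha') = A$ by the hypothesis and $\beta\alpha' = \alpha$. This shows $p \leq q$. The only real subtlety — and the step I would be most careful about — is bookkeeping which of the two products $p \cdot q$ or $q \cdot p$ to use and which coordinate comparison pins down the prefix relation; once the correct product is chosen the computation is immediate, and one should also note at the outset that $p \leq q$ in $E(S)$ is equivalent to $pq = p = qp$ so either product may be used, but $p \cdot q$ is the convenient one here. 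Nondegeneracy ($A \cap r(B,\alpha') \neq \emptyset$, needed to stay out of the ``otherwise'' case) is automatic since it equals $A \neq \emptyset$.
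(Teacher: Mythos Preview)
The paper does not prove this proposition; it merely cites it from \cite[Proposition 4.1]{MR3648984}. Your approach---computing $p\cdot q$ directly from the multiplication rule and equating to $p$---is the natural one and is essentially correct.

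One remark on the execution: your write-up wobbles between $q\cdot p$ and $p\cdot q$, and in the course of this the case labels get tangled (for $q\cdot p$ with $s=q$, $t=p$, ``case one'' is $\alpha=\beta\gamma'$, not $\beta=\alpha\gamma'$). Once you settle on $p\cdot q$, the forward direction should cleanly treat both nonzero cases: in case two ($\alpha=\beta\alpha'$) you get $p\cdot q=(\alpha,A\cap r(B,\alpha'),\alpha)$ and equating middle entries gives $A\subseteq r(B,\alpha')$; in case one ($\beta=\alpha\gamma'$) you get $p\cdot q=(\beta,r(A,\gamma')\cap B,\beta)$, and equating first entries with $p$ forces $\beta=\alpha$, hence $\gamma'=\eword$, which is the $\alpha'=\eword$ instance of the conclusion. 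Your backward direction and the nondegeneracy remark are fine.
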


\subsection{Filters in $\textbf{E(S)}$}\label{subsection:filters.E(S)}

For  a (meet) semilattice $E$ with $0$, there is a bijection between the set of filters in $E$ (upper sets that are closed under meets and that do not contain $0$) and the set $\hat{E}_0$ of characters of $E$ (zero and meet-preserving non-zero maps from $E$ to the Boolean algebra $\{0,1\}$). With the topology of pointwise convergence on $\hat{E}_0$, the closure of the subset $\hat{E}_\infty$ of characters that correspond to ultrafilters in $E$ is denoted by $\hat{E}_{tight}$, and is called the \emph{tight spectrum} of $E$. Elements of $\hat{E}_{tight}$ are the \emph{tight characters of $E$}, and their corresponding filters are \emph{tight filters}. The set of all filters will be denoted by $\filt$ and the set of tight filters will be denoted by $\ftight$, which we also call \emph{tight spectrum}. In particular, we may view $\ftight$ as a closed subspace of $\filt$. See \cite[Section 12]{MR2419901} for details.

Let $\lspace$ be weakly-left resolving labelled space. We recall the description of the tight spectrum of $E(S)$, as given in \cite{MR3648984}\footnote{The results in \cite{MR3648984} are stated in terms of $\awleinf$. This leads to an error in  the proof of \cite[Proposition 4.18]{MR3648984}, where it is assumed that if an infinite word has all beginnings belonging to $\awstar$, then this word is a labelled path. That is, $\awleinf=\overline{\awleinf}$, which is not always the case (see Subsection \ref{subsect.labelled.spaces}). By exchanging $\awleinf$ for $\overline{\awleinf}$ we get the correct descriptions of the filters in \cite{MR3648984}, and the same proofs given in \cite{MR3648984} hold.}. Let $\alpha\in\overline{\awleinf}$  and $\{\ftg{F}_n\}_{0\leq n\leq|\alpha|}$ (understanding that ${0\leq n\leq|\alpha|}$ means $0\leq n<\infty$ when $\alpha\in\overline{\awinf}$) be a family such that $\ftg{F}_n$ is a filter in $\acfrg{\alpha_{1,n}}$ for every $n>0$ and $\ftg{F}_0$ is either a filter in $\acf$ or $\ftg{F}_0=\emptyset$. The family $\{\ftg{F}_n\}_{0\leq n\leq|\alpha|}$ is  a  \emph{complete family for} $\alpha$ if
\[\ftg{F}_n = \{A\in \acfrg{\alpha_{1,n}} \ | \ r(A,\alpha_{n+1})\in\ftg{F}_{n+1}\}\]
for all $n\geq0$.

\begin{theorem}\cite[Theorem 4.13]{MR3648984}\label{thm.filters.in.E(S)}
	Let $\lspace$ be a weakly left-resolving labelled space and $S$   its associated inverse semigroup. Then there is a bijective correspondence between filters in $E(S)$ and pairs $(\alpha, \{\ftg{F}_n\}_{0\leq n\leq|\alpha|})$, where $\alpha\in\overline{\awleinf}$ and $\{\ftg{F}_n\}_{0\leq n\leq|\alpha|}$ is a complete family for $\alpha$.
\end{theorem}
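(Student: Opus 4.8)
The plan is to construct explicit maps in both directions between the set of filters in $E(S)$ and the set of pairs $(\alpha,\{\ftg{F}_n\}_{0\leq n\leq|\alpha|})$ with $\alpha\in\overline{\awleinf}$ and $\{\ftg{F}_n\}$ complete for $\alpha$, and then verify that these are mutually inverse.

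First I would describe how to extract a pair from a filter $\xi$ in $E(S)$. Since the idempotents of $S$ are the triples $(\gamma,A,\gamma)$, every nonzero element of $\xi$ has the form $(\gamma,A,\gamma)$; using the order description from the cited proposition (namely $(\gamma,A,\gamma)\leq(\beta,B,\beta)$ iff $\gamma=\beta\gamma'$ and $A\subseteq r(B,\gamma')$), one shows that the set of first coordinates appearing in $\xi$ is totally ordered under the ``beginning'' relation (if $(\gamma,A,\gamma),(\beta,B,\beta)\in\xi$ they have a common lower bound in $\xi$, forcing $\gamma,\beta$ comparable). Taking $\alpha$ to be the supremum of these words — a finite word if the lengths are bounded, otherwise the infinite word whose beginnings are all of them — I would check $\alpha\in\overline{\awleinf}$, since each beginning $\alpha_{1,n}$ is a genuine labelled path in $\awstar$. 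Then for $0\leq n\leq|\alpha|$ I set $\ftg{F}_n=\{A\in\acfrg{\alpha_{1,n}}\mid (\alpha_{1,n},A,\alpha_{1,n})\in\xi\}$, which is either a filter in $\acfrg{\alpha_{1,n}}$ or empty (the latter only possibly for $n=0$), and I would verify the completeness relation $\ftg{F}_n=\{A\mid r(A,\alpha_{n+1})\in\ftg{F}_{n+1}\}$ directly from the order relation and the fact that $\xi$ is an upper set closed under meets.

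Conversely, given a pair $(\alpha,\{\ftg{F}_n\})$, I would define $\xi=\{(\alpha_{1,n},A,\alpha_{1,n})\mid 0\leq n\leq|\alpha|,\ A\in\ftg{F}_n\}$ (when $\ftg{F}_0=\emptyset$ simply omit the $n=0$ level) and check it is a filter: it is nonempty because some $\ftg{F}_n\neq\emptyset$; it does not contain $0$; it is closed under meets because for $m\leq n$ the meet of $(\alpha_{1,m},A,\alpha_{1,m})$ and $(\alpha_{1,n},B,\alpha_{1,n})$ in $E(S)$ is $(\alpha_{1,n},r(A,\alpha_{m+1,n})\cap B,\alpha_{1,n})$, and completeness of the family together with the filter property of $\ftg{F}_n$ puts this back in $\xi$; and it is upward closed by a similar computation, using completeness again to climb down from level $n$ to the relevant smaller level. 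Finally I would check that the two constructions are inverse to each other, which is essentially bookkeeping once the order relation on $E(S)$ has been unwound.

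The main obstacle is the closure-under-meets and upward-closure verification for $\xi$ in the ``pair $\to$ filter'' direction: it requires using the completeness relation in both directions (to push a meet up a level and to pull an over-generous element down a level), and one must be careful that $r(A,\alpha_{m+1,n})$ lands in $\acfrg{\alpha_{1,n}}$ and interacts correctly with the weakly-left-resolving hypothesis when intersecting. A secondary subtlety is the case analysis around $\ftg{F}_0=\emptyset$ versus $\ftg{F}_0$ a filter in $\acf$, and making sure the supremum word $\alpha$ is correctly identified as an element of $\overline{\awleinf}$ rather than merely of $\awleinf$ — precisely the point flagged in the footnote about \cite{MR3648984}.
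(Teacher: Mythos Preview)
The paper does not give its own proof of this theorem; it is quoted from \cite[Theorem~4.13]{MR3648984} and used as a black box (the only in-text evidence of the correspondence is equation~\eqref{eq.defines.xi_n} and Remark~\ref{remark.when.in.xialpha}, which record exactly the filter-to-pair map you describe). So there is nothing in the present paper to compare against.

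That said, your outline is correct and is precisely the argument one expects from the cited source: extract the word $\alpha$ as the common-beginning supremum of the first coordinates occurring in $\xi$, set $\ftg{F}_n=\{A\mid(\alpha_{1,n},A,\alpha_{1,n})\in\xi\}$, and in the reverse direction take $\xi=\bigcup_n\{(\alpha_{1,n},A,\alpha_{1,n})\mid A\in\ftg{F}_n\}$. Your identification of the delicate points---the meet/upper-set checks via the completeness relation, the role of weak left-resolving in making $r(\cdot,\alpha_{m+1,n})$ respect intersections, the $\ftg{F}_0=\emptyset$ case, and the $\overline{\awleinf}$ versus $\awleinf$ issue flagged in the footnote---is accurate, and these are exactly the verifications carried out in \cite[Propositions~4.4, 4.8 and Theorem~4.13]{MR3648984}.
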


Filters are of \emph{finite type} if they are associated with pairs $(\alpha, \{\ftg{F}_n\}_{0\leq n\leq|\alpha|})$ for which $|\alpha|<\infty$, and of \emph{infinite type} otherwise.

A filter $\xi$ in $E(S)$ with associated labelled path $\alpha\in\overline{\awleinf}$ is sometimes denoted by $\xia$ to stress the word $\alpha$; in addition, the filters in the complete family associated with $\xia$ will be denoted by $\xi^{\alpha}_n$ (or simply $\xi_n$). Specifically,
\begin{align}\label{eq.defines.xi_n}
\xi^{\alpha}_n=\{A\in\acf \ | \ (\alpha_{1,n},A,\alpha_{1,n}) \in \xia\}.
\end{align}

\begin{remark}\label{remark.when.in.xialpha}
	It follows from \cite[Propositions 4.4 and 4.8]{MR3648984} that for a filter $\xi^\alpha$ in $E(S)$ and an element $(\beta,A,\beta)\in E(S)$ we have that $(\beta,A,\beta)\in\xi^{\alpha}$ if and only if $\beta$ is a beginning of $\alpha$ and $A\in\xi_{|\beta|}$.
\end{remark}

\begin{theorem}[\cite{MR3648984}, Theorems 5.10 and 6.7] \label{thm:TightFiltersType}
	\label{thm.tight.filters.in.es}
	Let $\lspace$ be a normal labelled space and $S$  its associated inverse semigroup. Then the tight filters in $E(S)$ are:
	\begin{enumerate}[(i)]
		\item The filters of infinite type for which the non-empty elements of their associated complete families are ultrafilters. 
		\item The filters of finite type $\xia$ such that $\xi_{|\alpha|}$ is an ultrafilter in $\acfra$ and for each  $A\in\xi_{|\alpha|}$ at least one of the following conditions hold:
		\begin{enumerate}[(a)]
			\item $\lbf(A\dgraph^1)$ is infinite.
			\item There exists $B\in\acfra$ such that $\emptyset\neq B\subseteq A\cap \dgraph^0_{sink}$.
		\end{enumerate}
	\end{enumerate}
\end{theorem}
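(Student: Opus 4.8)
The plan is to invoke Exel's combinatorial description of the tight spectrum of a semilattice \cite[Section~12]{MR2419901}: a filter $\xi$ in $E(S)$ is tight exactly when, for every $p\in\xi$, every finite family $Y$ of elements of $E(S)$ not lying in $\xi$, and every finite subset $Z$ of $\{q\in E(S)\mid q\leq p\text{ and }q\wedge y=0\text{ for all }y\in Y\}$ such that every nonzero such $q$ meets some member of $Z$ (a \emph{cover}), one has $Z\cap\xi\neq\emptyset$. Combining this with the parametrization of filters in Theorem \ref{thm.filters.in.E(S)} and the order description of \cite[Proposition~4.1]{MR3648984}, the statement becomes a list of combinatorial assertions about the complete family $\{\xi_n\}$ of $\xia$, which I would handle in three stages: necessity, then sufficiency for infinite type, then sufficiency for finite type.

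For \emph{necessity}, suppose first that some nonempty $\xi_n$ is not an ultrafilter in $\acfrg{\alpha_{1,n}}$, and pick $A\in\acfrg{\alpha_{1,n}}$ with $A\notin\xi_n$ but $A\cap C\neq\emptyset$ for all $C\in\xi_n$; fixing $C_0\in\xi_n$, the pair $\{(\alpha_{1,n},A\cap C_0,\alpha_{1,n}),(\alpha_{1,n},C_0\setminus A,\alpha_{1,n})\}$ is a finite cover of $(\alpha_{1,n},C_0,\alpha_{1,n})\in\xi$ — using that $r(\cdot,\delta)$ respects finite unions and, by weak left-resolving, finite intersections — while neither member lies in $\xi$ by Remark \ref{remark.when.in.xialpha}, so $\xi$ is not tight. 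Suppose next that $\xi$ is of finite type, $|\alpha|=n$, and some $A\in\xi_n$ has $\lbf(A\dgraph^1)$ finite with no nonempty member of $\acf$ contained in $A\cap\dgraph^0_{sink}$; then a direct computation with the multiplication in $S$ shows $Z=\{(\alpha a,r(A,a),\alpha a)\mid a\in\lbf(A\dgraph^1)\}$ is a nonempty finite cover of $(\alpha,A,\alpha)\in\xi$ none of whose members lies in $\xi$, since each associated word properly extends $\alpha$ (Remark \ref{remark.when.in.xialpha}). This gives the ``only if'' halves of (i) and (ii).

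For \emph{sufficiency in the infinite type case}, if every nonempty $\xi_n$ is an ultrafilter I claim $\xia$ is itself an ultrafilter in $E(S)$, hence tight: given $(\beta,B,\beta)\notin\xia$, either $\beta$ is not a beginning of $\alpha$, in which case for the first coordinate $k$ at which they disagree and any $C\in\xi_k$ the element $(\alpha_{1,k},C,\alpha_{1,k})\in\xia$ has zero product against $(\beta,B,\beta)$ by incomparability; or $\beta=\alpha_{1,m}$, in which case $B\notin\xi_m$ by Remark \ref{remark.when.in.xialpha}, and choosing $k\geq m+1$ with $\xi_k$ an ultrafilter the completeness relation gives $r(B,\alpha_{m+1,k})\notin\xi_k$, so some $C\in\xi_k$ is disjoint from $r(B,\alpha_{m+1,k})$ and $(\alpha_{1,k},C,\alpha_{1,k})\in\xia$ again annihilates $(\beta,B,\beta)$. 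For \emph{sufficiency in the finite type case} — with $|\alpha|=n$, $\xi_n$ an ultrafilter in $\acfra$, and each $A\in\xi_n$ satisfying (a) or (b) — I would show every finite cover of every $(\beta,B,\beta)\in\xia$ meets $\xia$, by downward induction on $m=|\beta|$ (base $m=n$).

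In that induction, after intersecting $B$ with finitely many members of $\xi_m$ one reduces to the case that $(\beta,B,\beta)$ is incompatible with every excluded element whose word is a beginning of $\alpha_{1,m}$, and then splits the cover into the part $Z_{\eword}$ of elements with word exactly $\alpha_{1,m}$ and the rest $Z_{>}$. The union $U$ of the second coordinates of $Z_{\eword}$ lies in $\acfrg{\alpha_{1,m}}$; if $U\in\xi_m$ then primeness of $\xi_m$ (an ultrafilter in a Boolean, resp.\ generalized Boolean, algebra) yields a member of $Z_{\eword}\cap\xia$, and otherwise $B\setminus U\in\xi_m$ and a disjointness argument shows $Z_{>}$ covers $(\alpha_{1,m},B\setminus U,\alpha_{1,m})$ (relative to the remaining excluded elements). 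For $m=n$ this last situation cannot occur: applied to $B\setminus U\in\xi_n$, condition (b) supplies a nonempty sink subset that annihilates every product with a member of $Z_{>}$, while condition (a) supplies a letter $a\in\lbf((B\setminus U)\dgraph^1)$ used neither by $Z_{>}$ nor by the finitely many excluded elements, so that $(\alpha a,r(B\setminus U,a),\alpha a)$ is left uncovered; hence $U\in\xi_n$ and the base case holds. For $m<n$ one restricts $Z_{>}$ to $(\alpha_{1,m+1},r(B\setminus U,\alpha_{m+1}),\alpha_{1,m+1})$, which lies in $\xia$ because the completeness relation gives $r(B\setminus U,\alpha_{m+1})\in\xi_{m+1}$, applies the inductive hypothesis, and pulls the resulting member of $\xia$ back through the refinement using that $\xia$ is upward closed. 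The step I expect to be the main obstacle is precisely this induction: tracking how second coordinates transform under the multiplication of $S$ when passing to refinements of a cover, handling arbitrary covers rather than the canonical one-step ones, and correctly absorbing the excluded elements of the cover condition; the necessity arguments and the infinite-type case are comparatively routine once the basic cover computations in $E(S)$ are in place.
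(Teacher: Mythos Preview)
The paper does not contain a proof of this statement: Theorem \ref{thm:TightFiltersType} is quoted verbatim from \cite[Theorems 5.10 and 6.7]{MR3648984} and used as a black box, so there is no ``paper's own proof'' to compare against. Any comparison would have to be with the original arguments in \cite{MR3648984}, which this paper does not reproduce.

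That said, your strategy is the natural one and tracks what one would expect from the cited source: translate Exel's cover criterion for tightness through the parametrization of filters by complete families, and handle the finite and infinite cases separately. The necessity arguments are clean and correct as stated. For infinite type, your ultrafilter argument is fine, though note the subtlety when $m=0$ and $\xi_0=\emptyset$: then you cannot find $C\in\xi_0$ disjoint from $B$, but you can instead pass to $k=1$ and use that $r(B,\alpha_1)\notin\xi_1$ (which follows from the completeness relation with $\xi_0=\emptyset$ meaning $r(B,\alpha_1)\notin\xi_1$ for all $B$ --- check this carefully). For finite type, the inductive scheme is the right shape, and you have correctly identified the delicate step: in the base case $m=n$ with condition (a), the letter $a$ you produce must avoid not only the first letters appearing in $Z_{>}$ but also those appearing in the excluded family $Y$, and you need $\lbf((B\setminus U)\dgraph^1)$ (not just $\lbf(A\dgraph^1)$) to be infinite --- this follows because $B\setminus U\in\xi_n$ and every member of $\xi_n$ satisfies (a) or (b). The bookkeeping in the inductive step --- restricting $Z_{>}$ along $\alpha_{m+1}$ and verifying the restricted family is still a cover relative to the appropriately restricted excluded set --- is genuine work and is where errors would creep in, but the outline is sound.
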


For $\alpha\in\awstar$, we denote by $\ftightw{\alpha}$ the set of all tight filters in $E(S)$ for which the associated word is $\alpha$.

\subsection{Filter surgery in E(S)}\label{subsection.filter.surgery}

Fix a labelled space  $\lspace$. Let $X_\alpha$ be the topological space associated with the Boolean algebra $\acfra$ via Stone's duality. We give an outline of glueing and cutting of filters, and refer the reader to \cite[Section 4]{MR3680957} for more details and the proofs.

If $\alpha,\beta\in\awplus$ such that $\alpha\beta\in\awplus$, the relative range map $\newf{r(\,\cdot\,,\beta)}{\acfrg{\alpha}}{\acfrg{\alpha\beta}}$ is a morphism of Boolean algebras and, therefore, we have its dual morphism \[\newf{f_{\alpha[\beta]}}{X_{\alpha\beta}}{X_{\alpha}}\] given by
\begin{equation}\label{eq:def.f}
f_{\alpha[\beta]}(\ft)=\{A\in\acfra \ | \ r(A,\beta)\in\ft\}.
\end{equation}
 If $\alpha=\eword$ and $\ft\in\acfrg{\beta}$, then $f_{\eword[\beta]}(\ft)=\{A\in\acf \ | \ r(A,\beta)\in\ft\}$ is either an ultrafilter in $\acf=\acfrg{\eword}$ or is the empty set, and we can therefore consider $\newf{f_{\eword[\beta]}}{X_{\beta}}{X_{\eword}\cup\{\emptyset\}}$. The functions $f_{\alpha[\beta]}$ are continuous  and $f_{\alpha[\beta\gamma]}=f_{\alpha[\beta]}\circ f_{\alpha\beta[\gamma]}$.

We now review functions described in \cite{MR3680957} that are used throughout this paper.

We begin with the  ``gluing map'': for composable labelled paths $\alpha\in\awplus$ and $\beta\in\awstar$ (that is, such that $\alpha\beta\in\awplus$), consider the subspace $X_{(\alpha)\beta}$ of $X_\beta$ given by \[X_{(\alpha)\beta}=\{\ft\in X_\beta \ |\ r(\alpha\beta)\in\ft\}.\] Then there is a continuous map \[\newf{g_{(\alpha)\beta}}{X_{(\alpha)\beta}}{X_{\alpha\beta}}\] on ultrafilters induced by gluing $\alpha$ at the beginning of the labelled path $\beta$ given by
\begin{align}\label{eq:def.g}
g_{(\alpha)\beta}(\ft)=\{C\cap r(\alpha\beta)\ |\ C\in\ft\}.
\end{align}

For labelled paths $\alpha\in\awplus$ and $\beta\in\overline{\awleinf}$, let $\ftightw{(\alpha)\beta}$ be the subspace of $\ftightw{\beta}$ given by \[ \ftightw{(\alpha)\beta}=\{\xi\in\ftightw{\beta}\ |\ \xi_0\in X_{(\alpha)\eword}\}.\]
Then we define a gluing map \[\newf{G_{(\alpha)\beta}}{\ftightw{(\alpha)\beta}}{\ftightw{\alpha\beta}},\] taking a tight filter $\xi\in\ftightw{(\alpha)\beta}$ to the tight filter $\eta\in\ftightw{\alpha\beta}$, whose complete family of (ultra) filters is obtained by gluing and cutting labelled paths  as follows:
\begin{itemize}
	\item If $\beta=\eword$, \[ \eta_{|\alpha|}=g_{(\alpha)\eword}(\xi_0)=\{C\cap r(\alpha)\ |\ C\in\xi_0\} \] and,  for $0\leq i< |\alpha|$, \[ \eta_i=f_{\alpha_{1,i}[\alpha_{i+1,|\alpha|}]}(\eta_{|\alpha|})=\{D\in\acfrg{\alpha_{1,i}}\ |\ r(D,\alpha_{i+1,|\alpha|})\in\eta_{|\alpha|}\};\]
	
	\item If $\beta\neq\eword$, for $1\leq n\leq|\beta|$ (or $n<|\beta|$ if $\beta$ is infinite)
	\[ \eta_{|\alpha|+n} = g_{(\alpha)\beta_{1,n}}(\xi_n) = \{C\cap r(\alpha\beta_{1,n})\ |\ C\in\xi_n\}\] and, for $0\leq i\leq|\alpha|$,
	\[\eta_i = f_{\alpha_{1,i}[\alpha_{i+1,|\alpha|}\beta_1]}(\eta_{|\alpha|+1})
	=\{D\in\acfrg{\alpha_{1,i}}\ |\ r(D,\alpha_{i+1,|\alpha|}\beta_1)\in\eta_{|\alpha|+1}\}.\]
\end{itemize}
Finally, for $\alpha=\eword$ set $\ftightw{(\eword)\beta}=\ftightw{\beta}$ and let $G_{(\eword)\beta}$ be the identity function on $\ftightw{\beta}$.

\begin{remark}
	In \cite{MR3680957}, it is asked that $\alpha$ and $\beta$ are composable in order for $G_{(\alpha)\beta}$ to make sense when $\beta\neq\eword$. However, if $\xi\in\ftight_{(\alpha)\beta}$, then $\alpha$ and $\beta$ are indeed composable. To see this, suppose that $\xi^{\beta}\in \ftightw{(\alpha)\beta}$. Then, by the definition of complete family, for $1\leq n\leq|\beta|$ (or $n<|\beta|$ if $\beta$ is infinite), $r(\alpha\beta_{1,n})=r(r(\alpha),\beta_{1,n})\in\xi_n$. Since $\xi_n$ is a filter, then $r(\alpha\beta_{1,n})\neq\emptyset$, and hence $\alpha\beta_{1,n}$ must be a labelled path. Since $n$ was arbitrary, we have that $\alpha\beta\in\overline{\awleinf}$.
\end{remark} 

Next, we describe the ``cutting map": for composable labelled paths $\alpha\in\awplus$ and $\beta\in\awstar$, there is a continuous map \[\newf{h_{[\alpha]\beta}}{X_{\alpha\beta}}{X_{(\alpha)\beta}}\] induced on ultrafilters by cutting $\alpha$ from the beginning of $\alpha\beta$ given by
\begin{align}\label{eq:def.h}
h_{[\alpha]\beta}(\ft)=\usetr{\ft}{\acfrg{\beta}}=\{C\in\acfrg{\beta}\ |\ D\scj C\text{ for some }D\in\ft\}. 
\end{align}

For composable labelled paths $\alpha\in\awplus$ and $\beta\in\overline{\awleinf}$, this map gives rise to a cutting map \[\newf{H_{[\alpha]\beta}}{\ftightw{\alpha\beta}}{\ftightw{(\alpha)\beta}}\] that takes a tight filter $\xi\in\ftightw{\alpha\beta}$ to the tight filter $\eta\in\ftightw{(\alpha)\beta}$ such that, for all $n$ with $0\leq n\leq |\beta|$, \[ \eta_n=h_{[\alpha]\beta_{1,n}}(\xi_{n+|\alpha|}).\] For $\alpha=\eword$ define $H_{[\eword]\beta}$ to be the identity function on $\ftightw{\beta}$.


\begin{theorem}\cite[Theorem 4.17]{MR3680957}
	\label{thm.H-class.G-class.inverses}
	Let  $\lspace$ be a normal labelled space, and let $\alpha\in\awplus$ and $\beta\in\overline{\awleinf}$ be such that $\alpha\beta\in\overline{\awleinf}$. Then $H_{[\alpha]\beta}=(G_{(\alpha)\beta})^{-1}$.
\end{theorem}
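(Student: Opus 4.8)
The plan is to show that $G_{(\alpha)\beta}$ and $H_{[\alpha]\beta}$ are mutually inverse bijections between $\ftightw{(\alpha)\beta}$ and $\ftightw{\alpha\beta}$, by checking that their composites in both orders are the respective identity maps. Since both maps act on a tight filter by prescribing each member of the associated complete family of (ultra)filters, and since a tight filter is determined by its complete family (Theorem \ref{thm.filters.in.E(S)}), it suffices to verify the identities level by level on these complete families. First I would reduce to the case $\alpha \neq \eword$, since for $\alpha = \eword$ both $G_{(\eword)\beta}$ and $H_{[\eword]\beta}$ are defined to be the identity and there is nothing to prove. I would also treat the two cases $\beta = \eword$ and $\beta \neq \eword$ separately, as the definition of $G_{(\alpha)\beta}$ distinguishes them.

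The heart of the argument is the interplay between the Stone-dual maps $g_{(\alpha)\gamma}$, $h_{[\alpha]\gamma}$ and $f_{\alpha'[\gamma']}$ at the level of the Boolean algebras $\acfrg{\cdot}$. I would first establish (or recall from \cite{MR3680957}) the pointwise statements that $h_{[\alpha]\gamma} \circ g_{(\alpha)\gamma} = \mathrm{id}_{X_{(\alpha)\gamma}}$ and $g_{(\alpha)\gamma} \circ h_{[\alpha]\gamma} = \mathrm{id}_{X_{\alpha\gamma}}$ for composable $\alpha \in \awplus$, $\gamma \in \awstar$: given $\ft \in X_{(\alpha)\gamma}$, we have $h_{[\alpha]\gamma}(g_{(\alpha)\gamma}(\ft)) = \{C \in \acfrg{\gamma} \mid D \scj C \text{ for some } D \in \{C' \cap r(\alpha\gamma) \mid C' \in \ft\}\}$, and using that $r(\alpha\gamma) \in \ft$ together with upward closure one checks this equals $\ft$; the reverse composite is analogous, using weak left-resolvingness where intersections with $r(\alpha\gamma)$ must be tracked. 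Next I would verify compatibility of these Stone-dual maps with the maps $f_{\alpha'[\gamma']}$ that fill in the lower-index members of a complete family — essentially that gluing/cutting at the ``top'' level propagates correctly down through the formula $\xi_n = \{A \mid r(A,\cdot) \in \xi_{n+1}\}$ defining a complete family, which is where the cocycle identity $f_{\alpha[\beta\gamma]} = f_{\alpha[\beta]} \circ f_{\alpha\beta[\gamma]}$ enters.

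With these pointwise facts in hand, computing $H_{[\alpha]\beta} \circ G_{(\alpha)\beta}$ on a tight filter $\xi \in \ftightw{(\alpha)\beta}$ becomes bookkeeping: for $\beta \neq \eword$ and $n \geq 1$ the composite sends $\xi$ to the filter whose $n$-th member is $h_{[\alpha]\beta_{1,n}}(g_{(\alpha)\beta_{1,n}}(\xi_n)) = \xi_n$; for the members with index $0 \leq i \leq |\alpha|$ in the glued filter, cutting recovers $\xi_0$ after one checks the indices match up; and similarly for $\beta = \eword$. The computation of $G_{(\alpha)\beta} \circ H_{[\alpha]\beta}$ on $\eta \in \ftightw{\alpha\beta}$ is symmetric. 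Finally I would note that well-definedness of the composites — i.e., that $G_{(\alpha)\beta}(\xi)$ really lands in $\ftightw{\alpha\beta}$ and $H_{[\alpha]\beta}(\eta)$ in $\ftightw{(\alpha)\beta}$ so the composites make sense — is part of the setup already recorded before the theorem, so it can be cited rather than reproved.

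The main obstacle I anticipate is the careful index and intersection bookkeeping in the mixed-index range: in the glued filter $\eta = G_{(\alpha)\beta}(\xi)$ the members $\eta_0, \ldots, \eta_{|\alpha|}$ are defined by a single formula $f_{\alpha_{1,i}[\alpha_{i+1,|\alpha|}\beta_1]}(\eta_{|\alpha|+1})$ rather than by cutting, so when I apply $H_{[\alpha]\beta}$ and it reads off $\eta_n \mapsto h_{[\alpha]\beta_{1,n}}(\eta_{n+|\alpha|})$, I must confirm that $h_{[\alpha]\beta_{1,n}}$ applied to the glued top-level ultrafilter genuinely returns $\xi_n$ rather than merely something containing or contained in it — this is exactly where weak left-resolvingness and the identity $r(\alpha\gamma) \in \xi_n$ (from $\xi \in \ftightw{(\alpha)\beta}$, noted in the remark above) are essential, and getting the directions of containment exactly right, rather than up to $\usetr{\cdot}{\cdot}$, will take the most care.
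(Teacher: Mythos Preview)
The paper does not prove this theorem; it is stated with the citation \cite[Theorem 4.17]{MR3680957} and used as a black box throughout Sections \ref{s:partialaction}--\ref{sec:groupoids}. There is no proof in the paper to compare against.

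Your proposal sketches a direct proof by verifying that $H_{[\alpha]\beta}\circ G_{(\alpha)\beta}$ and $G_{(\alpha)\beta}\circ H_{[\alpha]\beta}$ are the respective identities at the level of complete families, reducing to the Stone-dual identities $h_{[\alpha]\gamma}\circ g_{(\alpha)\gamma}=\mathrm{id}$ and $g_{(\alpha)\gamma}\circ h_{[\alpha]\gamma}=\mathrm{id}$ on $X_{(\alpha)\gamma}$ and $X_{\alpha\gamma}$. This is a sound strategy and is presumably close to the argument in the original reference \cite{MR3680957}, but for the purposes of the present paper the correct ``proof'' is simply the citation.
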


\begin{theorem}\cite[Lemmas 4.13 and 4.16]{MR3680957}
	\label{thm.compositions.G.and.H}
	Let  $\lspace$ be a normal labelled space, and let $\alpha,\beta\in\awplus$ and $\gamma\in\overline{\awleinf}$ be such that $\alpha\beta\gamma\in\overline{\awleinf}$. Then $G_{(\alpha\beta)\gamma}=G_{(\alpha)\beta\gamma}\circ G_{(\beta)\gamma}$ and $H_{[\beta]\gamma}\circ H_{[\alpha]\beta\gamma}=H_{[\alpha\beta]\gamma}$.
\end{theorem}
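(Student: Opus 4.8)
The statement to prove is Theorem~\ref{thm.compositions.G.and.H}: for a normal labelled space, composable labelled paths $\alpha,\beta\in\awplus$ and $\gamma\in\overline{\awleinf}$ with $\alpha\beta\gamma\in\overline{\awleinf}$, we have $G_{(\alpha\beta)\gamma}=G_{(\alpha)\beta\gamma}\circ G_{(\beta)\gamma}$ and $H_{[\beta]\gamma}\circ H_{[\alpha]\beta\gamma}=H_{[\alpha\beta]\gamma}$.

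\begin{proof}[Proof proposal]
The plan is to verify the identities on the level of the complete families of (ultra)filters that index the tight filters in question, using the explicit formulas for $G$ and $H$ together with the already-established cocycle-type relations $f_{\alpha[\beta\gamma]}=f_{\alpha[\beta]}\circ f_{\alpha\beta[\gamma]}$ for the Stone duals and the analogous relations for the gluing maps $g$ and cutting maps $h$ on ultrafilters. First I would observe that the second identity is essentially dual to the first: by Theorem~\ref{thm.H-class.G-class.inverses}, $H_{[\alpha\beta]\gamma}=(G_{(\alpha\beta)\gamma})^{-1}$, $H_{[\alpha]\beta\gamma}=(G_{(\alpha)\beta\gamma})^{-1}$, and $H_{[\beta]\gamma}=(G_{(\beta)\gamma})^{-1}$, so if the first identity $G_{(\alpha\beta)\gamma}=G_{(\alpha)\beta\gamma}\circ G_{(\beta)\gamma}$ holds, taking inverses yields $H_{[\alpha\beta]\gamma}=H_{[\beta]\gamma}\circ H_{[\alpha]\beta\gamma}$, which is the second identity. (One should check that the domains match up — that $\xi\in\ftightw{(\alpha\beta)\gamma}$ iff $G_{(\beta)\gamma}(\xi)\in\ftightw{(\alpha)\beta\gamma}$, which follows from the range-in-filter description of these subspaces and the fact that $r(\alpha\beta\gamma_{1,n})=r(r(\alpha\beta),\gamma_{1,n})$.) So the core of the proof is the first identity.

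For the first identity I would fix $\xi\in\ftightw{(\alpha\beta)\gamma}$, set $\eta=G_{(\beta)\gamma}(\xi)\in\ftightw{\beta\gamma}$ and then $\zeta=G_{(\alpha)\beta\gamma}(\eta)\in\ftightw{\alpha\beta\gamma}$, and compare $\zeta$ with $\theta:=G_{(\alpha\beta)\gamma}(\xi)$ level by level. Since both are tight filters with associated word $\alpha\beta\gamma$, and since a tight filter is determined by its complete family, it suffices to show $\zeta_k=\theta_k$ for every relevant index $k$. I would split into ranges of $k$: for $k\geq|\alpha\beta|$ (with $k=|\alpha\beta|+n$, $n\geq 1$, or $n=0$ handled via the $\beta=\eword$-type clause if $\gamma=\eword$), the formula for $G$ gives $\theta_{|\alpha\beta|+n}=g_{(\alpha\beta)\gamma_{1,n}}(\xi_n)$, while $\zeta_{|\alpha\beta|+n}=g_{(\alpha)\beta\gamma_{1,n}}(\eta_{|\beta|+n})=g_{(\alpha)\beta\gamma_{1,n}}(g_{(\beta)\gamma_{1,n}}(\xi_n))$, so these agree by the semigroup-cocycle identity $g_{(\alpha\beta)\delta}=g_{(\alpha)\beta\delta}\circ g_{(\beta)\delta}$ for the ultrafilter gluing maps (which itself follows directly from \eqref{eq:def.g}: gluing $\alpha\beta$ in one step equals gluing $\beta$ then $\alpha$, using $r(\alpha\beta\delta)=r(r(\alpha),\beta\delta)\subseteq r(\beta\delta)$). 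For $|\alpha|\leq k<|\alpha\beta|$ and for $0\leq k\leq|\alpha|$, both $\theta_k$ and $\zeta_k$ are obtained from the ``top'' ultrafilter (at level $|\alpha\beta|$ or $|\alpha\beta|+1$) by applying an $f$-map, and the identity $f_{\mu[\nu\rho]}=f_{\mu[\nu]}\circ f_{\mu\nu[\rho]}$ together with the already-proven agreement at the top level closes these cases; one has to be a little careful tracking which $f$-map the definition of $G$ prescribes in each clause (the $\beta=\eword$ versus $\beta\neq\eword$ split in the definition of $G_{(\alpha)\beta}$), so I would organize the argument by first treating $\gamma\neq\eword$ uniformly and then $\gamma=\eword$ as a (slightly notationally different but structurally identical) special case.

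The main obstacle I anticipate is purely bookkeeping: the definition of $G_{(\alpha)\beta}$ is given by a two-case split ($\beta=\eword$ or not), each case itself specifying the family on two overlapping index ranges, so the composite $G_{(\alpha)\beta\gamma}\circ G_{(\beta)\gamma}$ must be unwound carefully to see that the index ranges and the prescribed maps line up with those in the single-step $G_{(\alpha\beta)\gamma}$. There is no conceptual difficulty — everything reduces to the two cocycle identities for $f$ and for $g$ — but one must verify that at each level the \emph{same} composite of elementary maps is being applied, and in particular that the intermediate filter $\eta=G_{(\beta)\gamma}(\xi)$ genuinely lies in the domain $\ftightw{(\alpha)\beta\gamma}$ (equivalently $\eta_0\in X_{(\alpha)\eword}$), which again comes down to $r(\alpha\beta\gamma_{1,1})\in\xi_1$ forcing the relevant range to be nonempty. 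I would present the level-by-level comparison compactly, invoking $f_{\cdot[\cdot\cdot]}=f_{\cdot[\cdot]}\circ f_{\cdot[\cdot]}$ and the $g$-cocycle identity by name, rather than expanding the ultrafilter descriptions in full.
\end{proof}
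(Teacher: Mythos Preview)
The paper does not actually prove this statement: it is quoted from \cite[Lemmas~4.13 and~4.16]{MR3680957} and carries no proof in the present text, so there is nothing to compare your argument against here. That said, your proposal is correct and is essentially the natural proof. The reduction of the $H$-identity to the $G$-identity via Theorem~\ref{thm.H-class.G-class.inverses} is exactly right, and your level-by-level verification rests on the two cocycle identities $f_{\alpha[\beta\gamma]}=f_{\alpha[\beta]}\circ f_{\alpha\beta[\gamma]}$ (stated in Subsection~\ref{subsection.filter.surgery}) and $g_{(\alpha\beta)\delta}=g_{(\alpha)\beta\delta}\circ g_{(\beta)\delta}$, the latter being an immediate consequence of \eqref{eq:def.g} and the inclusion $r(\alpha\beta\delta)\subseteq r(\beta\delta)$. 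Your domain check is also fine: using the complete-family relations one sees that $r(\alpha\beta)\in\xi_0$ if and only if $r(\alpha)\in(G_{(\beta)\gamma}(\xi))_0$, and since $r(\alpha\beta)\subseteq r(\beta)$ this gives precisely $\ftightw{(\alpha\beta)\gamma}$ as the domain of the composite. The only thing to watch, as you already flag, is the $\gamma=\eword$ versus $\gamma\neq\eword$ clause in the definition of $G$; handling these separately is the clean way to do it.
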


\subsection{The $C^*$-algebra of a labelled space}\label{sec:labelCstarAlgDfn}
Let $\lspace$ be a normal labelled space. The \emph{C*-algebra associated with} $\lspace$, denoted by $C^*\lspace$, is the universal $C^*$-algebra generated by projections $\{p_A \ | \ A\in \acf\}$ and partial isometries $\{s_a \ | \ a\in\alf\}$ subject to the relations
\begin{enumerate}[(i)]
	\item $p_{A\cap B}=p_Ap_B$, $p_{A\cup B}=p_A+p_B-p_{A\cap B}$ and $p_{\emptyset}=0$, for every $A,B\in\acf$;
	\item $p_As_a=s_ap_{r(A,a)}$, for every $A\in\acf$ and $a\in\alf$;
	\item $s_a^*s_a=p_{r(a)}$ and $s_b^*s_a=0$ if $b\neq a$, for every $a,b\in\alf$;
	\item For every $A\in\acf$ for which $0<\card{\lbf(A\dgraph^1)}<\infty$ and there does not exist $B\in\acf$ such that $\emptyset\neq B\subseteq A\cap \dgraph^0_{sink}$,,
	\[p_A=\sum_{a\in\lbf(A\dgraph^1)}s_ap_{r(A,a)}s_a^*.\]
\end{enumerate}

For each word $\alpha=a_1a_2\cdots a_n$, define $s_\alpha=s_{a_1}s_{a_2}\cdots s_{a_n}$ and set $s_{\eword}=1$ for the the empty word $\eword$ . Observe that $s_\eword$ does not belong to $C^*\lspace$ unless it is unital -- we work with $s_{\eword}$ to simplify our statements. For example, $s_{\eword}p_{A}s_{\eword}^*$ means $p_{A}$. We never use $s_{\eword}$ alone.

\begin{proposition}\label{prop:closure.span}
	Let $\lspace$ be a  normal labelled space. Then \[C^*\lspace = \overline{\mathrm{span}}\{s_\alpha p_A s_\beta^* \ | \ \alpha,\beta\in\awstar \ \mbox{and} \ A\in\acfrg{\alpha}\cap\acfrg{\beta}\}.\]
\end{proposition}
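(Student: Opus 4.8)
The plan is to show that the set
\[
M = \overline{\mathrm{span}}\{s_\alpha p_A s_\beta^* \ | \ \alpha,\beta\in\awstar \ \mbox{and} \ A\in\acfrg{\alpha}\cap\acfrg{\beta}\}
\]
is a $C^*$-subalgebra of $C^*\lspace$ containing all the generators; since $C^*\lspace$ is generated by those elements, this forces $M = C^*\lspace$. The inclusion $M \subseteq C^*\lspace$ is clear. For the reverse inclusion it suffices to check that $M$ is closed under the algebra operations and under adjoints, and that each generator $p_A$ ($A \in \acf$) and $s_a$ ($a \in \alf$) lies in $M$. The generators are immediate: $p_A = s_\eword p_A s_\eword^*$ with $A \in \acf = \acfrg{\eword}$ (note $A \subseteq r(a)$ for the word $a$ is not needed here, only $A\in\acf$), and $s_a = s_a p_{r(a)} s_\eword^*$ since $r(a) \in \acfrg{a}$. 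Adjoint-closure is also immediate from $(s_\alpha p_A s_\beta^*)^* = s_\beta p_A s_\alpha^*$ and $A \in \acfrg{\beta}\cap\acfrg{\alpha}$. Closedness under addition and scalar multiplication is built into the closed span. So the crux is multiplicative closure: one must show that a product $(s_\alpha p_A s_\beta^*)(s_\gamma p_B s_\delta^*)$ lies in $M$.

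The main step, then, is the product computation, and this is where the inverse semigroup structure of $S$ does the bookkeeping. First I would establish the auxiliary identity $p_A s_\mu = s_\mu p_{r(A,\mu)}$ for $\mu \in \awstar$ and $A \in \acf$, by induction on $|\mu|$ using relation (ii) and the fact that $r(A,\mu_1 \cdots \mu_k) = r(r(A,\mu_1 \cdots \mu_{k-1}), \mu_k)$; together with its adjoint $s_\mu^* p_A = p_{r(A,\mu)} s_\mu^*$. The middle piece $s_\beta^* s_\gamma$ is handled by a case analysis exactly mirroring the multiplication in $S$: if $\gamma = \beta\gamma'$, then using relation (iii) repeatedly (each $s_{a_i}^* s_{a_i} = p_{r(a_i)}$ and $s_b^* s_a = 0$ for $b \neq a$) one gets $s_\beta^* s_\gamma = s_\beta^* s_\beta s_{\gamma'} = p_{r(\beta)} s_{\gamma'} = s_{\gamma'} p_{r(\gamma)}$ up to moving projections past $s_{\gamma'}$; symmetrically if $\beta = \gamma\beta'$ one gets $s_{\beta'}^* p_{r(\beta)}$-type expression; and if $\beta,\gamma$ are not comparable, $s_\beta^* s_\gamma = 0$. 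Plugging this back, moving the intervening projections through the $s$'s using the auxiliary identity, and invoking the weakly left-resolving hypothesis to identify intersections like $r(A,\gamma')\cap B$ as the relevant range-stable member of $\acf$, one sees the product collapses to either $0$ or a single term $s_{\alpha\gamma'} p_C s_\delta^*$ (resp. $s_\alpha p_C s_{\delta\beta'}^*$) with $C \in \acf$ contained in the appropriate range, i.e.\ an element of $M$. One also checks $C$ lies in $\acfrg{\alpha\gamma'}\cap\acfrg{\delta}$ (resp.\ $\acfrg{\alpha}\cap\acfrg{\delta\beta'}$); this is where normality (closure under relative complements is not needed, but closure under relative ranges and finite intersections is) and the weakly left-resolving property guarantee well-definedness, and it matches precisely the formula for $s \cdot t$ in $S$.

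Since multiplication is only densely defined a priori, the final point is to note that the set of finite linear combinations $\mathrm{span}\{s_\alpha p_A s_\beta^*\}$ is a $*$-algebra by the above, hence its closure $M$ is a $C^*$-algebra; continuity of multiplication extends the product formula from the dense subspace to all of $M$. I expect the main obstacle to be the careful case analysis in simplifying $s_\beta^* s_\gamma$ and then correctly tracking which set $C\in\acf$ emerges and verifying it is nonzero exactly when the corresponding product in $S$ is nonzero — in other words, making the slogan ``multiplication in $C^*\lspace$ on these spanning elements is governed by multiplication in the inverse semigroup $S$, together with the relation that kills terms whose middle entry becomes $\emptyset$'' into a rigorous computation. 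Everything else (adjoints, generators lying in $M$, closure being a $C^*$-algebra) is routine.
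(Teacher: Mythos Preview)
Your approach is correct and is the standard one. The paper itself does not supply a proof of this proposition; it simply states it and writes ``For details, see \cite{MR3614028} and \cite{MR3680957}.'' So there is nothing to compare against in the paper proper --- your sketch is in fact more than what appears here. The argument you outline (show the closed span is a $*$-subalgebra containing the generators, with the only nontrivial step being the product formula $(s_\alpha p_A s_\beta^*)(s_\gamma p_B s_\delta^*)$ mirroring multiplication in the inverse semigroup $S$) is exactly the route taken in the cited references; in particular the identity $s_\beta^* s_\beta = p_{r(\beta)}$ and the case split on comparability of $\beta$ and $\gamma$ are the key ingredients there as well.

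One small remark: when you write $s_a = s_a p_{r(a)} s_\eword^*$ you are implicitly using the convention, stated just before this proposition in the paper, that $s_\eword$ is a formal placeholder for the identity (and need not belong to the algebra). With that convention your verification that the generators lie in $M$ is fine.
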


For details, see \cite{MR3614028} and \cite{MR3680957}.

\section{A partial action on the tight filters of $E(S)$} \label{s:partialaction}
In this section we define a partial action of the free group generated by $\alf$ on the tight spectrum $\ftight$ of a labelled space. Our construction is in the same spirit as that of graphs \cite{MR3539347}. However, due to the one-to-one correspondence between filters in $\ftight$ and pairs consisting of a labelled path and a family of filters (see Theorem \ref{thm.filters.in.E(S)}), there is an extra layer of complexity that needs to dealt with.

We recall the definition of a topological semi-saturated orthogonal partial action:
\begin{definition}\cite[Section 2]{MR1703078}
	A \textit{partial action} of a group $G$ on a topological space $X$ is pair  $$ \Phi=(\{V_t\}_{t\in G},\{\phi_t\}_{t\in G}) $$ 
	consisting of open sets $\{V_t\}_{t\in G}$ and homeomorphisms $\phi_t:V_{t^{-1}}\to V_t$ such that 
	\begin{enumerate}
		\item $V_{e}=V_{e^{-1}}=X$ and $\phi_e$ is the identity on $X$,
		\item $\phi_s(V_{s^{-1}}\cap V_{t})= V_s\cap V_{st}$, and 
		\item $\phi_s(\phi_t(x))=\phi_{st}(x)$ for every $x \in V_{t^{-1}}\cap V_{(st)^{-1}}$.
	\end{enumerate}
	If the partial action is given by the free group $\F$ on a set of generators, then the partial action is \textit{semi-saturated} if 
	$$ \phi_s\circ\phi_t=\phi_{st} $$  
	for every $s,t\in\F$ such that $|st|=|s|+|t|$, and \textit{orthogonal} if $V_s\cap V_t =\emptyset$ for  $s\neq t$.
\end{definition}

Fix a weakly-left resolving labelled space $\lspace$.
We begin by describing the topology on $\ftight$.  For $e\in E(S)$ define
\[ U_e=\{\xi\in\filt\ |\ e\in\xi\}. \]
If  $\{e_1,\ldots,e_n\}$ is a finite (possibly empty) set in $E(S)$, define
\[ U_{e:e_1,\ldots,e_n}= U_e\cap U_{e_1}^c\cap\cdots\cap U_{e_n}^c.\]
%
\begin{proposition}\cite[Lemma 2.22 and Lemma 2.23]{MR2974110}
	The sets $U_{e:e_1,\ldots,e_n}$ form a basis of compact-open sets for a Hausdorff topology  on $\filt$.
\end{proposition}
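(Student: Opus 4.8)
The plan is to verify directly that the proposed collection $\mathcal{U} = \{U_{e:e_1,\dots,e_n} : e,e_1,\dots,e_n \in E(S)\}$ is a basis for a topology on $\filt$, that each such set is compact-open in the resulting topology, and that the topology is Hausdorff. First I would record the behavior of the membership sets $U_e$ under the semilattice operation: since a filter $\xi$ is closed under meets and upward closed, one has $U_e \cap U_f = U_{ef}$ for $e,f \in E(S)$, and $\xi \in U_e$ with $e \le f$ forces $\xi \in U_f$. From this it follows that an intersection of two basic sets, $U_{e:e_1,\dots,e_n} \cap U_{d:d_1,\dots,d_m}$, equals $U_{ed:e_1,\dots,e_n,d_1,\dots,d_m}$, which is again a member of $\mathcal{U}$ (absorbing the empty-complement case, and noting that if $ed = 0$ then this set is empty, which we may regard as $U_{0}$ or simply exclude harmlessly). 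That $\mathcal{U}$ covers $\filt$ is immediate because every filter contains some idempotent (e.g. any element it contains), so $\mathcal{U}$ is indeed a basis.

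Next I would address compactness. The key input is an ultrafilter/Zorn-type argument: I would show that $U_{e:e_1,\dots,e_n}$, viewed as a subset of $\filt$, is compact by showing it is closed in a compact space, or more directly by a filter-of-filters argument. A cleaner route is to identify $\filt$ with a subset of the character space $\hat E_0 \subseteq \{0,1\}^{E(S)}$ carrying the product topology: the basic set $U_e$ corresponds to the clopen condition ``evaluate to $1$ at $e$'' and $U_{e_i}^c$ to ``evaluate to $0$ at $e_i$'', so each $U_{e:e_1,\dots,e_n}$ is clopen in $\{0,1\}^{E(S)}$ intersected with the set of filters; one then checks that the filter condition (upward closed, meet-closed, $0 \notin \xi$) is itself a closed condition in the product, giving that $\filt$ is compact and each basic set is clopen, hence compact-open. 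This is essentially the content cited from \cite{MR2974110}, and I would invoke that reference for the detailed verification rather than reproving it.

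For the Hausdorff property, given distinct filters $\xi \neq \eta$ I would pick, without loss of generality, an element $e \in \xi \setminus \eta$. Then $\xi \in U_e$. To separate, I want a basic open set containing $\eta$ but not $\xi$: since $\eta$ is a filter not containing $e$, and $\xi$ is a filter containing $e$, I would use that $\eta \in U_f$ for any $f \in \eta$ and aim to show $\eta \in U_{f : e}$ for a suitable $f \in \eta$ with $f \le$ some common lower bound or with $fe \notin \eta$ — in fact it suffices to take any $f\in\eta$ and note $\xi\notin U_f$ is not automatic, so instead one separates via $\xi\in U_e\cap U_f^c$? The robust argument: if $e\notin\eta$ then either there is $f\in\eta$ with $f\wedge e=0$ (disjoint case), in which case $U_f$ and $U_e$ are disjoint opens separating $\eta$ and $\xi$; or $\eta$ has no such element, and then one shows $\eta\in U_{f:e}$ for any $f\in\eta$ with $e\le f$ does not happen, so one directly has $\xi\in U_{e}$, $\eta\in U_{f}$ with $f\wedge e\notin\eta$, whence $\eta \in U_{f:f\wedge e}$ while $\xi\notin U_{f:f\wedge e}$ since $f\wedge e\in\xi$. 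The main obstacle is precisely bookkeeping this separation cleanly in the semilattice $E(S)$ with zero; I expect the cleanest presentation is to cite \cite[Lemma 2.22 and Lemma 2.23]{MR2974110} for both compactness and the Hausdorff separation, as these hold for an arbitrary semilattice with zero, and simply note that $E(S)$ is such a semilattice by the description given after Proposition \ref{thm.filters.in.E(S)}'s surrounding discussion.
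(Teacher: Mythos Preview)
The paper does not prove this proposition at all; it is stated with a citation to \cite{MR2974110} and no argument is given. Your proposal is therefore more detailed than what the paper does, and in the end you also (correctly) defer to that reference.

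That said, your sketch contains one genuine error worth flagging. You write that the filter conditions are closed in $\{0,1\}^{E(S)}$ and conclude that $\filt$ itself is compact. The conditions ``meet-preserving'', ``upward closed'', and ``$0\notin\xi$'' (i.e.\ the character sends $0$ to $0$) are indeed closed, but the requirement that a character be \emph{non-zero} (equivalently, that a filter be non-empty) is an open condition, not a closed one. Consequently $\filt$ is only locally compact in general, not compact. The correct argument is that for $e\neq 0$ the additional constraint ``value $1$ at $e$'' is closed and forces non-triviality, so $U_e$ --- and hence each $U_{e:e_1,\ldots,e_n}\subseteq U_e$ --- is a closed subset of the compact product and therefore compact. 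Your conclusion about the basic sets survives, but the route through ``$\filt$ is compact'' does not.

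Your Hausdorff argument is also more tangled than necessary. Once you have $e\in\xi\setminus\eta$, pick any $f\in\eta$; then $\xi\in U_e$ and $\eta\in U_{f:e}=U_f\cap U_e^c$, and these two basic sets are visibly disjoint. There is no need for the case analysis on whether $f\wedge e=0$.
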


\begin{corollary}\label{cor:basis.tight}
	For $e\in E(S)$ and a finite (possibly empty) set $\{e_1,\ldots,e_n\}\ \subset E(S)$, the sets
	\[  V_{e:e_1,\ldots,e_n} :=  U_{e:e_1,\ldots,e_n}\cap\ftight\] 
	form a basis of compact-open sets for a Hausdorff topology  on $\ftight$.
\end{corollary}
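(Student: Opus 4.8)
The plan is to deduce Corollary~\ref{cor:basis.tight} directly from the preceding proposition, whose content is that the sets $U_{e:e_1,\ldots,e_n}$ form a basis of compact-open sets for a Hausdorff topology on \filt. Since $\ftight$ is, by definition (see Subsection~\ref{subsection:filters.E(S)}), a closed subspace of \filt, the corollary should follow from three general topological facts: (a) the trace of a basis on a subspace is a basis for the subspace topology; (b) a subspace of a Hausdorff space is Hausdorff; and (c) a closed subset of a compact set is compact, together with the fact that each $V_{e:e_1,\ldots,e_n}=U_{e:e_1,\ldots,e_n}\cap\ftight$ is the intersection of a compact-open set of \filt\ with the closed set $\ftight$, hence closed in the compact set $U_{e:e_1,\ldots,e_n}$ and therefore compact, while also being open in $\ftight$ by (a).

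First I would recall that $\ftight = \overline{\hat{E}(S)_\infty}$ is closed in \filt, so the subspace topology on $\ftight$ is well-defined and the relative basis statement applies. Then I would observe that for each basic set, $V_{e:e_1,\ldots,e_n}$ is open in $\ftight$ because it is $U_{e:e_1,\ldots,e_n}\cap\ftight$ with $U_{e:e_1,\ldots,e_n}$ open in \filt. Next, for compactness: $U_{e:e_1,\ldots,e_n}$ is compact in \filt\ by the proposition, and $\ftight$ is closed in \filt, so $V_{e:e_1,\ldots,e_n}=U_{e:e_1,\ldots,e_n}\cap\ftight$ is a closed subset of the compact set $U_{e:e_1,\ldots,e_n}$, hence compact; compactness is absolute (does not depend on the ambient space), so $V_{e:e_1,\ldots,e_n}$ is compact in $\ftight$ as well. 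Finally, Hausdorffness of $\ftight$ is immediate since any subspace of a Hausdorff space is Hausdorff, and the fact that the $V_{e:e_1,\ldots,e_n}$ form a basis for the subspace topology on $\ftight$ is the standard fact that if $\mathcal{B}$ is a basis for $X$ then $\{B\cap Y : B\in\mathcal{B}\}$ is a basis for the subspace $Y$.

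There is essentially no hard part here: the only point requiring the tiniest bit of care is making sure that "compact-open" transfers correctly, i.e., that $V_{e:e_1,\ldots,e_n}$ is simultaneously open in $\ftight$ (from being a trace of an open set) and compact (from being closed in a compact set), and that neither property is lost in passing to the subspace. I would therefore present the proof as a short paragraph invoking these three elementary facts, with the reference to the preceding proposition and to the definition of $\ftight$ as a closed subspace of \filt\ doing all the real work.

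\begin{proof}
	Recall from Subsection~\ref{subsection:filters.E(S)} that $\ftight$ is a closed subspace of \filt. By the preceding proposition, the sets $U_{e:e_1,\ldots,e_n}$ form a basis for a Hausdorff topology on \filt, and each such set is compact-open. Since the trace of a basis on a subspace is a basis for the subspace topology, the sets $V_{e:e_1,\ldots,e_n}=U_{e:e_1,\ldots,e_n}\cap\ftight$ form a basis for the subspace topology on $\ftight$, and each $V_{e:e_1,\ldots,e_n}$ is open in $\ftight$. Moreover, as $\ftight$ is closed in \filt\ and $U_{e:e_1,\ldots,e_n}$ is compact, the set $V_{e:e_1,\ldots,e_n}=U_{e:e_1,\ldots,e_n}\cap\ftight$ is a closed subset of the compact set $U_{e:e_1,\ldots,e_n}$, hence compact; since compactness is intrinsic, $V_{e:e_1,\ldots,e_n}$ is compact as a subspace of $\ftight$. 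Finally, a subspace of a Hausdorff space is Hausdorff, so the topology on $\ftight$ is Hausdorff.
\end{proof}
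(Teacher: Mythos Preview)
Your proof is correct and follows exactly the paper's approach: the paper's proof is the single sentence ``The result follows from the fact that $\ftight$ is a closed subset of $\filt$,'' and you have simply spelled out the elementary topological facts (trace of a basis, closed-in-compact, subspace of Hausdorff) that make this work.
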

\begin{proof}
	The result follows from the fact that $\ftight$ is a closed subset of $\filt$. 
\end{proof}
In particular, we denote $V_e=U_e\cap\ftight$.

We next describe the open domains and codomains for the partial action on $\ftight$. Let $\alpha\in \awstar$. Then 
\begin{equation*}
\bigcup \{\ftight_{(\alpha)\beta}\mid \beta\in\overline{\awleinf} \text{ and } \alpha\beta\in\overline{\awleinf}\} 
\end{equation*}
is the set of all filters in $E(S)$ whose associated labelled path can be glued  to $\alpha$, and 
\begin{equation*}
\bigcup \{\ftight_{\alpha\beta}\mid \beta\in\overline{\awleinf} \text{ and } \alpha\beta\in\overline{\awleinf}\}
\end{equation*}
is the set of all filters in $E(S)$ whose associated word begins with $\alpha$. Note that if $\beta,\gamma\in\overline{\awleinf}$ and $\beta\neq\gamma$, then 	$\ftight_{\alpha\beta}\cap\ftight_{\alpha\gamma}=\emptyset$ and $\ftight_{(\alpha)\beta}\cap\ftight_{(\alpha)\gamma}=\emptyset$ for all $\alpha\in\awstar$. Hence the unions above are disjoint unions.
To simplify notation we write $\bigsqcup_{\beta}\ftight_{(\alpha)\beta}$ where it is understood that the union is taken over all $\beta\in\overline{\awleinf}$ such that $\alpha\beta\in\overline{\awleinf}$. 
Note that $\bigsqcup_{\beta} \ftight_{\eword\beta}=\bigsqcup_{\beta} \ftight_{(\eword)\beta} =\ftight$.

\begin{lemma}\label{lem:compactopenneigh}
	Fix $\alpha \in\awplus$. Then 
	\begin{enumerate}[(i)]
		\item $V_{(\alpha,r(\alpha),\alpha)}=\bigsqcup_{\beta}\ftight_{\alpha\beta}$, and 
		\item  $V_{(\eword,r(\alpha),\eword)}=\bigsqcup_{\beta}\ftight_{(\alpha)\beta}$.
	\end{enumerate}
\end{lemma}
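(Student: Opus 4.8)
The statement to prove is Lemma~\ref{lem:compactopenneigh}: for fixed $\alpha\in\awplus$,
\[
V_{(\alpha,r(\alpha),\alpha)}=\bigsqcup_{\beta}\ftight_{\alpha\beta}
\qquad\text{and}\qquad
V_{(\eword,r(\alpha),\eword)}=\bigsqcup_{\beta}\ftight_{(\alpha)\beta}.
\]
Both equalities are set equalities, so I would prove each by a double inclusion, using Remark~\ref{remark.when.in.xialpha} as the key tool for deciding membership of an idempotent $(\beta,A,\beta)$ in a filter $\xi^{\delta}$.

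\textbf{Part (i).} By definition $V_{(\alpha,r(\alpha),\alpha)}=\{\xi\in\ftight\mid (\alpha,r(\alpha),\alpha)\in\xi\}$. Take $\xi=\xi^{\delta}\in\ftight$ with $(\alpha,r(\alpha),\alpha)\in\xi^\delta$. By Remark~\ref{remark.when.in.xialpha}, this forces $\alpha$ to be a beginning of $\delta$, i.e. $\delta=\alpha\beta$ for some $\beta\in\overline{\awleinf}$ with $\alpha\beta\in\overline{\awleinf}$; equivalently $\xi\in\ftight_{\alpha\beta}$, so $\xi\in\bigsqcup_{\beta}\ftight_{\alpha\beta}$. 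Conversely, if $\xi=\xi^{\alpha\beta}\in\ftight_{\alpha\beta}$, then since $\alpha$ is a beginning of $\alpha\beta$, Remark~\ref{remark.when.in.xialpha} reduces the claim $(\alpha,r(\alpha),\alpha)\in\xi^{\alpha\beta}$ to checking $r(\alpha)\in\xi_{|\alpha|}$. But $\xi_{|\alpha|}$ is a filter in $\acfrg{\alpha}=\acf\cap\powerset{r(\alpha)}$, and $r(\alpha)$ is the top element of this (generalized) Boolean algebra, hence belongs to every filter in it. Thus $(\alpha,r(\alpha),\alpha)\in\xi$, giving the reverse inclusion. The union on the right is disjoint by the remark recorded just before the lemma ($\ftight_{\alpha\beta}\cap\ftight_{\alpha\gamma}=\emptyset$ for $\beta\neq\gamma$).

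\textbf{Part (ii).} Here $V_{(\eword,r(\alpha),\eword)}=\{\xi\in\ftight\mid (\eword,r(\alpha),\eword)\in\xi\}$, and by \eqref{eq.defines.xi_n} together with Remark~\ref{remark.when.in.xialpha}, $(\eword,r(\alpha),\eword)\in\xi^{\delta}$ iff $\eword$ is a beginning of $\delta$ (always true) and $r(\alpha)\in\xi_0$. So I must show $r(\alpha)\in\xi_0$ is equivalent to $\xi\in\ftight_{(\alpha)\delta'}$ for some decomposition; recall $\ftightw{(\alpha)\beta}=\{\xi\in\ftightw{\beta}\mid \xi_0\in X_{(\alpha)\eword}\}$ and $X_{(\alpha)\eword}=\{\ft\in X_\eword\mid r(\alpha)\in\ft\}$ (the $\beta=\eword$ case of the definition of $X_{(\alpha)\beta}$). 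Thus for $\xi\in\ftightw{\delta}$ we have $\xi\in\ftightw{(\alpha)\delta}$ precisely when $\xi_0$ is a filter containing $r(\alpha)$ — note $\xi_0$ cannot be empty here, since $r(\alpha)\in\xi_0$. Forward: if $(\eword,r(\alpha),\eword)\in\xi^{\delta}$ then $r(\alpha)\in\xi_0$, so $\xi_0\in X_{(\alpha)\eword}$ and $\xi\in\ftight_{(\alpha)\delta}$; one checks $\alpha\delta\in\overline{\awleinf}$ exactly as in the Remark following Theorem~\ref{thm.H-class.G-class.inverses} (the relative ranges $r(\alpha\delta_{1,n})=r(r(\alpha),\delta_{1,n})$ lie in the filters $\xi_n$, hence are non-empty). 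Conversely if $\xi\in\ftight_{(\alpha)\beta}$ for some admissible $\beta$, then by definition $\xi_0\in X_{(\alpha)\eword}$, i.e. $r(\alpha)\in\xi_0$, so $(\eword,r(\alpha),\eword)\in\xi$. Disjointness of the right-hand union is again the remark before the lemma.

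\textbf{Main obstacle.} The only genuinely delicate point is the bookkeeping in Part~(ii): one must be careful that $\xi_0$ is a bona fide filter (not the empty set) whenever $r(\alpha)\in\xi_0$ — this is automatic since a filter-or-empty family member containing a non-empty set is a filter — and that the condition "$\alpha\delta\in\overline{\awleinf}$" is not an extra hypothesis but is forced, which is exactly the content of the Remark after Theorem~\ref{thm.H-class.G-class.inverses}. Everything else is a routine unwinding of definitions via Remark~\ref{remark.when.in.xialpha} and the fact that $r(\alpha)$ is the maximum of $\acfrg{\alpha}$.
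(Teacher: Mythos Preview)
Your proof is correct and follows essentially the same approach as the paper's: both parts are double inclusions driven by Remark~\ref{remark.when.in.xialpha}, using that $r(\alpha)$ is the top element of $\acfrg{\alpha}$ for part~(i) and the definition of $X_{(\alpha)\eword}$ for part~(ii). One small correction: the Remark you invoke for the composability of $\alpha$ and $\delta$ actually \emph{precedes} Theorem~\ref{thm.H-class.G-class.inverses} rather than following it (and the paper in fact re-derives that argument inline rather than citing the Remark), but the content is identical.
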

\begin{proof}
	(i) Assume $\xi\in \bigsqcup_{\beta}\ftight_{\alpha\beta}$ and $|\alpha|=n$. Then there is a $\beta\in\overline{\awleinf}$ such that $\alpha\beta\in\overline{\awleinf}$ and $\xi=\xi^{\alpha\beta}$ with associated with a pair $(\alpha\beta, \{\xi^{\alpha\beta}_i\}_{i\geq0})$. Since $r(\alpha_{1,n})$ is contained in any filter $\mathcal{F}_n\subset\acfrg{\alpha_{1,n}}$, it follows from  Remark \ref{remark.when.in.xialpha}  that 
	$$(\alpha,r(\alpha),\alpha)=(\alpha_{1,n},r(\alpha_{1,n}),\alpha_{1,n})\in\xi^{\alpha\beta}.$$
	Hence $\xi=\xi^{\alpha\beta}\in V_{(\alpha,r(\alpha),\alpha)}$. 
	
	Now suppose that $\xi\in V_{(\alpha,r(\alpha),\alpha)}$ and $\xi=\xi^{\gamma}$ for some $\gamma\in \overline{\awleinf}$. Then 
	$$(\alpha,r(\alpha),\alpha)=(\alpha_{1,n},r(\alpha_{1,n}),\alpha_{1,n})\in\xi^{\gamma},$$
	and it follows from Remark \ref{remark.when.in.xialpha} that $\gamma =\alpha\beta$ for some $\beta\in\overline{\awleinf}$. Thus $\xi\in\bigsqcup_{\beta}\ftight_{\alpha\beta}$.
	
	(ii) Suppose $\xi^{\beta}\in V_{(\eword,r(\alpha),\eword)}$ and let $\{\xi_n^{\beta}\}_{n\geq 0}$ be the complete family of filters for $\beta$ associated with $\xi^{\beta}$. Then 
	$$ (\eword,r(\alpha),\eword)=(\beta_{1,0},r(\alpha),\beta_{1,0})\in \xi^{\beta},$$
	which implies that 
	$$r(\alpha)\in\xi_0^{\beta}=\{A\in\acf_{\beta_{1,0}}\mid r(A,\beta_{1,m})\in\xi^{\beta}_{m}\},$$
	for all $m>0$. 
	Therefore $r(r(\alpha),\beta_m)\in \xi^{\beta}_{m}$ for all $m>0$. Since $\xi_m$ is a filter it is non-empty. Thus $r(r(\alpha),\beta_m)\neq \emptyset$, which shows that $\alpha\beta\in\overline{\awleinf}$. 	
	Hence,  if $r(\alpha)\in\xi_0^{\beta}$ then  $\xi_0^{\beta}\in X_{(\alpha)\eword}$, and thus $\xi^{\beta}\in \ftight_{(\alpha)\beta}$. Hence $V_{(\eword,r(\alpha),\eword)}\subseteq\bigsqcup_{\beta}\ftight_{(\alpha)\beta}$. 
	
	To see that $\bigsqcup_{\beta}\ftight_{(\alpha)\beta}\subseteq V_{(\eword,r(\alpha),\eword)}$, suppose that $\xi^{\beta}\in \ftight_{(\alpha)\beta}$ for some $\beta\in\overline{\awleinf}$. Then 
	$\xi_0^{\beta}\in X_{(\alpha)\eword}$, which implies that $r(\alpha)\in\xi_0^{\beta}$. Therefore
	$(\beta_{1,0},r(\alpha),\beta_{1,0})\in \xi^{\beta}$, and thus   $\xi^{\beta}\in V_{(\eword,r(\alpha),\eword)}$, completing the proof.
\end{proof}

Next we define a partial action of the free group $\mathbb{F}$ generated by $\alfg{A}$ (identifying the identity of $\mathbb{F}$ with $\eword$) on $\ftight$. 
Let $a\in \alfg{A}$. Put
\begin{equation*}
\begin{split}
&V_\eword =\ftight, \\
&V_a:=V_{(a,r(a),a)}, \text{ and }  \\
&V_{a^{-1}}:= V_{(w,r(a),w)}.
\end{split}
\end{equation*}
Define 
$\phi_a:V_{a^{-1}}\to V_a$ by   
\begin{equation} \label{partialactionDFN1}
\phi_{a} |_{\ftight_{(a)\beta}}= G_{(a)\beta},
\end{equation}
and $\phi_{a}^{-1}:V_{a}\to V_{a^{-1}}$ by 
\begin{equation} \label{partialactionDFN2}
\phi_{a}^{-1}|_{\ftight_{a\beta}}= H_{[a]\beta},
\end{equation}
where $G$ and $H$ are the gluing and cutting maps defined in Section \ref{subsection.filter.surgery}.
For the empty word we define $\phi_w$ to be the identity map  $\mathrm{id}_{\ftight}$ on $\ftight$. 
Let $t\in\mathbb{F}$ and suppose that $t=a_n\cdots a_1$ is the reduced form of $t$, with each $a_i\in \alfg{A}\cup\alfg{A}^{-1}$ and $n\geq 2$. We extend the definitions above to  $\phi_t$ inductively as follows: let 
\begin{equation}\label{e:toppartialactionextend}
\begin{split}
& V_{t^{-1}}=V_{(a_n\cdots a_1)^{-1}}=\phi^{-1}_{a_{n-1}\cdots a_1}(V_{a_n^{-1}}),\text{ and } \\
&  \phi_{t}(\xi)=\phi_{a_n\cdots a_1}(\xi)=\phi_{a_n}(\phi_{a_{n-1}\cdots a_1}(\xi)), 
\end{split}
\end{equation} 
for  $\xi\in V_{(a_n\cdots a_1)^{-1}}$.
\begin{remark}\label{rem:partialmapshomeomorphisms}
	Since $G_{(a)\beta}$ and $H_{[a]\beta}$ are inverses of each other (Theorem \ref{thm.H-class.G-class.inverses}), if  $a\in\alfg{A}$ then $\phi_a$ and $\phi_{a^{-1}}$ are bijections and $\phi_a^{-1}=\phi_{a^{-1}}$. Suppose $\alpha\beta\in \F$ is in reduced form and $\alpha=a_1\cdots a_m$ and $\beta=b_1\cdots b_n$ with each $a_i,b_j\in \alfg{A}\cup\alfg{A}^{-1}$.  Then by Equation \ref{e:toppartialactionextend} we have that
	$$\phi_{\alpha\beta}=\phi_{a_1}\circ\cdots \circ\phi_{a_m}\circ\phi_{b_1}\circ\cdots \circ\phi_{b_n}= \phi_{\alpha}\circ\phi_{\beta}$$ 
	on the appropriate domain $V_{(\alpha\beta)^{-1}}$. Moreover, note that $V_{(\alpha\beta)^{-1}}=\phi_{\beta}^{-1}(V_{\alpha^{-1}})\scj V_{\beta^{-1}}.$
\end{remark}
In Proposition \ref{prop:partialtopaction}, we show that the maps defined in (\ref{partialactionDFN1}), (\ref{partialactionDFN2}) and (\ref{e:toppartialactionextend}) give a partial action of $\mathbb{F}$ on $\ftight$. For this we need the following lemmas.

\begin{lemma}\label{lem:domainpart1}
	 If $\alpha\in \awplus$, then
	$V_{\alpha^{-1}}=V_{(\eword,r(\alpha),\eword)}$,  $V_\alpha=V_{(\alpha,r(\alpha),\alpha)}$ and 
	$ \phi_\alpha(\xi^{\beta})= G_{(\alpha)\beta}(\xi^{\beta})$ for every $\xi^{\beta}\in V_{\alpha^{-1}}$.
\end{lemma}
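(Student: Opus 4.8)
The plan is to prove Lemma \ref{lem:domainpart1} by induction on the length $|\alpha|$ of the reduced word $\alpha\in\awplus$, treating $\alpha$ here as an element of the free group $\F$ whose reduced form happens to lie in $\awplus$ (so all letters are in $\alfg{A}$, none inverted). The base case $|\alpha|=1$ is immediate: if $\alpha=a\in\alfg{A}$, then by definition $V_a=V_{(a,r(a),a)}$ and $V_{a^{-1}}=V_{(\eword,r(a),\eword)}$, and $\phi_a|_{\ftightw{(a)\beta}}=G_{(a)\beta}$ is exactly (\ref{partialactionDFN1}), so the statement holds verbatim.

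For the inductive step, write $\alpha=a\alpha'$ with $a\in\alfg{A}$ and $\alpha'\in\awplus$ of strictly smaller length, so that $|\alpha|=|a|+|\alpha'|$ as a reduced word in $\F$. First I would compute $V_{\alpha^{-1}}$: by (\ref{e:toppartialactionextend}), $V_{\alpha^{-1}}=\phi_{\alpha'}^{-1}(V_{a^{-1}})$, and by the inductive hypothesis $\phi_{\alpha'}=G_{(\alpha')\,\cdot}$ on its domain $V_{(\eword,r(\alpha'),\eword)}=\bigsqcup_\beta\ftightw{(\alpha')\beta}$ (using Lemma \ref{lem:compactopenneigh}(ii)) and $V_{a^{-1}}=V_{(\eword,r(a),\eword)}=\bigsqcup_\gamma\ftightw{(a)\gamma}$. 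So I need to identify $\{\xi\in\bigsqcup_\beta\ftightw{(\alpha')\beta} : G_{(\alpha')\beta}(\xi)\in\bigsqcup_\gamma\ftightw{(a)\gamma}\}$. A filter of the form $G_{(\alpha')\beta}(\xi)$ lives in $\ftightw{\alpha'\beta}$, and it lies in $\ftightw{(a)\,\cdot}$ precisely when $r(a\alpha'\beta)$ belongs to its zeroth filter — equivalently, unwinding the formula for $G$ and the definition of a complete family, when $r(a\alpha')=r(r(a),\alpha')\in (G_{(\alpha')\beta}(\xi))_{|\alpha'|}=g_{(\alpha')\beta_{1,0}}(\xi_0)$, which by (\ref{eq:def.g}) and weak left-resolving translates back to $r(\alpha)=r(a\alpha')\in\xi_0$. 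Thus $V_{\alpha^{-1}}=\{\xi\in\bigsqcup_\beta\ftightw{(\alpha')\beta} : r(\alpha)\in\xi_0\}=\bigsqcup_\beta\ftightw{(\alpha)\beta}=V_{(\eword,r(\alpha),\eword)}$ by Lemma \ref{lem:compactopenneigh}(ii) again. For the formula $\phi_\alpha=G_{(\alpha)\,\cdot}$, apply (\ref{e:toppartialactionextend}) and Remark \ref{rem:partialmapshomeomorphisms}: $\phi_\alpha=\phi_a\circ\phi_{\alpha'}=G_{(a)\,\cdot}\circ G_{(\alpha')\,\cdot}$, and Theorem \ref{thm.compositions.G.and.H} gives $G_{(a)\alpha'\beta}\circ G_{(\alpha')\beta}=G_{(a\alpha')\beta}=G_{(\alpha)\beta}$ on the relevant domain; one also checks $V_\alpha=\phi_\alpha(V_{\alpha^{-1}})=G_{(\alpha)\,\cdot}(\bigsqcup_\beta\ftightw{(\alpha)\beta})=\bigsqcup_\beta\ftightw{\alpha\beta}=V_{(\alpha,r(\alpha),\alpha)}$ using Theorem \ref{thm.H-class.G-class.inverses} and Lemma \ref{lem:compactopenneigh}(i).

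The main obstacle I anticipate is the careful bookkeeping in the first part of the inductive step: verifying that the condition "$G_{(\alpha')\beta}(\xi)$ has associated word beginning with $a$" is equivalent to "$r(\alpha)\in\xi_0$". This requires chasing the explicit formulas (\ref{eq:def.g}), (\ref{eq:def.f}) for the gluing/cutting of complete families through the definition of a complete family, and using the weakly-left-resolving hypothesis to commute relative ranges past intersections (so that $r(r(a),\alpha')\in\{C\cap r(\alpha'\beta_{1,0}) : C\in\xi_0\}$ iff $r(\alpha)\in\xi_0$). Everything else — the composition identities and the translations to the basic-open-set description — is then a direct appeal to Theorems \ref{thm.H-class.G-class.inverses}, \ref{thm.compositions.G.and.H} and Lemma \ref{lem:compactopenneigh}, together with Remark \ref{rem:partialmapshomeomorphisms}. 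I would also note explicitly at the start that since $\alpha$ is a genuine labelled path its reduced form in $\F$ has no cancellations, so that length is additive under the decomposition $\alpha=a\alpha'$ and Remark \ref{rem:partialmapshomeomorphisms} applies.
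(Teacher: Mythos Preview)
Your proposal is correct and follows the same overall strategy as the paper: induction on $|\alpha|$, with the base case given by the definitions (\ref{partialactionDFN1})--(\ref{partialactionDFN2}), and the inductive step reduced via Lemma~\ref{lem:compactopenneigh} and Theorems~\ref{thm.H-class.G-class.inverses} and~\ref{thm.compositions.G.and.H} to an identification of the relevant disjoint unions of $\ftightw{(\alpha)\beta}$ and $\ftightw{\alpha\beta}$.

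The one difference worth noting is the choice of inductive decomposition. You write $\alpha=a\alpha'$, apply the induction hypothesis to $\phi_{\alpha'}$, and then must verify that $G_{(\alpha')\beta}(\xi)\in V_{a^{-1}}$ is equivalent to $r(\alpha)\in\xi_0$; this is the bookkeeping you flag, and it does require tracing through the formulas for $g$, $f$ and complete families (your claim that $(G_{(\alpha')\beta}(\xi))_{|\alpha'|}=g_{(\alpha')\eword}(\xi_0)$ is true but is not literally the defining formula when $\beta\neq\eword$, so a short coherence argument using weak left-resolving is needed there). The paper instead writes $V_{\alpha^{-1}}=\phi_{a_1^{-1}}\bigl(V_{(a_{k+1}\cdots a_2)^{-1}}\bigr)$, applies the induction hypothesis to the length-$k$ word $a_{k+1}\cdots a_2$, and then computes entirely with the cutting maps $H$ via the composition identity $H_{[a_1]\beta}\circ H_{[a_{k+1}\cdots a_2]a_1\beta}=H_{[a_{k+1}\cdots a_1]\beta}$ from Theorem~\ref{thm.compositions.G.and.H}. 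This sidesteps your bookkeeping step entirely: no unpacking of the gluing formula or the complete-family condition is needed. Both routes are valid; the paper's is a bit shorter because the $H$-composition identity does all the work at once.
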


\begin{proof}
	If $\alpha\in \awplus$, then $\alpha=a_n\cdots a_1$ with $a_i\in \alf$ and $1\leq n< \infty$. By Lemma \ref{lem:compactopenneigh}, to show that   $V_{\alpha^{-1}}=V_{(\eword,r(\alpha),\eword)}$ it would suffice to show that $V_{\alpha^{-1}}=\bigsqcup_{\beta}\ftight_{(\alpha)\beta}$ for all  $1\leq n <\infty$. We show this by induction. If $n=1$, then $\alpha=a_1$ is a letter in $\alf$, and $V_{a_1^{-1}}=V_{(\eword,r(a_1),\eword)}$ by definition. Suppose the statement is true for $n=k$, that is, 
	$$ V_{(a_k\cdots a_1)^{-1}} =\bigsqcup_{\gamma}\ftight_{(a_k\cdots a_1)\gamma}. $$
	We show that the statement holds for $n=k+1$: 
	\begin{eqnarray*}
		V_{(a_{k+1}\cdots a_1)^{-1}} &=& \phi_{a_{k}\cdots a_1}^{-1}(V_{a_{k+1}^{-1}})   \\
		&=& \phi_{a_1^{-1}}\left( \phi_{a_{k}\cdots a_2}^{-1}(V_{a_{k+1}^{-1}})\right) \\
		&=&  \phi_{a_1^{-1}}\left(V_{(a_{k+1}\cdots a_2)^{-1}} \right) \\
		&=&  \phi_{a_1^{-1}}\left(  \bigsqcup_{\gamma}\ftight_{(a_{k+1}\cdots a_2)\gamma}\right) \\
		&=& \phi_{a_1^{-1}}\left(  \bigsqcup_{\gamma} H_{[a_{k+1}\cdots a_2]\gamma} \left(\ftight_{a_{k+1}\cdots a_2\gamma}\right)\right)  \\	
		&=&  \bigsqcup_{\beta} H_{[a_1]\beta}\circ H_{[a_{k+1}\cdots a_2]a_1\beta} \left(\ftight_{a_{k+1}\cdots a_2a_1\beta}\right)  \\
		&=&  \bigsqcup_{\beta} H_{[a_{k+1}\cdots a_1]\beta} \left(\ftight_{a_{k+1}\cdots a_1\beta}\right)  \\
		&=&  \bigsqcup_{\beta} \ftight_{(a_{k+1}\cdots a_1)\beta},
	\end{eqnarray*} 
	where the fourth and sixth equalities follow from the induction hypothesis and Theorem \ref{thm.compositions.G.and.H}, respectively. 
	Hence $V_{\alpha^{-1}}=V_{(\eword,r(\alpha),\eword)}$ for all  $1\leq n <\infty$.  A similar induction argument as above together with Theorem \ref{thm.compositions.G.and.H} shows that $V_{\alpha}=\bigsqcup_{\beta}\ftight_{\alpha\beta}$, and thus by Lemma 
	\ref{lem:compactopenneigh}, we have that  $V_\alpha=V_{(\alpha,r(\alpha),\alpha)}$. 
	
	Let  $\xi^{\beta}\in V_{\alpha^{-1}}=\bigsqcup_{\beta}\ftight_{(\alpha)\beta}$. Then, by Theorem \ref{thm.compositions.G.and.H} and Remark \ref{rem:partialmapshomeomorphisms}, $ \phi_\alpha(\xi^{\beta})=G_{(\alpha)\beta}(\xi^{\beta})$, completing the proof.
\end{proof}

\begin{lemma}\label{lem:intersectionproperties}
	Let $\alpha,\beta\in\awplus$ and suppose that $\alpha\beta^{-1}\in \mathbb{F}$ is in reduced form. Then the following are equivalent:
	\begin{enumerate}
		\item[(i)] $r(\alpha)\cap r(\beta)\neq\emptyset$,
		\item[(ii)] $V_{\alpha^{-1}}\cap V_{\beta^{-1}}  \neq\emptyset$, \text{ and}
		\item[(iii)]  $V_{(\alpha\beta^{-1})^{-1}}\neq\emptyset$.
	\end{enumerate}
\end{lemma}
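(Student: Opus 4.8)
The plan is to prove the cycle of implications $(i)\Rightarrow(iii)\Rightarrow(ii)\Rightarrow(i)$, using the identification of the relevant basic open sets with the ``cut'' and ``glue'' sets provided by Lemma~\ref{lem:compactopenneigh} and Lemma~\ref{lem:domainpart1}, together with the existence of ultrafilters from Lemma~\ref{lemma:ultrafilter} and the behaviour of complete families.

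\textbf{$(i)\Rightarrow(iii)$.} Suppose $\emptyset\neq A:=r(\alpha)\cap r(\beta)\in\acf$. By Lemma~\ref{lemma:ultrafilter} there is an ultrafilter $\ft$ in $\acf$ with $A\in\ft$, hence $r(\alpha)\in\ft$ and $r(\beta)\in\ft$. Such an ultrafilter determines a finite-type tight filter $\xi^{\eword}\in\ftight_\eword$ with $\xi_0=\ft$ (it is tight by Theorem~\ref{thm:TightFiltersType}, since it is the filter of finite type associated with the word $\eword$ and an ultrafilter in $\acf=\acfrg{\eword}$; here one should check that the side conditions (a)/(b) are automatic for the empty word, or else invoke directly that $\hat{E}_\infty\subseteq\hat E_{tight}$ applied to the ultrafilter, depending on how $\ftight_\eword$ was characterized). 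Since $r(\beta)\in\xi_0$, Lemma~\ref{lem:domainpart1} gives $\xi^{\eword}\in V_{(\eword,r(\beta),\eword)}=V_{\beta^{-1}}$, so $\xi^{\eword}\in\bigsqcup_\gamma\ftight_{(\beta)\gamma}$; say $\xi^{\eword}\in\ftight_{(\beta)\eword}$. Applying the gluing map $\phi_\beta=G_{(\beta)\eword}$ we obtain $\eta:=\phi_\beta(\xi^{\eword})\in V_\beta=V_{(\beta,r(\beta),\beta)}$, i.e.\ $\eta\in\bigsqcup_\gamma\ftight_{\beta\gamma}$, and moreover $r(\alpha)\in\xi_0$ is transported under gluing: by the formula \eqref{eq:def.g} defining $g_{(\beta)\eword}$ (and the definition of $G_{(\beta)\eword}$ on the complete family), $\eta_{|\beta|}$ is obtained from $\xi_0$, so one checks that $r(\alpha)\cap r(\beta)\in\eta_{|\beta|}$; since $\alpha\beta^{-1}$ is reduced, $\beta$ is not a beginning of $\alpha$ and one argues that $\eta$ lies in the domain of $\phi_{\alpha\beta^{-1}}$. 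Concretely, $V_{(\alpha\beta^{-1})^{-1}}=\phi_\beta^{-1}(V_{\alpha^{-1}})$ by \eqref{e:toppartialactionextend} (using that $(\alpha\beta^{-1})^{-1}=\beta\alpha^{-1}$ with reduced form read off the letters of $\beta$ followed by those of $\alpha^{-1}$, and Remark~\ref{rem:partialmapshomeomorphisms}), so it suffices to produce $\zeta\in V_{\alpha^{-1}}\cap V_\beta$; the filter $\eta$ above works once we confirm $r(\alpha)\in\eta$, which holds because $r(\alpha)=r(\alpha)\cap r(\beta)\cap r(\beta)\subseteq\ldots$ — more precisely $\eta\in V_{(\eword,r(\alpha),\eword)}$ iff $r(\alpha)\in\eta_0$, and $\eta_0$ is recovered from $\eta_{|\beta|}$ via $f$, so $r(\alpha)\in\eta_0$ follows from $r(\alpha)\cap r(\beta)\in\eta_{|\beta|}$ and weak left-resolvingness. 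Hence $V_{(\alpha\beta^{-1})^{-1}}\neq\emptyset$.

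\textbf{$(iii)\Rightarrow(ii)$.} This is immediate from the containment noted in Remark~\ref{rem:partialmapshomeomorphisms}: $V_{(\alpha\beta^{-1})^{-1}}=\phi_\beta^{-1}(V_{\alpha^{-1}})\subseteq V_{\beta^{-1}}$, and by construction $\phi_\beta^{-1}(V_{\alpha^{-1}})\subseteq$ the domain $V_{\alpha^{-1}}$ is hit by applying $\phi_\beta$; spelled out, any $\xi\in V_{(\alpha\beta^{-1})^{-1}}$ satisfies $\xi\in V_{\beta^{-1}}$ and $\phi_\beta(\xi)\in V_{\alpha^{-1}}$, but we want a single filter in $V_{\alpha^{-1}}\cap V_{\beta^{-1}}$. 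Instead, take $\xi\in V_{(\alpha\beta^{-1})^{-1}}$: then $\xi\in V_{\beta^{-1}}=V_{(\eword,r(\beta),\eword)}$, so $r(\beta)\in\xi_0$; and $\phi_\beta(\xi)\in V_{\alpha^{-1}}=V_{(\eword,r(\alpha),\eword)}$ forces (pulling back through the explicit glue formula, as in the previous paragraph) $r(\alpha)\cap r(\beta)\in\xi_0$, hence $r(\alpha)\in\xi_0$. Thus $\xi\in V_{(\eword,r(\alpha),\eword)}\cap V_{(\eword,r(\beta),\eword)}=V_{\alpha^{-1}}\cap V_{\beta^{-1}}$, which is therefore nonempty.

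\textbf{$(ii)\Rightarrow(i)$.} If $\xi\in V_{\alpha^{-1}}\cap V_{\beta^{-1}}=V_{(\eword,r(\alpha),\eword)}\cap V_{(\eword,r(\beta),\eword)}$ (Lemma~\ref{lem:domainpart1}), then $(\eword,r(\alpha),\eword)\in\xi$ and $(\eword,r(\beta),\eword)\in\xi$, so both $r(\alpha)$ and $r(\beta)$ lie in the filter $\xi_0$ in $\acf$. Since $\xi_0$ is a filter it is closed under finite meets and $0\notin\xi_0$, whence $r(\alpha)\cap r(\beta)=r(\alpha)\wedge r(\beta)\in\xi_0$ and in particular $r(\alpha)\cap r(\beta)\neq\emptyset$. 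This gives $(i)$.

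\textbf{Main obstacle.} The routine directions are $(ii)\Rightarrow(i)$ and $(iii)\Rightarrow(ii)$. The real work is $(i)\Rightarrow(iii)$: one must (a) manufacture an honest \emph{tight} filter containing both $r(\alpha)$ and $r(\beta)$ — for which the cleanest route is to start from an ultrafilter in $\acf$ via Lemma~\ref{lemma:ultrafilter} and invoke Theorem~\ref{thm:TightFiltersType} to know the resulting finite-type filter on the empty word is tight — and (b) track the sets $r(\alpha),r(\beta)$ through the gluing map $G_{(\beta)\eword}$ and the domain description $V_{(\alpha\beta^{-1})^{-1}}=\phi_\beta^{-1}(V_{\alpha^{-1}})$, using weak left-resolvingness to see that membership of $r(\alpha)\cap r(\beta)$ is preserved. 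The place to be careful is the reducedness hypothesis on $\alpha\beta^{-1}$: it guarantees neither of $\alpha,\beta$ is a beginning of the other, which is exactly what is needed for the composite map $\phi_{\alpha\beta^{-1}}$ to be defined via \eqref{e:toppartialactionextend} without collapsing, and for the domain identity to read off correctly. I would lean on Lemma~\ref{lem:domainpart1} and Remark~\ref{rem:partialmapshomeomorphisms} to keep the bookkeeping minimal rather than re-deriving the glue/cut formulas.
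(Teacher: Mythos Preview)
Your implication $(ii)\Rightarrow(i)$ is fine and matches the paper. The other two steps have problems.

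\medskip

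\textbf{The gap in $(i)\Rightarrow(iii)$.} You try to build a tight filter $\xi^{\eword}\in\ftight_{\eword}$ from an ultrafilter $\ft$ in $\acf$ containing $r(\alpha)\cap r(\beta)$. This need not exist. By Theorem~\ref{thm:TightFiltersType}(ii), a finite-type filter with word $\eword$ and $\xi_0=\ft$ is tight only if every $A\in\ft$ satisfies one of the side conditions (a)/(b); these are \emph{not} automatic for the empty word (e.g.\ if $\ft$ contains a regular set). Your fallback ``invoke $\hat E_\infty\subseteq\hat E_{tight}$'' gives an ultrafilter in $E(S)$ containing $(\eword,r(\alpha)\cap r(\beta),\eword)$, which is indeed tight, but its associated word is in general \emph{not} $\eword$, so your gluing via $G_{(\beta)\eword}$ is not applicable to it. The paper avoids this entirely by proving $(i)\Rightarrow(ii)\Rightarrow(iii)$: take any ultrafilter $\xi$ in $E(S)$ with $(\eword,r(\alpha)\cap r(\beta),\eword)\in\xi$; then $\xi\in V_{\alpha^{-1}}\cap V_{\beta^{-1}}$ regardless of its associated word, and $\phi_\beta(\xi)\in V_{(\alpha\beta^{-1})^{-1}}$. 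No control over the word is needed.

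\medskip

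\textbf{The bookkeeping error in $(iii)\Rightarrow(ii)$.} Your identity $V_{(\alpha\beta^{-1})^{-1}}=\phi_\beta^{-1}(V_{\alpha^{-1}})\subseteq V_{\beta^{-1}}$ is wrong. Writing $\alpha\beta^{-1}=s t$ with $s=\alpha$, $t=\beta^{-1}$ (reduced), Remark~\ref{rem:partialmapshomeomorphisms} gives
\[
V_{(\alpha\beta^{-1})^{-1}}=\phi_{\beta^{-1}}^{-1}(V_{\alpha^{-1}})\subseteq V_{(\beta^{-1})^{-1}}=V_\beta,
\]
not $V_{\beta^{-1}}$. So a point $\xi\in V_{(\alpha\beta^{-1})^{-1}}$ has associated word beginning with $\beta$, and $r(\beta)\in\xi_0$ is not what you get. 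The correct move (and the paper's) is to apply $\phi_{\beta^{-1}}$ to such a $\xi$: then $\phi_{\beta^{-1}}(\xi)\in V_{\alpha^{-1}}$ by the displayed identity, and $\phi_{\beta^{-1}}(\xi)\in V_{\beta^{-1}}$ since $\xi\in V_\beta$, so $\phi_{\beta^{-1}}(\xi)\in V_{\alpha^{-1}}\cap V_{\beta^{-1}}$. Your attempt to show that $\xi$ itself lies in $V_{\alpha^{-1}}\cap V_{\beta^{-1}}$ cannot work in general.
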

\begin{proof}
	We first show the equivalence of (i) and (ii), and then that (ii) is equivalent to (iii) by using  the description of $V_{\alpha^{-1}}$ given in Lemma \ref{lem:domainpart1}.
	
	(i)$\Rightarrow$(ii): Assume $r(\alpha)\cap r(\beta)\neq\emptyset$. Then $(\eword,r(\alpha)\cap r(\beta),\eword)\in E(S)$. By Zorn's lemma there is an ultra filter $\xi\in \ftight$ such that $(\eword,r(\alpha)\cap r(\beta),\eword)\in\xi$. If $\ft\subset \acfrg{\alpha}\cap\acfrg{\beta}$ is any filter such that $r(\alpha)\cap r(\beta)\in\ft$, then $r(\alpha)\in\ft$ and $r(\beta)\in\ft$, since $r(\alpha)\cap r(\beta)\subset r(\alpha)$ and $r(\alpha)\cap r(\beta)\subset r(\beta)$.
	Hence $(\eword,r(\alpha),\eword)\in\xi$ and $(\eword, r(\beta),\eword)\in\xi$, implying that $\xi\in V_{\alpha^{-1}}\cap V_{\beta^{-1}}$.

	(ii)$\Rightarrow$(i): Conversely assume that $V_{\alpha^{-1}}\cap V_{\beta^{-1}}  \neq\emptyset$.
	Then there is a filter $\xi\in \ftight$ such that 
	$(\eword,r(\alpha),\eword)\in\xi$ and $(\eword,r(\beta),\eword)\in\xi$. Since $E(S)$ is a semilattice, it follows that  
	$(\eword,r(\alpha)\cap r(\beta),\eword)\in\xi$, and that $r(\alpha)\cap r(\beta)\neq\emptyset$.
	
	(ii)$\Rightarrow$(iii): By Remark \ref{rem:partialmapshomeomorphisms}, $V_{(\alpha\beta^{-1})^{-1}}=\phi^{-1}_{\beta^{-1}}(V_{\alpha^{-1}})$. If $\xi\in V_{\alpha^{-1}}\cap V_{\beta^{-1}}$, then $\phi_{\beta}(\xi)$ is such that $\phi_{\beta^{-1}}(\phi_{\beta}(\xi))=\xi\in V_{\alpha^{-1}}$, that is, $\phi_{\beta}(\xi)\in V_{(\alpha\beta^{-1})^{-1}}$.
	
	(iii)$\Rightarrow$(ii): For $\xi\in V_{(\alpha\beta^{-1})^{-1}}$, we have that $\phi_{\beta^{-1}}\in V_{\alpha^{-1}}\cap V_{\beta^{-1}}$.

\end{proof}

\begin{lemma}\label{lem:domainsofreducedwords}
	Let  $t\in\F$ be in reduced form. Then $V_t$ and $\phi_t$ as defined in (\ref{e:toppartialactionextend}) satisfy the following:
	\begin{enumerate}
		\item[(i)] If $\alpha=a_1\cdots a_n, \beta=b_1\cdots b_m\in\awplus$ such that 
		$\alpha,\beta\neq \eword$, $a_n\neq b_m$ and $r(\alpha)\cap r(\beta)\neq \emptyset$, then 
		$\emptyset\neq V_{\beta\alpha^{-1}}\subseteq V_{\beta}$, 
		and $\phi_{\alpha\beta^{-1}}(\xi^{\beta\gamma})=G_{(\alpha)\gamma}\circ H_{[\beta]\gamma}(\xi^{\beta\gamma})$ for every
		$ \xi^{\beta\gamma}\in V_{\beta\alpha^{-1}}$.
		\item[(ii)] If $t\notin \{\eword\}\cup\{\alpha\mid\alpha\in\awplus\}\cup\{\alpha^{-1}\mid\alpha\in\awplus\}\cup\{\beta\alpha^{-1}\mid \beta,\alpha\in\awplus, r(\alpha)\cap r(\beta)\neq\emptyset\}$, then $V_t=V_{t^{-1}}=\emptyset$.
	\end{enumerate}
\end{lemma}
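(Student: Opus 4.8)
\textit{Plan for part (i).} The key observation is that, since $a_n\neq b_m$, both $\alpha\beta^{-1}$ and $\beta\alpha^{-1}=(\alpha\beta^{-1})^{-1}$ are reduced words of $\F$ with $\alpha,\beta\in\awplus$. I would then proceed as follows: (1) obtain $V_{\beta\alpha^{-1}}=V_{(\alpha\beta^{-1})^{-1}}\neq\emptyset$ straight from $r(\alpha)\cap r(\beta)\neq\emptyset$ and Lemma \ref{lem:intersectionproperties}; (2) apply Remark \ref{rem:partialmapshomeomorphisms} to the reduced word $\alpha\beta^{-1}$ (with $\beta^{-1}$ as the second factor) to obtain both $V_{\beta\alpha^{-1}}=\phi_{\beta^{-1}}^{-1}(V_{\alpha^{-1}})\subseteq V_{(\beta^{-1})^{-1}}=V_\beta$ and $\phi_{\alpha\beta^{-1}}=\phi_\alpha\circ\phi_{\beta^{-1}}$ on $V_{\beta\alpha^{-1}}$; and (3) for the formula, take $\xi^{\beta\gamma}\in V_{\beta\alpha^{-1}}$ (its associated word is of the form $\beta\gamma$ by the inclusion in (2) and Lemmas \ref{lem:compactopenneigh}, \ref{lem:domainpart1}), observe that $\phi_\beta$ restricts on $\ftightw{(\beta)\gamma}$ to $G_{(\beta)\gamma}$, whose inverse is $H_{[\beta]\gamma}$ (Theorem \ref{thm.H-class.G-class.inverses}), so that $\phi_{\beta^{-1}}(\xi^{\beta\gamma})=H_{[\beta]\gamma}(\xi^{\beta\gamma})\in V_{\alpha^{-1}}$, and finish with a second application of Lemma \ref{lem:domainpart1}, namely $\phi_\alpha(H_{[\beta]\gamma}(\xi^{\beta\gamma}))=G_{(\alpha)\gamma}(H_{[\beta]\gamma}(\xi^{\beta\gamma}))$.

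\textit{Plan for part (ii), first case.} Since $\phi_t\colon V_{t^{-1}}\to V_t$ is a bijection, $V_t=\emptyset$ iff $V_{t^{-1}}=\emptyset$, so it is enough to show $V_t=\emptyset$ for every $t$ outside the stated set. I would write $t$ in reduced form over $\alf\cup\alf^{-1}$ and split on the sign pattern of its letters ($+$ for a letter of $\alf$, $-$ for one of $\alf^{-1}$). If the sign pattern is not $+^p-^q$, then $t$ has two adjacent letters $a^{-1}b$ with $a,b\in\alf$, $a\neq b$, so $t=u(a^{-1}b)v$ reduced. Because $\phi_b$ has range $V_b$, $\phi_{a^{-1}}$ has domain $V_a$, and $V_a\cap V_b=\emptyset$ --- by Lemma \ref{lem:compactopenneigh}, $V_c=\bigsqcup_\delta\ftightw{c\delta}$ is the set of tight filters whose labelled path begins with $c$, and the first letter of a labelled path is unique --- the composite $\phi_{a^{-1}b}=\phi_{a^{-1}}\circ\phi_b$ has empty domain; then $\phi_t=\phi_u\circ\phi_{a^{-1}b}\circ\phi_v$ (Remark \ref{rem:partialmapshomeomorphisms}) has empty domain and range, i.e.\ $V_t=V_{t^{-1}}=\emptyset$.

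\textit{Plan for part (ii), sign pattern $+^p-^q$.} Write the positive prefix of $t$ as a word $\beta\in\awstar$ over $\alf$ ($\beta=\eword$ if $p=0$) and the negative suffix as $\alpha^{-1}$, $\alpha\in\awstar$ over $\alf$ ($\alpha=\eword$ if $q=0$), so $t=\beta\alpha^{-1}$ as a reduced word with $(p,q)\neq(0,0)$. The crux is the auxiliary claim: \emph{if $\gamma$ is a word over $\alf$ with $|\gamma|\ge1$ and $\gamma\notin\awplus$, then $V_\gamma=\emptyset$}. To prove it I would take $k$ maximal with $\gamma_{1,k}\in\awplus$ (this exists since $\gamma_1\in\alf=\awn{1}\subseteq\awplus$, and $k<|\gamma|$), note $\gamma_{1,k+1}\notin\awstar$ and hence $r(\gamma_{1,k+1})=\emptyset$, and factor $\phi_{\gamma_{1,k+1}}=\phi_{\gamma_{1,k}}\circ\phi_{\gamma_{k+1}}$ to get, via Lemma \ref{lem:domainpart1},
\[
V_\gamma\subseteq V_{\gamma_{1,k+1}}=\phi_{\gamma_{1,k}}\bigl(V_{(\eword,r(\gamma_{1,k}),\eword)}\cap V_{(\gamma_{k+1},r(\gamma_{k+1}),\gamma_{k+1})}\bigr);
\]
a tight filter $\xi=\xi^\mu$ in that intersection would satisfy $\mu_1=\gamma_{k+1}$ and $r(\gamma_{1,k})\in\xi_0$ (Remark \ref{remark.when.in.xialpha}), and the complete-family relation at level $0$ would then give $r(\gamma_{1,k+1})=r(r(\gamma_{1,k}),\gamma_{k+1})\in\xi_1$, which is impossible since $\xi_1$ is a filter. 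So $V_\gamma=\emptyset$. Granting the claim, I would finish as follows: if $q=0$ then $t=\beta\notin\awplus$, so $V_t=\emptyset$; if $p=0$ then $t=\alpha^{-1}$ with $\alpha\notin\awplus$, so $V_t=V_{\alpha^{-1}}=\emptyset$ because $V_\alpha=\emptyset$; and if $p,q\ge1$, then one of $\beta\notin\awplus$, $\alpha\notin\awplus$, $r(\alpha)\cap r(\beta)=\emptyset$ must hold (else $t$ would lie in the stated set), and using $\phi_t=\phi_\beta\circ\phi_{\alpha^{-1}}$ (Remark \ref{rem:partialmapshomeomorphisms}) these give respectively $V_t\subseteq V_\beta=\emptyset$, then $V_t=\emptyset$ (the domain of $\phi_{\alpha^{-1}}$ is $V_\alpha=\emptyset$), and then $V_t=V_{(\alpha\beta^{-1})^{-1}}=\emptyset$ (Lemma \ref{lem:intersectionproperties}, since $\alpha\beta^{-1}$ is reduced).

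\textit{Main difficulty.} Everything reduces to bookkeeping with reduced words together with Remark \ref{rem:partialmapshomeomorphisms} and Lemmas \ref{lem:domainpart1}, \ref{lem:compactopenneigh}, \ref{lem:intersectionproperties}, except for the auxiliary claim $V_\gamma=\emptyset$ for words $\gamma\notin\awplus$: that is the one place where one argues directly with tight filters, the complete-family relation, and the identity $r(r(A,\mu),\nu)=r(A,\mu\nu)$, and I expect it to be the main obstacle.
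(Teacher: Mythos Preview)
Your argument for part (i) is correct and follows the same route as the paper: non-emptiness from Lemma~\ref{lem:intersectionproperties}, the inclusion $V_{\beta\alpha^{-1}}\subseteq V_\beta$ and the factorization $\phi_{\alpha\beta^{-1}}=\phi_\alpha\circ\phi_{\beta^{-1}}$ from Remark~\ref{rem:partialmapshomeomorphisms}, and the identification of the factors with $G_{(\alpha)\gamma}$ and $H_{[\beta]\gamma}$ via Lemma~\ref{lem:domainpart1} and Theorem~\ref{thm.H-class.G-class.inverses}.

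For part (ii) your approach is correct but organized differently from the paper. The paper runs a four-way case split on forbidden \emph{factors} of $t$: a factor $ab$ with $ab\notin\awn{2}$, a factor $b^{-1}a^{-1}$ with $ab\notin\awn{2}$, a factor $b^{-1}a$ with $a\neq b$, or $t=\beta\alpha^{-1}$ with $\alpha,\beta\in\awplus$ and $r(\alpha)\cap r(\beta)=\emptyset$. You instead split on the global sign pattern of $t$, dispatch the case ``not of shape $+^p-^q$'' exactly as the paper handles its factor $b^{-1}a$, and then treat the $+^p-^q$ case via your auxiliary claim that $V_\gamma=\emptyset$ for any positive word $\gamma\notin\awplus$, proved by locating the first failure $\gamma_{1,k+1}\notin\awstar$ and using the complete-family relation together with $r(r(\gamma_{1,k}),\gamma_{k+1})=r(\gamma_{1,k+1})=\emptyset$. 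This buys you something real: the paper's length-two factor argument does not obviously exhaust all positive words $\gamma\notin\awplus$ (one can have $ab,bc\in\awn{2}$ yet $abc\notin\awn{3}$), whereas your claim handles every such $\gamma$ uniformly. The remaining subcases ($q=0$, $p=0$, and $p,q\geq 1$ with $r(\alpha)\cap r(\beta)=\emptyset$) then reduce cleanly to the claim and to Lemma~\ref{lem:intersectionproperties}, as you indicate.
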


\begin{proof}
	(i) If $r(\alpha)\cap r(\beta)\neq \emptyset$, then $\emptyset\neq V_{\beta\alpha^{-1}}\subseteq V_{\beta}$ by Lemma \ref{lem:intersectionproperties}. We show that $\phi_{\alpha\beta^{-1}}(\xi^{\beta\gamma})=G_{(\alpha)\gamma}\circ H_{[\beta]\gamma}(\xi^{\beta\gamma})$.
	Note that $\phi_{\alpha\beta^{-1}}: V_{\beta\alpha^{-1}}\to V_{(\beta\alpha^{-1})^{-1}}$. Since  $\emptyset\neq V_{\beta\alpha^{-1}}$, there exists a
	$\xi\in V_{\beta\alpha^{-1}}$. Moreover, since $V_{(\alpha\beta^{-1})^{-1}}=\phi_{\beta}(V_{{\alpha}^{-1}})$ it follows that $\xi=\xi^{\beta\gamma}$ with $\gamma$ such that $H_{[\beta]\gamma}(\xi^{\beta\gamma})\in V_{\alpha^{-1}}$ (that is, $\alpha\gamma\overline{\awleinf}$). It follows from Remark \ref{rem:partialmapshomeomorphisms} that 
	\begin{eqnarray*}
		\phi_{\alpha\beta^{-1}}(\xi^{\beta\gamma})&=& \phi_{\alpha}(\phi_{\beta^{-1}}(\xi^{\beta\gamma}))  \\
		&=&G_{(\alpha)\gamma}(H_{[\beta]\gamma}(\xi^{\beta\gamma})). \\
	\end{eqnarray*}
	
`	(ii) If $t\notin \{\eword\}\cup\{\alpha\mid\alpha\in\awplus\}\cup\{\alpha^{-1}\mid\alpha\in\awplus\}\cup\{\beta\alpha^{-1}\mid \beta,\alpha\in\awplus, r(\alpha)\cap r(\beta)\neq\emptyset\}$, then $t$ either: (1) contains a factor of form $ab$ with $a,b\in\alf{}$ and $r(a)\cap s(b)=\emptyset$, (2) contains a factor of the form $ b^{-1}a^{-1}$ with $a,b\in\alf{}$ and $r(a)\cap s(b)=\emptyset$, (3) contains a factor of the form $ b^{-1}a$ with $a,b\in\alf{}$ and $a\neq b$, or (4) contains a factor of the form $\beta_2\alpha_2^{-1}$ with $t=\beta_1\beta_2(\alpha_1\alpha_2)^{-1}$ (in reduced form),  $\alpha_2,\beta_2\in \awplus$ and $r(\alpha_2)\cap r(\beta_2)=\emptyset$. In cases (1) and (2) we have $V_{ab}=\phi_a(V_b)$ and $V_{b^{-1}a^{-1}}=\phi_{a^{-1}}(V_{b^{-1}})$ are empty because $r(a)\cap s(b)\neq\emptyset$ if and only if $ab\in\awplus$. In   case (3) we have that $V_{(a^{-1}b)^{-1}}=\phi^{-1}_{b}(V_{a})\neq\emptyset$ if and only if $V_a\cap V_b\neq\emptyset$. However, if $a\neq b$ then $V_a\cap V_b=\emptyset$, and thus $V_{(a^{-1}b)^{-1}}=\emptyset$. In case (4), since $r(\alpha_2)\cap r(\beta_2)=\emptyset$, it follows from Lemma \ref{lem:intersectionproperties} that $V_{\beta_2\alpha_2^{-1}}=V_{(\alpha_2\beta_2^{-1})^{-1}}=\emptyset$. Hence $V_t=\emptyset$ in each of the four case.  
\end{proof}

We now show that the maps defined in (\ref{partialactionDFN1}), (\ref{partialactionDFN2}) and (\ref{e:toppartialactionextend}) define a partial action of $\F$ on $\ftight$. The proof is inspired by \cite{MR3539347}. 
\begin{proposition} \label{prop:partialtopaction}
	Let $\lspace$ be  a weakly left-resolving labelled space.  Then
	$$\Phi=(\{V_t\}_{t\in\F},\{\phi_t\}_{t\in\F})$$ 
	is a semi-saturated orthogonal partial action of $\F$ on $\ftight$.
\end{proposition}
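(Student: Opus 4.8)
The plan is to verify the three axioms of a partial action from the definition, then separately check semi-saturation and orthogonality, using the description of the domains $V_t$ and the maps $\phi_t$ obtained in Lemmas \ref{lem:domainpart1}, \ref{lem:intersectionproperties} and \ref{lem:domainsofreducedwords}. First I would record that each $V_t$ is open: for the generating letters $V_a = V_{(a,r(a),a)}$ and $V_{a^{-1}} = V_{(\eword,r(a),\eword)}$ are basic open sets by Corollary \ref{cor:basis.tight}, $V_{\eword}=\ftight$, and by Lemma \ref{lem:domainsofreducedwords} every other $V_t$ with $t$ reduced is either $\emptyset$, or of the form $V_{\beta\alpha^{-1}}$ which Lemma \ref{lem:domainpart1}/\ref{lem:intersectionproperties} identifies (after applying $\phi_\beta$, a homeomorphism by Remark \ref{rem:partialmapshomeomorphisms}) with an open set; so openness is immediate. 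Axiom (1) ($V_{\eword}=\ftight$, $\phi_{\eword}=\mathrm{id}$) holds by definition. That each $\phi_t\colon V_{t^{-1}}\to V_t$ is a homeomorphism with inverse $\phi_{t^{-1}}$ follows from Remark \ref{rem:partialmapshomeomorphisms} together with Theorem \ref{thm.H-class.G-class.inverses} (the generating case) and induction on word length via \eqref{e:toppartialactionextend}; the continuity of $G$ and $H$ is quoted from Section \ref{subsection.filter.surgery}.

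Next I would prove semi-saturation, i.e. $\phi_s\circ\phi_t = \phi_{st}$ whenever $|st|=|s|+|t|$ (no cancellation). This is essentially built into the inductive definition \eqref{e:toppartialactionextend}: writing $s$ and $t$ as reduced words and noting $st$ is then already reduced, one unwinds the definition to get $\phi_{st} = \phi_{s_1}\circ\cdots\circ\phi_{s_p}\circ\phi_{t_1}\circ\cdots\circ\phi_{t_q}$ on its natural domain, which is exactly $\phi_s\circ\phi_t$; this is the content of Remark \ref{rem:partialmapshomeomorphisms}, and I would just cite it and check the domains match, namely $V_{(st)^{-1}} = \phi_t^{-1}(V_{s^{-1}})$. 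Orthogonality, $V_s\cap V_t=\emptyset$ for $s\neq t$ in $\F\setminus\{\eword\}$, is where Lemma \ref{lem:domainsofreducedwords}(ii) does the work: the only nonempty $V_t$ are those with $t\in\awplus$, $t^{-1}$ with the base in $\awplus$, or $t=\beta\alpha^{-1}$ with $\alpha,\beta\in\awplus$ and $r(\alpha)\cap r(\beta)\neq\emptyset$. For such $t$, Lemma \ref{lem:domainpart1} gives $V_t = V_{(\alpha,r(\alpha),\alpha)}$ (resp. $V_{(\eword,r(\alpha),\eword)}$, resp. $V_{\beta\alpha^{-1}}=\phi_\beta^{-1}(V_{(\eword,r(\alpha),\eword)})\subseteq V_\beta$), and by Lemma \ref{lem:compactopenneigh} these are disjoint unions $\bigsqcup_\beta \ftight_{\alpha\beta}$, $\bigsqcup_\beta\ftight_{(\alpha)\beta}$ respectively; since a filter $\xi^\gamma$ determines its word $\gamma$ uniquely, one reads off that these sets are pairwise disjoint for distinct words — e.g. $V_\alpha$ (filters whose word begins $\alpha$ with $\alpha\in\awplus$) can only meet $V_{\alpha'}$ if $\alpha=\alpha'$, it is disjoint from all $V_{\beta^{-1}}$ and $V_{\beta'\alpha'^{-1}}$ by comparing the sign/form of the reduced words and using that $\ftight_{(\delta)\epsilon}$ requires $\delta\ne\eword$ sits over a word not beginning with a positive $\delta$, and mixed cases are handled by the word-uniqueness again.

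The remaining work is axioms (2) and (3). For (3), $\phi_s(\phi_t(x))=\phi_{st}(x)$ for $x\in V_{t^{-1}}\cap V_{(st)^{-1}}$: write $s,t$ as reduced words, and split $st$ into its reduced form by cancelling the matching block. On the overlap domain the cancelled generators compose to the identity by Theorem \ref{thm.H-class.G-class.inverses} ($H$ and $G$ are mutually inverse) and the surviving ones reassemble, via Theorem \ref{thm.compositions.G.and.H} (the $G\circ G$ and $H\circ H$ composition laws), into $\phi_{st}$; the one genuinely delicate point, and the main obstacle, is bookkeeping the domains so that every intermediate application of $G$ or $H$ is legitimate — i.e. that a tight filter staying in $V_{t^{-1}}\cap V_{(st)^{-1}}$ really lands in the domain $\ftight_{(\cdot)\cdot}$ or $\ftight_{\cdot}$ required at each stage. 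This is exactly the ``extra layer of complexity'' flagged in the section intro, and I would organize it by first reducing to the three-letter and four-letter patterns enumerated in the proof of Lemma \ref{lem:domainsofreducedwords}(ii) and to products of a positive word and an inverse word, then invoking Theorem \ref{thm.compositions.G.and.H} and Theorem \ref{thm.H-class.G-class.inverses} cases. Finally, axiom (2), $\phi_s(V_{s^{-1}}\cap V_t)=V_s\cap V_{st}$: one inclusion follows by applying $\phi_s$ to (3) in the form $\phi_s(V_{s^{-1}}\cap V_t)\subseteq \phi_s(V_{s^{-1}})\cap \phi_s(V_{s^{-1}}\cap V_t)$ and using that $\phi_s$ restricted to $V_{s^{-1}}\cap V_t$ agrees with $\phi_{st}$ on $V_{t^{-1}}\cap V_{(st)^{-1}}$ after translating; concretely I would show $\phi_s$ carries $V_{s^{-1}}\cap V_t$ onto $V_s\cap V_{st}$ by exhibiting the inverse map as $\phi_{s^{-1}}$ restricted to $V_s\cap V_{st}$, again using Remark \ref{rem:partialmapshomeomorphisms} and the disjoint-union descriptions from Lemma \ref{lem:compactopenneigh}, so that membership becomes a statement about which word a tight filter has and whether its truncations glue/cut appropriately. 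Throughout, the essential inputs are Theorems \ref{thm.H-class.G-class.inverses} and \ref{thm.compositions.G.and.H} for the algebra of $G$ and $H$, and Lemmas \ref{lem:compactopenneigh}--\ref{lem:domainsofreducedwords} for identifying the domains; the proof is a careful but essentially formal assembly of these.
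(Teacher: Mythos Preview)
Your outline is broadly in the right spirit, but the orthogonality argument has a genuine error. You read the definition as requiring $V_s\cap V_t=\emptyset$ for \emph{all} distinct $s,t\in\F\setminus\{\eword\}$ and then attempt to prove that. This claim is false: for instance if $\alpha,\alpha'\in\awplus$ with $\alpha'=\alpha\gamma$ then $V_{\alpha'}=\bigsqcup_\delta\ftightw{\alpha\gamma\delta}\subseteq\bigsqcup_\delta\ftightw{\alpha\delta}=V_\alpha$, so $V_\alpha\cap V_{\alpha'}=V_{\alpha'}\neq\emptyset$ in general; similarly $V_{\beta\alpha^{-1}}\subseteq V_\beta$. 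Your sentence ``$V_\alpha$ can only meet $V_{\alpha'}$ if $\alpha=\alpha'$'' is therefore wrong. Orthogonality in the sense of \cite{MR1703078,MR1953065} only asks that $V_a\cap V_b=\emptyset$ for distinct \emph{generators} $a,b\in\alf$; this is exactly what the paper checks (one line, since filters whose word begins with $a$ cannot simultaneously begin with $b\neq a$), and it is all that is needed to invoke \cite[Theorem~4.3]{MR1953065} later.

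On axioms (2) and (3) your plan would probably work, but it is more laborious than what the paper does. Rather than unwinding everything back to $G$ and $H$ and doing a pattern-by-pattern case analysis, the paper first observes directly from \eqref{e:toppartialactionextend} that when $st$ is already reduced one has $V_{st}=\phi_s(V_{s^{-1}}\cap V_t)$. It then handles arbitrary $s,t$ by the standard free-group factorization $s=s_1r$, $t=r^{-1}t_1$ with $s_1t_1$ reduced, and manipulates purely at the level of the sets $V_{\bullet}$ and the bijections $\phi_{\bullet}$, using only the reduced-word identity just established. This avoids all the ``bookkeeping the domains so that every intermediate application of $G$ or $H$ is legitimate'' that you flag as the main obstacle. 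For semi-saturation the paper likewise just notes $V_{st}=\phi_s(V_{s^{-1}}\cap V_t)\subseteq V_s$ and cites \cite[Proposition~4.1]{MR1953065}, rather than re-deriving $\phi_s\circ\phi_t=\phi_{st}$ from scratch.
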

\begin{proof}
	Fix $t\in \F$. If $t=\eword$, then 
	\begin{equation*}\label{e:partialactionPROOF0}
	V_\eword=\ftight \text{ and } \phi_{\eword}=\mathrm{id}_{\ftight}.  
	\end{equation*}
	If $t\neq\eword$ is in reduced form, then, by Lemma  \ref{lem:domainsofreducedwords}(ii), we may assume that  $t=\alpha\beta^{-1}$ with $\alpha,\beta\in\awstar$. Then  Equation (\ref{e:toppartialactionextend}) and Lemma \ref{lem:domainpart1} show that $V_t$ is open in $\ftight$ . To see that $\phi_t$ is a homeomorphism, note that if $\alpha=a_1\cdots a_n$ and $\beta=b_1\cdots b_m$, then by Equation  (\ref{e:toppartialactionextend}) 
	$$ \phi_t=\phi_{a_1}\circ\cdots\phi_{a_n}\circ \phi_{b_n}^{-1} \cdots \circ \phi_{b_1}^{-1}.$$
	By \cite[Proposition 4.8]{Gil3} the maps $\phi_{a} |_{\ftight_{(a)\gamma}}=G_{(a)\gamma}$ and $\phi_{a}^{-1} |_{\ftight_{a\gamma}}=H_{[a]\gamma}$ are homeomorphisms for every $a\in\alf$. Therefore, since $V_{a}=\sqcup \ftight_{a\gamma}$ is a disjoint union, it follows that $\phi_a$ is a homeomorphism for every $a\in\alf$. Hence Equations (\ref{e:toppartialactionextend}) now show that $\phi_t$ is a homeomorphism. 	
	
	Next we show that $\phi_s(V_{s^{-1}}\cap V_t)=V_{s}\cap V_{st}$ and $\phi_s(\phi_t(\xi))=\phi_{st}(\xi)$ for every $\xi \in V_{t^{-1}}\cap V_{(st)^{-1}}$. 
	Let $st\in\F$ be in reduced form (that is, each factor is a finite product of elements from $\alfg{A}$ and their inverses, or the empty word). Then using (\ref{e:toppartialactionextend}) we see that
	\begin{eqnarray}
	V_{st}=\phi_{st}(V_{(st)^{-1}})&=& \phi_{st}(\phi_{t^{-1}}(V_{s^{-1}})) \nonumber \\
	&=&  \phi_{s}(V_{s^{-1}} \cap V_{t}). \label{e:partialactionPROOF1}
	\end{eqnarray} 
	Now suppose that $s,t\in \F$ are in reduced form. Let $r,s_1,t_1\in\F$ such that $s=s_1r$, $t=r^{-1}t_1$ and $s_1t_1$ is the reduced form of $st$, where $s_1=\eword$ if $s=r$ and $t_1=\eword$ if $t=r^{-1}$. Then 
	\begin{eqnarray}
	V_{s^{-1}}\cap V_{t}&=& V_{(r^{-1}s_1^{-1})}\cap V_{r^{-1}t_1}\nonumber \\
	&=&\phi_{r^{-1}}(V_{r}\cap V_{s_1^{-1}})\cap \phi_{r^{-1}}(V_{r}\cap V_{t_1}) \text{\hspace{0.5cm} (by Equation (\ref{e:partialactionPROOF1}))} \nonumber \\
	&=& \phi_{r^{-1}}(V_{r}\cap V_{s_1^{-1}}\cap V_{t_1}) .\label{e:partialactionPROOF2}
	\end{eqnarray}
	Applying $\phi_s$ to the left and right hand sides of Equation (\ref{e:partialactionPROOF2}) we have that
	\begin{eqnarray}
	\phi_s(V_{s^{-1}}\cap V_{t})&=& \phi_{s_1r}(\phi_{r^{-1}}(V_{r}\cap V_{s_1^{-1}}\cap V_{t_1}))\nonumber \\
	&=& \phi_{s_1}(V_{s_1^{-1}}\cap V_{r}) \cap \phi_{s_1}(V_{s_1^{-1}}\cap V_{t_1})\nonumber \\
	&=&  V_{(r^{-1}s_1^{-1})^{-1}}\cap V_{s_1t_1}\text{ \hspace{0.5cm} (by Equations (\ref{e:toppartialactionextend}) and (\ref{e:partialactionPROOF1}))} \nonumber\\
	&=& V_{s}\cap V_{st}. \nonumber 
	\end{eqnarray}
	Hence 
	\begin{equation*}\label{e:partialactionPROOF3}
	\phi_s(V_{s^{-1}}\cap V_t)=V_{s}\cap V_{st}.
	\end{equation*}
	
	To show that $\phi_s\circ\phi_t=\phi_{st}$ on  $V_{t^{-1}}\cap V_{(st)^{-1}}$, first note that 
	$V_{t^{-1}}\cap V_{(st)^{-1}}=\phi_{t^{-1}}(V_{t})\cap\phi_{t^{-1}}(V_{s^{-1}}\cap V_t)= \phi_{t^{-1}}(V_{s^{-1}})$. Hence, $\phi_s\circ\phi_t$ and $\phi_{st}$ have the same domains. That 
	\begin{equation*}\label{e:partialactionPROOF4}
	\phi_s(\phi_t(\xi))=\phi_{st}(\xi)
	\end{equation*}
	for every $\xi \in V_{t^{-1}}\cap V_{(st)^{-1}}$ now follows from (\ref{e:toppartialactionextend}). 
	This completes the proof that $\Phi$ is a partial action of $\F$ on $\ftight$.
	
	It remains to show that the action is semi-saturated and orthogonal. Let $s=a_1\cdots a_n$ and $t=b_1\cdots b_m$ be elements of $\F$ in reduced form such that $\mid st \mid=\mid s\mid+ \mid t\mid$. Then the element $st$ is in reduced form and by Equation (\ref{e:partialactionPROOF1}) we have that 
	\begin{equation*}
	V_{st}= \phi_{s}(V_{s^{-1}} \cap V_{t})\subseteq V_s.
	\end{equation*}
	Hence the action is semi-saturated by \cite[Proposition 4.1]{MR1953065}. 
	If $\alpha\neq \beta$, then $V_\alpha\cap V_\beta =\emptyset$. Hence the action is orthogonal, completing the proof.
\end{proof}

\section{Labelled space $C^*$-algebra as a crossed product}\label{s:crossedprodisom}
In this section we show that the partial crossed product $C^*$-algebra obtained from the partial action in Section \ref{s:partialaction} is isomorphic to the labelled space $C^*$-algebra $C^*\lspace$. 

We begin by describing the partial crossed product $C^*$-algebra $C_0(\ftight)\rtimes_{\ph}\F$. Let $\lspace$ be a weakly left-resolving labelled space and let $\Phi=(\{V_t\}_{t\in\F},\{\phi_t\}_{t\in\F})$ be the partial action associated with $\lspace$ in Proposition \ref{prop:partialtopaction}. For every $\alpha\in\awstar$ and $A\in\acfrg{\alpha}$, the subset $V_{(\alpha,A,\alpha)}\subset \ftight$ is compact and open, with the exception of $V_{\eword}=\ftight$ which might not be compact. Hence any $f\in C_0(V_{(\alpha,A,\alpha)})$ can and will be viewed as a function in $C_0(\ftight)$ by declaring that $f(\xi)=0$ if $\xi\notin V_{(\alpha,A,\alpha)}$. In fact, $C_0(V_{(\alpha,A,\alpha)})$ is a closed two-sided ideal in $C_0(\ftight)$ and thus a $C^*$-subalgebra. In particular, this applies to 
the sets $\{V_t\}_{t\in\F\backslash \{\eword\}}$, by Lemma \ref{lem:domainsofreducedwords}.
Put
$$D_{t}=C_0(V_t) \text{ and }  D_{t^{-1}}=C_0(V_{t^{-1}}).$$ 
Define $\ph_t:D_{t^{-1}}\to D_t$  
by 
$$\ph_t(f)=f\circ\phi_{t^{-1}},$$
and define $\ph_{t^{-1}}:D_t\to D_{t^{-1}}$ analogously. Then $(\{D_t\}_{t\in\F},\{\ph_t\}_{t\in\F})$ is a $C^*$-algebraic partial dynamical system \cite{MR1953065}.  Hence we may consider the partial crossed product 
\begin{equation}\label{e:partcrossedprodDFN}
C_0(\ftight)\rtimes_{\ph} \F=\overline{\mathrm{span}}\left\{ \sum_{t\in\F}f_t\delta_t : f_t\in D_t \text{ and } f_t\neq0 \text{ for finite many  }t\in\F \right\}, 
\end{equation}
where the closure is with respect to the universal norm (see for example \cite[Definition 11.11]{MR3699795}). Note that $\delta_t$ has no meaning in itself and merely serves as a place holder.\footnote{ Alternatively if one views $f_t\delta_t$ as a function from $G$ into $C_0(\ftight)$, then $f_t\delta_t$ can be viewed as $f_t$-valued with support equal to $\{t\}$.}  
Recall that multiplication and involution in $C_0(\ftight)\rtimes_{\ph} \F$ are given by 
\begin{equation}\label{e:crossedprodOperations}
\begin{split}
&(a\delta_s) (b\delta_t)=\ph_s(\ph_{s^{-1}}(a)b)\delta_{st}, \text{ and}\\
&(a\delta_s)^{*}=\ph_{s^{-1}}(a)\delta_{s^{-1}}.\\
\end{split}
\end{equation}

The labelled space $C^*$-algebra $C^*\lspace$ is generated by a set of projections $\{p_A\mid A\in \acf \}$ and a set of partial isometries $\{s_a\mid a\in \alfg{A}\}$ subject to certain relations (see Section \ref{sec:labelCstarAlgDfn}). If $A\in\acf$ and $\alpha\in\mathcal{L}^{*}$, then we let $1_{V_{(\alpha,A,\alpha)}}$ denote the characteristic function on $V_{(\alpha,A,\alpha)}$. We show that $C_0(\ftight)$ is generated by these characteristic functions.

\begin{lemma}\label{lem:genC0} 
	The $C^*$-algebra $C_0(\ftight)$ is generated by the set 
	$$\{1_{V_{(\alpha,A,\alpha)}}\mid \alpha\in\awstar, A\in\acfrg{\alpha}\}.$$
\end{lemma}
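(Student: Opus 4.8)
The plan is to apply the Stone–Weierstrass theorem for the locally compact Hausdorff space $\ftight$, showing that the $*$-subalgebra generated by $\{1_{V_{(\alpha,A,\alpha)}}\}$ separates points and vanishes nowhere, and then identify it with a dense subalgebra of $C_0(\ftight)$. The natural route, however, is slightly more direct: by Corollary \ref{cor:basis.tight} the sets $V_{e:e_1,\dots,e_n}$ with $e,e_i\in E(S)$ form a basis of compact-open sets for $\ftight$, and for any compact-open $U$ the characteristic function $1_U$ lies in $C_0(\ftight)$; moreover the closed linear span of $\{1_U : U \text{ compact-open}\}$ is all of $C_0(\ftight)$ (a standard fact for locally compact Hausdorff spaces with a basis of compact-open sets — any $f\in C_0(\ftight)$ and $\varepsilon>0$ admit a finite partition of a compact neighbourhood of $\{|f|\ge\varepsilon\}$ into compact-open pieces on which $f$ varies by less than $\varepsilon$). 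So it suffices to show every basic set $V_{e:e_1,\dots,e_n}$ has characteristic function in the algebra generated by the $1_{V_{(\alpha,A,\alpha)}}$.

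First I would record that a general idempotent $e\in E(S)$ is either $0$ or of the form $(\alpha,A,\alpha)$ with $\alpha\in\awstar$, $A\in\acfrg{\alpha}$, so $V_e = V_{(\alpha,A,\alpha)}$ (and $V_0=\emptyset$), hence $1_{V_e}$ is in the generating set (or is $0$). Next, since $1_{V_{e_1}\cap V_{e_2}} = 1_{V_{e_1}}1_{V_{e_2}}$ pointwise, and since $V_{e_1}\cap V_{e_2} = V_{e_1 e_2}$ (because $\xi$ is a filter, hence closed under the meet $e_1 e_2$ in the semilattice $E(S)$ whenever it contains both $e_1$ and $e_2$, and upward closed), products of generators again correspond to characteristic functions of sets $V_{(\gamma,C,\gamma)}$. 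Then for the relative complement: $1_{V_e \setminus V_{e_i}} = 1_{V_e} - 1_{V_e \cap V_{e_i}} = 1_{V_e} - 1_{V_e}1_{V_{e_i}}$, which lies in the $*$-algebra generated by the two characteristic functions. Iterating, $1_{V_{e:e_1,\dots,e_n}} = 1_{V_e}\prod_{i=1}^n (1_{V_e} - 1_{V_e}1_{V_{e_i}})$ belongs to the generated $*$-algebra (note all these functions are real-valued, so involution is trivial and only the algebra structure matters).

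Combining: the $*$-algebra $\mathcal{A}$ generated by $\{1_{V_{(\alpha,A,\alpha)}} : \alpha\in\awstar, A\in\acfrg{\alpha}\}$ contains $1_U$ for every basic compact-open $U$, hence its closure contains the closed linear span of all such $1_U$, which is $C_0(\ftight)$; the reverse inclusion is clear since each $1_{V_{(\alpha,A,\alpha)}}\in C_0(\ftight)$. Therefore $\overline{\mathcal A} = C_0(\ftight)$, which is the claim.

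The main obstacle is the fact that the closed linear span of characteristic functions of compact-open sets exhausts $C_0$ of a locally compact Hausdorff space admitting a basis of compact-open sets. I would either cite this (it is folklore; it also follows from Stone–Weierstrass applied to the real algebra generated by such characteristic functions, which is obviously a point-separating, nowhere-vanishing subalgebra because the compact-open sets form a basis and $\ftight$ is Hausdorff) or give the short direct $\varepsilon$-argument sketched above. A secondary, minor point to get right is the bookkeeping in reducing a general idempotent and a general basic set to the stated generators and in checking $V_{e_1}\cap V_{e_2}=V_{e_1e_2}$; these are immediate from the definition of $E(S)$ and of filters but should be stated cleanly.
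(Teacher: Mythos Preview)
Your proof is correct, but it takes a longer route than the paper, and in fact the detour ends up circling back to the very argument you set aside at the start. The paper simply applies Stone--Weierstrass directly: any $\xi\in\ftight$ is a nonempty filter, so picking any $(\gamma,A,\gamma)\in\xi$ gives $1_{V_{(\gamma,A,\gamma)}}(\xi)=1$ (nowhere vanishing); and if $\xi\neq\eta$ then some $(\gamma,A,\gamma)$ lies in one but not the other, so $1_{V_{(\gamma,A,\gamma)}}$ separates them. That is the entire proof.

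Your version first reduces to showing that $1_{V_{e:e_1,\dots,e_n}}$ lies in the generated algebra (which is fine and cleanly done), but then to finish you still need that the span of characteristic functions of compact-open sets is dense in $C_0(\ftight)$ --- and your suggested justification for this is Stone--Weierstrass, using precisely the separation and non-vanishing properties of those characteristic functions. So you have interposed an unnecessary layer: the intermediate work on the basis buys nothing, because the final appeal to Stone--Weierstrass could have been made directly for the original generating set, exactly as the paper does. The alternative $\varepsilon$-partition argument you sketch would avoid this circularity, but it is more work than simply invoking Stone--Weierstrass once at the outset.
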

\begin{proof}
	We employ the Stone-Weierstrass Theorem. 
	Let $\xi\in \ftight$. Since $\xi$ is filter it is non-empty. Suppose $p\in\xi$, with $p=(\gamma,A,\gamma)$ and $0\leq\mid\gamma\mid\leq \infty$. Then   $1_{V_{(\gamma,A,\gamma)}}(\xi)=1$. Hence the set $\{1_{V_{(\alpha,A,\alpha)}}\mid \alpha\in\awstar, A\in\acfrg{\alpha}\}$ vanishes nowhere. 
	
	Let $\xi,\eta\in\ftight$.  We show that $\{1_{V_{(\alpha,A,\alpha)}}\mid \alpha\in\awstar, A\in\acfrg{\alpha}\}$ separates points. Assume $\xi\neq\eta$. Without loss of generality we may assume that $(\gamma,A,\gamma)\in\xi$ and $(\gamma,A,\gamma)\notin\eta$. Then $\xi\in V_{(\gamma,A,\gamma)}$ and $\eta\notin V_{(\gamma,A,\gamma)}$. Hence $1_{V_{(\gamma,A,\gamma)}}(\xi)\neq1_{V_{(\gamma,A,\gamma)}}(\eta)$, which shows that the set $\{1_{V_{(\alpha,A,\alpha)}}\mid \alpha\in\awstar, A\in\acfrg{\alpha}\}$ separates points. The lemma now follows from the Stone-Weierstrass Theorem.

\end{proof}

Let $1_A$ denote the characteristic function on $V_{(w,A,w)}$ and $1_\alpha$  the characteristic function on $V_{(\alpha,r(\alpha),\alpha)}$.
In  Proposition \ref{prop:genCrossProd} we show that the $C^*$-subalgebra generated by $\{1_A\delta_w,1_a\delta_a\mid a\in\alf, A\in\acf\}$ and the crossed product $C^*$-algebra $C_0(\ftight)\rtimes_{\ph}\F$ are the same. This requires some computations with elements of the form $1_A\delta_w$ and $1_\alpha\delta_\alpha$. The purpose of the following lemma is to to de-clutter the proofs that follow, by provide a reference to some of these computations, and so making the proofs easier to parse.

\begin{lemma}\label{lem:crossProdComputations}
	Let $C^*(\{1_A\delta_w,1_a\delta_a\})\subset C_0(\ftight)\rtimes_{\ph}\F$ denote the $C^*$-subalgebra generated by $\{1_A\delta_w,1_a\delta_a\mid a\in\alf, A\in\acf\}$. Then
	\begin{enumerate}[(i)]
		\item $\ph_{\alpha^{-1}}(1_{V_{(\alpha,A,\alpha)}})=1_{V_{(\eword,A,\eword)}}$ and $\ph_\alpha(1_{V_{(\eword,A,\eword)}})=1_{V_{(\alpha,A,\alpha)}}$ for every $\alpha\in\awplus$ and $A\in\acf_{\alpha}$,
		\item  $1_{\alpha}\delta_\alpha=(1_{a_1}\delta_{a_1})\cdots(1_{a_n}\delta_{a_n})$ for all $\alpha\in\awplus$ where $\alpha=a_1a_2\cdots a_n$,
		\item\label{lem:crossProdComputations_triple}  $ (1_{\alpha}\delta_{\alpha})(1_A\delta_\eword)(1_{\alpha}\delta_{\alpha})^{*}=1_{V_{(\alpha,A,\alpha)}}\delta_\eword$ for all $\alpha\in\awplus$ and $A\in\acf_{\alpha}$,
		\item $(1_\alpha\delta_\alpha)^*=1_{r(\alpha)}\delta_{\alpha^{-1}}=1_{\alpha^{-1}}\delta_{\alpha^{-1}}$ for every $\alpha\in\awplus$,
		\item $1_{\alpha\beta^{-1}}\delta_{\alpha\beta^{-1}}=(1_{\alpha}\delta_{\alpha})(1_{\beta^{-1}}\delta_{\beta^{-1}})$ for every  $\alpha,\beta\in\awplus$ such that $\alpha\beta^{-1}\in \F$ is in reduced form.
		\item $ (1_A\delta_\eword)(1_a\delta_a)=(1_a\delta_a)(1_{r(A,a)}\delta_\eword)$ for all $\alpha\in\awplus$ and $A\in\acf$.
	\end{enumerate}
\end{lemma}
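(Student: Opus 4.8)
The plan is to verify each of the six identities in turn by unwinding the definitions of the crossed-product operations \eqref{e:crossedprodOperations}, the partial action $\phi$ of Section~\ref{s:partialaction}, and the gluing/cutting maps $G$, $H$ from Subsection~\ref{subsection.filter.surgery}, using Lemma~\ref{lem:domainpart1} and Lemma~\ref{lem:domainsofreducedwords} throughout to identify the relevant domains $V_t$.

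\begin{proof}
\textbf{(i)} By Lemma~\ref{lem:domainpart1}, $V_\alpha = V_{(\alpha,r(\alpha),\alpha)}$ and $V_{\alpha^{-1}} = V_{(\eword,r(\alpha),\eword)}$, and $\phi_\alpha = G_{(\alpha)\,\cdot}$, $\phi_{\alpha^{-1}} = H_{[\alpha]\,\cdot}$ are mutually inverse homeomorphisms (Remark~\ref{rem:partialmapshomeomorphisms}, Theorem~\ref{thm.H-class.G-class.inverses}). Since $\ph_{\alpha^{-1}}(f) = f\circ\phi_\alpha$, it suffices to check that $\phi_\alpha$ restricts to a bijection $V_{(\eword,A,\eword)}\to V_{(\alpha,A,\alpha)}$ for $A\in\acfrg{\alpha}$. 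For $\xi^\beta\in V_{(\eword,A,\eword)}$, i.e. $A\in\xi^\beta_0$, one computes from the definition of $G_{(\alpha)\beta}$ that $\eta = G_{(\alpha)\beta}(\xi^\beta)$ has $A\cap r(\alpha)=A\in\eta_{|\alpha|}$, so by Remark~\ref{remark.when.in.xialpha}, $(\alpha,A,\alpha)\in\eta$; conversely using $H_{[\alpha]\beta}$. Hence $\ph_{\alpha^{-1}}(1_{V_{(\alpha,A,\alpha)}}) = 1_{V_{(\eword,A,\eword)}}$, and the second identity follows by applying $\ph_\alpha$.

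\textbf{(ii)} Induct on $n$ using $1_\alpha\delta_\alpha = (1_{a_1\cdots a_{n-1}}\delta_{a_1\cdots a_{n-1}})(1_{a_n}\delta_{a_n})$: by \eqref{e:crossedprodOperations} the product is $\ph_{a_1\cdots a_{n-1}}\big(\ph_{(a_1\cdots a_{n-1})^{-1}}(1_{\alpha_{1,n-1}})\,1_{a_n}\big)\delta_\alpha$, and one checks the inner function equals $1_{V_{(\eword,r(\alpha),\eword)}}$ (using part~(i) to push the first factor down, that $\phi$ is semi-saturated, and Lemma~\ref{lem:domainpart1}), so that $\ph_{\alpha_{1,n-1}}$ sends it to $1_{V_{(\alpha_{1,n-1},r(\alpha),\alpha_{1,n-1})}}$, which is $1_\alpha\delta_\alpha$ after identifying $r(\alpha)\subseteq r(\alpha_{1,n-1})$. \textbf{(iii)} Compute $(1_\alpha\delta_\alpha)(1_A\delta_\eword)(1_\alpha\delta_\alpha)^*$ using \eqref{e:crossedprodOperations}: the first product gives $\ph_\alpha(\ph_{\alpha^{-1}}(1_\alpha)1_A)\delta_\alpha = \ph_\alpha(1_{V_{(\eword,r(\alpha),\eword)}}1_{V_{(\eword,A,\eword)}})\delta_\alpha = \ph_\alpha(1_{V_{(\eword,A,\eword)}})\delta_\alpha = 1_{V_{(\alpha,A,\alpha)}}\delta_\alpha$ (part (i), with $A\subseteq r(\alpha)$); then multiplying by $(1_\alpha\delta_\alpha)^* = \ph_{\alpha^{-1}}(1_\alpha)\delta_{\alpha^{-1}} = 1_{V_{(\eword,r(\alpha),\eword)}}\delta_{\alpha^{-1}}$ gives, via \eqref{e:crossedprodOperations}, $\ph_\alpha(\ph_{\alpha^{-1}}(1_{V_{(\alpha,A,\alpha)}})\cdot 1_{V_{(\eword,r(\alpha),\eword)}})\delta_\eword = \ph_\alpha(1_{V_{(\eword,A,\eword)}})\delta_\eword = 1_{V_{(\alpha,A,\alpha)}}\delta_\eword$, again by part (i).

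\textbf{(iv)} From $(1_\alpha\delta_\alpha)^* = \ph_{\alpha^{-1}}(1_\alpha)\delta_{\alpha^{-1}}$ and $\ph_{\alpha^{-1}}(1_{V_{(\alpha,r(\alpha),\alpha)}}) = 1_{V_{(\eword,r(\alpha),\eword)}}$ by part (i); and $V_{(\eword,r(\alpha),\eword)} = V_{\alpha^{-1}}$ by Lemma~\ref{lem:domainpart1}, so this function is both $1_{r(\alpha)}\delta_{\alpha^{-1}}$ and $1_{\alpha^{-1}}\delta_{\alpha^{-1}}$ by the notational conventions. \textbf{(v)} Using \eqref{e:crossedprodOperations}, $(1_\alpha\delta_\alpha)(1_{\beta^{-1}}\delta_{\beta^{-1}}) = \ph_\alpha(\ph_{\alpha^{-1}}(1_\alpha)1_{\beta^{-1}})\delta_{\alpha\beta^{-1}} = \ph_\alpha(1_{V_{\alpha^{-1}}\cap V_{\beta^{-1}}})\delta_{\alpha\beta^{-1}}$; by Lemma~\ref{lem:intersectionproperties} and Lemma~\ref{lem:domainsofreducedwords}(i), $\phi_\alpha(V_{\alpha^{-1}}\cap V_{\beta^{-1}}) = V_\alpha\cap V_{\alpha\beta^{-1}} = V_{\alpha\beta^{-1}}$, which equals $V_{(\alpha\beta^{-1},r(\alpha)\cap r(\beta),\alpha\beta^{-1})}$; comparing with the definition of $1_{\alpha\beta^{-1}}\delta_{\alpha\beta^{-1}}$ (the characteristic function of $V_{\alpha\beta^{-1}}$) gives the claim. \textbf{(vi)} Compute both sides with \eqref{e:crossedprodOperations}: $(1_A\delta_\eword)(1_a\delta_a) = (1_A 1_{r(a)})\delta_a = 1_{V_{(\eword,A\cap r(a),\eword)}}\delta_a$ since $\ph_\eword$ is the identity and $1_a = 1_{V_{(a,r(a),a)}}$, while after unwinding domains $V_{(\eword,A\cap r(a),\eword)} = V_{(\eword,r(A,a),\eword)}\cap\ldots$; on the other side $(1_a\delta_a)(1_{r(A,a)}\delta_\eword) = \ph_a(\ph_{a^{-1}}(1_a)1_{r(A,a)})\delta_a = \ph_a(1_{V_{(\eword,r(A,a),\eword)}})\delta_a = 1_{V_{(a,r(A,a),a)}}\delta_a$ by part (i). It remains to identify $V_{(\eword,A\cap r(a),\eword)}\cap V_a$ with $V_{(a,r(A,a),a)}$; this follows because for $\xi\in V_a = \bigsqcup_\beta\ftight_{a\beta}$ one has $A\in\xi_0$ (via $\phi_a^{-1}$) iff $r(A,a)\in\xi_1$ iff $(a,r(A,a),a)\in\xi$, using Remark~\ref{remark.when.in.xialpha} and the complete-family condition. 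The main obstacle throughout is bookkeeping: correctly tracking which $V_t$ a given characteristic function lives in after each application of $\ph$, which is where Lemmas~\ref{lem:domainpart1}, \ref{lem:intersectionproperties}, and \ref{lem:domainsofreducedwords} do the real work.
\end{proof}
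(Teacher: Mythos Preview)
Your overall approach is the same as the paper's: unwind the crossed-product operations and identify the relevant $V_t$'s using Lemmas~\ref{lem:domainpart1}--\ref{lem:domainsofreducedwords}. Parts (i), (iii), (iv), (v) are essentially identical to the paper's arguments (in (v) you invoke the abstract identity $\phi_\alpha(V_{\alpha^{-1}}\cap V_{\beta^{-1}})=V_\alpha\cap V_{\alpha\beta^{-1}}=V_{\alpha\beta^{-1}}$ where the paper does a pointwise check, which is fine).

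There is, however, a genuine error in your sketch of (ii). After the induction step the inner function is
\[
\ph_{(\alpha_{1,n-1})^{-1}}(1_{\alpha_{1,n-1}})\cdot 1_{a_n}=1_{V_{(\alpha_{1,n-1})^{-1}}}\cdot 1_{V_{a_n}}=1_{V_{(\alpha_{1,n-1})^{-1}}\cap V_{a_n}},
\]
and this set is \emph{not} $V_{(\eword,r(\alpha),\eword)}$: the former consists of filters whose word begins with $a_n$ and with $r(\alpha_{1,n-1})\in\xi_0$, whereas the latter requires $r(\alpha)\in\xi_0$ with no constraint on the first letter. Consequently your next claim, that $\ph_{\alpha_{1,n-1}}$ sends this to $1_{V_{(\alpha_{1,n-1},r(\alpha),\alpha_{1,n-1})}}$ and that this equals $1_\alpha$, also fails (for $\xi\in V_\alpha$ one does not automatically have $r(\alpha)\in\xi_{n-1}$). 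The correct finish is to apply the partial-action identity $\phi_s(V_{s^{-1}}\cap V_t)=V_s\cap V_{st}$ with $s=\alpha_{1,n-1}$, $t=a_n$, giving $\phi_{\alpha_{1,n-1}}(V_{(\alpha_{1,n-1})^{-1}}\cap V_{a_n})=V_{\alpha_{1,n-1}}\cap V_\alpha=V_\alpha$, hence $1_\alpha\delta_\alpha$ as required. (The paper avoids this by inducting from the other side, proving $(1_a\delta_a)(1_\beta\delta_\beta)=1_{a\beta}\delta_{a\beta}$ via a direct pointwise evaluation.)

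In (vi) your line ``$(1_A 1_{r(a)})\delta_a = 1_{V_{(\eword,A\cap r(a),\eword)}}\delta_a$'' is garbled: the product is $1_A\cdot 1_a$, not $1_A\cdot 1_{r(a)}$, and $1_a$ is supported on $V_a$, not on $V_{(\eword,r(a),\eword)}$. Your final identification $V_{(\eword,A,\eword)}\cap V_a=V_{(a,r(A,a),a)}$ via the complete-family condition is correct, and is exactly what the paper checks pointwise, so the argument can be repaired by cleaning up that middle line.
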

\begin{proof}
	(i) Let  $\alpha\in\awplus$ and $A\in\acf_{\alpha}$. We first show that \begin{equation}\label{e:indicatorshiftequal}
	\ph_{\alpha^{-1}}(1_{V_{(\alpha,A,\alpha)}})=1_{V_{(\eword,A,\eword)}}.
	\end{equation} 
	Note that $\ph_{\alpha^{-1}}(1_{V_{(\alpha,A,\alpha)}})=1_{V_{(\alpha,A,\alpha)}}\circ\phi_\alpha =1_{\phi_{\alpha^{-1}}(V_{(\alpha,A,\alpha)})}.$ Hence to show Equation 
	(\ref{e:indicatorshiftequal}) we need to show that $\phi_{\alpha^{-1}}(V_{(\alpha,A,\alpha)}) =V_{(\eword,A,\eword)}$. Note that 
	\begin{eqnarray*}
		\xi^{\alpha\beta}\in V_{(\alpha,A,\alpha)} &\Leftrightarrow&  (\alpha,A,\alpha)\in\xi^{\alpha\beta} \\
		&\Leftrightarrow& A\in\xi^{\alpha\beta}_{|\alpha|}. \\	
	\end{eqnarray*}
	Put $\eta=\phi_{\alpha^{-1}}(\xi^{\alpha\beta})$. Then 
	\begin{eqnarray*}
		A\in\xi^{\alpha\beta}_{|\alpha|}&\Leftrightarrow& A\in\eta_0 \\
		&\Leftrightarrow& (\eword,A,\eword)\in\eta \\
		&\Leftrightarrow& \eta=\phi_{\alpha^{-1}}(\xi^{\alpha\beta})\in V_{(\eword,A,\eword)}.\\
	\end{eqnarray*}
	%
	Hence  $\phi_{\alpha^{-1}}(V_{(\alpha,A,\alpha)})= V_{(\eword,A,\eword)}$, which shows that $\ph_{\alpha^{-1}}(1_{V_{(\alpha,A,\alpha)}})=1_{V_{(\eword,A,\eword)}}$. Since, $\ph_\alpha$ and $\ph_{\alpha^{-1}}$ are inverses of each other, it follows that 
	$\ph_\alpha(1_{V_{(\eword,A,\eword)}})=1_{V_{(\alpha,A,\alpha)}}$, completing the proof of (i).

	(ii) First let  $a\in\alf$ and $\beta\in\awstar$.	Then 
	\begin{equation*} 
	(1_a\delta_a)(1_\beta\delta_\beta)= \ph_a(\ph_{a^{-1}}(1_a)1_\beta)\delta_{a\beta}=\ph_a(1_{r(a)}1_\beta)\delta_{a\beta},  
	\end{equation*} 
	where $\ph_a(1_{r(a)}1_\beta)\in D_{a\beta}=C_0(V_{a\beta})$. Let $\xi\in V_{a\beta}=\phi_a(V_\beta).$ Then $\xi=\xi^{a\beta\gamma}$ for some $\gamma\in \overline{\awleinf}$. Thus  $\phi_{a^{-1}}(\xi^{a\beta\gamma})\in V_\beta\cap V_{a^{-1}}$ and 
	\begin{equation*}
	\begin{split}
	\ph_a(1_{r(a)}1_\beta)(\xi^{a\beta\gamma})& =1_{r(a)}1_\beta (\phi_{a^{-1}}(\xi^{a\beta\gamma}))\\
	& = 1_{r(a)}1_\beta \left(H_{[\alpha]}(\xi^{\alpha\beta\gamma})\right)\\
	& = 1.
	\end{split}
	\end{equation*} 
	Hence $\ph_a(1_{r(a)}1_\beta)=1_{a\beta}$, and thus $(1_a\delta_a)(1_\beta\delta_\beta)=1_{a\beta}\delta_{a\beta}$. For $\alpha=a_1 \cdots a_n$, applying the preceding argument pairwise from right to left yields $ (1_{a_1}\delta_{a_1})\cdots(1_{a_n}\delta_{a_n})=1_{a_1\cdots a_n}\delta_{a_1\cdots a_n}=1_{\alpha}\delta_\alpha$.
	
	(iii) Let $\alpha\in\awplus$ and $A\in\acf_{\alpha}$. Then applying Equations (\ref{e:crossedprodOperations}) and (i) we have that
	\begin{eqnarray*}
		(1_{\alpha}\delta_{\alpha})(1_A\delta_\eword)(1_{\alpha}\delta_{\alpha})^{*} &=&\left(\ph_\alpha\left(\ph_{\alpha^{-1}}(1_\alpha)1_A \right)\delta_\alpha\right)  \left(\ph_{\alpha^{-1}}(1_\alpha)\delta_{\alpha^{-1}}\right) \\
		&=& \left(\ph_\alpha\left(1_{r(\alpha)}1_A \right)\delta_\alpha\right)  \left(1_{r(\alpha)}\delta_{\alpha^{-1}}\right) \\
		&=& \ph_{\alpha}(1_{r(\alpha)}1_A1_{r(\alpha)})\delta_\eword  \\
		&=& \ph_{\alpha}(1_A)\delta_\eword  \\
		&=& 1_{V_{(\alpha,A,\alpha)}}\delta_\eword.
	\end{eqnarray*}
	
	(iv)  Let $\alpha\in\awplus$. By definition $(1_\alpha\delta_\alpha)^*=\ph_{\alpha^{-1}}(1_\alpha)\delta_{\alpha^{-1}}$. Since $\ph_{\alpha^{-1}}$ is a *-isomorphism from $C_0(V_{\alpha})$ onto $C_0(V_{\alpha^{-1}})$, it follows that $\ph_{\alpha^{-1}}(1_\alpha)=1_{\alpha^{-1}}$. On the other hand $\ph_{\alpha^{-1}}(C_0(V_{\alpha}))= C_0(\phi_{\alpha^{-1}}(V_{\alpha}))=C_0(V_{\alpha^{-1}})=C_0(V_{(\eword,r(\alpha),\eword)})$. Hence $\ph_{\alpha^{-1}}(1_\alpha)=1_{\alpha^{-1}}=1_{r(\alpha)}$.

	(v) By (iv) above we have that 
	$$(1_\alpha\delta_\alpha)(1_{\beta^{-1}}\delta_{\beta^{-1}})=(1_\alpha\delta_\alpha)(1_{r(\beta)}\delta_{\beta^{-1}})=
	\ph_\alpha(1_{r(\alpha)\cap r(\beta)})\delta_{\alpha\beta^{-1}}=(1_{r(\alpha)\cap r(\beta)}\circ\phi_{\alpha^{-1}})\delta_{\alpha\beta^{-1}}.$$ 
	Hence it suffices to show that $1_{\alpha\beta^{-1}}=1_{r(\alpha)\cap r(\beta)}\circ\phi_{\alpha^{-1}}$. By Equations (\ref{e:toppartialactionextend}) we have that 
	$$V_{\alpha\beta^{-1}}=\phi_{\alpha}(V_{\beta^{-1}})=\phi_{\alpha}(V_{\alpha^{-1}}\cap V_{\beta^{-1}}). $$  
	Thus, for $\xi\in\ftight$, we have that  
	\begin{eqnarray}
	1_{\alpha\beta^{-1}}(\xi)\neq 0 &\Leftrightarrow& \xi\in \phi_{\alpha}(V_{\alpha^{-1}}\cap V_{\beta^{-1}}) \nonumber\\
	&\Leftrightarrow& \phi_{\alpha^{-1}}(\xi)\in V_{\alpha^{-1}}\cap V_{\beta^{-1}}\nonumber\\ 
	&\Leftrightarrow& (\eword,r(\alpha),\eword),(\eword,r(\beta),\eword)\in  \phi_{\alpha^{-1}}(\xi)\nonumber\\
	&\Leftrightarrow& (\eword,r(\alpha)\cap r(\beta),\eword)\in\phi_{\alpha^{-1}}(\xi)\nonumber\\
	&\Leftrightarrow& 1_{r(\alpha)\cap r(\beta)}\circ\phi_{\alpha^{-1}}(\xi)\neq 0.\nonumber
	\end{eqnarray}
	Hence $1_{\alpha\beta^{-1}}=1_{r(\alpha)\cap r(\beta)}\circ\phi_{\alpha^{-1}}$,  completing the proof of (v).
	
	(vi) Note that 
	$$ (1_A\delta_\eword)(1_a\delta_a)=(1_A 1_a)\delta_a.$$ 
	We claim that $1_A 1_a=\ph_a(1_{r(a)}1_{r(A,a)})$.  For $\xi\in\ftight$ we have 
	\begin{eqnarray*}
		1_A1_a(\xi)\neq0 &\Leftrightarrow& \xi\in V_{(\eword,A,\eword)}\cap V_{(a,r(a),a)} \\
		&\Leftrightarrow& r(A,a)\cap r(a)\in\xi_1 \text{ and } \xi=\xi^{a\beta}, 
	\end{eqnarray*}
	for some $\beta \in\ftight$. On the other hand
	\begin{eqnarray*}
		\ph_a(1_{r(a)}1_{r(A,a)})(\xi)=1_{r(a)\cap r(A,a)}(\phi_{a^{-1}}(\xi))\neq 0&\Leftrightarrow& \phi_{a^{-1}}(\xi)\in V_{r(a)\cap r(A,a)}\\
		&\Leftrightarrow&  \xi\in \phi_a(V_{r(a)\cap r(A,a)}) \\
		&\Leftrightarrow&  \xi \in V_a \text{ and } A\in \xi_0  \\
		&\Leftrightarrow&  \xi=\xi^{a\beta}\text{ and } A\in \xi_0  \\
		&\Leftrightarrow&  r(A,a)\cap r(a)\in\xi_1 \text{ and } \xi=\xi^{a\beta}, 
	\end{eqnarray*}
	for some $\beta \in\ftight$, which proves the claim that  $1_A 1_a=\ph_a(1_{r(a)}1_{r(A,a)})$. Then 
	\begin{eqnarray*}
		(1_A\delta_\eword)(1_a\delta_a)&=& 1_A1_a\delta_a\\
		&=& \ph_a(1_{r(a)}1_{r(A,a)})\delta_a \\
		&=& \ph_{a}(\ph_{a^{-1}}(1_a)1_{r(A,a)})\delta_a \\
		&=&  (1_a\delta_a)(1_{r(A,a)}\delta_\eword).  \\
	\end{eqnarray*}
\end{proof}

The labelled space $C^*$-algebra $C^*\lspace$ is generated by a set of projections and a set of partial isometries indexed by sets $ A\in \acf $ and letters $ a\in \alfg{A}$, respectively (subject to certain relations (Section \ref{sec:labelCstarAlgDfn})). In the following proposition we show that  $C_0(\ftight)\rtimes_{\ph} \F$ also has generators indexed by the sets $ A\in \acf $ and letters $ a\in \alfg{A}$. This will help to define the isomorphism between these $C^*$-algebras.

\begin{proposition} \label{prop:genCrossProd}
	Let $C^*(\{1_A\delta_w,1_a\delta_a\})\subseteq C_0(\ftight)\rtimes_{\ph}\F$ denote the $C^*$-subalgebra generated by $\{1_A\delta_w,1_a\delta_a\mid a\in\alf, A\in\acf\}$. Then
	$$C_0(\ftight)\rtimes_{\ph}\F=C^*(\{1_A\delta_w,1_a\delta_a\}).$$
\end{proposition}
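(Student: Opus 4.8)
The plan is to prove the two inclusions $C^*(\{1_A\delta_\eword, 1_a\delta_a\}) \subseteq C_0(\ftight)\rtimes_{\ph}\F$ and $C_0(\ftight)\rtimes_{\ph}\F \subseteq C^*(\{1_A\delta_\eword, 1_a\delta_a\})$ separately; the first is immediate from the definitions, so the real content is the reverse inclusion. Since $C_0(\ftight)\rtimes_{\ph}\F$ is the closed span of elements $f_t\delta_t$ with $f_t\in D_t = C_0(V_t)$, and since by Lemma~\ref{lem:domainsofreducedwords} the only $t\in\F$ with $V_t\neq\emptyset$ are $t=\eword$, $t=\alpha\in\awplus$, $t=\alpha^{-1}$, and $t=\beta\alpha^{-1}$ with $\alpha,\beta\in\awplus$ and $r(\alpha)\cap r(\beta)\neq\emptyset$, it suffices to show that $f_t\delta_t\in C^*(\{1_A\delta_\eword,1_a\delta_a\})$ for each such $t$ and each $f_t\in D_t$.

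The key step is to reduce to characteristic functions of the basic compact-open sets $V_{(\gamma,A,\gamma)}$. First I would handle the case $t=\eword$: by Lemma~\ref{lem:genC0}, $C_0(\ftight)$ is generated by $\{1_{V_{(\gamma,A,\gamma)}}\mid \gamma\in\awstar, A\in\acfrg{\gamma}\}$, so $D_\eword = C_0(\ftight)$ is the closed $*$-algebra generated by these; by Lemma~\ref{lem:crossProdComputations}(ii) and (iii), each $1_{V_{(\gamma,A,\gamma)}}\delta_\eword = (1_\gamma\delta_\gamma)(1_A\delta_\eword)(1_\gamma\delta_\gamma)^*$ lies in $C^*(\{1_A\delta_\eword,1_a\delta_a\})$, and since the assignment $f\mapsto f\delta_\eword$ is a $*$-homomorphism onto $C_0(\ftight)\delta_\eword$, it follows that $C_0(\ftight)\delta_\eword \subseteq C^*(\{1_A\delta_\eword,1_a\delta_a\})$. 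Next, for $t=\alpha\in\awplus$, I would note that any $f\in D_\alpha = C_0(V_\alpha)$ can be written as $f = g\cdot 1_{V_\alpha}$ with $g\in C_0(\ftight)$, and since $V_\alpha = V_{(\alpha,r(\alpha),\alpha)}$ (Lemma~\ref{lem:domainpart1}) we get $f\delta_\alpha = (g 1_{r(\alpha)\text{-truncated}})\,\delta_\alpha$; more usefully, $f\delta_\alpha = (f\delta_\eword)(1_\alpha\delta_\alpha)$, because $(f\delta_\eword)(1_\alpha\delta_\alpha) = \ph_\eword(f\cdot 1_\alpha)\delta_\alpha = (f1_\alpha)\delta_\alpha = f\delta_\alpha$ since $f$ is supported in $V_\alpha$. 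As $f\delta_\eword\in C^*(\{1_A\delta_\eword,1_a\delta_a\})$ by the previous case and $1_\alpha\delta_\alpha = (1_{a_1}\delta_{a_1})\cdots(1_{a_n}\delta_{a_n})\in C^*(\{1_A\delta_\eword,1_a\delta_a\})$ by Lemma~\ref{lem:crossProdComputations}(ii), we conclude $f\delta_\alpha\in C^*(\{1_A\delta_\eword,1_a\delta_a\})$. The case $t=\alpha^{-1}$ then follows by taking adjoints, using~\eqref{e:crossedprodOperations} and Lemma~\ref{lem:crossProdComputations}(iv).

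Finally, for the mixed case $t=\beta\alpha^{-1}$ (reduced, with $r(\alpha)\cap r(\beta)\neq\emptyset$) and $f\in D_{\beta\alpha^{-1}} = C_0(V_{\beta\alpha^{-1}})$, I would write $f\delta_{\beta\alpha^{-1}} = (f\delta_\eword)(1_{\beta\alpha^{-1}}\delta_{\beta\alpha^{-1}})$ using that $f$ is supported in $V_{\beta\alpha^{-1}}$, and then invoke Lemma~\ref{lem:crossProdComputations}(v), which gives $1_{\beta\alpha^{-1}}\delta_{\beta\alpha^{-1}} = (1_\beta\delta_\beta)(1_{\alpha^{-1}}\delta_{\alpha^{-1}})$, a product of elements already shown to be in $C^*(\{1_A\delta_\eword,1_a\delta_a\})$. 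Taking the closed span over all such $f$ and $t$, and using~\eqref{e:partcrossedprodDFN}, yields $C_0(\ftight)\rtimes_{\ph}\F \subseteq C^*(\{1_A\delta_\eword,1_a\delta_a\})$. The main obstacle I anticipate is the bookkeeping in the case $t=\eword$: one must be careful that the closed $*$-subalgebra of $C_0(\ftight)$ generated by the $1_{V_{(\gamma,A,\gamma)}}$ really is all of $C_0(\ftight)$ (this is exactly Lemma~\ref{lem:genC0}), and that multiplying by $1_\alpha$ to ``localize'' a function to $V_\alpha$ is legitimate --- i.e., that $f\cdot 1_{V_\alpha} = f$ for $f\in C_0(V_\alpha)$ viewed inside $C_0(\ftight)$, which is where the compact-openness of $V_\alpha$ (so that $1_{V_\alpha}\in C_0(\ftight)$) is essential. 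Everything else is a routine application of the multiplication formula~\eqref{e:crossedprodOperations} together with the identities collected in Lemma~\ref{lem:crossProdComputations}.
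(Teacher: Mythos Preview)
Your proposal is correct and follows essentially the same approach as the paper: first show $C_0(\ftight)\delta_\eword\subseteq C^*(\{1_A\delta_\eword,1_a\delta_a\})$ via Lemma~\ref{lem:genC0} and Lemma~\ref{lem:crossProdComputations}(ii),(iii), then handle each relevant $t$ by factoring $f_t\delta_t=(f_t\delta_\eword)(1_t\delta_t)$ and invoking Lemma~\ref{lem:crossProdComputations}(ii),(iv),(v). The only cosmetic difference is that you dispatch the case $t=\alpha^{-1}$ by taking adjoints of the $t=\alpha$ case, whereas the paper computes it directly as $(f_{\alpha^{-1}}\delta_\eword)(1_\alpha\delta_\alpha)^*$; both are equally valid.
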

\begin{proof}
	Let $C_0(\ftight)\delta_\eword$ denote the canonical image of $C_0(\ftight)$ in $C_0(\ftight)\rtimes_{\ph}\F$. We claim that 
	$C_0(\ftight)\delta_\eword \subset C^*(\{1_A\delta_w,1_a\delta_a\})$. To prove  this claim, note that by Lemma \ref{lem:genC0} it will suffice to see that $1_{V_{(\alpha,A,\alpha)}}\delta_\eword\in C^*(\{1_A\delta_\eword,1_a\delta_a\})$ for every $\alpha\in\awstar$ and $A\in\acf$. 
	Hence to prove this claim let  $\alpha=a_1 \cdots a_n$. Then by  by Lemma \ref{lem:crossProdComputations} we have that
	$$ (1_{a_1}\delta_{a_1})\cdots(1_{a_n}\delta_{a_n})=1_{a_1\cdots a_n}\delta_{a_1\cdots a_n},$$ and also 
	$$ (1_{\alpha}\delta_{\alpha})(1_A\delta_\eword)(1_{\alpha}\delta_{\alpha})^{*}=1_{V_{(\alpha,A,\alpha)}}\delta_\eword.$$ 
	Hence 
	$$ 1_{V_{(\alpha,A,\alpha)}}\delta_\eword = (1_{a_1}\delta_{a_1}\cdots1_{a_n}\delta_{a_n})1_{A}\delta_\eword (1_{a_1}\delta_{a_1}\cdots1_{a_n}\delta_{a_n})^{*} $$
	proving our claim.
	
	Next we show that $C_0(\ftight)\rtimes_{\ph}\F=C^*(\{1_A\delta_w,1_a\delta_a\})$. It is clear that $C^*(\{1_A\delta_w,1_a\delta_a\})\subseteq C_0(\ftight)\rtimes_{\ph}\F$. To see the reverse inclusion, let $f_t\delta_t\in C_0(\ftight)\rtimes_{\ph}\F$ with $f_t\in C_0(\ftight)$ and let $t\in\F$. Then $f_t\in D_t=C_0(V_t)$. If $t=\eword$ then $f_\eword\delta_\eword\in C_0(\ftight)\delta_{\eword}\subset  C^*(\{1_A\delta_w,1_a\delta_a\})$. If $t\neq \eword$, then by Lemma \ref{lem:domainsofreducedwords}(ii) we may assume that $t=\alpha\beta^{-1}$ with $\alpha,\beta\in\awstar$. We consider three cases. First assume that $|\alpha|,|\beta|\geq 1$. Then by Lemma \ref{lem:crossProdComputations} we have that 
	\begin{eqnarray*}
		f_{\alpha\beta^{-1}}\delta_{\alpha\beta^{-1}} &=&  (f_{\alpha\beta^{-1}}1_{\alpha\beta^{-1}})\delta_{\alpha\beta^{-1}} \\
		&=& (f_{\alpha\beta^{-1}}\delta_\eword)(1_{\alpha}\delta_{\alpha})(1_{\beta^{-1}}\delta_{\beta^{-1}}) \\
		&=& (f_{\alpha\beta^{-1}}\delta_\eword)(1_{\alpha}\delta_{\alpha})(1_{\beta}\delta_{\beta})^{*}, 
	\end{eqnarray*}
	which shows that $f_{\alpha\beta^{-1}}\delta_{\alpha\beta^{-1}}\in C^*(\{1_A\delta_w,1_a\delta_a\})$. Secondly, assume that $\beta=\eword$ and $\alpha\neq \eword$. Then 
	\begin{eqnarray*}
		f_{\alpha}\delta_{\alpha} &=&  (f_{\alpha}1_{\alpha})\delta_{\alpha} \\
		&=& (f_{\alpha}\delta_\eword)(1_{\alpha}\delta_{\alpha}), 
	\end{eqnarray*}
	which shows that $f_{\alpha}\delta_{\alpha}\in C^*(\{1_A\delta_w,1_a\delta_a\})$. Lastly, assume that $\alpha=\eword$ and $\beta\neq \eword$. Then by Lemma \ref{lem:crossProdComputations}(iv) we have that 
	\begin{eqnarray*}
		f_{\beta^{-1}}\delta_{\beta^{-1}} 
		&=&  (f_{\beta^{-1}}1_{\beta^{-1}})\delta_{\beta^{-1}} \\
		&=&  (f_{\beta^{-1}}\delta_\eword)(1_{\beta^{-1}}\delta_{\beta^{-1}}) \\
		&=&(f_{\beta^{-1}}\delta_\eword)(1_{\beta}\delta_{\beta})^{*},
	\end{eqnarray*}
	which shows that $f_{\beta^{-1}}\delta_{\beta^{-1}} \in C^*(\{1_A\delta_w,1_a\delta_a\})$.
	Hence, a set that densely spans  $C_0(\ftight)\rtimes_{\ph}\F$ (see (\ref{e:partcrossedprodDFN})) is contained in  $C^*(\{1_A\delta_w,1_a\delta_a\})$, which shows that $C_0(\ftight)\rtimes_{\ph}\F\subseteq C^*(\{1_A\delta_w,1_a\delta_a\})$, and completes the proof.

\end{proof}

By \cite[Theorem 4.3]{MR1953065}, if $N:	\alf \to (0,\infty)$ is any function, then there exists a unique strongly continuous one-parameter group $\sigma$ of automorphisms of $ C_0(\ftight)\rtimes_{\ph}\F $ such that 
$$  \sigma_t(f\delta_a)=N(a)^{it}f\delta_a \hspace{1cm} \text{ and } \hspace{1cm} \sigma_t(g\delta_w)=g\delta_w, $$
for all $t\in\mathbb{R}, a\in\alf,f\in D_g$ and $g\in D_w$.	
If we let $N(a)=\mathrm{exp}(1)$ for every $a\in\alf$, then we obtain a strongly continuous action (using the same notation) $\sigma:\mathbb{T}\to \mathrm{Aut}(C_0(\ftight)\rtimes_{\ph}\F)$ such that 
\begin{equation}
\begin{split}
&\sigma_z(f\delta_a)=zf\delta_a, \text{ and } \\
&\sigma_z(g\delta_w)=f\delta_w,
\end{split}
\end{equation}
for all $t\in\mathbb{R},  a\in\alf,f\in D_g$ and $g\in D_w$.

We now prove our main result of this section.

\begin{theorem}\label{thm:IsomorphismThm}
	There is a *-isomorphism $\psi$ from $C^*\lspace$ onto  $ C_0(\ftight)\rtimes_{\ph}\F$ such that 
	$$\psi(p_A)=1_A\delta_\eword \hspace{1cm} \text{ and  } \hspace{1cm} \psi(s_a)= 1_a\delta_a.$$
\end{theorem}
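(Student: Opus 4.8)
The plan is to build $\psi$ using the universal property of $C^*\lspace$, then construct an inverse using the universal property of the partial crossed product, and finally conclude that $\psi$ is a $*$-isomorphism. First I would verify that the elements $P_A := 1_A\delta_\eword$ and $S_a := 1_a\delta_a$ of $C_0(\ftight)\rtimes_{\ph}\F$ satisfy relations (i)--(iv) of Section \ref{sec:labelCstarAlgDfn}. Relation (i) follows from the fact that $A\mapsto 1_{V_{(\eword,A,\eword)}}$ is a Boolean-algebra homomorphism into $C_0(\ftight)$ (using $V_{(\eword,A\cap B,\eword)}=V_{(\eword,A,\eword)}\cap V_{(\eword,B,\eword)}$, which in turn comes from $E(S)$ being a semilattice and the description of $V_e$). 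Relation (vi) of Lemma \ref{lem:crossProdComputations} is precisely relation (ii). For relation (iii), I would compute $(1_a\delta_a)^*(1_a\delta_a) = \ph_{a^{-1}}(1_a)\,\ph_{a^{-1}}(\ph_a(\ph_{a^{-1}}(1_a)1_a))\,\delta_\eword$; using Lemma \ref{lem:crossProdComputations}(iv) this reduces to $1_{r(a)}\delta_\eword = P_{r(a)}$, and $S_b^*S_a = 0$ for $b\neq a$ follows from orthogonality ($V_a\cap V_b=\emptyset$). Relation (iv) is the Cuntz--Krieger-type relation: for a regular $A$ with no sink-subset, one must show $1_A\delta_\eword = \sum_{a\in\lbf(A\dgraph^1)} (1_a\delta_a)(1_{r(A,a)}\delta_\eword)(1_a\delta_a)^*$; by Lemma \ref{lem:crossProdComputations}(iii) the right-hand side is $\sum_a 1_{V_{(a,r(A,a),a)}}\delta_\eword$, so the content is the set-theoretic identity $V_{(\eword,A,\eword)} = \bigsqcup_{a\in\lbf(A\dgraph^1)} V_{(a,r(A,a),a)}$ inside $\ftight$, which should follow from the description of tight filters of finite type in Theorem \ref{thm:TightFiltersType}(ii) together with the completeness condition on the associated families. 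By universality this yields a $*$-homomorphism $\psi$ with $\psi(p_A)=1_A\delta_\eword$ and $\psi(s_a)=1_a\delta_a$.

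Next I would show $\psi$ is surjective. By Proposition \ref{prop:genCrossProd}, $C_0(\ftight)\rtimes_{\ph}\F = C^*(\{1_A\delta_\eword, 1_a\delta_a\})$, and both generators are in the image of $\psi$, so $\psi$ is onto. It also respects the gauge actions: the circle action $\sigma$ on $C_0(\ftight)\rtimes_{\ph}\F$ defined above satisfies $\sigma_z\circ\psi = \psi\circ\gamma_z$, where $\gamma$ is the canonical gauge action on $C^*\lspace$ (which exists by universality, since the defining relations are invariant under $s_a\mapsto zs_a$, $p_A\mapsto p_A$); this is checked on generators.

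For injectivity I would invoke a gauge-invariant uniqueness theorem for labelled space $C^*$-algebras (as in \cite{MR3614028} or \cite{MR3680957}): since $\psi$ is gauge-equivariant, it suffices to check that $\psi(p_A)\neq 0$ for every nonempty $A\in\acf$, and more generally that $\psi$ is injective on the fixed-point/diagonal subalgebra. Nonvanishing of $\psi(p_A) = 1_{V_{(\eword,A,\eword)}}$ follows from Lemma \ref{lemma:ultrafilter}: any nonzero $(\eword,A,\eword)\in E(S)$ lies in some ultrafilter, hence in some tight filter, so $V_{(\eword,A,\eword)}\neq\emptyset$. For the full diagonal statement one checks that the canonical conditional expectation $C_0(\ftight)\rtimes_{\ph}\F \to C_0(\ftight)\delta_\eword$ pulls back under $\psi$ to the faithful conditional expectation on $C^*\lspace$ onto the closed span of the $s_\alpha p_A s_\alpha^*$, reducing injectivity on the diagonal to the statement that $A\mapsto 1_{V_{(\eword,A,\eword)}}$ and more generally $s_\alpha p_A s_\beta^*\mapsto$ its image are separated by points of $\ftight$ — which is Lemma \ref{lem:genC0} together with Theorem \ref{thm.filters.in.E(S)}. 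Alternatively, one constructs the inverse directly: the assignment $1_A\delta_\eword\mapsto p_A$, $1_a\delta_a\mapsto s_a$ extends to a $*$-homomorphism $C_0(\ftight)\rtimes_{\ph}\F\to C^*\lspace$ via the universal property of the crossed product, once one exhibits a covariant representation of $(\{D_t\},\{\ph_t\})$ inside $C^*\lspace$ (send $f\in D_\eword \cong C_0(\ftight)$ via the isomorphism $C_0(\ftight)\cong \overline{\mathrm{span}}\{s_\alpha p_A s_\alpha^*\}$ established in \cite{MR3680957}, and $\delta_a\mapsto s_a$), and checks it is a two-sided inverse of $\psi$ on generators.

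\textbf{Main obstacle.} The crux is verifying relation (iv) — the Cuntz--Krieger relation — on the crossed-product side, i.e.\ the disjoint decomposition $V_{(\eword,A,\eword)} = \bigsqcup_{a\in\lbf(A\dgraph^1)} V_{(a,r(A,a),a)}$ for regular $A$ without sink subsets. This requires unwinding the characterization of tight filters of finite type (Theorem \ref{thm:TightFiltersType}(ii)): a tight filter containing $(\eword,A,\eword)$ but with no sink condition and $\lbf(A\dgraph^1)$ finite cannot have empty word as its associated path, so it must extend by exactly one of the letters $a\in\lbf(A\dgraph^1)$, landing in precisely one $\ftight_{a\beta}$; showing this is both exhaustive and disjoint, and matches up with $r(A,a)$ correctly, is the delicate combinatorial step. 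The second potential obstacle is the injectivity argument, which hinges on having the right gauge-invariant uniqueness theorem available in the generality used here (no domain or finiteness hypotheses) — if such a theorem is not citable verbatim, the direct-inverse approach via a covariant representation is the safer route, and there the work is checking covariance, i.e.\ $\ph_t(f)\,u_t = u_t\, f$ translated into the identities $p_{r(A,a)} s_a^* = s_a^* p_A$ and their iterates, which are routine consequences of relations (ii)--(iii).
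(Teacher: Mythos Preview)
Your proposal is correct and follows essentially the same route as the paper: verify relations (i)--(iv) for the elements $1_A\delta_\eword$ and $1_a\delta_a$ (with the Cuntz--Krieger relation (iv) reducing, exactly as you say, to the set-theoretic decomposition $V_{(\eword,A,\eword)}=\bigsqcup_{a\in\lbf(A\dgraph^1)}V_{(a,r(A,a),a)}$ via the tight-filter characterization), obtain $\psi$ by universality, get surjectivity from Proposition \ref{prop:genCrossProd}, and deduce injectivity from the gauge-invariant uniqueness theorem of \cite[Corollary 3.10]{MR3614028} using nonvanishing of $1_A$ for $A\neq\emptyset$ together with the gauge action on the crossed product coming from semi-saturatedness and orthogonality. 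Your alternative inverse-construction approach is not needed, since the required uniqueness theorem is available in the cited reference in the generality used here.
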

\begin{proof}
	By Proposition \ref{prop:genCrossProd}, $C_0(\ftight)\rtimes_{\ph}\F$ is densely spanned by the set
	$$ \{1_A\delta_w,1_a\delta_a\mid A\in\acf, a\in\alf{} \}.$$
	We show this set satisfies the relations of $C^*\lspace$ as in Section \ref{sec:labelCstarAlgDfn}. Fix $a,b\in\alf$, $A,B\in\acf$ and let
	$\xi\in\ftight$ with complete family $\{\xi_n\}$. 
	
	For relation (i), note that $(1_A\delta_\eword)(1_B\delta_\eword)=(1_A1_B)\delta_\eword$ and $1_A1_B(\xi)\neq0$ if and only if $\xi\in V_{(\eword,A,\eword)}\cap V_{(\eword,B,\eword)}$. That is, 
	$A\cap B\in\xi_0$, which implies that $\xi\in V_{(\eword,A\cap B,\eword)}$. On the other hand if $\xi\in V_{(\eword,A\cap B,\eword)}$ then $A\cap B\in\xi_0$. Since $\xi_0$ is a filter and since $ A\cap B\subset A$ and $ A\cap B\subset B$ it follows that $A\in \xi_0$ and $B\in\xi_0$. Hence $\xi\in V_{(\eword,A,\eword)}\cap V_{(\eword,B,\eword)}$, and thus $(1_A\delta_\eword)(1_B\delta_\eword)= 1_{A\cap B}\delta_\eword$. If $A=\emptyset$, then $(\eword,A,\eword)\notin E(S)$ and thus $V_{(\eword,A,\eword)}=\{\xi\in \ftight\mid (\eword,A,\eword)\in\xi\}=\emptyset$. Hence $1_A\delta_\eword=0$. The equation $$1_{A\cup B}\delta_\eword=1_A\delta_\eword+1_B\delta_\eword-1_{A\cap B}\delta_\eword$$
	follows from the fact that $\xi_0$ is a prime filter. Therefore $A\cup B\in\xi_0$ if and only if $A\in\xi_0$ or $B\in\xi_0$, proving relation (i). 	
	
	Relation (ii) follows directly from Lemma \ref{lem:crossProdComputations}(vi).
	
	Next we show that relation (iii) is satisfied.  Firstly, we have that
	$$(1_a\delta_a)^{*}(1_a\delta_a)=(1_{r(a)}\delta_a^{-1})(1_a\delta_a)=1_{r(a)}\delta. $$
	Secondly, if  $a\neq b $  then $V_{a^{-1}b}=\emptyset $ by Lemma \ref{lem:domainsofreducedwords}(ii), and thus 
	$$ (1_a\delta_a)^{*}(1_b\delta_b)=0.$$
	
	To see that relation (iv) is satisfied, we need to show, for every $A\in\acf$ for which $0<\card{\lbf(A\dgraph^1)}<\infty$ and there does not exist $B\in\acf$ such that $\emptyset\neq B\subseteq A\cap \dgraph^0_{sink}$, that
	\begin{eqnarray}\label{e:IsomorphismThm1}
	1_A\delta_\eword&=&\sum_{a\in\lbf(A\dgraph^1)}(1_a\delta_a)(1_{r(A,a)}\delta_\eword)(1_a^*\delta_a)\nonumber \\
	&=& \sum_{a\in\lbf(A\dgraph^1)} \ph_a(1_{r(A,a)})\delta_\eword\nonumber \\
	&=& \sum_{a\in\lbf(A\dgraph^1)} (1_{V(\eword,r(A,a),\eword)}\circ\phi_{a}^{-1})\delta_\eword\nonumber \\
	&=& \sum_{a\in\lbf(A\dgraph^1)} (1_{\phi_{a}(V_{(\eword,r(A,a),\eword)})})\delta_\eword. 
	\end{eqnarray}
	Note that if $a,b\in \lbf(A\dgraph^1)$, then $\phi_{a}(V_{(\eword,r(A,a),\eword)}) \subset V_a$ and $\phi_{b}(V_{(\eword,r(A,b),\eword)}) \subset V_b$. Hence, since $V_a\cap V_b=\emptyset$ if $a\neq b$, it follows that $\phi_{a}(V_{(\eword,r(A,a),\eword)}) \cap \phi_{b}(V_{(\eword,r(A,b),\eword)})=\emptyset$. Therefore to show that Equation (\ref{e:IsomorphismThm1}) holds it will suffice to show that
	\begin{equation}
	V_{(\eword,A,\eword)}=\bigcup_{a\in\lbf(A\dgraph^1)} \phi_{a}(V_{(\eword,r(A,a),\eword)}).
	\end{equation}
	Let $\xi^{\alpha}\in V_{(\eword,A,\eword)}$ with $\alpha=a_1a_2\cdots$ (possibly the empty word). Then $A\in\xi_0^{\alpha}$. Since $0<\card{\lbf(A\dgraph^1)}<\infty$ and there does not exist $B\in\acf$ such that $\emptyset\neq B\subseteq A\cap \dgraph^0_{sink}$, it follows from \cite[Theorem 6.7]{MR3648984}  that $\alpha\neq \eword$, $a_1\in \lbf(A\dgraph^1)$ and $r(A,a_1)\neq\emptyset$. Hence $r(A,a_1)\in\xi_1^{\alpha}$. Thus 
	$$ \phi_{a_1}(\xi^{\alpha})\in V_{(\eword,r(A,a_1),\eword)}, $$
	and since $\xi^{\alpha}=\phi_{a_1^{-1}}(\phi_{a_1}(\xi^{\alpha}))$ it follow that 
	$$ V_{(\eword,A,\eword)}\subseteq \bigcup_{a\in\lbf(A\dgraph^1)} \phi_{a}(V_{(\eword,r(A,a),\eword)}). $$
	For the reverse inclusion let $\xi^{\beta}\in V_{(\eword,r(A,a),\eword)}$ for some $a\in\lbf(A\dgraph^1)$ and let $\eta=\phi_a(\xi^{\beta})$. Then $r(A,a)\in\xi_0^{\beta}$. Hence $r(A,a)\cap r(a)=r(A,a)\in \eta_1$, which implies that $A\in\eta_0$ and thus $\eta\in V(\eword,A,\eword)$, giving the reverse inclusion. Hence Equation \ref{e:IsomorphismThm1} is satisfied. 
	
	Since $C^*\lspace$ is universal for these relations, there is a surjective *-homomorphism $\psi:C^*\lspace\to C_0(\ftight)\rtimes_{\ph}\F$ such that 
	$$\psi(p_A)=1_A\delta_\eword \hspace{1cm} \text{ and  } \hspace{1cm} \psi(s_a)= 1_a\delta_a.$$
	
	All that is left to prove is that the $\psi$ is injective. By \cite[Crollary 3.10]{MR3614028} $\psi$ is injective if and only if $1_A\neq0$ for $A\neq \emptyset$, and for each $z\in\mathbb{T}$ there exists a *-homomorphism $\sigma_z:C^*(\{1_A,1_a\})\to C^*(\{1_A,1_a\})$ such that  
	\begin{equation}\label{e:stronglyctsgrhom}
	\begin{split}
	&\sigma_z(1_a\delta_\alpha)=z1_a\delta_\alpha, \text{ and } \\
	&\sigma_z(1_A\delta_w)=1_A\delta_w,
	\end{split}
	\end{equation}
	for all $A\in\acf$ and $a\in\alf$. Since the partial action is semi-saturated and orthogonal, it follows from \cite[Theorem 4.3]{MR1953065} and the discussion preceding this theorem that such a *-homomorphism $\sigma_z$ satisfying Equations (\ref{e:stronglyctsgrhom}) exists.  Hence $\psi$ is an isomorphism and the proof is complete. 
\end{proof}

\section{Partial action and labelled space groupoids are isomorphic}\label{sec:groupoids}
Let $\lspace$ be a normal labelled space. In this section we show that the groupoid associated with a partial action (as defined in \cite{MR2045419}) is isomorphic to the groupoid associated with a labelled space (as defined in \cite{Gil3}). 

For the definition of a groupoid we refer the reader to \cite{MR584266}. A topological groupoid is a groupoid with a topology such that  multiplication and  involution are continuous. 
A locally compact groupoid is \textit{\' etale }if the range and source maps are local homeomorphisms. An open set in a groupoid is a \textit{bisection} if the range and source maps are bijections when restricted to this open set. An \textit{ample} groupoid is an \' etale groupoid that has a base  of  compact open bisections.

We first describe the groupoid of $\lspace$ as in \cite{Gil3}.  The set
\begin{equation}\label{eqn:Groupoid1}
 \Gamma=\{(\xi^{\alpha\gamma},|\alpha|-|\beta|,\eta^{\beta\gamma})\in \ftight\times\mathbb{Z}\times\ftight\ |\ H_{[\alpha]\gamma}(\xi^{\alpha\gamma})=H_{[\beta]\gamma}(\eta^{\beta\gamma})\}
\end{equation}
is a groupoid with products and inverses given by
\[ (\xi,m,\eta)(\eta,n,\rho)=(\xi,m+n,\rho) \text{ and  } (\xi,m,\eta)^{-1}=(\eta,-m,\xi),\] 
respectively (\cite[Proposition 3.5]{Gil3}). We denote by $\Gamma^{(2)}$ the set of composible pairs. 
If $s=(\alpha,A,\beta)\in S\lspace$ and $e_1,\ldots,e_n\in E(S)$ then the sets 
\[ Z_{s,e:e_1,\ldots,e_n}=\{(\eta^{\alpha\gamma},|\alpha|-|\beta|,\xi^{\beta\gamma})\in\Gamma\ |\ \xi\in V_{e:e_1,\ldots,e_n}\text{ and }H_{[\alpha]\gamma}(\eta)=H_{[\beta]\gamma}(\xi) \} \]
form a basis of compact open sets for a locally compact Hausdorff topology on $\Gamma$ \cite[Proposition 4.4]{Gil3}. It follows from \cite[Corollary 4.14]{Gil3} that $\Gamma$ is an \' etale groupoid. Moreover,  $\Gamma$ is an ample groupoid since the topology has a basis of compact open sets. Note that the unit space $\Gamma^0$ of $\Gamma$ is identified with $\ftight$.  Let $r_{\Gamma}$ and $s_{\Gamma} $ denote the range and source maps, respectively, of $\Gamma$.  

Next we describe the groupoid of a partial action as in \cite{MR2045419}. Let $\Phi=(\{V_t\}_{t\in\F},\{\phi_t\}_{t\in\F})$ be the partial action on $\ftight$ as in Proposition \ref{prop:partialtopaction}. Then
\begin{equation}\label{eqn:Groupoid2}
 \mathcal{G}=\{(\xi,t,\eta)\in\ftight\times\F\times\ftight \mid \eta\in V_{t^{-1}}, \text{ and } \xi=\phi_t(\eta)\}
 \end{equation}
is a groupoid with products and inverses given by 
\[ (\xi,s,\eta)(\eta,t,\rho)=(\xi,st,\rho) \text{ and }  (\xi,t,\eta)^{-1}=(\eta,t^{-1},\xi),\] 
respectively \cite{MR2045419}. We give $\mathcal{G}$ the topology inherited from the product topology on $\ftight\times\F\times\ftight$. The unit space $\mathcal{G}^0$ is also identified with $\ftight$. Let $r_{\mathcal{G}}$ and $s_{\mathcal{G}} $ denote the range and source maps, respectively, of $\mathcal{G}$.  

Our main goal is to show that the groupoids $\Gamma$ and $\mathcal{G}$ are isomorphic (Theorem \ref{t:isomgroupoids}). To this end we first show that $\mathcal{G}$ is an ample groupoid. The following identification of elements in $\mathcal{G}$ is used throughout this section without reference.

\begin{lemma}\label{lem:groupoidsamesets}
	We have that  $(\xi,t,\eta)\in\mathcal{G}$ if and only if $(\xi,t,\eta)= (\xi^{\alpha\gamma},\alpha\beta^{-1},\eta^{\beta\gamma})$ for some $\alpha,\beta\in\awstar$, and $ \phi_{\beta^{-1}}(\eta^{\beta\gamma})=\phi_{\alpha^{-1}}(\xi^{\alpha\gamma}) $.
\end{lemma}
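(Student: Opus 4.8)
The statement characterizes the elements of the partial-action groupoid $\mathcal{G}$ in terms of the labelled-path decompositions coming from Theorem \ref{thm.filters.in.E(S)}, and it is essentially an unpacking of the definitions of $\mathcal{G}$, of the domains $V_t$ (Lemma \ref{lem:domainsofreducedwords}), and of the maps $\phi_t$ (Lemma \ref{lem:domainpart1}, Remark \ref{rem:partialmapshomeomorphisms}). The plan is to prove the two implications separately, and in each case to reduce $t\in\F$ to a reduced word of the form $\alpha\beta^{-1}$ with $\alpha,\beta\in\awstar$ using Lemma \ref{lem:domainsofreducedwords}(ii).

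\textbf{Forward direction.} Suppose $(\xi,t,\eta)\in\mathcal{G}$, so $\eta\in V_{t^{-1}}$ and $\xi=\phi_t(\eta)$. In particular $V_{t^{-1}}\neq\emptyset$, so by Lemma \ref{lem:domainsofreducedwords}(ii) the reduced form of $t$ is $\eword$, or $\alpha$ or $\alpha^{-1}$ for some $\alpha\in\awplus$, or $\alpha\beta^{-1}$ with $\alpha,\beta\in\awplus$ and $r(\alpha)\cap r(\beta)\neq\emptyset$; in every case we may write $t=\alpha\beta^{-1}$ with $\alpha,\beta\in\awstar$ (allowing $\alpha$ or $\beta$ to be $\eword$). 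By Remark \ref{rem:partialmapshomeomorphisms}, $V_{t^{-1}}=V_{(\alpha\beta^{-1})^{-1}}=\phi_{\beta}^{-1}(V_{\alpha^{-1}})\subseteq V_{\beta^{-1}}=V_{(\eword,r(\beta),\eword)}$ (using Lemma \ref{lem:domainpart1} when $\beta\neq\eword$, and $V_{\eword}=\ftight$ when $\beta=\eword$). By Lemma \ref{lem:compactopenneigh}(ii), $\eta\in V_{(\eword,r(\beta),\eword)}=\bigsqcup_{\gamma}\ftight_{(\beta)\gamma}$, so $\eta=\eta^{\beta\gamma}$ for some $\gamma\in\overline{\awleinf}$ with $\beta\gamma\in\overline{\awleinf}$. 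Then $\phi_{\beta^{-1}}(\eta^{\beta\gamma})=H_{[\beta]\gamma}(\eta^{\beta\gamma})\in V_{\alpha^{-1}}$, and a second application of Lemma \ref{lem:compactopenneigh}(ii) (plus Lemma \ref{lem:domainpart1}) gives $\phi_{\beta^{-1}}(\eta^{\beta\gamma})\in\ftightw{(\alpha)\gamma'}$ for some $\gamma'$; since cutting $\beta$ off $\beta\gamma$ leaves the tail $\gamma$, in fact $\gamma'=\gamma$, so $\alpha\gamma\in\overline{\awleinf}$. Applying $\phi_\alpha=G_{(\alpha)\gamma}$ we get $\xi=\phi_t(\eta)=\phi_\alpha(\phi_{\beta^{-1}}(\eta^{\beta\gamma}))=G_{(\alpha)\gamma}(H_{[\beta]\gamma}(\eta^{\beta\gamma}))\in\ftightw{\alpha\gamma}$, so $\xi=\xi^{\alpha\gamma}$, and $\phi_{\alpha^{-1}}(\xi^{\alpha\gamma})=H_{[\alpha]\gamma}(\xi^{\alpha\gamma})=H_{[\beta]\gamma}(\eta^{\beta\gamma})=\phi_{\beta^{-1}}(\eta^{\beta\gamma})$ by Theorem \ref{thm.H-class.G-class.inverses}. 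This is exactly the desired form.

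\textbf{Reverse direction.} Conversely, suppose $(\xi,t,\eta)=(\xi^{\alpha\gamma},\alpha\beta^{-1},\eta^{\beta\gamma})$ with $\alpha,\beta\in\awstar$ and $\phi_{\beta^{-1}}(\eta^{\beta\gamma})=\phi_{\alpha^{-1}}(\xi^{\alpha\gamma})$. Here $\phi_{\alpha^{-1}}(\xi^{\alpha\gamma})=H_{[\alpha]\gamma}(\xi^{\alpha\gamma})\in\ftightw{(\alpha)\gamma}\subseteq V_{\alpha^{-1}}$ by Lemma \ref{lem:domainpart1}, and similarly $\phi_{\beta^{-1}}(\eta^{\beta\gamma})\in V_{\beta^{-1}}$; since these two elements coincide, the common element lies in $V_{\alpha^{-1}}\cap V_{\beta^{-1}}$. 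Hence $\eta^{\beta\gamma}=\phi_\beta$ applied to an element of $V_{\beta^{-1}}\cap V_{\alpha^{-1}}$, so by Remark \ref{rem:partialmapshomeomorphisms} $\eta^{\beta\gamma}\in\phi_\beta(V_{\beta^{-1}}\cap V_{\alpha^{-1}})=V_{(\alpha\beta^{-1})^{-1}}=V_{t^{-1}}$. Finally $\phi_t(\eta^{\beta\gamma})=\phi_\alpha(\phi_{\beta^{-1}}(\eta^{\beta\gamma}))=\phi_\alpha(\phi_{\alpha^{-1}}(\xi^{\alpha\gamma}))=\xi^{\alpha\gamma}$, since $\phi_{\alpha^{-1}}(\xi^{\alpha\gamma})\in V_{\alpha^{-1}}$. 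Therefore $(\xi,t,\eta)\in\mathcal{G}$.

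\textbf{Main obstacle.} The only subtle point is the bookkeeping of tails in the forward direction: after cutting $\beta$ and re-gluing $\alpha$, one must be sure that the resulting filters genuinely have the advertised words $\beta\gamma$, $\gamma$, $\alpha\gamma$ and that $\alpha\gamma,\beta\gamma\in\overline{\awleinf}$, which is where the remark following Theorem \ref{thm.H-class.G-class.inverses} (composability is automatic once the relevant tight-filter sets are nonempty) and the compatibility $H_{[\alpha]\gamma}=(G_{(\alpha)\gamma})^{-1}$ do the work. Everything else is a direct substitution of the definitions of $\phi_t$ from Section \ref{s:partialaction}.
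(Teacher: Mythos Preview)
Your argument follows the same route as the paper's proof (reduce $t$ to $\alpha\beta^{-1}$ via Lemma~\ref{lem:domainsofreducedwords}(ii), locate $\eta$ inside the right domain, then read off the labelled paths and use $\phi_{\alpha\beta^{-1}}=\phi_{\alpha}\circ\phi_{\beta^{-1}}$). The reverse direction is fine.

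There is, however, a genuine slip in the forward direction. From Remark~\ref{rem:partialmapshomeomorphisms} applied to the reduced word $\alpha\beta^{-1}$ one gets
\[
V_{t^{-1}}=V_{(\alpha\beta^{-1})^{-1}}=\phi_{\beta^{-1}}^{-1}(V_{\alpha^{-1}})\subseteq V_{(\beta^{-1})^{-1}}=V_{\beta},
\]
not $V_{\beta^{-1}}$ as you wrote. Consequently you should be invoking Lemma~\ref{lem:compactopenneigh}(i), which gives $V_{\beta}=\bigsqcup_{\gamma}\ftightw{\beta\gamma}$ and hence $\eta=\eta^{\beta\gamma}$. Your line uses Lemma~\ref{lem:compactopenneigh}(ii) and lands in $\bigsqcup_{\gamma}\ftightw{(\beta)\gamma}$, whose elements have associated word $\gamma$, not $\beta\gamma$; so the sentence ``so $\eta=\eta^{\beta\gamma}$'' does not follow from what you actually wrote. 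Once you fix this (replace $V_{\beta^{-1}}$ by $V_{\beta}$ and cite part~(i)), the remainder of your forward argument goes through unchanged and matches the paper's proof.
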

\begin{proof}
	Let $(\xi^{\alpha\gamma},\alpha\beta^{-1},\eta^{\beta\gamma})$ with $\alpha,\beta\in\awstar$, and such that  $\phi_{\beta^{-1}}(\eta^{\beta\gamma})=\phi_{\alpha^{-1}}(\xi^{\alpha\gamma})$. Then 
	$\eta\in V_{\alpha\beta^{-1}}\subseteq V_{\beta}$ (Remark \ref{rem:partialmapshomeomorphisms}) and  $\xi=\phi_{\alpha\beta^{-1}}(\eta)$ (since $\phi_{\alpha^{-1}}$ is homeomorphism). Hence $(\xi^{\alpha\gamma},\alpha\beta^{-1},\eta^{\beta\gamma})\in\mathcal{G}$.
	
	If $(\xi,t,\eta)\in\mathcal{G}$, then $\eta\in V_{t^{-1}}$ and thus $V_t\neq \emptyset$. Hence $t=\alpha\beta^{-1}$ for some $\alpha,\beta\in\awstar$ by Lemma \ref{lem:domainsofreducedwords}(ii), and $V_{t^{-1}}\subseteq V_\beta$ by Remark \ref{rem:partialmapshomeomorphisms}. That is, $\eta=\eta^{\beta\gamma}$ for some $\gamma\in\awleinf$. 
	Since $(\xi,t,\eta)\in\mathcal{G}$  we have that $\xi=\phi_{\alpha\beta^{-1}}(\eta^{\beta\gamma})$. Then,  Lemma \ref{lem:domainsofreducedwords}(i), it follows  that $\xi=\xi^{\alpha\gamma}$ and 
	$$ \phi_{\beta^{-1}}(\eta^{\beta\gamma})=\phi_{\alpha^{-1}}(\xi^{\alpha\gamma}). $$
\end{proof}

\begin{lemma} \label{lem:amplegroupoid}
	The groupoid $\mathcal{G}$ is an ample groupoid.
\end{lemma}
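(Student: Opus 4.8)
The plan is to show that $\mathcal{G}$ is étale with a basis of compact open bisections, which by definition makes it ample. The key point is to produce a convenient basis for the product topology on $\mathcal{G}$ and verify the range and source maps are local homeomorphisms on basic sets. Since $\mathcal{G}$ carries the subspace topology from $\ftight \times \F \times \ftight$ and $\F$ is discrete, a basis is given by sets of the form $\{(\xi,t,\eta)\in\mathcal{G} \mid \eta\in W\}$ where $t\in\F$ is fixed and $W\subseteq\ftight$ is a basic open set; by Corollary \ref{cor:basis.tight} we may take $W = V_{e:e_1,\ldots,e_n}$ for $e,e_1,\ldots,e_n\in E(S)$. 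By Lemma \ref{lem:domainsofreducedwords}(ii) only $t=\alpha\beta^{-1}$ with $\alpha,\beta\in\awstar$ give nonempty slices, so it suffices to work with these.

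First I would fix $t=\alpha\beta^{-1}$ in reduced form and set $W = V_{e:e_1,\ldots,e_n}\cap V_{t^{-1}}$ (intersecting with the open domain $V_{t^{-1}}$, which is compact open when $t\ne\eword$ by Lemma \ref{lem:domainsofreducedwords} and the discussion before it, and which is itself a union of basic sets). Define
\[
\Theta_{t,W} = \{(\phi_t(\eta),t,\eta) \mid \eta\in W\}\subseteq\mathcal{G}.
\]
The source map $s_{\mathcal{G}}$ restricts to $\Theta_{t,W}$ as $\eta\mapsto\eta$ composed with the inclusion $W\hookrightarrow\ftight$, which is a homeomorphism onto $W$; the range map $r_{\mathcal{G}}$ restricts as $\eta\mapsto\phi_t(\eta)$, which is a homeomorphism onto $\phi_t(W)$ since $\phi_t:V_{t^{-1}}\to V_t$ is a homeomorphism (Proposition \ref{prop:partialtopaction}) and $W\subseteq V_{t^{-1}}$ is open. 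Hence each $\Theta_{t,W}$ is a bisection. Compactness follows because $W$ can be chosen compact open (using that the $V_{e:e_1,\ldots,e_n}$ are compact open by Corollary \ref{cor:basis.tight}, and $V_{t^{-1}}$ is compact open) and $\phi_t$ and $s_{\mathcal{G}},r_{\mathcal{G}}$ are continuous, so $\Theta_{t,W}$ is the continuous image of a compact set, hence compact; it is open since it equals $\mathcal{G}\cap(\ftight\times\{t\}\times W)$.

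Next I would check that the sets $\Theta_{t,W}$, as $t$ ranges over $\{\alpha\beta^{-1}\mid\alpha,\beta\in\awstar\}$ and $W$ over a basis of compact open subsets of $V_{t^{-1}}$, form a basis for the topology of $\mathcal{G}$: any basic open set of $\ftight\times\F\times\ftight$ meeting $\mathcal{G}$ has the form $U_1\times\{t\}\times U_2$, and its intersection with $\mathcal{G}$ equals $\{(\phi_t(\eta),t,\eta)\mid \eta\in U_2\cap\phi_t^{-1}(U_1)\cap V_{t^{-1}}\}$, which is $\Theta_{t,W'}$ for the open set $W' = U_2\cap\phi_t^{-1}(U_1)\cap V_{t^{-1}}$; refining $W'$ by compact open basic sets gives the claim. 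This exhibits a basis of compact open bisections, so $\mathcal{G}$ is étale, and being étale with a basis of compact open sets it is ample. The main obstacle is bookkeeping: confirming that the compact open sets $V_{e:e_1,\ldots,e_n}\cap V_{t^{-1}}$ genuinely form a basis for the open subsets of $V_{t^{-1}}$ and that $\phi_t^{-1}$ of a basic set is again a union of such — but this is immediate from continuity of $\phi_t$ together with the fact (Lemma \ref{lem:crossProdComputations}(i) and Lemma \ref{lem:domainpart1}) that $\phi_t$ maps the generating compact open sets $V_{(\gamma,C,\gamma)}$ to sets of the same form, so no serious difficulty arises.
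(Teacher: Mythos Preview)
Your proposal is correct and follows essentially the same approach as the paper. The paper is slightly more terse: it first remarks that a basis of compact open sets for $\mathcal{G}$ is automatic from the product structure (since $\F$ is discrete and $\ftight$ has a compact open basis), and then verifies étaleness by exhibiting, for each $(\xi^{\alpha\gamma},\alpha\beta^{-1},\eta^{\beta\gamma})$, the open neighbourhood $U=(W_\alpha\times\{\alpha\beta^{-1}\}\times W_\beta)\cap\mathcal{G}$ on which $r_{\mathcal{G}}$ is the first-coordinate projection; your sets $\Theta_{t,W}$ are exactly these neighbourhoods, and your explicit check that both $s_{\mathcal{G}}$ and $r_{\mathcal{G}}$ restrict to homeomorphisms simply makes the bisection property and the basis verification more explicit than the paper does.
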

\begin{proof}
	
	Since $\F$ is discrete and $\ftight$ has a basis of compact open subsets, it follows that $\mathcal{G}$ has a basis of compact open sets. Hence we only need to show that $\mathcal{G}$ is \' etale. 
	Fix any $(\xi^{\alpha\gamma},\alpha\beta^{-1},\eta^{\beta\gamma})\in \mathcal{G}$. To show that $\mathcal{G}$ is an \' etale groupoid, we need to find a neighborhood $U$ of $(\xi^{\alpha\gamma},\alpha\beta^{-1},\eta^{\beta\gamma})$ such that the range map $r_{\mathcal{G}}$ of $\mathcal{G}$ restricted to $U$ is a homeomorphism. Let $W=\phi_{\alpha^{-1}}(V_\alpha)\cap (\phi_{\beta^{-1}}(V_\beta))$, and put $W_\alpha=\phi_\alpha(W)$ and $W_\beta=\phi_\beta(W)$. Then  
	$$ U= (W_\alpha\times\{\alpha\beta^{-1}\}\times W_\beta)\cap\mathcal{G}$$
	is an open neighborhood of $(\xi^{\alpha\gamma},\alpha\beta^{-1},\eta^{\beta\gamma})$ such that $r_{\mathcal{G}}|_{U}$ is just the projection onto the first coordinate,  and is thus a homeomorphism. Hence $\mathcal{G}$ is \' etale  and thus also an ample groupoid.
\end{proof}

We now prove our main result of this section
\begin{theorem}\label{t:isomgroupoids}
	The map $\Theta:\mathcal{G}\to \Gamma$ defined by 
	$$\Theta((\xi^{\alpha\gamma},\alpha\beta^{-1},\eta^{\beta\gamma}))=(\xi^{\alpha\gamma},|\alpha|-|\beta|,\eta^{\beta\gamma}) $$
	is a groupoid isomorphism.
\end{theorem}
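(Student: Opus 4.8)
The plan is to check that $\Theta$ is a well-defined bijective groupoid homomorphism that is also a homeomorphism, exploiting the fact that both groupoids sit inside spaces of the form $\ftight\times(\text{discrete})\times\ftight$ with $\ftight$ their common unit space. First I would verify \emph{well-definedness}: given $(\xi,t,\eta)\in\mathcal{G}$, Lemma \ref{lem:groupoidsamesets} writes it uniquely-up-to-reduction as $(\xi^{\alpha\gamma},\alpha\beta^{-1},\eta^{\beta\gamma})$ with $\phi_{\beta^{-1}}(\eta^{\beta\gamma})=\phi_{\alpha^{-1}}(\xi^{\alpha\gamma})$; by Lemma \ref{lem:domainpart1} (and Remark \ref{rem:partialmapshomeomorphisms}) this last equality is exactly $H_{[\alpha]\gamma}(\xi^{\alpha\gamma})=H_{[\beta]\gamma}(\eta^{\beta\gamma})$, which is the defining condition of $\Gamma$ in \eqref{eqn:Groupoid1}; moreover $|\alpha|-|\beta|$ is determined by the reduced word $\alpha\beta^{-1}$, so the image does not depend on the representation. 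Hence $\Theta$ lands in $\Gamma$ and is well defined.

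Next I would check the \emph{algebraic} properties. On units, $\Theta$ restricts to the identity on $\ftight$, since $(\xi,\eword,\xi)\mapsto(\xi,0,\xi)$. For composability: a pair $(\xi,s,\eta),(\eta,t,\rho)\in\mathcal{G}$ is composable iff the middle entries match up in $\mathcal{G}$ (they always compose, giving $st$), and after reducing $s=\alpha\beta^{-1}$, $t=\beta'\delta^{-1}$ one checks the source/range of the images agree; the length cocycle $|\alpha|-|\beta|$ is additive under the reduction of $\alpha\beta^{-1}\cdot\beta'\delta^{-1}$ because lengths of positive words add under concatenation and cancel under reduction, so $\Theta$ respects products, and clearly $\Theta((\xi,t,\eta)^{-1})=\Theta(\eta,t^{-1},\xi)=(\eta,-(|\alpha|-|\beta|),\xi)=\Theta((\xi,t,\eta))^{-1}$. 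For \emph{injectivity}: if $\Theta(\xi^{\alpha\gamma},\alpha\beta^{-1},\eta^{\beta\gamma})=\Theta(\xi^{\alpha'\gamma'},\alpha'\beta'^{-1},\eta^{\beta'\gamma'})$ then the first and third coordinates force $\alpha\gamma=\alpha'\gamma'$ and $\beta\gamma=\beta'\gamma'$ as elements of $\overline{\awleinf}$, while $|\alpha|-|\beta|=|\alpha'|-|\beta'|$; combining these with the reduced-form constraints on $\alpha\beta^{-1}$ forces $\alpha=\alpha'$ and $\beta=\beta'$, hence the middle group elements coincide. For \emph{surjectivity}: given $(\xi^{\alpha\gamma},m,\eta^{\beta\gamma})\in\Gamma$ with $H_{[\alpha]\gamma}(\xi^{\alpha\gamma})=H_{[\beta]\gamma}(\eta^{\beta\gamma})$ and $m=|\alpha|-|\beta|$, reduce the word $\alpha\beta^{-1}$ in $\F$; Lemma \ref{lem:intersectionproperties} and the hypothesis $H_{[\alpha]\gamma}(\xi)=H_{[\beta]\gamma}(\eta)\neq\emptyset$ guarantee $r(\alpha)\cap r(\beta)\neq\emptyset$ so $V_{\alpha\beta^{-1}}\neq\emptyset$ and, via Lemma \ref{lem:domainpart1}, $\phi_{\alpha\beta^{-1}}(\eta^{\beta\gamma})=\xi^{\alpha\gamma}$, exhibiting a preimage in $\mathcal{G}$.

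Finally I would address \emph{continuity of $\Theta$ and of $\Theta^{-1}$}. Since both groupoids are étale and ample (Lemma \ref{lem:amplegroupoid} for $\mathcal{G}$, \cite[Corollary 4.14]{Gil3} for $\Gamma$) with the same unit space, it suffices to show $\Theta$ is a local homeomorphism, and in fact it is enough to compare basic open sets. A basic open set of $\mathcal{G}$ has the form $(W_\alpha\times\{\alpha\beta^{-1}\}\times W_\beta)\cap\mathcal{G}$ as in the proof of Lemma \ref{lem:amplegroupoid}, and one checks $\Theta$ carries it bijectively and bicontinuously onto the basic set $Z_{(\alpha,A,\beta),\dots}$ of $\Gamma$ with the same first and third ``coordinate windows'': on such a set the middle coordinate is constant ($\alpha\beta^{-1}$ on one side, $|\alpha|-|\beta|$ on the other), so $\Theta$ is just the identity on the first and third coordinates composed with a relabelling of the (locally constant) middle coordinate; its inverse is described symmetrically, using that for a fixed $\alpha,\beta$ the condition $H_{[\alpha]\gamma}(\xi^{\alpha\gamma})=H_{[\beta]\gamma}(\eta^{\beta\gamma})$ depends only on $\xi$ (it determines $\eta$), exactly as $\phi_{\alpha\beta^{-1}}(\eta)=\xi$ does.

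\textbf{Main obstacle.} The routine algebra is the homomorphism property, but the genuinely delicate point is bookkeeping with \emph{reduced words}: an element $(\xi,t,\eta)\in\mathcal{G}$ carries the single reduced group element $t$, whereas I want to present it as $\alpha\beta^{-1}$ with $\alpha,\beta\in\awstar$ \emph{positive} and $\alpha\beta^{-1}$ reduced, and this decomposition is the content of Lemma \ref{lem:domainsofreducedwords}(ii); one must make sure (a) the integer $|\alpha|-|\beta|$ really is determined by $t$ alone (so $\Theta$ is well defined) and (b) that under multiplication the two reduced presentations $\alpha\beta^{-1}$ and $\beta'\delta^{-1}$ recombine, after cancellation of the common part of $\beta$ and $\beta'$, into the reduced presentation of $st$ in such a way that the length cocycle adds — i.e.\ that $\Theta$ is genuinely a $\mathbb{Z}$-cocycle morphism. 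Once this combinatorial compatibility of word length with the $\F$-multiplication is nailed down, the topological statement reduces, via the explicit bases already recorded in \cite{Gil3} and Lemma \ref{lem:amplegroupoid}, to the observation that $\Theta$ fixes the unit space and only renames a locally constant coordinate, which is transparently a homeomorphism.
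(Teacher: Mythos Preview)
Your proposal is correct and follows essentially the same approach as the paper: verify well-definedness via the length invariance under reduction, check bijectivity by comparing the labelled paths attached to the first and third coordinates, declare the homomorphism property straightforward, and finish with a topological argument. Your injectivity argument (using that if both $\alpha\beta^{-1}$ and $\alpha'\beta'^{-1}$ are in reduced form and share the same outer coordinates then $\alpha=\alpha'$, $\beta=\beta'$) is a slight variant of the paper's, which instead shows directly that $\alpha\beta^{-1}=\mu\nu^{-1}$ in $\F$ without insisting on reduced form; both are fine.

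The one genuine difference is in the homeomorphism step. The paper dispatches this in one line, asserting that since both groupoids are ample it suffices to know their unit spaces are homeomorphic (both being $\ftight$). Your plan to match basic open bisections explicitly is more laborious but also more transparent: the paper's shortcut implicitly relies on $\Theta$ restricting to the identity on unit spaces and intertwining the range/source local homeomorphisms, which deserves the kind of justification you sketch. Either route works, and your identification of the reduced-word bookkeeping as the real content of the proof is exactly right.
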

\begin{proof}	 
	We begin by showing that $\Theta$ is well-defined. Suppose $t\in\F$ and $t=\alpha\beta$ where $\alpha=\alpha^{\prime}\gamma$ and $\beta=\gamma^{-1}\beta^{\prime}$ such that $t=\alpha^{\prime}\beta^{\prime}$ is in reduced form. Then $|\alpha|-|\beta|=|\alpha^{\prime}\gamma|-|\gamma^{-1}\beta^{\prime}|=|\alpha^{\prime}|-|\beta^{\prime}|$. Therefore, since $\Theta$ only changes the second coordinate, it follows that $\Theta$ is well-defined.
	
	We show that $\Theta$ is a bijection. It is clear that $\Theta$ is onto. Let $(\xi^{\alpha\gamma},\alpha\beta^{-1},\eta^{\beta\gamma})$ and $(\zeta^{\mu\delta},\mu\nu^{-1},\chi^{\nu\delta})$ be in $\mathcal{G}$. 
	Note that if $\Theta((\xi^{\alpha\gamma},\alpha\beta^{-1},\eta^{\beta\gamma}))=\Theta((\zeta^{\mu\delta},\mu\nu^{-1},\chi^{\nu\delta}))$, then 
	$\xi^{\alpha\gamma}=\zeta^{\mu\delta}$ and $\eta^{\beta\gamma}=\chi^{\nu\delta}$. Hence $\alpha\gamma=\mu\delta$ and $\beta\gamma=\nu\delta$ by Theorem \ref{thm.filters.in.E(S)}. We may assume $\alpha=\mu\alpha^{\prime}$ (if not, then $\mu=\alpha\mu^{\prime}$ and the same argument holds). Hence $\alpha\gamma=\mu\alpha^{\prime}\gamma=\mu\delta$, implying that $\alpha^{\prime}\gamma=\delta$. Then $\beta\gamma=\nu\alpha^{\prime}\gamma$, which implies that $\beta=\nu\alpha^{\prime}$. Then we compute in $\F$:
	$$ \alpha\beta^{-1}= \mu\alpha^{\prime}(\nu\alpha^{\prime})^{-1}=\mu\nu^{-1}.$$ 
	Hence $\Theta$ is a bijection.   
	
	It is straightforward to show that $\Theta$ preserves multiplication and inverses.
	
	It remains to show that $\Theta$ is also a homeomorphism. However, since both $\Gamma$ and $\mathcal{G}$ are ample groupoids, it suffices to show that their unit spaces are homeomorphic. However this follows from the fact that both $\Gamma^{(0)}$ and $\mathcal{G}^{(0)}$ are homeomorphic to $\ftight$, completing the proof.
\end{proof}

\section{Simplicity of labelled spaces C*-algebras}\label{section:simplicity}
In this section we characterize simplicity of $C^*\lspace$ in terms of the tight spectrum $\ftight$. We do this by using known simplicity results for groupoid $C^*$-algebras and partial crossed product C*-algebras.  

\begin{definition}
	Let $\mathcal{G}$ be a locally compact, Hausdorff groupoid. A unit $u\in \mathcal{G}^{(0)}$ has \emph{trivial isotropy} if the set $\mathcal{G}^u_u=\{\gamma\in G\mid s(\gamma)=r(\gamma)=u\}$ contains only $u$. A subset $D\scj \mathcal{G}^{(0)}$ is \emph{invariant} if for all $\gamma\in \mathcal{G}$, when $s(\gamma)\in D$ then $r(\gamma)\in D$. We say that $\mathcal{G}$ is \emph{topologically principal} if the set of units with trivial isotropy is dense in $\mathcal{G}^{(0)}$, and \emph{minimal} if $\mathcal{G}^{(0)}$ and $\emptyset$ are the only open invariant subsets of $\mathcal{G}^{(0)}$.
\end{definition}


If a locally compact Hausdorff groupoid $\mathcal{G}$ is minimal and has a unit with trivial isotropy, then $\mathcal{G}$ is  topologically principal \cite[Remark 2.2]{MR3189105}.

\begin{definition}
	Let $\Phi=(\{U_t\}_{t\in G},\{\phi_t\}_{t\in G})$ be a partial action of a discrete group $G$ on a locally compact Hausdorff space $X$. Then $\Phi$ is \emph{topologically free} if the set of fixed points $\text{Fix}(t)=\{x\in X\mid x\in U_{t^{-1}}\text{ and }\phi_t(x)=x\}$ has empty interior, for all $t\in G\setminus\{e\}$. A subset $V\scj X$ is \emph{invariant} under the partial action if $\phi_t(V\cap U_{t^{-1}})\scj V$ for all $t\in G$. The partial action is \emph{minimal} if $\emptyset$ and $X$ are the only open invariant subsets of $X$.
\end{definition}

It is not hard to see that if $\Phi=(\{U_t\}_{t\in G},\{\phi_t\}_{t\in G})$ is a partial action on $X$, then the following are equivalent:
	\begin{enumerate}[(1)]
		\item $\Phi$ is minimal,
		\item for every $x\in X$ the \emph{orbit} of $x$, $\text{Orb}(x):=\{\phi_t(x)\mid t\in G,x\in U_{t^{-1}}\}$, is dense in $X$,
		\item for every non-empty subset $V$ of $X$ the \emph{orbit} of $V$, $\text{Orb}(V):=\bigcup_{t\in G} \phi_t(V\cap U_{t^{-1}})$, is equal to $X$.
	\end{enumerate}

By comparing the above definitions, one arrives at:

\begin{proposition}\label{prop:conditions.groupoid.partial.action.equivalent}
	Let $\Phi=(\{U_t\}_{t\in G},\{\phi_t\}_{t\in G})$ be a partial action of a discrete group $G$ on $X$ and let $\mathcal{G}_\Phi$ be its associated groupoid. Then:
	\begin{enumerate}
		\item $\mathcal{G}_\Phi$ is topologically principal if and only if $\Phi$ is topologically free, and
		\item $\mathcal{G}_\Phi$ is minimal if and only if $\Phi$ is minimal.
	\end{enumerate}
\end{proposition}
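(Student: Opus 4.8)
The plan is to establish the two equivalences by passing through the explicit description of the groupoid $\mathcal{G}_\Phi$ associated with a partial action, which one may take (following \cite{MR2045419}) to be $\mathcal{G}_\Phi=\{(x,t,y)\in X\times G\times X\mid y\in U_{t^{-1}},\ x=\phi_t(y)\}$ with unit space $X$, range $r(x,t,y)=x$ and source $s(x,t,y)=y$. The key observation for part (1) is the correspondence between isotropy at a unit and fixed points of the partial homeomorphisms: for $x\in X$, the isotropy group $(\mathcal{G}_\Phi)_x^x$ consists of those $(x,t,x)$ with $x\in U_{t^{-1}}$ and $\phi_t(x)=x$, i.e.\ $(\mathcal{G}_\Phi)_x^x=\{x\}$ precisely when $x\notin\mathrm{Fix}(t)$ for every $t\in G\setminus\{e\}$. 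Thus the set of units with trivial isotropy equals $X\setminus\bigcup_{t\neq e}\mathrm{Fix}(t)$.

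For part (1), I would argue both directions via this identification. If $\Phi$ is topologically free, then each $\mathrm{Fix}(t)$ has empty interior; since $G$ is countable in the cases of interest — or more generally, if we only assume the definition literally — one wants the set of units with trivial isotropy to be dense. Here I would be careful: a union of sets with empty interior need not have empty interior without a Baire-category hypothesis. The cleanest route, and the one I expect the paper intends, is to note that in our setting $\F$ is countable (the alphabet is countable in all applications, or one restricts attention to that case) and $\ftight$ is locally compact Hausdorff, hence Baire, so $\bigcup_{t\neq e}\mathrm{Fix}(t)$ is meagre and its complement is dense; this gives topological principality. Conversely, if some $\mathrm{Fix}(t_0)$ ($t_0\neq e$) had nonempty interior $O$, then every $x\in O$ would carry the nontrivial isotropy element $(x,t_0,x)$, so no point of the open set $O$ would have trivial isotropy, contradicting density. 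This direction needs no countability. The main obstacle, then, is precisely the Baire-category point in the forward direction; I would flag the standing countability assumption (or cite that $\ftight$ is second countable in the relevant case) to make the union-of-nowhere-dense-sets argument go through.

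For part (2), the argument is a direct translation with no category subtleties. A subset $D\subseteq X$ is invariant for $\mathcal{G}_\Phi$ — meaning $s(\gamma)\in D\Rightarrow r(\gamma)\in D$ — if and only if for every $t\in G$ and every $y\in U_{t^{-1}}\cap D$ we have $\phi_t(y)\in D$, which is exactly the condition $\phi_t(D\cap U_{t^{-1}})\subseteq D$ for all $t$, i.e.\ $D$ is invariant for the partial action. Since the two notions of open invariant subset coincide set-theoretically, $\mathcal{G}_\Phi^{(0)}$ and $\emptyset$ being the only open invariant subsets of the unit space is the same statement as $X$ and $\emptyset$ being the only open invariant subsets for $\Phi$; hence $\mathcal{G}_\Phi$ is minimal iff $\Phi$ is minimal. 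I would present this as a one-line verification that the invariance conditions match, referencing the equivalent reformulation of minimality in terms of dense orbits recorded just above the proposition if a more dynamical phrasing is wanted. Overall the proof is short; the only place demanding care is ensuring the hypotheses (local compactness, Hausdorffness, and effectively second countability of $\ftight$) are in force so that topological freeness genuinely yields a dense set of units with trivial isotropy.
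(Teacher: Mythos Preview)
The paper gives no proof of this proposition: it simply prefaces the statement with ``By comparing the above definitions, one arrives at:'' and moves on. Your argument is therefore more detailed than anything the paper provides, and your unpacking of both parts---identifying the units with trivial isotropy as $X\setminus\bigcup_{t\neq e}\mathrm{Fix}(t)$ for part (1), and matching the two notions of invariant set verbatim for part (2)---is exactly the content one would supply if asked to justify the claim.

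You are also right to flag the Baire-category subtlety in the forward direction of (1). As stated, the proposition places no countability hypothesis on $G$, yet passing from ``each $\mathrm{Fix}(t)$ has empty interior'' to ``$\bigcup_{t\neq e}\mathrm{Fix}(t)$ has dense complement'' genuinely uses that $G$ is countable and $X$ is Baire (each $\mathrm{Fix}(t)$ is closed in the open set $U_{t^{-1}}$, hence nowhere dense in $X$ once it has empty interior, so the countable union is meagre). The paper does not address this; it presumably regards the proposition as folklore and, in its applications, the group is the free group $\F$ on the alphabet, so the issue only bites when the alphabet is uncountable---a case the paper elsewhere acknowledges is outside the second-countable regime. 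Your instinct to record the hypothesis explicitly is the more careful path. The reverse direction of (1) and all of (2) need no such caveat, and your arguments there are correct and match what the paper intends by ``comparing the definitions.''
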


In following two theorems we state known simplicity characterizations for groupoid C*-algebras and partial crossed products.

\begin{theorem}\cite[Theorem 5.1]{MR3189105} \label{thm.simple.groupoid}
	Let $\mathcal{G}$ be a second-countable, locally compact and Hausdorff  \'etale groupoid. Then the groupoid C*-algebra $C^*(\mathcal{G})$ is simple if and only if the following conditions are satisfied:
	\begin{enumerate}
		\item $C^*(\mathcal{G})=C^*_r(\mathcal{G})$,
		\item $\mathcal{G}$ is topologically principal,
		\item $\mathcal{G}$ is minimal.
	\end{enumerate}
\end{theorem}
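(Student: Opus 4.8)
The plan is to establish the two implications of the equivalence separately, beginning with the necessity of the three conditions, since two of them are soft ideal-theoretic observations. Assume first that $C^*(\mathcal{G})$ is simple. For condition (1), the canonical surjection $\lambda\colon C^*(\mathcal{G})\to C^*_r(\mathcal{G})$ has a closed two-sided kernel, which simplicity forces to be $\{0\}$, so $\lambda$ is an isomorphism. For condition (3), if there were a non-empty open invariant set $U\subsetneq\mathcal{G}^{(0)}$, then $\mathcal{G}|_U$ would be an open subgroupoid and $F:=\mathcal{G}^{(0)}\setminus U$ a closed invariant set; extension of functions by zero realises $C^*(\mathcal{G}|_U)$ as a proper non-zero closed two-sided ideal of $C^*(\mathcal{G})$, with quotient $C^*(\mathcal{G}|_F)$, contradicting simplicity. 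Thus $\mathcal{G}$ is minimal. The one genuinely hard assertion is (2), which I would prove by contraposition: if the set of units with trivial isotropy is not dense, then the isotropy bundle has non-empty interior, and --- invoking second countability through a Baire-category argument and using minimality --- one promotes this to the statement that the interior of the isotropy is an open subgroupoid with full unit space, i.e. a non-trivial \'etale group bundle; from such a bundle (for instance via the conditional expectation onto $C_0(\mathcal{G}^{(0)})$ induced by the trivial character on the fibres) one manufactures a proper non-zero ideal, again contradicting simplicity.

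For the converse, assume (1)--(3) and identify $C^*(\mathcal{G})=C^*_r(\mathcal{G})$. The structural input I would use is the faithful conditional expectation $E\colon C^*_r(\mathcal{G})\to C_0(\mathcal{G}^{(0)})$ given by restriction to the unit space, together with the uniqueness (``intersection'') property for topologically principal Hausdorff \'etale groupoids: every non-zero closed two-sided ideal $I$ satisfies $I\cap C_0(\mathcal{G}^{(0)})\neq\{0\}$. Then $I\cap C_0(\mathcal{G}^{(0)})=C_0(U)$ for some non-empty open $U\subseteq\mathcal{G}^{(0)}$, and a short computation with functions supported on compact open bisections shows $U$ is invariant: if $s(\gamma)\in U$ and $f\in C_c(B)$ with $f(\gamma)\neq 0$ for a bisection $B\ni\gamma$, then $f\,1_U\,f^{*}\in I\cap C_0(\mathcal{G}^{(0)})$ is non-zero at $r(\gamma)$. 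Minimality then forces $U=\mathcal{G}^{(0)}$, so $C_0(\mathcal{G}^{(0)})\subseteq I$; since every $f\in C_c(B)$ with $B$ a compact open bisection satisfies $f=f\,1_{s(B)}$ with $1_{s(B)}\in C_0(\mathcal{G}^{(0)})\subseteq I$, and such functions span a dense $*$-subalgebra, we conclude $I=C^*_r(\mathcal{G})$, so the algebra is simple.

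The main obstacle is the implication ``$C^*(\mathcal{G})$ simple $\Rightarrow$ $\mathcal{G}$ topologically principal'' in the necessity direction: conditions (1) and (3) are soft, but (2) requires genuinely exploiting second countability (Baire category on the isotropy bundle) together with an explicit construction of an ideal from the ``extra'' isotropy, and this is the technical heart of the theorem. The intersection property used in the converse is also non-trivial but is a standard consequence of topological principality for Hausdorff \'etale groupoids and may simply be quoted.
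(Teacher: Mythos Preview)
The paper does not give its own proof of this statement: it is quoted verbatim as \cite[Theorem~5.1]{MR3189105} and used as a black box in Section~\ref{section:simplicity}. There is therefore nothing in the paper to compare your proposal against. Your outline is a reasonable sketch of how the cited result of Brown--Clark--Farthing--Sims is actually proved, and you have correctly identified the hard step (simple $\Rightarrow$ topologically principal) as the one that genuinely uses second countability; but for the purposes of this paper the theorem is simply imported from the literature, not re-proved.
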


\begin{theorem}\cite[Corollary 2.9]{MR1905819}\label{thm.simple.partial.action}
	Let $\Phi=(\{U_t\}_{t\in G},\{\phi_t\}_{t\in G})$ be a partial action of a discrete group $G$ on a locally compact Hausdorff space $X$. If $\Phi$ is topologically free and minimal, then the associated partial crossed product is simple.
\end{theorem}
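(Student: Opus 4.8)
The plan is to show that every nonzero closed two-sided ideal $I$ of the partial crossed product $A\rtimes_\phi G$, where $A=C_0(X)$, is all of $A\rtimes_\phi G$; since $A\rtimes_\phi G\neq\{0\}$, this is simplicity. I regard $A$ as the ``degree $e$'' subalgebra of $A\rtimes_\phi G=\overline{\mathrm{span}}\{f_t\delta_t : t\in G,\ f_t\in C_0(U_t)\}$ (writing $f$ for $f\delta_e$ when convenient), and I use the faithful conditional expectation $E\colon A\rtimes_\phi G\to A$, an $A$-bimodule map, available on the reduced partial crossed product. The argument has two steps: (Step 1) topological freeness forces $I\cap A\neq\{0\}$; (Step 2) minimality forces the resulting $\Phi$-invariant ideal $I\cap A$ of $A$ to equal $A$, and then $A\subseteq I$ gives $I=A\rtimes_\phi G$.

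\emph{Step 1.} Suppose, toward a contradiction, that $I\cap A=\{0\}$. Then the quotient map $q\colon A\rtimes_\phi G\to (A\rtimes_\phi G)/I$ is injective, hence isometric, on $A$, so $\|a\|=\mathrm{dist}(a,I)$ for all $a\in A$. Fix $0\leq b\in I$, $b\neq0$; by faithfulness of $E$, $m:=\|E(b)\|>0$. Given $0<\varepsilon<m/3$, pick $a=\sum_{t\in F}f_t\delta_t$ in the dense $*$-subalgebra with $\|a-b\|<\varepsilon$, $e\in F$, and $\|f_e-E(b)\|<\varepsilon$. Now topological freeness enters: each $\text{Fix}(t)$ ($t\in F\setminus\{e\}$) has empty interior and is closed in the open set $U_{t^{-1}}$, hence nowhere dense in $X$; so, $F$ being finite, $\bigcup_{t\in F\setminus\{e\}}\bigl(\overline{\text{Fix}(t)}\cup\partial U_{t^{-1}}\bigr)$ is closed and nowhere dense, and therefore there is a point $x_0$ in the nonempty open set $\{|E(b)|>m-\varepsilon\}$ avoiding it. For each $t\in F\setminus\{e\}$ we then have either $x_0\in U_{t^{-1}}$ with $\phi_t(x_0)\neq x_0$, or $x_0\notin\overline{U_{t^{-1}}}$; in both cases a small enough open $V\ni x_0$ satisfies $\phi_t(V\cap U_{t^{-1}})\cap V=\emptyset$. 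Choose a positive contraction $h\in C_c(X)$ with $h(x_0)=1$ and $\mathrm{supp}\,h\subseteq V$. A direct computation with the crossed-product multiplication $(f\delta_s)(g\delta_t)=\widehat\phi_s\bigl(\widehat\phi_{s^{-1}}(f)g\bigr)\delta_{st}$, where $\widehat\phi_t(k)=k\circ\phi_{t^{-1}}$, gives $(h\delta_e)(f_t\delta_t)(h\delta_e)=0$ for every $t\in F\setminus\{e\}$, so $hah=(hf_eh)\delta_e$ and thus $\|hbh-hf_eh\|=\|h(b-a)h\|<\varepsilon$, while $\|hf_eh\|\geq|f_e(x_0)|>m-2\varepsilon$. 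Since $hbh\in I$, we get $\mathrm{dist}(hf_eh,I)<\varepsilon$, but $hf_eh\in A$ so $\|hf_eh\|=\mathrm{dist}(hf_eh,I)<\varepsilon$, contradicting $\|hf_eh\|>m-2\varepsilon$. Hence $I\cap A\neq\{0\}$.

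\emph{Step 2.} Now $J:=I\cap A$ is a nonzero closed ideal of $A=C_0(X)$, so $J=C_0(W)$ for a nonempty open $W\subseteq X$. Because $I$ is two-sided, conjugating $g\delta_e$ (for $g\in J$) by elements $f_t\delta_t$, and running $f_t$ through an approximate unit of $C_0(U_t)$, stays inside $I\cap A=J$; unwinding this with the multiplication formula shows precisely that $\phi_t(W\cap U_{t^{-1}})\subseteq W$ for all $t\in G$, i.e. $W$ is $\Phi$-invariant. By minimality $W=X$, so $A=J\subseteq I$. Finally, for any spanning element $f_t\delta_t$ and an approximate unit $(u_\lambda)$ of $C_0(U_t)\subseteq A$ we have $(u_\lambda\delta_e)(f_t\delta_t)=(u_\lambda f_t)\delta_t\in I$ and $(u_\lambda f_t)\delta_t\to f_t\delta_t$; since $I$ is closed it contains the dense spanning set, so $I=A\rtimes_\phi G$. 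This proves simplicity.

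The step I expect to be the main obstacle is the localization in Step 1: one must produce a single function $h$ that simultaneously annihilates all of the finitely many off-diagonal terms $f_t\delta_t$, $t\in F\setminus\{e\}$, while keeping (nearly) full mass on the diagonal term; this is exactly where topological freeness is used, together with the elementary fact that a finite union of closed nowhere-dense sets is nowhere dense, and it requires careful bookkeeping of the partial multiplication on the open domains $U_t$. A secondary technical point is the passage between the full and reduced partial crossed products (the clean form of the above argument uses the faithful conditional expectation, which lives on the reduced one); in the setting where this theorem is applied the relevant groupoid is amenable, so the full and reduced crossed products coincide and the statement holds for either.
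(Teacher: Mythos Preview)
The paper does not provide its own proof of this statement: it is cited verbatim as \cite[Corollary 2.9]{MR1905819} and used as a black box, so there is no in-paper argument to compare against. Your proposal reproduces what is essentially the standard proof of that cited result, and the overall strategy (localize via topological freeness to show $I\cap A\neq\{0\}$, then use minimality to get $A\subseteq I$) is correct and well executed.

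One point worth flagging, which you already identify, is the full-versus-reduced issue. Your Step~1 invokes the faithful conditional expectation $E$, which lives on the \emph{reduced} partial crossed product; the statement as quoted in the paper does not specify which crossed product is meant. In \cite{MR1905819} the result is indeed proved for the reduced crossed product (their Theorem~2.6 establishes the intersection property $I\cap A\neq\{0\}$ under topological freeness, and Corollary~2.9 deduces simplicity). Your closing remark that, in the labelled-space application, amenability of the associated groupoid collapses the distinction is exactly how the paper uses the result (see \cite[Corollary~4.12]{Gil3}, invoked just before Theorem~\ref{thm:simplicity.tight.spectrum}). So nothing is missing, but you might state up front that you are proving simplicity of the reduced crossed product, rather than leaving this to a parenthetical at the end.
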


Note that, since $G$ is a discrete group, the groupoid $\mathcal{G}_\Phi$ associated with the partial action in Theorem \ref{thm.simple.partial.action} is an \'etale groupoid (see for example the proof of Lemma \ref{lem:amplegroupoid}) . Hence, if $\mathcal{G}_\Phi$ is second-countable and $C^*(\mathcal{G}_\Phi)=C^*_r(\mathcal{G}_\Phi)$, then the converse of Theorem  \ref{thm.simple.partial.action} also holds (by Theorem \ref{thm.simple.groupoid}). For a normal labelled spaces and its associated partial action $\hat{\Phi}$, it is always the case that $C^*(\mathcal{G}_{\hat{\Phi}})=C^*_r(\mathcal{G}_{\hat{\Phi}})$, \cite[Corollary 4.12]{Gil3}. However, since we do not assume that the underlying graph is countable, $\mathcal{G}_{\hat{\Phi}}$ may not be second-countable (although it is in many examples).


With the relationship between a partial action $\Phi$ and its associated groupoid $\mathcal{G}_\Phi$ given in Proposition \ref{prop:conditions.groupoid.partial.action.equivalent}, we can describe being topologically principle and minimal in terms of a family of open sets in  $X$ that satisfy a certain property. Specifically, suppose that $\mathfrak{F}$ is a family of open sets of $\mathcal{G}_\Phi^{(0)}=X$ with the property that for every non-empty open subset $U$ of $X$ there exists a non-empty set $V\in\mathfrak{F}$ such that $V\scj U$. Then, to check that $\mathcal{G}$ is topologically principal, it is sufficient to show that for every $V\in\mathfrak{F}$, there exists $u\in V$ such that $u$ has trivial isotropy. To check that $\mathcal{G}_\Phi$ is minimal, it is sufficient to show that for all $x\in X$ and all $V\in\mathfrak{F}$, we have that $\text{Orb}(x)\cap V\neq \emptyset$.

We show that a normal labelled space has such a family $\mathfrak{F}$ with which we can characterize simplicity of a labelled space $C^*$-algebra solely in terms of the tight spectrum of the labelled space. For the remainder of this section fix a labelled space $\lspace$ with tight spectrum $\ftight$. Let $\mathfrak{F}=\{V_e\subset\ftight \mid e\in E(S)\}$. Then, even though $\mathfrak{F}$ is not necessarily a basis for $\ftight$, the following lemma shows that it does satisfy the property described in the previous paragraph.

\begin{lemma}\label{lemma:refinement.ftight}
Let $U$ be a non-empty open subset of $\ftight$. Then there exists $e\in E(S)$ such that $\emptyset\neq V_e\scj U$.
\end{lemma}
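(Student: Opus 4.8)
The goal is to refine an arbitrary non-empty open $U \subseteq \ftight$ to a set of the form $V_e$ with $e \in E(S)$. Recall from Corollary \ref{cor:basis.tight} that the sets $V_{e:e_1,\dots,e_n} = U_e \cap U_{e_1}^c \cap \cdots \cap U_{e_n}^c \cap \ftight$ form a basis for the topology on $\ftight$; so the first step is to pick $\xi \in U$ and a basic neighbourhood $V_{e:e_1,\dots,e_n} \subseteq U$ with $\xi \in V_{e:e_1,\dots,e_n}$. Writing $e = (\gamma, A, \gamma)$ (it is non-zero, as $\xi$ is a filter and $e \in \xi$), I want to shrink $A$ inside $\acfrg{\gamma}$ so as to kill all the ``forbidden'' idempotents $e_1,\dots,e_n$ at once, producing a single idempotent $e' = (\gamma, A', \gamma) \le e$ with $\xi \in V_{e'} \subseteq V_{e:e_1,\dots,e_n}$.

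The key step is the shrinking argument, and it is where normality is used. For each $j$, since $e_j \notin \xi$ but $e \in \xi$, the meet $e \wedge e_j$ is not in $\xi$ (a filter contains $x\wedge y$ iff it contains both). Using the order description (Proposition, \cite[Proposition 4.1]{MR3648984}), $e \wedge e_j$ is either $0$ or has the form $(\gamma, B_j, \gamma)$ with $B_j \subseteq A$ obtained via a relative range: concretely, if $e_j = (\delta_j, C_j, \delta_j)$ and $\gamma, \delta_j$ are comparable in the requisite way, $B_j = A \cap r(C_j, \cdot\,)$ or $B_j = C_j \cap r(A,\cdot\,)$ intersected into $\acfrg{\gamma}$; if they are not comparable then $e\wedge e_j = 0$ and $e_j$ causes no trouble. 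In the non-trivial cases $B_j \in \acfrg{\gamma}$ and, because $\xi_{|\gamma|}$ is an ultrafilter in the Boolean algebra $\acfrg{\gamma}$ (this is the crucial point—tightness of $\xi$ forces $\xi_{|\gamma|}$ to be an ultrafilter, by Theorem \ref{thm.tight.filters.in.es}, in \emph{both} the finite- and infinite-type cases) and $B_j \notin \xi_{|\gamma|}$, the relative complement $A \setminus B_j$ lies in $\xi_{|\gamma|}$; here normality of the labelled space is what guarantees $\acf$, hence $\acfrg{\gamma}$, is closed under relative complements so that $A \setminus B_j \in \acfrg{\gamma}$. Now set $A' = A \cap \bigcap_{j : e\wedge e_j \ne 0}(A \setminus B_j)$, a finite intersection inside $\acfrg{\gamma}$, so $A' \in \xi_{|\gamma|}$ and $A' \ne \emptyset$; put $e' = (\gamma, A', \gamma) \in E(S)$.

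It remains to check $\emptyset \ne V_{e'} \subseteq U$. Non-emptiness: $\xi \in V_{e'}$ since $A' \in \xi_{|\gamma|}$ gives $(\gamma,A',\gamma) \in \xi$ by Remark \ref{remark.when.in.xialpha}. Inclusion: suppose $\eta \in V_{e'}$. Then $e' \in \eta$, and $e' \le e$ forces $e \in \eta$, i.e. $\eta \in V_e = U_e \cap \ftight$. For each $j$ I must show $e_j \notin \eta$; if $\eta$ contained $e_j$ it would contain $e \wedge e_j = (\gamma, B_j, \gamma)$, hence $B_j \in \eta_{|\gamma|}$, but also $A \setminus B_j \in \eta_{|\gamma|}$ (since $e' \le (\gamma, A\setminus B_j, \gamma)$ and so $A\setminus B_j \in \eta_{|\gamma|}$), contradicting that $\eta_{|\gamma|}$ is a filter (the two sets are disjoint). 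Hence $\eta \in V_{e:e_1,\dots,e_n} \subseteq U$, so $V_{e'} \subseteq U$, completing the proof.

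**Main obstacle.** The delicate point is bookkeeping the meets $e \wedge e_j$ via the order formula from \cite[Proposition 4.1]{MR3648984} and confirming that each $B_j$ genuinely lands in $\acfrg{\gamma}$, so that $A \setminus B_j$ makes sense there; and, underlying everything, that $\xi_{|\gamma|}$ is an ultrafilter in $\acfrg{\gamma}$—which is exactly the content of the tight-filter description (Theorem \ref{thm.tight.filters.in.es}) and is what lets relative complements be used. Once those two facts are in hand the argument is a routine finite intersection.
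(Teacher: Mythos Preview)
Your argument has a genuine gap in the computation of $e\wedge e_j$. You assert that when $\gamma$ and $\delta_j$ are comparable the meet has the form $(\gamma,B_j,\gamma)$ for some $B_j\in\acfrg{\gamma}$, but this is only true when $\delta_j$ is a prefix of $\gamma$. If instead $\gamma$ is a \emph{proper} prefix of $\delta_j$, say $\delta_j=\gamma\delta_j'$ with $|\delta_j'|\geq 1$, then
\[
e\wedge e_j=(\delta_j,\ r(A,\delta_j')\cap C_j,\ \delta_j),
\]
which lives at level $|\delta_j|>|\gamma|$ and cannot be rewritten as $(\gamma,B_j,\gamma)$. Your shrinking inside $\acfrg{\gamma}$ therefore does not force $V_{e'}\cap V_{e_j}=\emptyset$: a filter $\eta\in V_{e'}$ whose word extends $\gamma$ by $\delta_j'$ may still contain $e_j$, whatever $A'\subseteq A$ you choose. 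Concretely, take a tight filter $\xi$ of finite type with word $\eword$ such that every $A\in\xi_0$ emits infinitely many labels but some fixed $a\in\alf$ satisfies $r(A,a)\neq\emptyset$ for all $A\in\xi_0$ (so $\xi$ is tight but not an ultrafilter in $E(S)$). With $e=(\eword,A,\eword)$ and $e_1=(a,r(a),a)$ one has $\xi\in V_{e:e_1}$, yet no $e'=(\eword,A',\eword)$ with $A'\in\xi_0$ satisfies $V_{e'}\cap V_{e_1}=\emptyset$, since that would require $r(A',a)=\emptyset$.

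The paper avoids this by choosing $\xi\in U$ to be an \emph{ultrafilter} in $E(S)$ (ultrafilters are dense in $\ftight$) and then citing the standard fact that for an ultrafilter $\xi$ in a meet-semilattice the family $\{V_e:e\in\xi\}$ is a neighbourhood basis at $\xi$; equivalently, for each $f\notin\xi$ there exists $e\in\xi$ with $e\wedge f=0$. Your argument can be repaired along exactly these lines: pick $\xi$ to be an ultrafilter, for each $e_j\notin\xi$ obtain $f_j\in\xi$ with $f_j\wedge e_j=0$, and set $e'=e\wedge f_1\wedge\cdots\wedge f_n\in\xi$. The $f_j$ will in general have words longer than $\gamma$, which is precisely what is needed to kill the problematic case above; the shrinking cannot be confined to a single Boolean algebra $\acfrg{\gamma}$.
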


\begin{proof}
Since the set of ultrafilters is dense in $\ftight$, there exists an ultrafilter $\xi\in U$. By \cite[Proposition 2.5]{MR3448414}, the family $\{V_e\mid e\in\xi\}$ is a neighbourhood basis for $\xi$, and therefore there exists $e\in\xi$ such that $\emptyset\neq V_e\scj U$.
\end{proof}

Applying Lemma \ref{lemma:refinement.ftight} and using Theorem \ref{thm.simple.groupoid} and Theorem \ref{thm.simple.partial.action}, we state a simplicity characterization for labelled space $C^*$-algebras:

\begin{theorem}\label{thm:simplicity.tight.spectrum}
	Let $\lspace$ be a normal labelled space, $\ftight$ the tight spectrum of the associated inverse semigroup, $\Phi$ the partial action given in Section \ref{s:partialaction}, and $\mathcal{G}$ the groupoid given in Section \ref{sec:groupoids}. If
	\begin{itemize}
		\item there exists $\xi\in\ftight$ with trivial isotropy and,
		\item for all $e\in E(S)$ and all $\xi\in\ftight$, there exists $\alpha,\beta\in\awstar$ such that $\xi\in V_{\alpha\beta^{-1}}$ and $\phi_{\beta\alpha^{-1}}(\xi)\in V_{e}$,
	\end{itemize}
	then $C^*\lspace$ is simple. Under the extra hypothesis that $\mathcal{G}$ is second-countable, the converse is also true, and moreover, for every non-empty open set $U\scj \ftight$, there exists $\xi$ with trivial isotropy such that $\xi\in U$.
\end{theorem}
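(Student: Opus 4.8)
The plan is to reduce the statement about $C^*\lspace$ to the corresponding statements about the groupoid $\mathcal{G}$ (equivalently, about the partial action $\Phi$), via the chain of identifications already established: $C^*\lspace\cong C_0(\ftight)\rtimes_{\ph}\F$ (Theorem \ref{thm:IsomorphismThm}), and $C_0(\ftight)\rtimes_{\ph}\F\cong C^*(\mathcal{G})$ with $\mathcal{G}\cong\mathcal{G}_{\ph}$ (Theorem \ref{t:isomgroupoids} together with the standard isomorphism between a partial crossed product and the $C^*$-algebra of its transformation groupoid). So $C^*\lspace$ is simple iff $C^*(\mathcal{G})$ is simple. Recall also that $C^*(\mathcal{G})=C^*_r(\mathcal{G})$ always holds here by \cite[Corollary 4.12]{Gil3}, so condition (1) of Theorem \ref{thm.simple.groupoid} is automatic.

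\medskip

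First I would prove the \emph{forward} direction. Assume the two bulleted hypotheses. Translate the second bullet into minimality of $\Phi$: given any non-empty open $U\scj\ftight$ and any $\xi\in\ftight$, Lemma \ref{lemma:refinement.ftight} gives $e\in E(S)$ with $\emptyset\neq V_e\scj U$, and the hypothesis produces $\alpha,\beta$ with $\xi\in V_{\alpha\beta^{-1}}$ and $\phi_{\beta\alpha^{-1}}(\xi)\in V_e\scj U$; since $\phi_{\beta\alpha^{-1}}(\xi)\in\mathrm{Orb}(\xi)$, the orbit of every point is dense, which is exactly minimality of $\Phi$ (by the equivalence stated after the definition of minimality), hence $\mathcal{G}$ is minimal by Proposition \ref{prop:conditions.groupoid.partial.action.equivalent}. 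Next, the first bullet gives a unit with trivial isotropy in $\mathcal{G}$; combined with minimality, \cite[Remark 2.2]{MR3189105} gives that $\mathcal{G}$ is topologically principal. Then Theorem \ref{thm.simple.partial.action} (or directly Theorem \ref{thm.simple.groupoid}, whose hypotheses about second-countability are not needed for the ``if'' implication in \cite[Corollary 2.9]{MR1905819}) yields simplicity of $C_0(\ftight)\rtimes_{\ph}\F$, hence of $C^*\lspace$.

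\medskip

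For the \emph{converse}, assume in addition that $\mathcal{G}$ is second-countable. Then $C^*(\mathcal{G})=C^*_r(\mathcal{G})$ holds and Theorem \ref{thm.simple.groupoid} applies in full: simplicity of $C^*\lspace\cong C^*(\mathcal{G})$ forces $\mathcal{G}$ to be topologically principal and minimal. Minimality of $\mathcal{G}$ gives minimality of $\Phi$ (Proposition \ref{prop:conditions.groupoid.partial.action.equivalent}), which by the orbit characterization says exactly that for every $\xi\in\ftight$ and every non-empty open $V_e$ ($e\in E(S)$) there are $\alpha,\beta\in\awstar$ with $\xi\in V_{\alpha\beta^{-1}}$ and $\phi_{\beta\alpha^{-1}}(\xi)\in V_e$ — here I would note that membership of $\xi$ in the domain $V_{\alpha\beta^{-1}}=V_{(\beta\alpha^{-1})^{-1}}$ is precisely what allows $\phi_{\beta\alpha^{-1}}$ to be applied, using Lemma \ref{lem:domainsofreducedwords}(ii) to see that every element of $\F$ with non-empty domain has the form $\alpha\beta^{-1}$. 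This is the second bullet. Topological principality of $\mathcal{G}$ gives a dense set of units with trivial isotropy; in particular there exists $\xi\in\ftight$ with trivial isotropy, which is the first bullet, and moreover for every non-empty open $U\scj\ftight$ such a $\xi$ can be found inside $U$ (by density), giving the last clause.

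\medskip

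The routine-but-careful part is checking that the isomorphism $C^*\lspace\cong C^*(\mathcal{G})$ does carry simplicity back and forth and that the groupoid $\mathcal{G}$ of Section \ref{sec:groupoids} is literally (up to the isomorphism $\Theta$ of Theorem \ref{t:isomgroupoids}) the transformation groupoid $\mathcal{G}_{\ph}$ to which \cite[Theorem 5.1]{MR3189105} and Proposition \ref{prop:conditions.groupoid.partial.action.equivalent} apply; this is immediate from the constructions but should be stated. The genuine obstacle is the second-countability hypothesis in the converse: without it Theorem \ref{thm.simple.groupoid} is unavailable, which is exactly why the converse is only claimed under that extra assumption. No new estimates are needed — the proof is essentially an assembly of Theorems \ref{thm:IsomorphismThm}, \ref{t:isomgroupoids}, \ref{thm.simple.groupoid}, \ref{thm.simple.partial.action}, Propositions \ref{prop:conditions.groupoid.partial.action.equivalent}, and Lemma \ref{lemma:refinement.ftight}, together with the translation of the two bullets into ``there is a unit with trivial isotropy'' and ``$\Phi$ is minimal''.
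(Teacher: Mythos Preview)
Your proposal is correct and follows essentially the same route as the paper: the paper presents this theorem as an immediate consequence of Lemma~\ref{lemma:refinement.ftight}, Theorem~\ref{thm.simple.groupoid}, Theorem~\ref{thm.simple.partial.action}, Proposition~\ref{prop:conditions.groupoid.partial.action.equivalent}, and the remark that minimality plus a single unit with trivial isotropy yields topological principality. Your write-up supplies the connective tissue (the translation of the two bullets into minimality and existence of a trivial-isotropy unit, and the use of the chain $C^*\lspace\cong C_0(\ftight)\rtimes_{\ph}\F\cong C^*(\mathcal{G})$) that the paper leaves implicit.
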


We want to find conditions for simplicity solely on terms of the labelled space. We begin by characterizing when an element of $\ftight$ has non-trivial isotropy. As is shown in Section \ref{sec:groupoids}, we have two (isomorphic) groupoid models for $C^*\lspace$; one in terms of the partial action (Equation (\ref{eqn:Groupoid2})) and the other in terms the cutting map $H$ (Equation (\ref{eqn:Groupoid1})). By \cite[Proposition 4.8]{Gil3} the cutting map may also be seen as a shift $\sigma$, analogous to the case of directed graphs. We use these descriptions interchangeably without much fuss, if  no confusion is likely. We need the following technical lemmas.

\begin{lemma}\label{lemma:filter.loop.1}
	Let $\lspace$ be a normal labelled space. Suppose that $\ft\in X_{\beta\gamma}$ for some $\beta,\gamma\in\awstar$. If $A\cap r(A,\gamma)\neq\emptyset$ for all $A\in\ft$, then $A\cap r(A,\gamma)\in\ft$ for all $A\in\ft$.
\end{lemma}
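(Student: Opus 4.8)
The plan is to work with the ultrafilter $\ft$ in the Boolean algebra $\acfrg{\beta\gamma}$ and exploit that $\acfrg{\beta\gamma}$ is a Boolean algebra (so, in particular, an ultrafilter is a prime filter by the Proposition recalled in Subsection~\ref{subsection:filters.and.characters}). Fix $A\in\ft$. The hypothesis gives that $A\cap r(A,\gamma)\neq\emptyset$; I want to upgrade this to $A\cap r(A,\gamma)\in\ft$. Since $\ft$ is an ultrafilter, it suffices to show that its complement (relative to $A$, say) is not in $\ft$, or equivalently, by primeness, to show that $A\cap r(A,\gamma)$ meets every element of $\ft$ — i.e. that $A\cap r(A,\gamma)$ is ``$\ft$-large''. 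Concretely, the cleanest route is: suppose for contradiction that $A\cap r(A,\gamma)\notin\ft$. Then, since $\ft$ is an ultrafilter in the Boolean algebra $\acfrg{\beta\gamma}$ and $A\in\ft$, the relative complement $A\setminus r(A,\gamma) = A\setminus(A\cap r(A,\gamma))$ lies in $\ft$ (this uses normality, which makes $\acfrg{\beta\gamma}$ a genuine Boolean algebra with relative complements). Call this set $B := A\setminus r(A,\gamma)\in\ft$.

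Now apply the hypothesis to $B\in\ft$: we get $B\cap r(B,\gamma)\neq\emptyset$. The key point to push through is a monotonicity/weak-left-resolving computation: since $B\subseteq A$, weak left-resolvingness (or simply monotonicity of $r(\,\cdot\,,\gamma)$) gives $r(B,\gamma)\subseteq r(A,\gamma)$. But $B = A\setminus r(A,\gamma)$ is disjoint from $r(A,\gamma)$, hence disjoint from $r(B,\gamma)$; that is, $B\cap r(B,\gamma) = \emptyset$, contradicting what the hypothesis just gave us. Therefore $A\cap r(A,\gamma)\in\ft$, as claimed. I should double-check the edge case where $A\cap r(A,\gamma) = A$ (then there is nothing to prove since $A\in\ft$) and make sure that $r(A,\gamma)\in\acfrg{\beta\gamma}$ so that the relative complement $A\setminus r(A,\gamma)$ indeed lies in $\acfrg{\beta\gamma}$ — this is where I use that $\acf$ is closed under relative ranges and that the labelled space is normal (closed under relative complements), together with $A\subseteq r(\beta\gamma)$.

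**Main obstacle.** The only real subtlety is making sure all the sets stay inside the right Boolean algebra $\acfrg{\beta\gamma}$ and that $r(A,\gamma)$ can legitimately be intersected with elements of $\acfrg{\beta\gamma}$ — a priori $r(A,\gamma)$ is the relative range of $A$ (which sits inside $r(\beta\gamma)$) along $\gamma$, so I should verify that it, or at least $A\cap r(A,\gamma)$, lies in $\acf$ and is contained in $r(\beta\gamma)$. Given $A\in\acfrg{\beta\gamma}$ we have $A\subseteq r(\beta\gamma) = r(r(\beta),\gamma)$, and since $\acf$ is closed under relative ranges, $r(A,\gamma)\in\acf$; moreover $r(A,\gamma)\subseteq r(r(\beta\gamma),\gamma)$... here I need to be a little careful, since $\gamma$ may extend past where $A$ lives, but in fact $A\cap r(A,\gamma)\subseteq A\subseteq r(\beta\gamma)$ regardless, so $A\cap r(A,\gamma)\in\acfrg{\beta\gamma}$ as long as it is in $\acf$, which it is by closure under finite intersections. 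That settles the membership issues, and the contradiction argument above then goes through verbatim.
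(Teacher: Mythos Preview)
Your proof is correct. The core is the same as the paper's: apply the hypothesis to a well-chosen smaller element of $\ft$ and use monotonicity of $r(\,\cdot\,,\gamma)$. The only difference is packaging. The paper argues directly via the ultrafilter characterization ``$x\in\ft$ iff $x\cap B\neq\emptyset$ for all $B\in\ft$'': for arbitrary $B\in\ft$ it observes
\[
A\cap r(A,\gamma)\cap B\;\supseteq\;(A\cap B)\cap r(A\cap B,\gamma)\neq\emptyset
\]
by monotonicity and the hypothesis applied to $A\cap B\in\ft$. You instead use the dual characterization ``$x\notin\ft$ implies $A\setminus x\in\ft$'' and apply the hypothesis to the single element $B=A\setminus r(A,\gamma)$. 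Both routes are equally short; the paper's avoids the (harmless) case split on whether $A\subseteq r(A,\gamma)$, while yours isolates a single witness rather than quantifying over all $B\in\ft$. Your care in checking that $A\cap r(A,\gamma)$ and $A\setminus(A\cap r(A,\gamma))$ live in $\acfrg{\beta\gamma}$ is exactly right and matches the paper's first sentence.
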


\begin{proof}
Given $A\in\ft$, since $A\in\acf_{\beta\gamma}$, it follows that $A\cap r(A,\gamma)\in\acf_{\beta\gamma}$. To prove that $A\cap r(A,\gamma)\in\ft$, it is sufficient to show that $A\cap r(A,\gamma) \cap B\neq\emptyset$ for all $B\in\ft$. Since relative ranges preserve inclusion, if $B\in\ft$, then 
	\[A\cap r(A,\gamma)\cap B\supseteq (A\cap B)\cap r(A\cap B,\gamma),\]
	which is non empty by hypothesis since $A\cap B\in\ft$.
\end{proof}

For $\alpha,\beta\in\awplus$ such that $\alpha\beta\in\awplus$, let  $f_{\alpha[\beta]}$ and $h_{(\alpha)\beta}$ be the maps defined in Equation (\ref{eq:def.f}) and Equation (\ref{eq:def.h}), respectively (see Section \ref{subsection.filter.surgery}).
\begin{lemma}\label{lemma:filter.loop.2}
Let $\lspace$ be a normal labelled space. Suppose that $\ft\in X_{\beta\gamma^{n+1}}$ for some $\beta\in\awstar$ and $\gamma\in\awplus$ and $n\in\nn^*$. Then $h_{[\beta]\gamma^n}(f_{\beta\gamma^n[\gamma]}(\ft))=h_{[\beta\gamma]\gamma^n}(\ft)$ if and only if $A\cap r(A,\gamma)\neq\emptyset$ for all $A\in\ft$.
\end{lemma}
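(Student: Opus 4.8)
The plan is to unwind both sides of the claimed equality directly from the definitions of $f_{\alpha[\beta]}$ (Equation~(\ref{eq:def.f})) and $h_{[\alpha]\beta}$ (Equation~(\ref{eq:def.h})), reducing the statement to a concrete comparison of two filters in $\acf_{\beta\gamma^n}$, and then to invoke Lemma~\ref{lemma:filter.loop.1} to close the gap. First I would observe that both $h_{[\beta]\gamma^n}(f_{\beta\gamma^n[\gamma]}(\ft))$ and $h_{[\beta\gamma]\gamma^n}(\ft)$ are (ultra)filters in $\acf_{\beta\gamma^n}$, so by Stone duality (equivalently, since an ultrafilter is not properly contained in any filter) it suffices to show one is contained in the other, or even just that $h_{[\beta]\gamma^n}(f_{\beta\gamma^n[\gamma]}(\ft)) \subseteq h_{[\beta\gamma]\gamma^n}(\ft)$ always holds, with equality governed by the stated condition. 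Writing out $h_{[\beta\gamma]\gamma^n}(\ft)=\{C\in\acf_{\gamma^n}\mid D\subseteq C\text{ for some }D\in\ft\}$ — wait, I need to be careful about which Boolean algebra the ``cut'' lands in; here $\ft\in X_{\beta\gamma^{n+1}}=X_{(\beta\gamma)\gamma^n}$ after cutting $\beta\gamma$, landing in $\acf_{\gamma^n}$ relatively, and similarly $f_{\beta\gamma^n[\gamma]}(\ft)=\{A\in\acf_{\beta\gamma^n}\mid r(A,\gamma)\in\ft\}$, then cutting $\beta$ gives a filter in the appropriately ranged algebra. I would set up notation carefully so that both sides live in the same $\acf_{\gamma^n}$ (up to the identification used in Section~\ref{subsection.filter.surgery}) and then chase an arbitrary element through.

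The computational heart is this: an element $C$ lies in $h_{[\beta\gamma]\gamma^n}(\ft)$ iff there is $D\in\ft$ with (the appropriate relative inclusion) $D\subseteq C$; an element $C$ lies in $h_{[\beta]\gamma^n}(f_{\beta\gamma^n[\gamma]}(\ft))$ iff there is $A\in\acf_{\beta\gamma^n}$ with $r(A,\gamma)\in\ft$ and $A\subseteq C$ (relatively). Given $D\in\ft$, the natural candidate for $A$ is $D$ itself pulled back one step, and the point where the hypothesis $A\cap r(A,\gamma)\neq\emptyset$ enters is exactly in verifying that such an $A$ can be found with $r(A,\gamma)$ still in $\ft$: one needs $D\cap r(D,\gamma)$ (or the analogous set) to be nonempty and in $\ft$, which is precisely what Lemma~\ref{lemma:filter.loop.1} supplies — it promotes the pointwise nonemptiness condition ``$A\cap r(A,\gamma)\neq\emptyset$ for all $A\in\ft$'' to membership ``$A\cap r(A,\gamma)\in\ft$''. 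Conversely, for the ``only if'' direction I would argue that if some $A\in\ft$ had $A\cap r(A,\gamma)=\emptyset$, then tracing the definitions shows $r(A,\gamma)\notin f_{\beta\gamma^n[\gamma]}(\ft)$-related sets in the way needed, producing an element in one cut-filter but not the other, contradicting the equality; concretely, the two ultrafilters would differ on whether $A$ (suitably placed) belongs.

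The main obstacle I anticipate is purely bookkeeping: keeping straight the four or five different Boolean algebras $\acf_{\beta\gamma^{n+1}}$, $\acf_{\beta\gamma^n}$, $\acf_{\gamma^n}$ (and the relative-range maps between them) and the precise meaning of ``$\subseteq$'' once one passes to the cut algebras, so that the single application of Lemma~\ref{lemma:filter.loop.1} (with its $\beta,\gamma$ there being $\beta\gamma^n$ and $\gamma$ here) is legitimate. There is no deep idea beyond the definitional unwinding plus Lemma~\ref{lemma:filter.loop.1}; the functoriality identity $f_{\alpha[\beta\gamma]}=f_{\alpha[\beta]}\circ f_{\alpha\beta[\gamma]}$ and the analogous one for $h$ from Section~\ref{subsection.filter.surgery} will likely be used to commute the cut and pullback past each other. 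Once the two sides are exhibited as sets of the form ``$\{C\mid \exists D\in\ft,\ D\subseteq C\}$'' with the relevant $D$'s shown to coincide up to intersecting with relative ranges, the equivalence falls out. I expect the write-up to be about half a page of careful but elementary manipulation.
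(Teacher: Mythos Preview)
Your plan is correct and matches the paper's proof: unwind the definitions (the paper uses the commutation $h_{[\beta]\gamma^n}\circ f_{\beta\gamma^n[\gamma]}=f_{\gamma^n[\gamma]}\circ h_{[\beta]\gamma^{n+1}}$ from \cite[Lemma~4.7]{MR3680957} to write both sides concretely in $\acf_{\gamma^n}$), then for the ``if'' direction show one inclusion using Lemma~\ref{lemma:filter.loop.1} and conclude by ultrafilter maximality, and for ``only if'' read off directly that $A\in\ft$ forces $r(A,\gamma)\in\usetr{\ft}{\acf_{\gamma^{n+1}}}$. One small correction: the inclusion that the hypothesis yields is $h_{[\beta\gamma]\gamma^n}(\ft)\subseteq h_{[\beta]\gamma^n}(f_{\beta\gamma^n[\gamma]}(\ft))$, not the reverse, and neither inclusion holds unconditionally --- but this does not affect your argument, since you only need one inclusion under the hypothesis.
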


\begin{proof}
By the definition of $f$ and $h$, and by \cite[Lemma 4.7]{MR3680957}, we have that $h_{[\beta\gamma]\gamma^n}(\ft)=\usetr{\ft}{\acf_{\gamma^n}}$ and \[h_{[\beta]\gamma^n}(f_{\beta\gamma^n[\gamma]}(\ft))=f_{\gamma^n[\gamma]}(h_{[\beta]\gamma^{n+1}}(\ft))=\{D\in\acf_{\gamma^n}\mid r(D,\gamma)\in\usetr{\ft}{\acf_{\gamma^{n+1}}}\}.\]
	
	If $h_{[\beta]\gamma^n}(f_{\beta\gamma^n[\gamma]}(\ft))=h_{[\beta\gamma]\gamma^n}(\ft)$, then for $A\in\ft$, we have that $A\in h_{[\beta\gamma]\gamma^n}(\ft)$. Hence $r(A,\gamma)\in \usetr{\ft}{\acf_{\gamma^{n+1}}}$, which implies that $A\cap r(A,\gamma)\neq\emptyset$.
	
	Suppose now that $A\cap r(A,\gamma)\neq\emptyset$ for all $A\in\ft$ and let $D\in h_{[\beta\gamma]\gamma^n}(\ft)$. Then, $D\in\acf_{\gamma^n}$ and there exists $C\in \ft$ such that $C\scj D$. By Lemma \ref{lemma:filter.loop.1}, $C':=C\cap r(C,\gamma)\in\ft$, so that $C'\scj r(C,\gamma)\scj r(D,\gamma)$. This implies that $D\in h_{[\beta]\gamma^n}(f_{\beta\gamma^n[\gamma]}(\ft))$ and therefore $h_{[\beta\gamma]\gamma^n}(\ft)\scj h_{[\beta]\gamma^n}(f_{\beta\gamma^n[\gamma]}(\ft))$. However, since we are dealing with ultrafilters, these sets must be equal.
\end{proof}

For $\alpha,\beta\in\awplus$ such that $\alpha\beta\in\awplus$, let  $\sigma$ be the shift map given by $\sigma^{|\alpha|}=H_{\alpha[\beta]}$ (see \cite[Proposition 4.8]{Gil3}). 
\begin{proposition}\label{prop:non.trivial.isotropy}
	Let $\lspace$ be a normal labelled space. A tight filter $\xia$ has non trivial isotropy if and only if there exist $\beta,\gamma\in\awplus$ such that $\alpha=\beta\gamma^{\infty}$ and for all $n\in\nn^*$ and all $A\in\xi_{|\beta\gamma^n|}$ we have that $A\cap r(A,\gamma)\neq\emptyset$. 
\end{proposition}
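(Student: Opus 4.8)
The plan is to translate "non-trivial isotropy" through the isomorphism $\Theta$ of Theorem~\ref{t:isomgroupoids} into a statement purely about the cutting maps $H$, and then to iterate Lemma~\ref{lemma:filter.loop.2}. Concretely, $\xi^\alpha$ has non-trivial isotropy in $\mathcal{G}$ precisely when there is a nonidentity $t\in\F$ with $\xi^\alpha\in V_{t^{-1}}$ and $\phi_t(\xi^\alpha)=\xi^\alpha$; via $\Theta$ this says there is a pair $(\mu,\nu)$ with $\mu\neq\nu$, $\xi^\alpha\in\ftight_{\mu\delta}\cap\ftight_{\nu\delta}$ for some $\delta$, and $H_{[\mu]\delta}(\xi^\alpha)=H_{[\nu]\delta}(\xi^\alpha)$, while $|\mu|\neq|\nu|$ so that the $\Z$-coordinate is non-zero (if $|\mu|=|\nu|$ then by Theorem~\ref{thm.filters.in.E(S)} the equality of the cut filters forces $\mu=\nu$). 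Writing $\mu=\beta\gamma$ and $\nu=\beta$ after cancelling the common beginning (WLOG $|\mu|>|\nu|$; note $\alpha$ must then begin with $\mu$, hence with $\beta\gamma$), the hypothesis becomes: $\alpha=\beta\gamma\delta'$ for the appropriate tail, and $H_{[\beta\gamma]\delta}(\xi^\alpha)=H_{[\beta]\delta}(\xi^\alpha)$ where $\delta=\gamma\delta'$ — i.e. the filter is fixed by the shift that deletes $\gamma$ after the prefix $\beta$. Iterating this (using the semigroup property of the $H$ maps, Theorem~\ref{thm.compositions.G.and.H}) forces $\alpha=\beta\gamma^\infty$ and that the shift deleting $\gamma^n$ after $\beta$ fixes $\xi^\alpha$ for every $n$.

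**Key steps.**

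\begin{enumerate}[(1)]
\item \emph{Reduce to the shift picture.} Use $\Theta$ and the identification $\mathcal{G}^{(0)}\cong\ftight$ to see that $\xi^\alpha$ has non-trivial isotropy iff there exist $\mu,\nu\in\awstar$ with $|\mu|\neq|\nu|$, a common tail $\delta$, $\mu\delta=\nu\delta'$ (equal, $=\alpha$ truncated) and $H_{[\mu]\delta}(\xi^\alpha)=H_{[\nu]\delta}(\xi^\alpha)$. Cancel the longest common beginning of $\mu,\nu$; since the lengths differ and the cuts agree, one is a strict beginning of the other (here Theorem~\ref{thm.filters.in.E(S)} is used to rule out incomparable $\mu,\nu$, because the cut filters have underlying words $\mu^{-1}\alpha$ and $\nu^{-1}\alpha$ which must coincide). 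Write the longer as $\beta\gamma$, the shorter as $\beta$, with $\gamma\in\awplus$; then $\alpha$ begins with $\beta\gamma$ and, writing the relevant tail as $\delta$, the fixed-point condition reads $H_{[\beta\gamma]\delta}(\xi^\alpha)=H_{[\beta]\delta}(\xi^\alpha)$ for all finite truncations, which in the local coordinates at level $|\beta|+$ (something) is exactly the hypothesis of Lemma~\ref{lemma:filter.loop.2} with a single copy of $\gamma$.

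\item \emph{Apply Lemma~\ref{lemma:filter.loop.2}.} For each relevant $n$, the equality $h_{[\beta]\gamma^n}(f_{\beta\gamma^n[\gamma]}(\xi_{|\beta\gamma^{n+1}|}))=h_{[\beta\gamma]\gamma^n}(\xi_{|\beta\gamma^{n+1}|})$ holds iff $A\cap r(A,\gamma)\neq\emptyset$ for all $A$ in the corresponding ultrafilter $\xi_{|\beta\gamma^{n+1}|}$. So the shift-invariance at "step $n$" is equivalent to the nonempty-intersection condition at level $|\beta\gamma^n|$.

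\item \emph{Iterate to get $\alpha=\beta\gamma^\infty$.} The single fixed-point relation $H_{[\beta\gamma]\gamma\delta''}=H_{[\beta]\gamma\delta''}$ applied to $\xi^\alpha$, together with the composition identities $H_{[\beta]\gamma}\circ H_{[\beta\gamma]\gamma^n}=H_{[\beta\gamma]\gamma^n}$-style relations of Theorem~\ref{thm.compositions.G.and.H}, self-improves: if the shift by $\gamma$ fixes $\xi^\alpha$ then so does the shift by $\gamma^k$ for all $k$, which forces the underlying word $\alpha$ (an element of $\overline{\awleinf}$ fixed by deleting $\gamma$ after $\beta$) to be exactly $\beta\gamma\gamma\gamma\cdots=\beta\gamma^\infty$. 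Combined with step (2) this yields: $\alpha=\beta\gamma^\infty$ and for all $n\in\nn^*$ and all $A\in\xi_{|\beta\gamma^n|}$, $A\cap r(A,\gamma)\neq\emptyset$.

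\item \emph{Converse.} Conversely, given $\beta,\gamma$ with $\alpha=\beta\gamma^\infty$ and the intersection condition at every level, run Lemma~\ref{lemma:filter.loop.2} (and Lemma~\ref{lemma:filter.loop.1}) backwards to produce the equality $H_{[\beta\gamma]\delta}(\xi^\alpha)=H_{[\beta]\delta}(\xi^\alpha)$ on all finite truncations, hence on $\xi^\alpha$; this gives a groupoid element $(\xi^\alpha,|\gamma|,\xi^\alpha)\in\Gamma$ with nonzero $\Z$-coordinate, i.e. non-trivial isotropy, and transport back through $\Theta$.
\end{enumerate}

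**Main obstacle.** The delicate point is step (1)--(3): carefully matching the abstract isotropy element $t=\mu\nu^{-1}\in\F$ to a power of a fixed word $\gamma$ and showing the self-improvement in step (3). One must check that the common-beginning cancellation is legitimate (it is, by the groupoid multiplication and the fact that $\Theta$ only alters the middle coordinate), that $|\mu|\neq|\nu|$ cannot be avoided, and that deleting $\gamma$ once while fixing $\xi^\alpha$ genuinely propagates to all powers — this last is where Theorem~\ref{thm.compositions.G.and.H} (the $H\circ H$ composition law) does the bookkeeping, letting us write $H_{[\beta\gamma^k]\gamma^n}=H_{[\gamma^{k-1}\cdots]\cdots}\circ\cdots\circ H_{[\beta\gamma]\gamma^{n+k-1}}$ and peel off one $\gamma$ at a time. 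Everything else is a routine application of Lemmas~\ref{lemma:filter.loop.1} and~\ref{lemma:filter.loop.2} level by level.
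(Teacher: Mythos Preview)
Your proposal is correct and follows essentially the same route as the paper: reduce non-trivial isotropy to a shift-fixed-point condition $\sigma^{|\beta|}(\xi)=\sigma^{|\beta\gamma|}(\xi)$, read off $\alpha=\beta\gamma^{\infty}$, and then verify the level-by-level filter condition via Lemma~\ref{lemma:filter.loop.2} (and conversely). The paper's argument is slightly leaner in two places: it works directly in the groupoid $\Gamma$ using the shift $\sigma$ (so your detour through $\Theta$ and the partial-action groupoid $\mathcal{G}$ is unnecessary, though harmless), and it obtains $\alpha=\beta\gamma^{\infty}$ in one line by comparing the underlying labelled paths of $\sigma^{l}(\xi)$ and $\sigma^{k}(\xi)$ rather than by iterating Theorem~\ref{thm.compositions.G.and.H} as in your step~(3).
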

\begin{proof}
	Suppose first that $\xia$ has non trivial isotropy. This means that there exist $k,l\in\nn$ with $k>l$ such that $\sigma^l(\xi)=\sigma^k(\xi)=:\eta$. By looking at the labelled path of $\eta$, we conclude that there exists $\gamma\in\awplus$ with $|\gamma|=k-l$, and such that the labelled path of $\eta$ is $\gamma^{\infty}$. Then there exists $\beta\in\awstar$ such that $|\beta|=l$ and $\alpha=\beta\gamma^{\infty}$.
	
	Let $n\in\nn$ and $A\in\xi_{|\beta\gamma^{n+1}|}$. Then
	\[h_{[\beta\gamma]\gamma^n}(\xi_{|\beta\gamma^{n+1}|})=\eta_{|\gamma^n|}=h_{[\beta]\gamma^n}(\xi_{|\beta\gamma^{n}|})=h_{[\beta]\gamma^n}(f_{\beta\gamma^n[\gamma]}(\xi_{|\beta\gamma^{n+1}|})).\]
	By Lemma \ref{lemma:filter.loop.2}, $A\cap r(A,\gamma)\neq\emptyset$.
	
	Suppose now that there exist $\beta,\gamma\in\awplus$ such that $\alpha=\beta\gamma^{\infty}$ and for all $n\in\nn^*$ and all $A\in\xi_{|\beta\gamma^n|}$ we have that $A\cap r(A,\gamma)\neq\emptyset$. Define $\eta=\sigma^{|\beta|}(\xi)$ and $\rho=\sigma^{|\beta\gamma|}(\xi)$. In order to prove that $\xi$ has non trivial isotropy, it is sufficient to show that $\eta=\rho$. Since a filter $\eta_m$ for $m\in\nn^*$ determines all filters $\eta_n$ for $n<m$, we can reduce the proof to showing that $\eta_m=\rho_m$ infinitely many times. Fix an arbitrary $n\in\nn^*$. Applying  Lemma \ref{lemma:filter.loop.2}, we see that
 \[\eta_{|\gamma^n|}=h_{[\beta]\gamma^n}(\xi_{|\beta\gamma^n|})=h_{[\beta]\gamma^n}(f_{\beta\gamma^n[\gamma]}(\xi_{|\beta\gamma^{n+1}|}))=h_{[\beta\gamma]\gamma^n}(\xi_{|\beta\gamma^{n+1}|})=\rho_{|\gamma^n|}.\]
The result now follows.
\end{proof}

\begin{remark}\label{remark:fixed.point} 
	An alternative approach to finding points with non trivial isotropy is to look for fixed points of the partial action $\Phi$ of Section \ref{s:partialaction}. Suppose $\xia\in \text{Fix}(t)$ for some $t\in\F\setminus\{\eword\}$. By the definition of the partial action, we must have $t=\mu\nu^{-1}$ for some $\mu,\nu\in\awstar$ such that $|\mu|\neq|\nu|$, and in this case $\sigma^{|\mu|}(\xi)=\sigma^{|\nu|}(\xi)$. For $\beta$ and $\gamma$ as in Proposition \ref{prop:non.trivial.isotropy}, we conclude from the proof of Proposition \ref{prop:non.trivial.isotropy} that $\mu=\beta$ and $\nu=\beta\gamma$ if $|\mu|<|\nu|$, and $\mu=\beta\gamma$ and $\nu=\beta$ if $|\mu|>|\nu|$. Here too we have that $\alpha=\beta\gamma^{\infty}$.
\end{remark}

\begin{definition}\cite[Definition 9.5]{MR3606190}
	Let $\lspace$ be a normal labelled space.
	\begin{enumerate}
		\item A pair $(\alpha,A)$ with $\alpha\in\awplus$ and $A\in\acfra$ is a \emph{cycle} if for every $B\in\acfra$ with $B\scj A$, we have that $r(B,\alpha)=B$.
		\item A cycle $(\alpha, A)$ has an \textit{exit} if there exists $0\leq k \leq |\alpha|$ and $\emptyset\neq B\in \acf$ such that $B\scj r(A,\alpha_{1,k})$ and $\lbf(B\dgraph^1)\neq \{\alpha_{k+1}\}$ (where $\alpha_{|\alpha|+1}:=\alpha_1$).
		\item The labelled space $\lspace$ satisfies condition ($L_\acf$) if every cycle has an exit.
	\end{enumerate}
\end{definition}

\begin{theorem}\label{thm:top.principal.labelled.spaces}
	Let $\lspace$ be a normal labelled space and $\mathcal{G}$ the groupoid given in Section \ref{sec:groupoids}. Then $\mathcal{G}$ is topologically principal if and only if $\lspace$ satisfies condition ($L_\acf$).
\end{theorem}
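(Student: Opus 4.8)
The plan is to translate the topological condition on the groupoid $\mathcal{G}$ into the shift-dynamics description of $\ftight$ and then match it with condition ($L_\acf$). Recall that by Theorem~\ref{t:isomgroupoids} the groupoid $\mathcal{G}$ is isomorphic to $\Gamma$, so $\mathcal{G}$ is topologically principal if and only if the set of tight filters with trivial isotropy is dense in $\ftight$. By Proposition~\ref{prop:non.trivial.isotropy}, a tight filter $\xi^\alpha$ has non-trivial isotropy precisely when $\alpha=\beta\gamma^\infty$ for some $\beta,\gamma\in\awplus$ with $A\cap r(A,\gamma)\neq\emptyset$ for all $n\in\nn^*$ and all $A\in\xi_{|\beta\gamma^n|}$. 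So the first step is to show: \emph{$\mathcal{G}$ fails to be topologically principal if and only if there is a non-empty open set $U\scj\ftight$ all of whose elements have non-trivial isotropy.} One direction is immediate; for the other, using Lemma~\ref{lemma:refinement.ftight} I would shrink $U$ to some $V_e$ with $e=(\mu,C,\mu)\in E(S)$, and argue that, since every tight filter in $V_e$ has non-trivial isotropy with the associated $\beta\gamma^\infty$ structure forced to pass through $\mu$ and $C$, one can extract a genuine cycle.

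The second step is the key equivalence at the level of the labelled space. Suppose ($L_\acf$) fails, i.e. there is a cycle $(\gamma,A)$ without exit. I would build an explicit tight filter with underlying word $\alpha=\gamma^\infty$, taking $\xi_{|\gamma^n|}$ to be an ultrafilter in $\acf_{\gamma^n}$ containing $r(A,\gamma^n)$ (which equals a translate of $A$ since $(\gamma,A)$ is a cycle), chosen so the complete family condition holds; the no-exit hypothesis guarantees $\lbf(B\dgraph^1)=\{\gamma_{k+1}\}$ for relevant $B$, so these filters are forced (finite-type obstruction (ii)(a)/(b) of Theorem~\ref{thm:TightFiltersType} is avoided and the filter is of infinite type, hence tight). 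Then I would check that $V_{(\gamma,A,\gamma)}$ (or a slightly smaller basic set) consists entirely of filters satisfying the hypothesis of Proposition~\ref{prop:non.trivial.isotropy}: any $\eta^{\alpha'}\in V_{(\gamma,A,\gamma)}$ has $A\in\eta_{|\gamma|}$, its word is forced to continue as $\gamma^\infty$ because the cycle has no exit, and the cycle relation $r(B,\gamma)=B$ for $B\scj A$ gives $B\cap r(B,\gamma)=B\neq\emptyset$ for every relevant $B$. Thus $\mathcal{G}$ is not topologically principal.

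Conversely, if $\mathcal{G}$ is not topologically principal, Step~1 gives a basic open set $V_{(\mu,C,\mu)}$ of filters with non-trivial isotropy. I would take one such $\xi^\alpha$, write $\alpha=\beta\gamma^\infty$ as in Proposition~\ref{prop:non.trivial.isotropy}, and show that $(\gamma, A)$ is a cycle without exit for a suitable $A\in\xi_{|\beta\gamma|}$, $A\scj r(\beta\gamma)$: the property $B\cap r(B,\gamma)\neq\emptyset$ for all $B\in\xi_{|\beta\gamma^n|}$, combined with weak-left-resolvingness and Lemma~\ref{lemma:filter.loop.1}, forces $r(B,\gamma)\supseteq$ (equivalently $=$, after intersecting) $B$ for all $B\scj A$ in the accommodating family — here I would have to be careful to argue that the equality $r(B,\gamma)=B$ holds for \emph{all} $B\scj A$ and not merely for those in the filter, which is where normality (closure under relative complements) and the weakly-left-resolving identity are used. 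The absence of an exit then follows because a genuine exit would, via Theorem~\ref{thm:TightFiltersType}, produce tight filters in $V_{(\mu,C,\mu)}$ of finite type or with a different continuation, contradicting that every filter there has the rigid $\beta\gamma^\infty$ form.

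\textbf{Main obstacle.} The delicate point is the passage from ``$B\cap r(B,\gamma)\neq\emptyset$ for all $B$ in the relevant filters'' to the full cycle condition ``$r(B,\gamma)=B$ for all $B\scj A$ in $\acf$'': one must upgrade a non-emptiness statement about ultrafilter members to an identity holding on an entire down-set, and also ensure the cycle is exit-free rather than merely that one particular filter avoids the finite-type tight conditions. This is where Lemma~\ref{lemma:filter.loop.1}, Lemma~\ref{lemma:filter.loop.2}, weak-left-resolvingness, and the explicit classification of tight filters in Theorem~\ref{thm:TightFiltersType} all have to be combined carefully; I expect the bulk of the write-up to be in making this implication airtight.
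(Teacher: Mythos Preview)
Your overall architecture matches the paper's, and the direction ``not ($L_\acf$) $\Rightarrow$ not topologically principal'' is essentially right: every tight filter in $V_{(\gamma,A,\gamma)}$ is forced by the no-exit hypothesis to have word $\gamma^\infty$, and the cycle identity $r(B,\gamma)=B$ for $B\scj A$ gives the Proposition~\ref{prop:non.trivial.isotropy} condition directly.

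The genuine gap is in the converse, and it is exactly the obstacle you flag --- but your proposed fix will not close it. Working with a single filter $\xi$ and trying to squeeze the full cycle identity $r(B,\gamma)=B$ (for \emph{all} $B\scj A$) out of the non-emptiness condition $B\cap r(B,\gamma)\neq\emptyset$ (for $B$ \emph{in} the filters $\xi_{|\beta\gamma^n|}$) cannot succeed: a single filter with non-trivial isotropy is perfectly compatible with condition ($L_\acf$) holding, so no amount of Lemma~\ref{lemma:filter.loop.1}/\ref{lemma:filter.loop.2} and normality applied to that one $\xi$ will produce a cycle without exit. Relatedly, your claim that ``every filter in $V_{(\mu,C,\mu)}$ has the rigid $\beta\gamma^\infty$ form'' is unjustified as stated: different filters in an open set of non-trivial-isotropy points may well have different periods.

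The idea you are missing --- and what the paper actually does --- is to pass through the partial-action formulation first. By Proposition~\ref{prop:conditions.groupoid.partial.action.equivalent}, failure of topological principality means $\text{Fix}(t)$ has non-empty interior for one \emph{specific} $t=\beta\gamma^{\pm1}\beta^{-1}$ (Remark~\ref{remark:fixed.point}). Now shrink to $V_e\scj\text{Fix}(t)$ with $e=(\delta,D,\delta)$; the word $\gamma$ is fixed once and for all, and every filter in $V_e$ has associated path $\beta\gamma^\infty$. Define $A:=r(D,\beta'\gamma)$ directly from $e$ (not from any filter). The cycle property is then proved by \emph{varying the filter}: if $B:=A\setminus r(A,\gamma)\neq\emptyset$, pick a tight filter $\eta$ containing $(\beta\gamma,B,\beta\gamma)$; then $\eta\in V_e\scj\text{Fix}(t)$, but $B\cap r(B,\gamma)=\emptyset$ forces $\eta$ to have trivial isotropy by Proposition~\ref{prop:non.trivial.isotropy}, a contradiction. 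The no-exit argument is the same manoeuvre. So the resolution of your obstacle is not a clever combination of the lemmas applied to $\xi$, but rather: fix $t$, take $A$ from the basic set, and for each potential failure choose a \emph{new} tight filter witnessing it inside $\text{Fix}(t)$.
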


\begin{proof}
	First suppose that $\mathcal{G}$ is topologically principal and let $(\alpha,A)$ be a cycle. Consider the set $V_e$, where $e=(\alpha,A,\alpha)$, which is non-empty by Lemma \ref{lemma:ultrafilter}, and take $\xi=\xi^{\beta}\in V_e$ with trivial isotropy. We have a few cases to consider.
	
	If $\beta=\alpha^n\alpha_{1,k}\gamma$ for some $n\in\nn$, $0\leq k< |\alpha|$ and $\gamma\in\awleinf\setminus\{\eword\}$ such that $\gamma_1\neq \alpha_{k+1}$, then $\lbf(r(A,\alpha_{1,k})\dgraph^1)\supseteq \{\alpha_{k+1},\gamma_1\}\neq \{\alpha_{k+1}\}$, and hence $(\alpha,A)$ has an exit.
	
	If $\beta=\alpha^n\alpha_{1,k}$ for some $n\in\nn$ and $0\leq k< |\alpha|$, then $\xi$ is a tight filter of finite type, and by Theorem \ref{thm.tight.filters.in.es}, either $\lbf(r(A,\alpha_{1,k})\dgraph^1)$ is infinite (and therefore different from $\{\alpha_{t+1}\}$), or there exists $\emptyset\neq B\in\acf$ such that $B\scj r(A,\alpha_{1,k})\cap \dgraph^0_{sink}$, and for this $B$ we have that $\lbf(B\dgraph^1)=\emptyset\neq \{\alpha_{k+1}\}$. In both cases, $(\alpha,A)$ has an exit.
	
	If $\beta=\alpha^{\infty}$, by Proposition \ref{prop:non.trivial.isotropy}, there exist $n\in\nn^*$ and $B\in\xi_{|\alpha^n|}$ such that $B\cap r(B,\alpha)=\emptyset$. Since $(\alpha,A,\alpha)\in\xi$, we have that $A\in\xi_{|\alpha|}$, and since $A=r(A,\alpha)$, we have that $A=r(A,\alpha^{n-1})\in\xi_{|\alpha^n|}$. If we take $C=A\cap B\in \xi_{|\alpha^n|}\scj \acf$, then $\emptyset\neq C\scj A$ and $C\cap r(C,\alpha)=\emptyset$. This would imply that $C\neq r(C,\alpha)$, contradicting the fact that $(\alpha,A)$ is a cycle.
	
	For the converse, we prove the contra positive. Hence, assume that $\mathcal{G}$ is not topologically principal, or equivalently, that the partial action $\Phi$ given in Section \ref{s:partialaction} is not topologically free. Let $t\in\F\setminus\{\eword\}$ be such that $\text{Fix}(t)$ has non-empty interior. By Remark \ref{remark:fixed.point}, there exists $\beta,\gamma\in\awstar$ such that $t=\beta\gamma^{\pm 1}\beta^{-1}$. Now, by Lemma \ref{lemma:refinement.ftight}, there exists $e=(\delta,D,\delta)\in E(S)$ such that $\emptyset\neq V_e\scj \text{Fix}(t)$. We may assume without loss of generality that $\beta=\delta\beta'$ for some $\beta'\in\awstar$, since $t=\beta\gamma^n\gamma^{\pm 1}(\beta\gamma^n)^{-1}$ for any $n\in\nn$. For $A:=r(D,\beta'\gamma)$, we have that $A\scj r(\gamma)$. We claim that $(\gamma,A)$ is a cycle that has no exits, so that $\lspace$ does not satisfy condition ($L_\acf$).
	
	Since $V_e$ is non-empty, there exists $\xia\in V_e\scj\text{Fix}(t)$. By Remark \ref{remark:fixed.point}, $\alpha=\beta\gamma^{\infty}$ so that $A\in\xi_{|\beta\gamma|}$. By Proposition \ref{prop:non.trivial.isotropy}, $A\cap r(A,\gamma)\neq\emptyset$. 	If $(\gamma,A)$ is not a cycle, then either $A\setminus r(A,\gamma)\neq\emptyset$ or $r(A,\alpha)\setminus A\neq\emptyset$. Supposing that $B:=A\setminus r(A,\gamma)\neq\emptyset$, then
	\[B\cap r(B,\gamma)=(A\setminus r(A,\gamma))\cap(r(A,\gamma)\setminus r(A,\gamma^2))=\emptyset.\]
	For $f=(\beta\gamma,B,\beta\gamma)$ and $\eta\in\ftight$ such that $f\in\eta$, we have that $\eta\in V_e\scj\text{Fix}(t)$ since $f\leq e$. As with $\xi$, the associated labelled path of $\eta$ must be $\beta\gamma^{\infty}$. By Proposition \ref{prop:non.trivial.isotropy}, $\eta$ has trivial isotropy. But, this implies that $\eta\notin \text{Fix}(t)$, which is a contradiction. Analogously, if $B:=r(A,\gamma)\setminus A\neq\emptyset$, we can make the same argument with $f'=(\beta\gamma^2,B,\beta\gamma^2)$ to get a contradiction. This implies that $(\gamma,A)$ is a cycle.
	
	Assume now that $(\gamma,A)$ has an exit, that is there there exists $0\leq k \leq |\gamma|$ and $\emptyset\neq B\in \acf$ such that $B\scj r(A,\gamma_{1,k})$ and $\lbf(B\dgraph^1)\neq \{\alpha_{k+1}\}$. If $\lbf(B\dgraph^1)=\emptyset$, then a tight filter containing $(\beta\gamma\gamma_{1,k},B,\beta\gamma\gamma_{1,k})$ would be a filter of finite type belonging to $\text{Fix}(t)$. If $a\in \lbf(B\dgraph^1)\setminus\{\gamma_{k+1}\}$ we would get a tight filter whose associated labelled path begins with $\beta\gamma\gamma_{1,k}a$ belonging to $\text{Fix}(t)$. Both cases are contradictions because the labelled path of every tight filter in $\text{Fix}(t)$ is $\beta\gamma^{\infty}$. We conclude that $(\gamma,A)$ is a cycle with no exits, showing that  $\lspace$ does not satisfy condition ($L_\acf$).
\end{proof}

For a condition equivalent to minimality, but in terms of the labelled space, we need the notion of hereditary and saturated subsets of $\acf$.

\begin{definition}
	Let $\lspace$ be a normal labelled space. A subset $H$ of $\acf$ is  \emph{hereditary} if the following conditions hold:
	\begin{enumerate}[(i)]
		\item $r(A,\alpha)\in H$ for all $A\in H$ and all $\alpha\in\awstar$,
		\item $A\cup B\in H$ for all $A,B\in H$,
		\item if $B\in\acf$ is such that $B\scj A$ for some $A\in H$, then $B\in H$.
	\end{enumerate}
	A hereditary set $H$ is \emph{saturated} if given $A\in\acf_{reg}$ such that $r(A,a)\in H$ for all $a\in\alf$, then $A\in H$.
\end{definition}

Given a subset $\mathcal{I}$ of $\acf$, it is easy to see there is a smallest hereditary saturated set containing $\mathcal{I}$. In order to describe this set more concretely, we define
\[\her(\mathcal{I}):=\{B\in\acf\mid B\scj \bigcup_{i=1}^m r(A_i,\alpha_i)\text{ for some }m\in\nn,\ A_i\in\mathcal{I}\text{ and }\alpha_i\in\awstar,\ i=1,\ldots,m\},\]
and
\[\sat(\her(\mathcal{I}))=\bigcup_{n=0}^{\infty}\sat^{[n]}(\her(\mathcal{I})),\]
where $\sat^{[n]}(\her(\mathcal{I}))$ is defined inductively by
\begin{enumerate}[(i)]
	\item $\sat^{[0]}(\her(\mathcal{I}))=\her(\mathcal{I})$,
	\item for $n>0$, $\sat^{[n]}(\her(\mathcal{I}))=\{B\in\acf_{reg}\mid r(B,a)\in \sat^{[n-1]}(\her(\mathcal{I}))\text{ for all }a\in\alf\}$.
\end{enumerate}
The set $\sat(\her(\mathcal{I}))$ is then the smallest hereditary saturated set containing $\mathcal{I}$. In particular, if $\mathcal{I}=\{A\}$ for some $A\in\acf$, then we write $\sat(\her(A))$ for $\sat(\her(\mathcal{I}))$.

We now adapt a part of the proof from \cite[Theorem 9.15]{MR3606190} to our context.
\begin{theorem}\label{thm:minimality.labelled.spaces}
	Let $\lspace$ be a normal labelled space and $\mathcal{G}$ the groupoid given in Section \ref{sec:groupoids}. Then $\mathcal{G}$ is minimal if and only if $\{\emptyset\}$ and $\acf$ are the only hereditary saturated subsets of $\acf$.
\end{theorem}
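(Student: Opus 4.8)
The plan is to prove both implications via the correspondence between minimality of $\mathcal{G}$ and minimality of the partial action $\Phi$ (Proposition \ref{prop:conditions.groupoid.partial.action.equivalent}), and then translate minimality of $\Phi$ into the statement about hereditary saturated subsets of $\acf$. The bridge in both directions is the dictionary: open invariant subsets of $\ftight$ should correspond to hereditary saturated subsets of $\acf$. Concretely, given an open invariant $U\scj\ftight$, I would associate the set $H_U = \{A\in\acf \mid V_{(\eword,A,\eword)}\scj U\}$ (together with $\emptyset$), and conversely, given a hereditary saturated $H\scj\acf$, I would associate the open set $U_H = \bigcup_{A\in H} V_{(\eword,A,\eword)}$ together with, for $\alpha\in\awplus$, the sets $V_{(\alpha,A,\alpha)}$ with $A\in H\cap\acfrg{\alpha}$ — equivalently, $U_H$ is the orbit under $\Phi$ of $\bigcup_{A\in H}V_{(\eword,A,\eword)}$.

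First I would check that $H_U$ is hereditary and saturated whenever $U$ is open and invariant. Heredity conditions (i)–(iii) should follow from invariance: condition (i), that $r(A,\alpha)\in H_U$ when $A\in H_U$, follows because $\phi_\alpha$ maps $V_{(\eword,A,\eword)}$ onto $V_{(\alpha,A,\alpha)}$ and then the cutting/shift moves filters in $V_{(\alpha,A,\alpha)}$ to filters in $V_{(\eword,r(A,\alpha),\eword)}$, and invariance of $U$ under $\Phi$ gives $V_{(\eword,r(A,\alpha),\eword)}\scj U$ (here one uses that tight filters containing $(\eword,r(A,\alpha),\eword)$ are shifts of tight filters containing $(\alpha,A,\alpha)$, hence elements of $\text{Orb}(U)=U$). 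Condition (ii) is the prime-filter/union relation $V_{(\eword,A\cup B,\eword)} = V_{(\eword,A,\eword)}\cup V_{(\eword,B,\eword)}$ used in relation (i) of the $C^*$-algebra; condition (iii) is immediate since $B\scj A$ implies $V_{(\eword,B,\eword)}\scj V_{(\eword,A,\eword)}$. For saturation: if $A\in\acf_{reg}$ with $r(A,a)\in H_U$ for all $a\in\alf$, then by Theorem \ref{thm.tight.filters.in.es}(ii) every tight filter in $V_{(\eword,A,\eword)}$ has a nonempty labelled path beginning with some $a$ with $r(A,a)\ne\emptyset$ (using the regularity of $A$, so it is not covered by condition (a) or (b) there), hence is the $\phi_a$-image of a tight filter in $V_{(\eword,r(A,a),\eword)}\scj U$; invariance gives $V_{(\eword,A,\eword)}\scj U$, i.e.\ $A\in H_U$. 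Conversely I would verify $U_H$ is open (a union of basic compact-open sets) and invariant under $\Phi$: invariance reduces, via $\phi_s\circ\phi_t=\phi_{st}$ on semi-saturated words and Lemma \ref{lem:domainsofreducedwords}(ii), to checking that $\phi_a$ and $\phi_{a^{-1}}$ preserve $U_H$, which in turn follows from conditions (i) and the saturation-type behaviour of the gluing maps $G_{(a)\beta}$ — gluing a letter onto a filter in $V_{(\eword,A,\eword)}$ lands in $V_{(\eword,A,\eword)}$-type sets reachable from $H$, and cutting a letter from $V_{(\eword,A,\eword)}$ gives a filter in $V_{(\eword,r(A,a),\eword)}\scj U_H$ for the appropriate $a$.

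Then the proof concludes as follows. If $\{\emptyset\}$ and $\acf$ are the only hereditary saturated subsets, take any nonempty open invariant $U\scj\ftight$; then $H_U$ is hereditary saturated and, since $U\ne\emptyset$ contains a basic set $V_{(\alpha,A,\alpha)}$ with $A\ne\emptyset$ (and applying $\phi_{\alpha^{-1}}$, which preserves $U$, we get $V_{(\eword,A,\eword)}\scj U$), $H_U\ne\{\emptyset\}$, so $H_U=\acf$; hence $U\supseteq\bigcup_{A\in\acf}V_{(\eword,A,\eword)}$, and since every tight filter $\xi$ is nonempty and so lies in some $V_{(\alpha,A,\alpha)}$ hence (after shifting) in some $V_{(\eword,B,\eword)}\scj U$, we get $U=\ftight$. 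Thus $\Phi$ is minimal and so $\mathcal{G}$ is minimal by Proposition \ref{prop:conditions.groupoid.partial.action.equivalent}. Conversely, if $H\scj\acf$ is hereditary saturated with $\{\emptyset\}\subsetneq H\subsetneq\acf$, then $U_H$ is a nonempty open invariant set; it is proper because if $A\in\acf\setminus H$ then, I claim, $V_{(\eword,A,\eword)}\not\scj U_H$ — this requires producing a tight filter with $(\eword,A,\eword)$ in it whose whole "trajectory" under cutting avoids $H$, which is exactly where the hereditary-saturated machinery (the explicit description $\sat(\her(\cdot))$) and Theorem \ref{thm.tight.filters.in.es} are needed to build such a filter by an inductive choice of ultrafilters.

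The main obstacle I expect is this last point: showing $U_H$ is a \emph{proper} subset, i.e.\ that for $A\notin H$ there genuinely exists a tight filter $\xi\in V_{(\eword,A,\eword)}\setminus U_H$. One must construct the complete family $\{\xi_n\}$ step by step, at each stage choosing an ultrafilter in the relevant $\acfrg{\alpha_{1,n}}$ that contains a set not in $H$ and avoids forcing membership in $H$ later — the saturation condition is precisely what guarantees that a regular $A$ with all $r(A,a)\notin H$ cannot occur (so the inductive choice does not get stuck), while the description of tight filters of finite type (condition (ii) of Theorem \ref{thm.tight.filters.in.es}) handles the terminal case. Making this selection argument precise, and checking it produces a \emph{tight} filter rather than merely a filter, is the technical heart; everything else is bookkeeping with the gluing/cutting maps and the already-established correspondence of Proposition \ref{prop:conditions.groupoid.partial.action.equivalent}. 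This is essentially the adaptation of \cite[Theorem 9.15]{MR3606190} referred to in the statement, carried out without the countability or domain hypotheses by working directly with the tight spectrum.
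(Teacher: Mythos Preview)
Your overall framework—setting up a correspondence between open $\Phi$-invariant subsets of $\ftight$ and hereditary saturated subsets of $\acf$—is sound, and your reverse direction (no nontrivial $H$ $\Rightarrow$ minimal) via the map $U\mapsto H_U$ is correct; it is a more structural repackaging of what the paper does by hand, and the verifications that $H_U$ is hereditary and saturated go through exactly as you indicate (with relation (iv) of the $C^*$-algebra presentation giving saturation).

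The forward direction, however, is handled quite differently in the paper, and your sketch has a real gap. The paper does \emph{not} build $U_H$ from a given $H$ and prove it proper. Instead, given $A\neq\emptyset$ it shows $\sat(\her(A))=\acf$ directly: for any $B\neq\emptyset$, minimality gives $\text{Orb}(V_{(\eword,A,\eword)})=\ftight$, and compactness of $V_{(\eword,B,\eword)}$ yields a finite cover $V_{(\eword,B,\eword)}\subseteq\bigcup_{i=1}^m\phi_{\alpha^i(\beta^i)^{-1}}(V_{(\eword,A,\eword)}\cap V_{\beta^i(\alpha^i)^{-1}})$. An induction on $N=\max_i|\alpha^i|$ then shows $B\in\sat(\her(A))$: the base case $N=0$ gives $B\in\her(A)$ by a covering argument with the $r(A,\beta^i)$, and for the inductive step one shows the set $C=B\setminus\bigcup_{i:\alpha^i=\eword}r(A,\beta^i)$ is regular (otherwise a tight filter of the wrong shape would lie in the cover), so that applying $\phi_{a^{-1}}$ reduces $N$ by one. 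This compactness-plus-induction argument is the heart of the paper's proof and completely avoids constructing a tight filter.

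Your route requires, for $A\notin H$, producing a tight filter $\xi$ with $\xi_n\cap H=\emptyset$ for all $n$. Two issues with your sketch: first, the complete-family condition forces $\xi_{n-1}=f_{\alpha_{1,n-1}[\alpha_n]}(\xi_n)$, so $\xi_n$ determines $\xi_{n-1}$, not the other way around—your ``step by step'' forward choice of ultrafilters does not obviously yield a \emph{complete} family, and some inverse-limit or compactness argument in the filter space is needed. Second, in the case where the inductive descent stops at a non-regular set (to produce a finite-type tight filter via Theorem~\ref{thm.tight.filters.in.es}(ii)), you need a non-regular witness disjoint from $H$, which is not automatic from $A_n\notin H$ alone. (Also, a small slip: you wrote ``a regular $A$ with all $r(A,a)\notin H$ cannot occur''; you mean a regular $A\notin H$ with all $r(A,a)\in H$ cannot occur—that is the contrapositive of saturation.) These obstacles can likely be overcome, but the paper's argument bypasses them entirely.
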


\begin{proof}
	First suppose that $\mathcal{G}$ is minimal and take $A\in\acf\setminus\{\emptyset\}$ arbitrary. It is sufficient to show that $\sat(\her(A))=\acf$ and for that take $B\in\acf$. Since $\emptyset\in\sat(\her(A))$ holds for any $A$, we can suppose without loss of generality that $B\neq\emptyset$. Since $A\neq\emptyset$, for $e=(\eword,A,\eword)$, $V_e\neq\emptyset$ and therefore the orbit of $V_e$, namely, $\text{Orb}(V_e)=\bigcup_{\alpha,\beta\in\awstar}\phi_{\alpha\beta^{-1}}(V_e\cap V_{\beta\alpha^{-1}})$ is equal to $\ftight$. Since $B\neq\emptyset$, it follows that $V_{(\eword,B,\eword)}$ is a non-empty compact subset of $\ftight$ and therefore there exist $\alpha^1,\ldots,\alpha^m,\beta^1,\ldots,\beta^m\in\awstar$ such that $V_{(\eword,B,\eword)}\scj\bigcup_{i=1}^m\phi_{\alpha^i(\beta^i)^{-1}}(V_e\cap V_{\beta^i(\alpha^i)^{-1}})$. We use an induction argument on $N:=\max\{|\alpha_i|:i=1,\ldots,m\}$ to prove that $B\in \sat(\her(A))$. More precisely, we prove that for all $N\in\nn$, if $B\in\acf$ is such that there exist $\alpha^1,\ldots,\alpha^m,\beta^1,\ldots,\beta^m\in\awstar$ with $V_{(\eword,B,\eword)}\scj\bigcup_{i=1}^m\phi_{\alpha^i(\beta^i)^{-1}}(V_e\cap V_{\beta^i(\alpha^i)^{-1}})$ and $\max\{|\alpha_i|:i=1,\ldots,m\}=N$, then $B\in \sat(\her(A))$.
	
	If $N=0$, then the condition on $B$ is $V_{(\eword,B,\eword)}\scj\bigcup_{i=1}^m\phi_{(\beta^i)^{-1}}(V_e\cap V_{\beta^i})$. In this case, if $\xi\in V_{(\eword,B,\eword)}$, then there exists $i_0\in\{1,\ldots,m\}$ such that $\xi\in \phi_{(\beta^{i_0})^{-1}}(V_e\cap V_{\beta^{i_0}})$. Thus, there exists $\eta=\eta^{\beta^{i_0}\gamma}$ such that $A\in\eta_0$ and $\xi = H_{[\beta^{i_0}]\gamma}(\eta)$. By the definition of $H_{[\beta^{i_0}]\gamma}$, this implies that $r(A,\beta^{i_0})\in\xi_0$. We claim that $B\scj \bigcup_{i=1}^m r(A,\beta_i)$ so that $B\in\her(A)$. If not, then $C:=B\setminus\left(\bigcup_{i=1}^m r(A,\beta_i)\right)\in\acf$ and $C\neq\emptyset$. If $\xi\in\ftight$ such that $(\eword,C,\eword)\in\xi$ (which exists by Lemma \ref{lemma:ultrafilter}), then  $\xi\in V_{(\eword,B,\eword)}$ but $r(A,\beta^i)\cap C=\emptyset$, implying that $r(A,\beta^i)\notin\xi_0$ for all $i=1,\ldots,m$, which is a contradiction.
	
	Fix $N\in\nn$ and suppose that $V_{(\eword,B,\eword)}\scj\bigcup_{i=1}^m\phi_{\alpha^i(\beta^i)^{-1}}(V_e\cap V_{\beta^i(\alpha^i)^{-1}})$ is such that $\max\{|\alpha_i|:i=1,\ldots,m\}=N+1$. Define $C:=B\setminus\left(\bigcup_{i:\alpha^i=\eword}r(A,\beta^i)\right)$. We prove that $C\in\acf_{reg}$. If there exists $D\in\acf$ such that $\emptyset\neq D\scj C\cap \dgraph^0_{sink}$, then an ultrafilter $\ft$ in $\acf$ containing $D$ would be such that $\xi=\{(\eword,E,\eword)\mid E\in\ft\}$ is a tight filter in $V_{(\eword,B,\eword)}$ but not in $\bigcup_{i=1}^m\phi_{\alpha^i(\beta^i)^{-1}}(V_e\cap V_{\beta^i(\alpha^i)^{-1}})$, which is a contradiction. If $\lbf(C\dgraph^1)$ is infinite, then there exists $a\in \lbf(C\dgraph^1)$ such that $a\neq\alpha_1^i$ for all $i$ such that $|\alpha_i|\geq 1$. If $\xi$ is a tight filter containing $(a,r(C,a),a)$, then $\xi\in V_{(\eword,B,\eword)}$, but with $\xi\notin \bigcup_{i=1}^m\phi_{\alpha^i(\beta^i)^{-1}}(V_e\cap V_{\beta^i(\alpha^i)^{-1}})$, which again is a contradiction.  We conclude that $C\in\acf_{reg}$ and hence $V_{(\eword,C,\eword)}=\bigcup_{a\in\lbf(C\dgraph^1)}V_{(a,r(C,a),a)}$. For each $a\in\lbf(C\dgraph^1)$, we have that $(a,r(C,a),a)\leq (\eword,B,\eword)$, implying that 
	$$V_{(a,r(C,a),a)}\scj \bigcup_{i:\alpha^i_1=a}\phi_{\alpha^i(\beta^i)^{-1}}(V_e\cap V_{\beta^i(\alpha^i)^{-1}}).$$  
By applying $\phi_{a^{-1}}$ to the inclusion above, we get $V_{(\eword,r(C,a),\eword)}\scj \bigcup_{i:\alpha^i_1=a}\phi_{\alpha^i_{2,|\alpha^i|}(\beta^i)^{-1}}(V_e\cap V_{\beta^i(\alpha^i_{2,|\alpha^i|})^{-1}})$. Using the induction hypothesis $r(C,a)\in \sat(\her(A))$ for all $a\in \lbf(C\dgraph^1)$. Since $\sat(\her(A))$ is saturated and $C\in\acf_{reg}$, we conclude that $C\in\sat(\her(A))$ and the same is true for $B\cap C$. Finally, notice that $B\setminus C\scj \bigcup_{i:\alpha^i=\eword}r(A,\beta^i)$ so that $B\setminus C\in \sat(\her(A))$. Using that $\sat(\her(A))$ is closed under finite unions, we get that $B=(B\setminus C)\cup (B\cap C)\in \sat(\her(A))$. Hence $\{\emptyset\}$ and $\acf$ are the only hereditary saturated subsets of $\acf$. 
	
	For the converse, assume that the only hereditary saturated subsets of $\acf$ are $\{\emptyset\}$ and $\acf$. We want to prove that for any $e=(\gamma,C,\gamma)\in E(S)$ and any $\xi=\xia\in\ftight$, there is an element in the orbit of $\xi$ and in $V_e$. Note that since $C\neq\emptyset$, it follows that $\sat(\her(C))=\acf$. Consider first the case where $\alpha$ is infinite and let $A\in\xi_1$ be arbitrary. Then there exists $n\in\nn$ such that $A\in\sat^{[n]}(\her(\mathcal{C}))$, and therefore there exist $\beta_1,\ldots,\beta_n\in\awstar$ such that 
	\[r(A,\alpha_{2,n+1})\scj\bigcup_{i=1}^m r(C,\beta_i).\]
	By taking intersections and using that $\xi_{n+1}$ is a prime filter, we see that there exists $i_0\in\{1,\ldots,m\}$ such that $r(A,\alpha_{2,n+1})\cap r(C,\beta_{i_0})\in \xi_{n+1}$. To simplify  notation, we write $\beta$ for $\beta_{i_0}$. Since $C\scj r(\gamma)$, it follows that $r(A,\alpha_{2,n+1})\cap r(C,\beta)\scj r(\gamma\beta)$. Hence $r(\gamma\beta)\in h_{[\alpha_{1,n+1}]}(\xi_{n+1})$, implying that $H_{[\alpha_{1,n+1}]\alpha_{n+2,\infty}}(\xi)\in \ftight_{(\gamma\beta)\alpha_{n+2,\infty}}$. Also, observe that $r(C,\beta)\in H_{[\alpha_{1,n+1}]\alpha_{n+2,\infty}}(\xi)_0$, which implies that $(\gamma,C,\gamma)\in G_{(\gamma\beta)\alpha_{n+2,\infty}}(H_{[\alpha_{1,n+1}]\alpha_{n+2,\infty}}(\xi))=\phi_{\gamma\beta\alpha_{1,n+1}^{-1}}(\xi)$, and this element is in the orbit of $\xi$ and in $V_e$. In the case where $\alpha$ is finite, we take an arbitrary $A\in\xi_{|\alpha|}$, which is not an element of $\acf_{reg}$, and therefore $A\in\her(C)$. Repeating the same argument as in the infinite case with $A$ in the place of $r(A,\alpha_{2,n+1})$, we see that there exists $\beta\in\awstar$ such that $\phi_{\gamma\beta\alpha^{-1}}(\xi)\in V_e$, which completes the proof.	
\end{proof}

Combining the results of this section we get the following characterization for the simplicity of $C^*\lspace$ solely in terms of $\ftight$.

\begin{theorem}\label{theorem:simple.labelled.space.algebra}
	Let $\lspace$ be a normal labelled space. If the labelled space satisfies condition $(L_\acf)$, and $\{\emptyset\}$ and $\acf$ are the only hereditary saturated subsets of $\acf$, then $C^*\lspace$ is simple. If, in addition, the groupoid $\mathcal{G}$ is second countable, then the converse is also true.
\end{theorem}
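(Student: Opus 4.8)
The plan is to deduce this from the two characterizations just proved and feed them into the simplicity criterion of Theorem~\ref{thm:simplicity.tight.spectrum}. Theorem~\ref{thm:top.principal.labelled.spaces} identifies condition $(L_\acf)$ with topological principality of $\mathcal{G}$, and Theorem~\ref{thm:minimality.labelled.spaces} identifies the ``$\{\emptyset\}$ and $\acf$ are the only hereditary saturated subsets'' condition with minimality of $\mathcal{G}$. So what remains is to rephrase the two hypotheses of Theorem~\ref{thm:simplicity.tight.spectrum} in this language. The first hypothesis, existence of a $\xi\in\ftight$ with trivial isotropy, follows from $\mathcal{G}$ being topologically principal, since then the trivial-isotropy units are dense in the non-empty space $\ftight$. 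For the second hypothesis, note that for a fixed $\xi\in\ftight$ the orbit is $\text{Orb}(\xi)=\{\phi_{\beta\alpha^{-1}}(\xi)\mid\alpha,\beta\in\awstar,\ \xi\in V_{\alpha\beta^{-1}}\}$ by the description of the domains $V_t$ in Lemma~\ref{lem:domainsofreducedwords}(ii); hence the second hypothesis says precisely that $\text{Orb}(\xi)$ meets every non-empty $V_e$, $e\in E(S)$. Since, by Lemma~\ref{lemma:refinement.ftight}, every non-empty open subset of $\ftight$ contains a non-empty $V_e$, this is equivalent to $\text{Orb}(\xi)$ being dense for every $\xi$, i.e.\ to minimality of $\Phi$, i.e.\ by Proposition~\ref{prop:conditions.groupoid.partial.action.equivalent}(2) to minimality of $\mathcal{G}$.

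With this translation, the forward implication is immediate. Assuming $(L_\acf)$ and that the only hereditary saturated subsets of $\acf$ are $\{\emptyset\}$ and $\acf$, Theorem~\ref{thm:top.principal.labelled.spaces} gives that $\mathcal{G}$ is topologically principal, so the first hypothesis of Theorem~\ref{thm:simplicity.tight.spectrum} holds, and Theorem~\ref{thm:minimality.labelled.spaces} gives that $\mathcal{G}$ is minimal, so the second holds; hence $C^*\lspace$ is simple by Theorem~\ref{thm:simplicity.tight.spectrum}. One may also bypass Theorem~\ref{thm:simplicity.tight.spectrum} altogether: Proposition~\ref{prop:conditions.groupoid.partial.action.equivalent} turns the two conditions into topological freeness and minimality of the partial action $\Phi$, Theorem~\ref{thm.simple.partial.action} then makes $C_0(\ftight)\rtimes_{\ph}\F$ simple (no countability being needed), and Theorem~\ref{thm:IsomorphismThm} carries this to $C^*\lspace$.

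For the converse, add the hypothesis that $\mathcal{G}$ is second countable and assume $C^*\lspace$ is simple. Second countability is exactly what lets us invoke the converse half of Theorem~\ref{thm:simplicity.tight.spectrum}, which supplies a $\xi$ with trivial isotropy, the orbit condition of the second hypothesis, and, moreover, that the trivial-isotropy units are dense in $\ftight$, i.e.\ that $\mathcal{G}$ is topologically principal; then Theorem~\ref{thm:top.principal.labelled.spaces} yields $(L_\acf)$. By the translation of the first paragraph, the orbit condition forces $\Phi$, and hence $\mathcal{G}$, to be minimal, so Theorem~\ref{thm:minimality.labelled.spaces} yields that $\{\emptyset\}$ and $\acf$ are the only hereditary saturated subsets of $\acf$. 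Alternatively one argues the converse through the groupoid $C^*$-algebra: $C^*\lspace\cong C_0(\ftight)\rtimes_{\ph}\F\cong C^*(\mathcal{G})$ by Theorem~\ref{thm:IsomorphismThm} and \cite{MR2045419}; $\mathcal{G}$ is locally compact, Hausdorff and étale by Lemma~\ref{lem:amplegroupoid}, it is second countable by hypothesis, and $C^*(\mathcal{G})=C^*_r(\mathcal{G})$ by \cite[Corollary 4.12]{Gil3}, so Theorem~\ref{thm.simple.groupoid} shows that $\mathcal{G}$ is topologically principal and minimal, and Theorems~\ref{thm:top.principal.labelled.spaces} and~\ref{thm:minimality.labelled.spaces} finish as before.

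Every genuinely nontrivial ingredient already lives in the cited results, so I do not expect a real obstacle; the only points needing care are bookkeeping ones. The most delicate is the equivalence between the second hypothesis of Theorem~\ref{thm:simplicity.tight.spectrum} and minimality of $\Phi$, which rests on Lemma~\ref{lemma:refinement.ftight} together with the explicit form of the domains $V_t$ from Lemma~\ref{lem:domainsofreducedwords}; and, for the alternative groupoid route, one must keep the various objects ($C^*\lspace$, the partial crossed product, $C^*(\mathcal{G})$, $C^*(\Gamma)$) consistently identified and verify that the hypotheses of Theorem~\ref{thm.simple.groupoid} — étale, second countable, and full equals reduced — are all in force.
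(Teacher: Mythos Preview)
Your proposal is correct and follows essentially the same approach as the paper, which simply states the theorem as the combination of Theorems~\ref{thm:simplicity.tight.spectrum}, \ref{thm:top.principal.labelled.spaces} and~\ref{thm:minimality.labelled.spaces} without writing out the details. Your translation between the two bullet points of Theorem~\ref{thm:simplicity.tight.spectrum} and the properties of $\mathcal{G}$ (via Lemma~\ref{lemma:refinement.ftight}, Lemma~\ref{lem:domainsofreducedwords} and Proposition~\ref{prop:conditions.groupoid.partial.action.equivalent}) spells out precisely what the paper leaves implicit, and your alternative route through Theorem~\ref{thm.simple.partial.action} and Theorem~\ref{thm.simple.groupoid} is also valid.
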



%

\section{Cuntz-Pimsner algebras associated to subshifts}\label{sec:subshift}
In \cite{MR2380472} (see also \cite{carlsen2004operator}) a $C^*$-algebra is associated  with a one-sided subshift. These algebras were studied as a groupoid C*-algebras and as partial crossed products in \cite{carlsen2004operator,MR3639522,MR3527546,MR2680816}.  There is a topological space that serves both as the unit space of the groupoid and the space on which the free group acts.  In \cite[Example 4]{MR3614028}  it is shown that the $C^*$-algebra of a one-sided subshift may be realized as the $C^*$-algebra of a normal labelled space.  In this section we describe how the topological space referred to above, arises as the spectrum of a commutative unital C*-algebra, and  use this to study the $C^*$-algebra of a one-sided subshift in the language of labelled spaces by applying  the theory developed in \cite{Gil3} and in this paper.


We recall the necessary definitions (see also \cite{MR1369092}). Given a finite set $\alf$ viewed as a discrete topological space, the \emph{one-sided full shift space} is $\alf^{\nn}$ with the product topology. The map $\sigma:\alf^{\nn}\to\alf^{\nn}$ given by $\sigma(x_0,x_1,x_2\ldots)=(x_1,x_2,\ldots)$ is called the \emph{shift map}. A \emph{subshift}, also called a \emph{shift space}, is a non-empty subset $\ssf\scj \alf^{\nn}$ that is closed and invariant by the shift map in the sense that $\sigma(\ssf)\scj \ssf$. The \emph{language} of $\ssf$ is the set $L_{\ssf}=\{\alpha\in\alf^*\mid \alpha y\in\ssf\text{ for some }y\in\ssf\}$. We consider the empty word $\eword$ as an element of $L_{\ssf}$. Since $\ssf$ is closed in $\alf^{\nn}$, if $x\in \alf^{\nn}$ is such that $x_0\cdots x_n \in L_{\ssf}$ for all $n\in\nn$, then $x\in \ssf$. Also, without loss of generality, we may assume that $\alf\scj L_{\ssf}$. A common way of defining a subshift is to specify a set of \emph{forbidden words} $\mathcal{F}\scj\alf$ and setting $\ssf_{\mathcal{F}}$ to be the subset of $\alf^{\nn}$ of all elements $x\in\alf^{\nn}$ such that $x_{i,j}\notin \mathcal{F}$ for all $i,j\in\nn$. It can be shown that $\ssf_{\mathcal{F}}$ is indeed a subshift.

Given $\alpha,\beta\in L_{\ssf}$, we define the set $C(\alpha,\beta)=\{\beta x\in\ssf\mid\alpha x\in\ssf\}$. In particular, $Z_{\beta}:=C(\eword,\beta)$ is called a \emph{cylinder set}, $F_{\alpha}:=C(\alpha,\eword)$ is called a \emph{follower set} and $\ssf=C(\eword,\eword)$. It is well known that $\{Z_{\beta}\}_{\beta\in L_{\ssf}}$ is a basis of compact open sets for the product topology on $\ssf$ and that $F_{\alpha}$ is closed with respect to this topology. It follows immediately that $C(\alpha,\beta)=\sigma^{-|\beta|}(F_{\alpha})\cap Z_{\beta}$ is closed in $\ssf$. 
The commutative $C^*$-algebra mentioned in the first paragraph of this section, denoted by $\mathcal{D}_{\ssf}$, is defined as the C*-subalgebra of $\ell^{\infty}(\ssf)$ generated by the characteristic functions $1_{C(\alpha,\beta)}$ for all $\alpha,\beta\in L_{\ssf}$.

As in \cite{MR3614028}, we define the labelled space $(\dgraph_{\ssf}, \lbf_{\ssf}, \acf_{\ssf})$ associated with a subshift $\ssf$ as follows: The graph $\dgraph_{\ssf}$ is given by $\dgraph_{\ssf}^0=\ssf$, $\dgraph_{\ssf}^1=\{(x,a,y)\in\ssf\times\alf\times\ssf\mid x=ay\}$, $s(x,a,y)=x$ and $r(x,a,y)=y$. The labelling map is given by $\lbf_{\ssf}(x,a,y)=a$ and the accommodating family $\acf_{\ssf}$ is the Boolean algebra generated by the sets $C(\alpha,\beta)$ for $\alpha,\beta\in\lbf_{\ssf}$. Then the triple $(\dgraph_{\ssf}, \lbf_{\ssf}, \acf_{\ssf})$ is  a normal labelled space \cite[Lemma 5.5]{MR3614028}. By \cite[Proposition 5.7]{MR3614028}, the Cuntz-Pimsner algebra $C^*(\ssf)$ associated to $\ssf$, as defined in \cite{MR2380472}, is isomorphic to $C^*(\dgraph_{\ssf}, \lbf_{\ssf}, \acf_{\ssf})$ in such way that $\mathcal{D}_{\ssf}\cong\overline{\text{span}}\{p_A\mid A\in\acf_{\ssf}\}$.

For the remainder we fix a finite set $\alf$ and a subshift $\ssf\scj \alf^{\nn}$. In order to simplify the notation,  we will omit the sub-index $\ssf$ in $(\dgraph_{\ssf}, \lbf_{\ssf}, \acf_{\ssf})$. Observe that given $x\in\ssf$, we have that $x=s(x,x_0,\sigma(x))$ so that the graph has no sinks. Also, since $\alf$ is finite, for all $A\in\acf$ the set $\lbf(A\dgraph^1)$ is finite, which implies that the elements of the corresponding tight spectrum $\ftight$ are only of infinite type (Theorem \ref{thm:TightFiltersType}). Our first goal is to show that $\ftight$ is homeomorphic to the Stone dual of $\acf$.

Observe that the finite and infinite paths on the graph $\dgraph$ must be of the form
\[(x,x_0,\sigma(x))(\sigma(x),x_1,\sigma^2(x))\cdots(\sigma^n(x),x_n,\sigma^{n+1}(x))(\cdots),\]
for some $x\in \ssf$. Hence, $\awstar = L_{\ssf}$ and $\awinf = \ssf$. Also, for $\alpha\in\awstar$, we have that $r(\alpha)=C(\alpha,\eword)=F_{\alpha}$. 

We recall how to evaluate the relative range in this example, \cite[Equation (6)]{MR3614028}. For $\alpha,\beta\in L_{\ssf}$ such that $\beta\neq\eword$ and $a\in\alf$,
\[r(C(\alpha,\beta),a)=\begin{cases}
C(a,\eword)\cap C(\alpha,\beta_2\ldots\beta_{|\beta|}) & \text{if } \beta=a\beta_2,\ldots,\beta_{|\beta|}, \\
\emptyset & \text{otherwise}.
\end{cases}
\]
Also,
\[r(C(\alpha,\eword),a)=\begin{cases}
C(\alpha a,\eword) & \text{if }\alpha a\in L_{\ssf}, \\
\emptyset & \text{if }\alpha a\notin L_{\ssf}.
\end{cases}\]
More generally, $r(A,\alpha)=\{x\in\ssf\mid \alpha x\in A\}$.

Also, recall that $\acfrg{\alpha}$ is the set of  elements from $\acf$ which are contained in $r(\alpha)$ (Section \ref{subsect.labelled.spaces}),  and $X_{\alpha}$ is its Stone dual. The topology on $X_{\alpha}$ is given by the basic open sets $U_A=\{\ft\in X_{\alpha}\mid A\in\ft\}$, where $A\in\acfra$. In particular, for the empty word $\acf_{\eword}=\acf$. In Proposition \ref{prop:tightStoneDualHomeo} below, we prove that $\ftight$ is homeomorphic to $X_{\eword}$. To this end, we need the following lemmas. 

\begin{lemma}\label{lemma:rel.range.shift.onto}
	Let $\alpha,\beta\in L_{\ssf}$ be such that $\alpha\beta\in L_{\ssf}$. Then the map $r(\cdot,\beta):\acfra\to\acfrg{\alpha\beta}$ is onto.
\end{lemma}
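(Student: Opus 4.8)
The plan is to use the explicit description of $\acf$ for a subshift together with the explicit formula for relative ranges recalled just before the statement, namely $r(A,\beta)=\{x\in\ssf\mid \beta x\in A\}$. First I would reduce to the case $\beta\in\alf$ a single letter: since $r(\cdot,\beta\beta')=r(r(\cdot,\beta),\beta')$ (functoriality of relative ranges, which also shows each $r(\cdot,\gamma)$ is a Boolean morphism in a normal labelled space), a composition of surjections is surjective, so it suffices to handle $\beta=a\in\alf$ with $\alpha a\in L_{\ssf}$.

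Next I would fix $B\in\acfrg{\alpha a}$, i.e.\ $B\in\acf$ with $B\scj r(\alpha a)=F_{\alpha a}=C(\alpha a,\eword)$, and produce $A\in\acfrg{\alpha}$ with $r(A,a)=B$. The natural candidate is $A=aB:=\{ay\mid y\in B\}$; one checks $ay\in\ssf$ for $y\in B$ because $y\in F_{\alpha a}$ forces $ay\in F_{\alpha}\scj\ssf$, so $A\scj F_{\alpha}=r(\alpha)$, giving $A\in\powerset{r(\alpha)}$, and directly from $r(A,a)=\{x\mid ax\in A\}$ we get $r(A,a)=B$. The one genuine point to verify is that $A=aB$ actually lies in $\acf$ (the accommodating family), not merely in $\powerset{\dgraph^0}$. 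For this I would argue on the level of generators: $\acf$ is the Boolean algebra generated by the sets $C(\mu,\nu)$, and prepending a letter distributes over the Boolean operations, namely $a\,C(\mu,\nu)=C(\mu,a\nu)$ when this is nonempty (and $=\emptyset$ otherwise), $a(B_1\cap B_2)=aB_1\cap aB_2$, $a(B_1\cup B_2)=aB_1\cup aB_2$, and $a(B_1\setminus B_2)=aB_1\setminus aB_2$ as subsets of $aF_{\alpha a}$. Hence if $B$ is built from finitely many $C(\mu,\nu)$'s by Boolean operations, so is $aB$ (after intersecting with $r(a)=F_a$, which is itself $C(a,\eword)\in\acf$), placing $aB\in\acf$.

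The main obstacle is precisely this last verification that $aB\in\acf$: one must be careful that $B\scj F_{\alpha a}$ is expressed using generators compatible with prepending $a$, and that the identity $r(C(\mu,\nu),a)=C(a,\eword)\cap C(\mu,\nu')$ when $\nu=a\nu'$ (from the recalled formula) is used in the right direction. A clean way to finish is to observe that $\{A\in\acfrg{\alpha}\mid r(A,a)\in\acfrg{\alpha a}\}$ is all of $\acfrg{\alpha}$ and that the image $r(\acfrg{\alpha},a)$ is a Boolean subalgebra of $\acfrg{\alpha a}$ (as the image of a Boolean morphism) containing every generator $C(\mu,\nu)\cap F_{\alpha a}$ of $\acfrg{\alpha a}$ — indeed $C(\mu,a\nu)\cap F_\alpha\in\acfrg{\alpha}$ maps onto it — and therefore equals $\acfrg{\alpha a}$. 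Once surjectivity for a single letter is established, the reduction in the first paragraph completes the proof for arbitrary $\beta\in L_{\ssf}$ with $\alpha\beta\in L_{\ssf}$.
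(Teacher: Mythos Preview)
Your ``clean finish'' is essentially the paper's own proof: the paper also argues that the image of the Boolean homomorphism $r(\cdot,\beta)$ is a Boolean subalgebra of $\acfrg{\alpha\beta}$ and then checks it contains the generators $C(\alpha\beta,\eword)\cap C(\gamma,\delta)$, exhibiting the explicit preimage $C(\alpha,\eword)\cap C(\gamma,\beta\delta)\in\acfrg{\alpha}$. The difference is that the paper does this directly for arbitrary $\beta$, with no reduction to a single letter; your induction on $|\beta|$ is correct but unnecessary, since the same preimage formula works in one step. Your first approach via the explicit section $A=aB$ is natural but, as you correctly flag, the verification that $aB\in\acf$ requires exactly the same generator-level bookkeeping that the clean finish handles more transparently, so the paper (and your clean finish) simply skip it.
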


\begin{proof}
	Since $r(\cdot,\beta)$ is a Boolean algebra homomorphism, it is sufficient to prove that the generators of $\acfrg{\alpha\beta}$ are in the range of $r(\cdot,\beta)$. Observe that $\acfrg{\alpha\beta}$ is generated by the sets $C(\alpha\beta,\eword)\cap C(\gamma,\delta)$ for $\beta,\gamma,\delta\in L_{\ssf}$. For this intersection not to be empty, it is necessary that $\alpha\beta\delta\in L_{\ssf}$, and in which case $C(\alpha\beta,\eword)\cap C(\gamma,\delta)=C(\alpha\beta,\eword)\cap C(\beta,\eword)\cap C(\gamma,\delta)=r(C(\alpha,\eword)\cap C(\gamma,\beta\delta),\beta)$. The result now follows since $C(\alpha,\eword)\cap C(\gamma,\beta\delta)\in \acfra$.
\end{proof}

\begin{lemma}\label{lemma:f.almost.onto}
	Let $\alpha,\beta\in L_{\ssf}$ be such that $\alpha\beta\in L_{\ssf}$. For all $\ft\in X_{\alpha}$ such that $F_{\alpha}\cap Z_{\beta}\in\ft$, there exists a unique $\ftg{G}\in X_{\alpha\beta}$ such that $f_{\alpha[\beta]}(\ftg{G})=\ft$.
\end{lemma}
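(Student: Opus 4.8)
The map $f_{\alpha[\beta]}:X_{\alpha\beta}\to X_\alpha$ is the Stone dual of the Boolean algebra homomorphism $r(\,\cdot\,,\beta):\acfrg{\alpha}\to\acfrg{\alpha\beta}$, which by Lemma \ref{lemma:rel.range.shift.onto} is surjective. By Stone duality, surjectivity of a Boolean algebra morphism corresponds to injectivity of its dual map, together with the fact that the dual map is a homeomorphism onto a closed subspace of the codomain; concretely, the image of $f_{\alpha[\beta]}$ is exactly $\{\ft\in X_\alpha\mid \ker(r(\,\cdot\,,\beta))\subseteq\{A\in\acfrg\alpha\mid A\notin\ft\}\}$, i.e. those ultrafilters $\ft$ that do not meet the kernel ideal. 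So the real content of the lemma is to identify that kernel: I would show that $r(A,\beta)=\emptyset$ for $A\in\acfrg\alpha$ if and only if $A\cap F_\alpha\cap Z_\beta=\emptyset$, equivalently $A\subseteq F_\alpha\setminus Z_\beta$ (recall $A\subseteq r(\alpha)=F_\alpha$ already). Indeed $r(A,\beta)=\{x\in\ssf\mid\beta x\in A\}$, and this is empty precisely when no element of $A$ begins with $\beta$, i.e. $A\cap Z_\beta=\emptyset$. Hence the kernel ideal of $r(\,\cdot\,,\beta)$ is $\{A\in\acfrg\alpha\mid A\subseteq F_\alpha\setminus Z_\beta\}=\{A\in\acfrg\alpha\mid A\cap(F_\alpha\cap Z_\beta)=\emptyset\}$, which is exactly the ideal "complementary" to the clopen set $U_{F_\alpha\cap Z_\beta}\subseteq X_\alpha$.

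With that identification, an ultrafilter $\ft\in X_\alpha$ lies in the image of $f_{\alpha[\beta]}$ if and only if $\ft$ avoids that kernel ideal, which — since $F_\alpha\cap Z_\beta\in\acfrg\alpha$ and $\ft$ is an ultrafilter, hence prime — happens exactly when $F_\alpha\cap Z_\beta\in\ft$. This gives the existence claim. For uniqueness: if $\ftg G,\ftg G'\in X_{\alpha\beta}$ both satisfy $f_{\alpha[\beta]}(\ftg G)=f_{\alpha[\beta]}(\ftg G')=\ft$, then for every $B\in\acfrg{\alpha\beta}$ write $B=r(A,\beta)$ with $A\in\acfrg\alpha$ (possible by Lemma \ref{lemma:rel.range.shift.onto}); then $B\in\ftg G\iff r(A,\beta)\in\ftg G\iff A\in f_{\alpha[\beta]}(\ftg G)=\ft$, and likewise for $\ftg G'$, so $\ftg G=\ftg G'$. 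In other words, surjectivity of $r(\,\cdot\,,\beta)$ forces injectivity of $f_{\alpha[\beta]}$ directly, without invoking duality as a black box.

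Concretely I would structure the proof in two short steps. \textbf{Step 1 (uniqueness).} Given $\ft$, show that any preimage $\ftg G$ must equal $\{r(A,\beta)\mid A\in\ft,\ A\in\acfrg\alpha\}^{\uparrow}$ — more precisely, show $B\in\ftg G\iff B=r(A,\beta)$ for some $A\in\ft$; this uses only that $r(\,\cdot\,,\beta)$ is onto and that $\ftg G$ is a filter with $f_{\alpha[\beta]}(\ftg G)=\ft$, together with the defining formula \eqref{eq:def.f}. \textbf{Step 2 (existence).} Assuming $F_\alpha\cap Z_\beta\in\ft$, define $\ftg G:=\usetr{\{r(A,\beta)\mid A\in\ft\}}{\acfrg{\alpha\beta}}$ and verify it is an ultrafilter in $\acfrg{\alpha\beta}$: it is upward closed by construction; it is closed under intersection because $r(A,\beta)\cap r(A',\beta)=r(A\cap A',\beta)$ by weak left-resolvingness (normality of the labelled space) and $A\cap A'\in\ft$; it omits $\emptyset$ because $r(A,\beta)=\emptyset$ would force $A\cap F_\alpha\cap Z_\beta=\emptyset$, contradicting $A,\,F_\alpha\cap Z_\beta\in\ft$; and it is maximal because its dual ideal is the one generated by $\{A\in\acfrg\alpha: A\cap F_\alpha\cap Z_\beta=\emptyset\}$ pushed forward, and $r(\,\cdot\,,\beta)$ surjective sends the maximal ideal $\{A:A\notin\ft\}$ onto a maximal ideal. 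Finally check $f_{\alpha[\beta]}(\ftg G)=\ft$ by unwinding \eqref{eq:def.f}. The main obstacle — really the only non-formal point — is Step 2's maximality verification: one must be careful that pushing an ultrafilter forward along a surjection of Boolean algebras and then taking the upward closure genuinely yields an ultrafilter and not merely a filter, which is where the precise description of $\ker r(\,\cdot\,,\beta)$ and the hypothesis $F_\alpha\cap Z_\beta\in\ft$ are both essential.
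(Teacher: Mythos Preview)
Your proposal is correct and takes essentially the same approach as the paper: both define $\ftg{G}=\{r(A,\beta)\mid A\in\ft\}$, verify it is an ultrafilter using weak left-resolvingness for meets, the hypothesis $F_\alpha\cap Z_\beta\in\ft$ to avoid $\emptyset$, and Lemma~\ref{lemma:rel.range.shift.onto} for upward closure and maximality, and both derive uniqueness from surjectivity of $r(\,\cdot\,,\beta)$. Your Stone-duality framing (identifying the kernel ideal of $r(\,\cdot\,,\beta)$ as $\{A\in\acfrg\alpha\mid A\cap Z_\beta=\emptyset\}$ and reading off the image of $f_{\alpha[\beta]}$ as $U_{F_\alpha\cap Z_\beta}$) is a nice conceptual gloss that the paper leaves implicit, but the underlying computations are the same.
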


\begin{proof}
	The result is trivial for $\beta=\eword$ since $f_{\alpha[\eword]}$ is the identity on $X_{\alpha}$. Hence, suppose that $\beta\neq\eword$ and let $\ft\in X_{\alpha}$ be such that $F_{\alpha}\cap Z_{\beta}\in\ft$. Consider $\ftg{G}:=\{r(A,\beta)\in\acfrg{\alpha\beta}\mid A\in\ft\}$. We prove that $\ftg{G}\in X_{\alpha\beta}$.
	
	For $A\in\ft$, since $A\cap F_{\alpha}\cap Z_{\beta}\neq \emptyset$, there exists $x\in\ssf$ such that $\alpha\beta x\in \ssf$ and $\beta x \in A$. Observe that $x\in r(A,\beta)$ so that $r(A,\beta)\neq\emptyset$.
	
	That $\ftg{G}$ is closed under intersections follows immediately from the fact that the relative range preserves intersections, since $(\dgraph_{\ssf}, \lbf_{\ssf}, \acf_{\ssf})$ is weakly left-resolving.
	
	Given $D\in \acfrg{\alpha\beta}$ such that $r(A,\beta)\scj D$, we have to show that $D\in\ftg{G}$. By Lemma \ref{lemma:rel.range.shift.onto}, there exists $B\in \acfra$ such that $r(B,\beta)=D$. Suppose, by way of contradiction, that $B\notin \ft$. Then, since $\ft$ is an ultrafilter, there is $C\in \ft$ such that $B\cap C=\emptyset$. In this case, $A\cap C\in \ft$ and, using the first part of this proof, we have that
	\[\emptyset\neq r(A\cap C,\beta)=r(A,\beta)\cap r(C,\beta)\scj r(B,\beta)\cap r(C,\beta)=r(B\cap C,\beta)=\emptyset,\]
	which is a contradiction.  Hence, $D\in\ftg{G}$ and that $\ftg{G}$ is a filter.
	
	To prove that $\ftg{G}$ is an ultrafilter in $\acfrg{\alpha\beta}$, suppose that there exists a filter $\ftg{H}$ in $\acfrg{\alpha\beta}$ such that $\ftg{G}\subsetneq\ftg{H}$. Arguing as in the previous paragraph, if $D\in\ftg{H}\setminus\ftg{G}$, there exists $C\in\ft$ such that $r(C,\beta)\cap D=\emptyset$, which would imply that $\ftg{H}$ is not a filter. Hence $\ftg{G}\in X_{\alpha\beta}$.
	
	Now, by the definition of $f_{\alpha[\beta]}$, it is clear that $f_{\alpha[\beta]}(\ftg{G})=\ft$. Also, if $\ftg{H}\in X_{\alpha\beta}$ is such that $\ftg{H}\neq \ftg{G}$, then as above, there exist $D\in \ftg{H}\setminus\ftg{G}$ and $C\in\ft$ such that $r(C,\beta)\cap D=\emptyset$. This implies that $r(C,\beta)\notin \ftg{H}$ and $f_{\alpha[\beta]}(\ftg{H})\neq\ft$. Hence, the uniqueness of $\ftg{G}$.
	
\end{proof}

\begin{lemma}\label{lemma:from.filter.to.word}
	Let $\alpha\in \awstar$. For $\ft\in X_{\alpha}$, there is a unique $y\in F_{\alpha}$ such that $Z_{y_{0,n}}\cap F_{\alpha}\in \ft$ for all $n\in\nn$.
\end{lemma}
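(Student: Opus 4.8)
The plan is to construct the word $y$ letter by letter using the fact that $\ft$ is an ultrafilter in $\acfra$, which forces a ``prime filter'' dichotomy at each stage. First I would observe that $F_\alpha = Z_\eword \cap F_\alpha \in \ft$ (it is the top element of $\acfra$ since every element of $\acfra$ is contained in $r(\alpha)=F_\alpha$), giving the base case $n=0$ with the empty initial segment. For the inductive step, suppose $Z_{w}\cap F_\alpha\in\ft$ for some word $w=y_0\cdots y_{n-1}$ with $\alpha w\in L_{\ssf}$. Since $\alf$ is finite we have the finite disjoint decomposition $Z_w\cap F_\alpha = \bigsqcup_{a\in\alf}\, Z_{wa}\cap F_\alpha$, where only the $a$ with $\alpha wa\in L_{\ssf}$ contribute nonempty sets; each $Z_{wa}\cap F_\alpha$ lies in $\acf$ and is contained in $F_\alpha$, hence lies in $\acfra$. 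Because $\ft$ is an ultrafilter in the Boolean algebra $\acfra$, it is prime, so from $\bigsqcup_a Z_{wa}\cap F_\alpha\in\ft$ we conclude $Z_{wa}\cap F_\alpha\in\ft$ for exactly one $a\in\alf$; call it $y_n$. This extends the word, and by induction produces an infinite sequence $y=y_0y_1y_2\cdots$ with $Z_{y_{0,n}}\cap F_\alpha\in\ft$ for all $n$.

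Next I would check that $y\in F_\alpha$. For each $n$, $Z_{y_{0,n}}\cap F_\alpha\in\ft$ is nonempty, so there is $x\in\ssf$ with $\alpha x\in\ssf$ and $x_{0,n}=y_{0,n}$; in particular $\alpha y_0\cdots y_n\in L_{\ssf}$ for every $n$. Since $\ssf$ is closed in $\alf^{\nn}$ (equivalently, a word lies in $L_\ssf$ iff all its initial segments do), it follows that $\alpha y\in\ssf$, i.e.\ $y\in F_\alpha$, and also $y\in Z_{y_{0,n}}$ trivially.

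For uniqueness, suppose $y'\in F_\alpha$ also satisfies $Z_{y'_{0,n}}\cap F_\alpha\in\ft$ for all $n$, and suppose $y'\neq y$. Let $k$ be the least index with $y'_k\neq y_k$; then $y'_{0,k-1}=y_{0,k-1}$, and the sets $Z_{y_{0,k}}\cap F_\alpha$ and $Z_{y'_{0,k}}\cap F_\alpha$ are disjoint (they specify incompatible values at coordinate $k$). But both are required to lie in $\ft$, contradicting property (i) of a filter ($0\notin\ft$, and a filter is closed under meets, so their intersection $\emptyset$ would be in $\ft$). Hence $y'=y$.

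I do not expect a serious obstacle here; the only point requiring a little care is making sure each set $Z_{y_{0,n}}\cap F_\alpha$ genuinely belongs to $\acfra$ (so that the ultrafilter/primeness machinery applies), which follows because $\acf$ is generated by the sets $C(\gamma,\delta)$ and is closed under finite intersections, and $Z_{y_{0,n}}=C(\eword,y_{0,n})$, $F_\alpha=C(\alpha,\eword)$; the intersection is contained in $F_\alpha=r(\alpha)$, so it lies in $\acfra$. The finiteness of $\alf$ is what guarantees the decomposition at each stage is finite, so that primeness of the ultrafilter can be invoked.
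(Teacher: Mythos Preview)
Your proposal is correct and follows essentially the same approach as the paper: both use the primeness of the ultrafilter $\ft$ to select a unique letter at each stage from the finite disjoint decomposition $Z_w\cap F_\alpha=\bigsqcup_{a\in\alf}Z_{wa}\cap F_\alpha$, then use closedness of $\ssf$ (equivalently of $F_\alpha$) to conclude $y\in F_\alpha$. The only cosmetic differences are that the paper phrases the limit argument via an auxiliary sequence $b_0\cdots b_n x^{(n)}\to y$ inside the closed set $F_\alpha$, and dispatches uniqueness with the phrase ``by construction'' rather than your explicit disjointness contradiction.
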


\begin{proof}
	We use the fact that if a union of elements of $\acfra$ is in $\ft$ then one of the sets of the union is in $\ft$ (since ultrafilters in Boolean algebras are prime filters). In particular, if $A_1,\ldots, A_n\in \acfra$ are such that the disjoint union $A_1\sqcup \cdots\sqcup A_n$ belongs to $\ft$ then there is a unique $i\in\{1,\ldots,n\}$ such that $A_i\in \ft$.
	
	Notice that
	\[\ft\ni r(\alpha)=\bigsqcup_{b\in\alf}Z_b\cap r(\alpha)\]
	so that there is a unique $b_0\in \alf$ such that $\emptyset \neq Z_{b_0}\cap r(\alpha)\in \ft$. Also, there exists $x^{(0)}\in\ssf$ such that $b_0x^{(0)}\in r(\alpha)$. Now
	\[\ft\ni Z_{b_0}\cap r(\alpha) = \bigsqcup_{b\in\alf:b_0b\in L_{\ssf}}Z_{b_0b}\cap r(\alpha),\]
	and there is a unique $b_1\in \alf$ such that $b_0b_1\in L_{\ssf}$ and $Z_{b_0b_1}\cap r(\alpha)\in\ft$. As above, there exists $x^{(1)}\in \ssf$ such that $b_0b_1x^{(1)}\in r(\alpha)$. Continuing this process we find sequences $\{b_n\}_{n\in\nn}$ in $\alf$ and $\{x^{(n)}\}_{n\in\nn}$ in $\ssf$ such that for all $n\in\nn$, $b_0\ldots b_n \in L_{\ssf}$, $Z_{b_0\ldots b_n}\cap r(\alpha)\in \ft$ and $b_0\ldots b_nx^{(n)}\in r(\alpha)$. Define $y=b_0b_1\ldots$ and observe that $b_0\ldots b_nx^{(n)}\xrightarrow{n\to\infty} y$ so that $y\in \ssf$. Since $r(\alpha)=F_{\alpha}$ is closed, $y \in F_{\alpha}$.
	
	The uniqueness of $y$ follows by construction.
\end{proof}

\begin{lemma}\label{lemma:follower.set.dense}
	Let $\alpha\in \awstar$. For all $y\in F_{\alpha}$, the set $\ft_y:=\{A\in\acfra\mid y\in A\}$ is an ultrafilter in $\acfrg{\alpha}$. Moreover, the map $\mathcal{I}:y\in F_{\alpha}\to \ft_y\in X_{\alpha}$ is one-to-one and has dense image.
\end{lemma}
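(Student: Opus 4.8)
The plan is to verify the three assertions in turn, each of which is a short check once the relevant structure is unwound. To see that $\ft_y$ is a filter in $\acfra$: it omits $\emptyset$ since $y\notin\emptyset$; it is upward closed, because $y\in A\scj B$ forces $y\in B$; it is closed under intersections, since $A\cap B\in\acfra$ and $y\in A\cap B$ whenever $A,B\in\ft_y$; and it is nonempty, containing $r(\alpha)=F_\alpha$ because $y\in F_\alpha$ by hypothesis. For the ultrafilter property I would argue directly rather than invoke primeness: if $\eta$ is a filter in $\acfra$ with $\ft_y\subsetneq\eta$, pick $A\in\eta\setminus\ft_y$; then $y\notin A$, so $y\in F_\alpha\setminus A$, and this set belongs to $\acfra$ because $\lspace$ is normal and hence $\acfra$ is a Boolean algebra with unit $F_\alpha$; thus $F_\alpha\setminus A\in\ft_y\scj\eta$, whence $\emptyset=A\cap(F_\alpha\setminus A)\in\eta$, contradicting that $\eta$ is a filter. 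Therefore $\ft_y$ is maximal, so $\ft_y\in X_\alpha$ and $\mathcal{I}$ is well defined.

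Injectivity of $\mathcal{I}$ is then immediate: if $y\neq z$ in $F_\alpha$, then as points of $\alf^{\nn}$ they differ in some coordinate, so $y_{0,n}\neq z_{0,n}$ for some $n\in\nn$; the set $Z_{y_{0,n}}\cap F_\alpha$ belongs to $\acf$, being an intersection of two of its generators, and is contained in $r(\alpha)=F_\alpha$, hence lies in $\acfra$; since it contains $y$ but not $z$, it separates $\ft_y$ from $\ft_z$. For density, recall that the basic open subsets of $X_\alpha$ are the sets $U_A$ with $A\in\acfra$, and that $U_A\neq\emptyset$ forces $A\neq\emptyset$ since no filter contains $\emptyset$; given a nonempty $U_A$, choose any $y\in A$, and since $A\scj F_\alpha$ we obtain $y\in F_\alpha$ with $A\in\ft_y$, that is $\ft_y\in U_A$. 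Thus the image of $\mathcal{I}$ meets every nonempty basic open set and is dense.

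The only step requiring genuine care is the ultrafilter property, which is where normality of $\lspace$ enters, through the availability of the relative complement $F_\alpha\setminus A$ inside $\acfra$; the remaining verifications are routine manipulations with cylinder sets and the Stone topology, and present no real obstacle.
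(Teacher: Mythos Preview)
Your proof is correct and follows essentially the same approach as the paper: the ultrafilter property is obtained from the Boolean algebra structure of $\acfra$ (the paper phrases it as ``either $A\in\ft_y$ or $r(\alpha)\setminus A\in\ft_y$'', while you argue by maximality, but both use the same relative complement), injectivity is shown by separating $y\neq z$ with the cylinder set $Z_{y_{0,n}}\cap F_\alpha$, and density is established by picking any $y\in A$ for a nonempty basic open $U_A$. Your exposition is slightly more detailed in verifying the filter axioms, but there is no substantive difference.
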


\begin{proof}
	Since $y\in F_{\alpha}=r(\alpha)$, for an arbitrary $A\in\acfra$, either $y\in A$ or $y\notin A$ so that $A\in \ft_y$ or $r(\alpha)\setminus A\in\ft_y$. This implies that $\ft_y$ is an ultrafilter in the Boolean algebra $\acfra$.
	
	Suppose now that $y,z\in F_{\alpha}$ and $y\neq z$. Then there exists $n\in\nn$ such that $y_n\neq z_n$. Define $\beta = y_0\ldots y_n$ and $\gamma = z_0\ldots z_n$. For $A=Z_{\beta}\cap F_{\alpha}\in \acfra$, we have that $y \in A$ but $z \notin A$, which implies that $\ft_y\neq \ft_z$.
	
	Finally, given any basic open set $U_A$ of $X_{\alpha}$, where $A \in\acfra$, we have that $A\scj F_{\alpha}$ and $\ft_y\in U_A$ for all $y\in A$. This implies the density of the image of $\mathcal{I}$.
\end{proof}

\begin{proposition} \label{prop:tightStoneDualHomeo}
	The map $\Phi:\ftight\to X_{\eword}$ given by $\Phi(\xi)=\xi_0$ is a homeomorphism.
\end{proposition}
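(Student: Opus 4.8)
The plan is to show that $\Phi(\xi)=\xi_0$ is a well-defined continuous bijection between compact Hausdorff spaces (or at least a continuous bijection that is open), and hence a homeomorphism. First I would check that $\Phi$ is well-defined: for $\xi\in\ftight$, since $\alf$ is finite every tight filter is of infinite type (this was observed right after Proposition~\ref{prop:tightStoneDualHomeo}'s setup, via Theorem~\ref{thm:TightFiltersType}), so the complete family $\{\xi_n\}_{n\geq 0}$ consists of ultrafilters in the Boolean algebras $\acfrg{\alpha_{1,n}}$, and in particular $\xi_0$ is an ultrafilter in $\acf=\acfrg{\eword}$, i.e. $\xi_0\in X_{\eword}$.

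Next I would prove injectivity. Suppose $\xi^\alpha,\xi^{\alpha'}\in\ftight$ with $\xi_0=\xi'_0$. The key point is that $\xi_0$ already determines the labelled path $\alpha$: by Lemma~\ref{lemma:from.filter.to.word} applied with the empty word, there is a unique $y\in F_{\eword}=\ssf$ with $Z_{y_{0,n}}\in\xi_0$ for all $n$, and one checks $\alpha=y$ (indeed $r(\alpha)=\bigcap_n(Z_{\alpha_{1,n}}\cap\dgraph^0)$-type membership forces $\alpha_{1,n}$ to be the unique word at each stage). Having $\alpha=\alpha'$, the completeness relation $\xi_n=\{A\in\acfrg{\alpha_{1,n}}\mid r(A,\alpha_{n+1})\in\xi_{n+1}\}$ run the other way shows that $\xi_0$ determines $\xi_n$ for all $n$: by Lemma~\ref{lemma:f.almost.onto} (with the roles arranged so that $F_{\eword}\cap Z_{\alpha_{1,n}}\in\xi_0$), there is a \emph{unique} ultrafilter $\ftg{G}\in X_{\alpha_{1,n}}$ with $f_{\eword[\alpha_{1,n}]}(\ftg{G})=\xi_0$, and this forces $\xi_n=\xi'_n$. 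Hence $\xi=\xi'$.

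For surjectivity, take $\ft\in X_{\eword}$. Use Lemma~\ref{lemma:from.filter.to.word} to extract the unique $y\in\ssf$ with $Z_{y_{0,n}}\in\ft$ for all $n$; set $\alpha:=y\in\awinf=\ssf$. For each $n$, apply Lemma~\ref{lemma:f.almost.onto} to get the unique $\xi_n\in X_{\alpha_{1,n}}$ with $f_{\eword[\alpha_{1,n}]}(\xi_n)=\ft$; the identity $f_{\eword[\alpha_{1,n+1}]}=f_{\eword[\alpha_{1,n}]}\circ f_{\alpha_{1,n}[\alpha_{n+1}]}$ together with the uniqueness in Lemma~\ref{lemma:f.almost.onto} shows $f_{\alpha_{1,n}[\alpha_{n+1}]}(\xi_{n+1})=\xi_n$, which is exactly the completeness condition $\xi_n=\{A\in\acfrg{\alpha_{1,n}}\mid r(A,\alpha_{n+1})\in\xi_{n+1}\}$ (since $\xi_{n+1}$ is an ultrafilter, $r(A,\alpha_{n+1})\in\xi_{n+1}$ iff $A\in f_{\alpha_{1,n}[\alpha_{n+1}]}(\xi_{n+1})$). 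Thus $(\alpha,\{\xi_n\})$ is a complete family of ultrafilters of infinite type, so by Theorem~\ref{thm:TightFiltersType}(i) it gives a tight filter $\xi$ with $\Phi(\xi)=\xi_0=\ft$.

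Finally, continuity and openness. For continuity: a subbasic open set of $X_{\eword}$ is $U_A=\{\ft\mid A\in\ft\}$ for $A\in\acf$, and $\Phi^{-1}(U_A)=\{\xi\mid A\in\xi_0\}=\{\xi\mid(\eword,A,\eword)\in\xi\}=V_{(\eword,A,\eword)}$ (when $A\neq\emptyset$; empty when $A=\emptyset$), which is open in $\ftight$; similarly $\Phi^{-1}(U_A^c)=V_{(\eword,A,\eword)}^c$ is open, so $\Phi$ is continuous. For openness, it suffices to check $\Phi$ sends the basic sets $V_{e:e_1,\dots,e_n}$ (Corollary~\ref{cor:basis.tight}) to open sets; writing $e=(\gamma,C,\gamma)$ and $e_i=(\gamma^i,C_i,\gamma^i)$, membership $\xi\in V_e$ is equivalent, via Remark~\ref{remark.when.in.xialpha}, to $\gamma$ being a beginning of the path determined by $\xi_0$ and $C\in\xi_{|\gamma|}$; since $\xi_0$ determines the whole path and all the $\xi_n$ (by Lemmas~\ref{lemma:from.filter.to.word} and~\ref{lemma:f.almost.onto} as above), this translates into a condition on $\xi_0$ expressible using the sets $Z_{\gamma}$, $r(\gamma)$ and $f_{\eword[\gamma]}$, all continuous/Boolean, hence an open (indeed clopen) subset of $X_{\eword}$. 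Alternatively, and more cleanly: $\ftight$ is compact Hausdorff (it is a closed subspace of the compact Hausdorff $\filt$, by Corollary~\ref{cor:basis.tight} and the surrounding discussion — or, since $\alf$ is finite, one argues compactness directly) and $X_{\eword}$ is Hausdorff, so the continuous bijection $\Phi$ is automatically a homeomorphism. I expect the main obstacle to be the bookkeeping in the injectivity/surjectivity argument, namely verifying rigorously that $\xi_0$ recovers the labelled path $\alpha$ and then all the ultrafilters $\xi_n$ via the uniqueness clauses of Lemmas~\ref{lemma:from.filter.to.word} and~\ref{lemma:f.almost.onto}; once that is in place, the topological part is routine given compactness of $\ftight$.
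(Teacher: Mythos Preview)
Your approach is essentially the same as the paper's: build the inverse $\Psi:X_{\eword}\to\ftight$ using Lemma~\ref{lemma:from.filter.to.word} to recover the infinite word and Lemma~\ref{lemma:f.almost.onto} to recover each $\xi_n$, then verify $\Phi$ and $\Psi$ are mutually inverse via the uniqueness clauses.

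There is one small but genuine gap in your well-definedness step. Theorem~\ref{thm:TightFiltersType}(i) only says that the \emph{non-empty} members of the complete family are ultrafilters, and the setup of complete families explicitly allows $\xi_0=\emptyset$. You need to rule this out. The paper does so by noting that $r(Z_{\alpha_1},\alpha_1)=F_{\alpha_1}\neq\emptyset$, so by the defining relation $\xi_0=\{A\in\acf\mid r(A,\alpha_1)\in\xi_1\}$ the set $\xi_0$ contains $Z_{\alpha_1}$ and hence is nonempty.

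For the topological part, the paper's route is cleaner than either of your two suggestions. Rather than unpacking $V_{e:e_1,\ldots,e_n}$ or invoking compactness, the paper observes (using Lemma~\ref{lemma:rel.range.shift.onto}) that for any $e=(\alpha,A,\alpha)\in E(S)$ there exists $B\in\acf$ with $r(B,\alpha)=A$, and then $V_e=V_{(\eword,B,\eword)}$. Thus the family $\{V_{(\eword,B,\eword)}\mid B\in\acf\}$ is already a basis for $\ftight$, and $\Phi$ visibly carries it to the basis $\{U_B\mid B\in\acf\}$ of $X_{\eword}$. Your compactness shortcut does work here, since $\ssf\in\acf$ is a top element so $\ftight=V_{(\eword,\ssf,\eword)}$ is compact by Corollary~\ref{cor:basis.tight}, but you should state this justification rather than appealing vaguely to $\filt$ being compact (which is not automatic in general).
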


\begin{proof}
	As observed at the start of this section, the elements of $\ftight$ are all of infinite type. Hence, for $\xi=\xia\in\ftight$, the labelled path $\alpha$ is an element of $\ssf$. According to \cite[Proposition 5.9]{MR3648984}, $\xi_0$ can be either the empty set or an element of $X_{\eword}$. The case of being the empty set only happens if $r(A,\alpha_1)$ is empty for all $A\in\acf$. However $r(Z_{\alpha_1},\alpha_1)=F_{\alpha_1}\neq \emptyset$, which implies that $\xi_0\in\ X_{\eword}$ and that $\Phi$ is well defined.

	To prove that $\Phi$ is bijective, we build its inverse. For $\ft\in X_{\eword}$, let $y\in\ssf$ be as in Lemma \ref{lemma:from.filter.to.word}. For each $n\in\nn$, let $\ft_n$ be the unique element of $X_{y_{0,n}}$ such that $f_{\eword[y_{0,n}]}(\ft_n)=\ft$ as in Lemma \ref{lemma:f.almost.onto}. Then $\{\ft_n\}_{n\in\nn}$ is a complete family of ultrafilters for $y$ (see Section \ref{subsection:filters.E(S)}). Using \cite[Propositions 4.9 and 5.9]{MR3648984}, we can associate a unique element $\xi_{\ft}\in\ftight$ to the pair $(y,\{\ft_n\}_{n\in\nn})$. Define $\Psi:X_{\eword} \to \ftight$ by $\Psi(\ft)=\xi_{\ft}$.
	
	By construction, $\Phi(\Psi(\ft))=\ft$ for all $\ft\in X_{\eword}$. That $\Psi(\Phi(\xia))=\xia$, for $\xia\in\ftight$, follows from the uniqueness in Lemma \ref{lemma:f.almost.onto} and that $f_{\eword[\alpha_{0,n}]}(\xi_n)=\xi_0$ for all $n\in\nn$.

	We now prove that $\Phi$ is a homeomorphism. Consider $e=(\alpha,A,\alpha)\in E(S)$ and $B\in\acf$ such that $r(B,\alpha)=A$, which exists by Lemma \ref{lemma:rel.range.shift.onto}. By looking at the constructed correspondence between $\ftight$ and $X_{\eword}$, we see that for $\xi\in\ftight$, we have that $e\in\xi$ if and only if $(\eword,B,\eword)\in\xi$, which implies that $V_{e}=V_{(\eword,B,\eword)}$. 
	
	Now, for $B\in\acf$, the set $U_B=\{\ft\in X_{\eword}\mid B\in\ft\}$ is a basic open subset of $X_{\eword}$. Then $\Phi$ sends the family of basic open sets $\{V_{(\eword,B,\eword)}\mid B\in\acf\}$ of $\ftight$ to the family of basic open sets $\{U_{B}\mid B\in\acf\}$, and $\Psi$ goes the other way around. Whence $\Phi$ is a homeomorphism.
\end{proof}

\begin{proposition}
	The algebra $\mathcal{D}_{\ssf}$ is isomorphic to $C_0(\ftight)$.
\end{proposition}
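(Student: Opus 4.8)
The plan is to transport the problem through the $*$-isomorphism $\psi$ of Theorem \ref{thm:IsomorphismThm} and to identify the image of the diagonal subalgebra inside the crossed product. By \cite[Proposition 5.7]{MR3614028} the isomorphism $C^*(\ssf)\cong C^*\lspace$ carries $\mathcal{D}_{\ssf}$ onto $\overline{\mathrm{span}}\{p_A\mid A\in\acf\}\subseteq C^*\lspace$; and since $\psi$ is a continuous linear $*$-isomorphism with $\psi(p_A)=1_A\delta_\eword$, it restricts to an isometric isomorphism of $\overline{\mathrm{span}}\{p_A\mid A\in\acf\}$ onto $\overline{\mathrm{span}}\{1_A\delta_\eword\mid A\in\acf\}$. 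So the whole proof reduces to showing that
\[\overline{\mathrm{span}}\{1_A\delta_\eword\mid A\in\acf\}=C_0(\ftight)\delta_\eword,\]
because the canonical inclusion $f\mapsto f\delta_\eword$ is an isometric $*$-isomorphism of $C_0(\ftight)$ onto the copy $C_0(\ftight)\delta_\eword$ sitting in the crossed product.

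The one step that is not pure bookkeeping is the identity $C_0(\ftight)=\overline{\mathrm{span}}\{1_A\mid A\in\acf\}$, where $1_A$ denotes the characteristic function of $V_{(\eword,A,\eword)}$. By Lemma \ref{lem:genC0}, $C_0(\ftight)$ is generated as a $C^*$-algebra by the family $\{1_{V_{(\alpha,A,\alpha)}}\mid\alpha\in\awstar,\ A\in\acfrg{\alpha}\}$. Here the subshift enters: by Lemma \ref{lemma:rel.range.shift.onto}, applied to the composable pair $(\eword,\alpha)$, for every $\alpha\in\awstar$ and every $A\in\acfrg{\alpha}$ there is $B\in\acf$ with $r(B,\alpha)=A$, and then $V_{(\alpha,A,\alpha)}=V_{(\eword,B,\eword)}$ (this equality is established in the proof of Proposition \ref{prop:tightStoneDualHomeo}). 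Hence $C_0(\ftight)$ is already generated by $\{1_A\mid A\in\acf\}$. These are mutually commuting projections, and the computation in the verification of relation (i) in the proof of Theorem \ref{thm:IsomorphismThm} gives $1_A1_B=1_{A\cap B}$; therefore the $*$-subalgebra they generate equals their linear span, and on taking closures $C_0(\ftight)=\overline{\mathrm{span}}\{1_A\mid A\in\acf\}$. Pushing this through the isometry $f\mapsto f\delta_\eword$ yields $C_0(\ftight)\delta_\eword=\overline{\mathrm{span}}\{1_A\delta_\eword\mid A\in\acf\}$, which is the required identity; combining with the first paragraph gives $\mathcal{D}_{\ssf}\cong C_0(\ftight)$.

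I expect the main obstacle to be precisely this identification of $C_0(\ftight)$ with $\overline{\mathrm{span}}\{1_A\mid A\in\acf\}$, i.e. the point at which the special structure of the subshift example is used: the fact, packaged in Lemma \ref{lemma:rel.range.shift.onto}, that each $\acfrg{\alpha}$ is reached from $\acf$ by relative ranges, so that no basic compact-open subset of $\ftight$ genuinely carries a non-trivial labelled path; everything else is routine. As a sanity check and an alternative proof, one can bypass the crossed product entirely: Proposition \ref{prop:tightStoneDualHomeo} identifies $\ftight$ with the Stone dual $X_{\eword}$ of the Boolean algebra $\acf$, which in this example is unital (with top element $\ssf=C(\eword,\eword)$), so $C_0(\ftight)\cong C(X_{\eword})$; and $\mathcal{D}_{\ssf}$ is by definition the closed linear span of the commuting characteristic functions $1_Y$, $Y\in\acf$, which form a Boolean algebra isomorphic to $\acf$, whence $\mathcal{D}_{\ssf}\cong C(X_{\eword})$ as well. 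Either route yields the statement.
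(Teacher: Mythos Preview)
Your argument is correct and follows essentially the same path as the paper. Both proofs hinge on the identity $V_{(\alpha,A,\alpha)}=V_{(\eword,B,\eword)}$ for a suitable $B\in\acf$ obtained from Lemma~\ref{lemma:rel.range.shift.onto}, which collapses the diagonal generators $s_\alpha p_A s_\alpha^*$ (equivalently $1_{V_{(\alpha,A,\alpha)}}\delta_\eword$) down to $p_B$ (equivalently $1_B\delta_\eword$). The only cosmetic difference is that the paper invokes \cite[Theorem~6.9]{MR3680957} to realise $C_0(\ftight)$ as $\overline{\mathrm{span}}\{s_\alpha p_A s_\alpha^*\}$ inside $C^*\lspace$, whereas you pass through the crossed-product isomorphism $\psi$ and Lemma~\ref{lem:genC0}; the substance is the same. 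One small point: when you invoke the equality $V_{(\alpha,A,\alpha)}=V_{(\eword,B,\eword)}$, the paper's own proof of the proposition is a little more careful than the passage in Proposition~\ref{prop:tightStoneDualHomeo} you cite, replacing $B$ by $B\cap Z_\alpha$ first so that membership of $B$ in $\xi_0$ forces the associated word of $\xi$ to begin with $\alpha$---you may want to include that adjustment explicitly. Your Stone-duality alternative via Proposition~\ref{prop:tightStoneDualHomeo} is a nice sanity check and arguably the cleanest route.
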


\begin{proof}
	As discussed above $\mathcal{D}_{\ssf}\cong\overline{\text{span}}\{p_A\mid A\in\acf_{\ssf}\}$. On the other hand, by \cite[Theorem 6.9]{MR3680957}, $C_0(\ftight)\cong\overline{\text{span}}\{s_{\alpha}p_As_{\alpha}^*\mid (\alpha,A,\alpha)\in S(\dgraph_{\ssf}, \lbf_{\ssf}, \acf_{\ssf})\}$. It then suffices to show that if $(\alpha,A,\alpha)\in S(\dgraph_{\ssf}, \lbf_{\ssf}, \acf_{\ssf})$, then $s_{\alpha}p_As_{\alpha}^*=p_B$ for some some $B\in\acf_{\ssf}$.
	
	By Theorem \ref{thm:IsomorphismThm} and Lemma \ref{lem:crossProdComputations} (\ref{lem:crossProdComputations_triple}), $s_{\alpha}p_As_{\alpha}^*=p_B$ if and only if $V_{(\alpha,A,\alpha)}=V_{(\eword,A,\eword)}$. Suppose then that $(\alpha,A,\alpha)\in S(\dgraph_{\ssf}, \lbf_{\ssf}, \acf_{\ssf})$ and notice that $A\in\acfra$. By Lemma \ref{lemma:rel.range.shift.onto}, there exists $B\in\acf_{\eword}$ such that $r(B,\alpha)=A$. Since $r(Z_{\alpha},\alpha)=F_{\alpha}$ and the labelled space is weakly left resolving, $r(B\cap Z_{\alpha}, \alpha)=A\cap F_{\alpha}=A$, so we may assume without loss of generality that $B\scj Z_{\alpha}$. Using the definition of complete family and the proof of Proposition \ref{prop:tightStoneDualHomeo}, we have that
	\[\xi\in V_{(\alpha,A,\alpha)}\Leftrightarrow (\alpha,A,\alpha)\in\xi_{|\alpha|}\Leftrightarrow (\eword,B,\eword)\in\xi_0\Leftrightarrow \xi\in V_{(\eword,B,\eword)}.\]
	
	The result follows.
\end{proof}

\begin{remark}
	We can use the theory developed here and in \cite{Gil3} to describe $C^*(\ssf)$ as a groupoid C*-algebra and as a partial crossed product. Using Proposition \ref{prop:tightStoneDualHomeo} and the map $\sigma$ of \cite[Proposition 4.8]{Gil3}, we can define a local homeomorphism $\tilde{\sigma}:X_{\eword}\to X_{\eword}$ as follows: for $\ft\in X_{\eword}$ and $y\in\ssf$ given by Lemma \ref{lemma:from.filter.to.word}, we define \[\tilde{\sigma}(\ft)=\Phi(H_{[y_0]y_1y_2\ldots}(\Phi^{-1}(\ft)))=\usetr{\{r(A,y_0)\mid A\in\ft\}}{\acf}.\]
	From the results of \cite{Gil3}, if $\mathcal{G}(X_\eword,\tilde{\sigma})$ is the groupoid defined from the pair $(X_\eword,\tilde{\sigma})$ as in \cite{MR1770333}, then $C^*(\ssf)\cong C^*(\mathcal{G}(X_\eword,\tilde{\sigma}))$.
	
	A similar description could be done for the partial action, however this will not be necessary for the remainder of the text.
\end{remark}

We now tackle the problem of studying the simplicity of $C^*(\ssf)$. We recover some of the results of \cite{MR3639522} using the theory developed in Section \ref{section:simplicity}. We begin by characterizing minimality of the partial action (see also Theorem \ref{thm:minimality.labelled.spaces}) in terms of hyper cofinality. See \cite[Section 13]{MR3639522} for the relevant discussion and motivation on cofinality. 


\begin{definition}
	For a finite set $P\scj L_{\ssf}$, $F_P:=\bigcap_{\beta\in P}F_{\beta}$ is called the \emph{follower set} of $P$. Given also $x\in\ssf$, the \emph{cost} of reaching $x$ from $P$ is
	\[\text{Cost}(P,x)=\inf\{|\alpha|+|\gamma|: x=\alpha y \in \ssf,\ \beta \gamma y \in \ssf,\text{ for all }\beta\in P\},\]
	with the convention that $\inf\emptyset=\infty$. We say that the subshift is \emph{hyper cofinal} if
	\[\sup_{x\in \ssf}\text{Cost}(P,x)<\infty,\]
	for every $P\scj L_{\ssf}$ finite such that $F_P\neq\emptyset$.
\end{definition}

\begin{proposition}\label{prop:minimality.shift}
	The only hereditary saturated subsets of $\acf_{\ssf}$ are $\{\emptyset\}$ and $\acf_{\ssf}$ if and only if $\ssf$ is hyper cofinal.
\end{proposition}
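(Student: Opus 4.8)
The plan is to prove this by combining Theorem \ref{thm:minimality.labelled.spaces} (which reduces the statement about hereditary saturated subsets to minimality of the groupoid $\mathcal{G}$, equivalently of the partial action $\Phi$) with the description of the orbit of a tight filter in terms of the shift maps $H_{[\alpha]\beta}$ and gluing maps $G_{(\alpha)\beta}$. So I would first invoke Theorem \ref{thm:minimality.labelled.spaces} to rephrase the left-hand side as: $\Phi$ is minimal, equivalently, $\text{Orb}(\xi)$ is dense in $\ftight$ for every $\xi\in\ftight$. Since the elements of $\ftight$ are exactly of infinite type here, and since by Proposition \ref{prop:tightStoneDualHomeo} $\ftight\cong X_{\eword}$, I will translate everything into the concrete space $X_{\eword}$ (or work directly with pairs $(y,\{\ft_n\})$), using the basic open sets $U_{F_P\cap Z_{\delta}}$, $P$ finite, $\delta\in L_{\ssf}$, as a convenient neighbourhood basis. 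I also want to use Lemma \ref{lemma:refinement.ftight}: to check minimality it suffices to check $\text{Orb}(\xi)\cap V_e\neq\emptyset$ for every $e\in E(S)$ and every $\xi$.

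For the direction \emph{hyper cofinal $\Rightarrow$ only trivial hereditary saturated subsets}, I would take $e=(\gamma,C,\gamma)\in E(S)$ and an arbitrary $\xi=\xi^{\alpha}\in\ftight$ with $\alpha\in\ssf$; I need $\text{Orb}(\xi)\cap V_e\neq\emptyset$. Choosing $B\in\acf$ with $r(B,\gamma)=C$ (Lemma \ref{lemma:rel.range.shift.onto}) and an element $A\in\xi_{0}$, the fact that $\lbf(A\dgraph^1)$ is finite together with hyper cofinality applied to a suitable finite set $P$ with $F_P\supseteq C$ should give a bound $n$ on the cost of reaching $\alpha$ from $P$; concretely there is $\beta\in L_{\ssf}$ with $|\beta|$ controlled such that, after shifting $\xi$ by $\alpha_{1,m}$ for an appropriate $m$ and gluing $\gamma\beta$ in front, the resulting tight filter $G_{(\gamma\beta)\alpha_{m+1,\infty}}\bigl(H_{[\alpha_{1,m}]\alpha_{m+1,\infty}}(\xi)\bigr) = \phi_{\gamma\beta\alpha_{1,m}^{-1}}(\xi)$ lies in $V_e$ (its zeroth filter contains $r(C,\beta)\subseteq C\subseteq r(\gamma)$, hence $e$ is in the filter). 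This is essentially the same mechanism as in the proof of Theorem \ref{thm:minimality.labelled.spaces}, with the cost bound supplied by hyper cofinality in place of the abstract saturation argument. The content to verify is that reaching $\alpha$ from $P$ at bounded cost, uniformly over all $x=\alpha\in\ssf$, is exactly what lets us find such $\beta$ with $\alpha x\in\ssf$ and $\beta$-compatible data, i.e.\ that the combinatorial definition of $\text{Cost}$ matches the filter-surgery picture.

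For the converse, \emph{not hyper cofinal $\Rightarrow$ there is a non-trivial hereditary saturated subset}, I would prove the contrapositive via Theorem \ref{thm:minimality.labelled.spaces}: assume $\mathcal{G}$ is minimal and show hyper cofinality. Given a finite $P\subseteq L_{\ssf}$ with $F_P\neq\emptyset$, set $C=F_P$ and $e=(\eword,C,\eword)$, and consider an arbitrary $x\in\ssf$, giving $\xi^x\in\ftight$. Minimality (in the orbit formulation) yields $\alpha,\beta\in L_{\ssf}$ with $x\in V_{\alpha\beta^{-1}}$ and $\phi_{\beta\alpha^{-1}}(\xi^x)\in V_e$; unwinding what $\phi_{\beta\alpha^{-1}}(\xi^x)\in V_e$ means — namely $r(C,\beta)$ (or $F_P$ shifted appropriately) belongs to the zeroth filter of a tight filter sitting over $\beta\cdot(\text{tail of }x)$ — exhibits a factorization $x=\alpha' y\in\ssf$ with $\beta' y\in F_{\beta_0}\cap\cdots$, hence $\text{Cost}(P,x)\le |\alpha'|+|\beta'|$. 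The essential point is that this bound can be taken \emph{uniform} in $x$: here I would use compactness of $\ftight$ (it is $V_{(\eword,F_P,\eword)}$ that we cover, or rather all of $\ftight$) — cover $\ftight$ by finitely many sets $\phi_{\alpha^i(\beta^i)^{-1}}(V_e\cap V_{\beta^i(\alpha^i)^{-1}})$ and take $\sup_i(|\alpha^i|+|\beta^i|)$, exactly as in the induction setup of the proof of Theorem \ref{thm:minimality.labelled.spaces}.

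The main obstacle I anticipate is bookkeeping: matching the purely combinatorial notion of $\text{Cost}(P,x)$ — which talks about words $\alpha,\gamma$ with $x=\alpha y\in\ssf$ and $\beta\gamma y\in\ssf$ for all $\beta\in P$ — against the filter/orbit language, where a point of the orbit of $\xi^x$ reachable in $V_{(\eword,F_P,\eword)}$ corresponds to some $\phi_{\gamma\cdot\alpha^{-1}}(\xi^x)$ whose complete family encodes precisely such words. One has to be careful that $F_P = \bigcap_{\beta\in P} F_\beta$ lands in $\acf_{\ssf}$ and is nonzero exactly under the stated hypothesis, that the shift $\sigma$ of \cite[Proposition 4.8]{Gil3} corresponds under $\Phi$ to the explicit map $\tilde\sigma$, and that the gluing of $\gamma$ in front is only legitimate when $\gamma$ is compatible with the tail $y$, i.e.\ $\gamma y\in\ssf$ — which is built into the definition of cost. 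Once this dictionary is set up carefully on a couple of representative pieces, both implications reduce to the uniform-bound-via-compactness argument, which the proof of Theorem \ref{thm:minimality.labelled.spaces} already provides a template for, so no genuinely new estimate should be needed.
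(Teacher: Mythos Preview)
Your route is genuinely different from the paper's. The paper never invokes Theorem \ref{thm:minimality.labelled.spaces} or the partial action at all; it works directly and combinatorially with the $\sat^{[n]}(\her(\cdot))$ hierarchy. For the direction ``only trivial hereditary saturated $\Rightarrow$ hyper cofinal'', the paper takes a finite $P$ with $F_P\neq\emptyset$, uses $\sat(\her(F_P))=\acf_{\ssf}$ to find $n$ with $\ssf\in\sat^{[n]}(\her(F_P))$, and reads off that every $F_{\alpha}$ with $|\alpha|=n$ is covered by finitely many $r(F_P,\gamma^i)$; this yields a uniform cost bound. For the converse, the paper first reduces an arbitrary $\emptyset\neq A\in\acf_{\ssf}$ to a set of the form $F_P$ (via relative ranges and the structure of the generators $C(\alpha,\beta)$), and then uses the uniform bound $N=\sup_x\text{Cost}(P,x)$ to show directly that $F_{\alpha}\scj\bigcup_{|\gamma|\leq 2N}r(F_P,\gamma)$ for every $\alpha$ of length $N$, whence $\ssf\in\sat^{[N]}(\her(F_P))$.

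Your approach (translate to minimality of $\Phi$ and argue via orbit density and compactness of $\ftight\cong X_{\eword}$) can be made to work, but there is a real gap in your forward direction. Hyper cofinality is a statement about \emph{points} $x\in\ssf$, whereas minimality requires $\text{Orb}(\xi)\cap V_e\neq\emptyset$ for \emph{every} tight filter $\xi$, including those corresponding to non-principal ultrafilters in $X_{\eword}$. Your sketch silently identifies $\xi=\xi^{\alpha}$ with the point $\alpha\in\ssf$ and applies the cost bound to that point; but knowing $\alpha=\alpha' y$ with $\beta\gamma y\in\ssf$ for all $\beta\in P$ only tells you that $F_{\gamma\beta}$ lies in the \emph{principal} ultrafilter $\ft_y$, not in $(H_{[\alpha']}(\xi))_0$ for a general $\xi$. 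To close this gap you need to upgrade the pointwise cost bound to the set-level inclusion $F_{\mu}\scj\bigcup_{|\gamma|\leq 2N} r(F_P,\gamma)$ for every $\mu$ of length $N$; that inclusion then lives in $\acf_{\ssf}$ and passes to all ultrafilters. But proving that inclusion is exactly the paper's combinatorial step, so your detour through minimality does not avoid it. (A smaller slip: you write $F_P\supseteq C$, but what you need is some $F_P$ with $\emptyset\neq F_P\scj C$ after a suitable relative range, which is precisely the reduction the paper carries out.) Your converse direction, using compactness of $\ftight$ to extract a finite cover and hence a uniform bound, is correct.
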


\begin{proof}
	Suppose that the only saturated hereditary subsets of $\acf_{\ssf}$ are $\{\emptyset\}$ and $\acf_{\ssf}$, and let $P\scj L_{\ssf}$ be a finite set such that $F_P\neq\emptyset$. Since $\sat(\her(F_P))$ is a hereditary saturated subset of $\acf_{\ssf}$ that contains $F_P$, we have that $\sat(\her(F_P))=\acf_{\ssf}$.
	
	Since $\ssf\in\acf_{\ssf}$ is regular, there exists $n\in\nn$ such that $\ssf\in \sat^{[n]}(\her(F_P))$, which means that for every $\alpha\in L_{\ssf}$ such that $|\alpha|=n$, we have that $r(X,\alpha)=F_{\alpha}\in \her(F_P)$. Therefore, there exist $\gamma_{\alpha}^1,\ldots,\gamma_{\alpha}^{m_\alpha}$ such that $F_{\alpha}\scj \bigcup_{i=1}^{m_{\alpha}}r(F_P,\gamma^i)$. In particular, if $x=\alpha y\in\ssf$, then $y\in \bigcup_{i=1}^{m_{\alpha}}r(F_P,\gamma^i)$ and hence $y\in r(F_P,\gamma^i)$ for some $i\in\{1,\ldots,m_{\alpha}\}$, which implies that $\beta\gamma^i y\in\ssf$ for all $\beta\in P$. It follows that $\text{Cost}(P,x)\leq n+\max\{|\gamma_{\alpha}^1|,\ldots,|\gamma_{\alpha}^{m_\alpha}|\}$. Since there are only finitely many labelled paths of length $n$,
	\[\sup_{x\in \ssf}\text{Cost}(P,x)\leq n+\max\{|\gamma^i_\alpha| : \alpha\in\ssf, |\alpha|=n,i=1,\ldots,m_{\alpha}\}<\infty.\]
	
	Now suppose that $\ssf$ is hyper cofinal. We need to prove that for all $A\in\acf_{\ssf}$, if $A\neq\emptyset$, then $\sat(\her(A))=\acf_{\ssf}$. We will make a series of reductions to show that it is sufficient to consider $A=F_P$ for some $P\scj L_{\ssf}$ finite. For that, observe that for $A,B\in\acf_{\ssf}$, if $A\scj B$ then $\sat(\her(A))\scj\sat(\her(B))$, and for any  $\alpha\in L_{\ssf}$, $\sat(\her(r(A,\alpha)))\scj \sat(\her(A))$. Since $\acf_{\ssf}$ is generated by $C(\alpha,\beta)$, it follows that any element of $\acf_{\ssf}$ is a finite union of sets of the form
	\[C(\alpha_1,\beta_1)\cap\cdots\cap C(\alpha_n,\beta_n)\cap C(\mu_1,\nu_1)^c\cap \cdots C(\mu_m,\nu_m)^c.\]
	If $z$ is an element in this intersection, then for $p>\max\{|\beta_1|,\ldots,|\beta_n|,|\nu_1|,\ldots,|\nu_m|\}$,
	the relative range of this intersection with respect to $z_{0,p}$ is of the form
	\[F_{\gamma_1}\cap\cdots\cap F_{\gamma_k}\cap F_{\delta_1}^c\cap\cdots\cap F_{\delta_l}^c,\]
	and it contains $z_{p+1,\infty}$.
	For $y$ in this intersection, there exists then $q\in\nn^*$ such that $\delta_1y_{0,q},\ldots\delta_ly_{0,q}\notin L_{\ssf}$, which implies that
	\[F_{\gamma_1}\cap\cdots\cap F_{\gamma_k}\cap Z_{y_{0,q}}\scj F_{\gamma_1}\cap\cdots\cap F_{\gamma_k}\cap F_{\delta_1}^c\cap\cdots\cap F_{\delta_l}^c.\]
	Finally, $y_{q+1,\infty}\in r(F_{\gamma_1}\cap\cdots\cap F_{\gamma_k}\cap Z_{y_{0,q}},y_{0,q})=F_P$ for some finite $P\scj L_{\ssf}$.
	
	Hence suppose that $P\scj L_{\ssf}$ is finite and that $F_P\neq \emptyset$. By hypothesis, we have that $N:=\sup_{x\in \ssf}\text{Cost}(P,x)<\infty$. Then, given $x\in\ssf$, there exists $\alpha,\gamma\in L_{\ssf}$ with $|\alpha|,|\gamma|\leq N$ such that $x=\alpha y$ and $\beta\gamma y\in\ssf$ for all $\beta\in P$. We can write $y=\alpha'y'$ in such a way that $|\alpha|+|\alpha'|=N$, and then $\beta\gamma\alpha' y'\in\ssf$ for all $\beta\in P$. This implies that if $|\alpha|=N$, then $F_{\alpha}\scj \bigcup_{\gamma\in L_{\ssf},|\gamma|\leq 2N}r(F_P,\gamma)$, which is a finite union. It follows that $r(\ssf,\alpha)=F_{\alpha}\in\her(F_P)$ and since alpha was arbitrary, $\ssf\in \sat^{[N]}(\her(F_P))$. Finally, $\ssf$ is the top element of $\acf_{\ssf}$ and by the definition of a hereditary subset, $\sat(\her(F_P))=\acf_{\ssf}$.
\end{proof}

\begin{definition}
	We say that $\gamma\in L_{\ssf}$ is a \emph{circuit} if $\gamma^{\infty}\in\ssf$. We say that the subshift $\ssf$ satisfies \emph{condition (L)} if for every finite $P\scj L_{\ssf}$ such that $\gamma^\infty\in F_P$ for some circuit $\gamma$, there is an element $y\in F_P$ that is different from $\gamma^\infty$.
\end{definition}

Our definition of condition (L) is the same as \cite[Theorem 12.6(ii)]{MR3639522}. We call it condition (L) due to the following proposition.

\begin{proposition}\label{prop:top.principal.shift}
	The subshift $\ssf$ satisfies condition (L) if and only if $(\dgraph_{\ssf}, \lbf_{\ssf}, \acf_{\ssf})$ satisfies condition ($L_\acf$).
\end{proposition}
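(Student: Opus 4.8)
The plan is to prove this by unwinding the definitions of condition (L) for the subshift $\ssf$ and condition ($L_\acf$) for the labelled space $(\dgraph_{\ssf},\lbf_{\ssf},\acf_{\ssf})$, translating between circuits in $\ssf$ and cycles $(\gamma,A)$ in $\acf_{\ssf}$. The key dictionary entries are: $\awplus = L_{\ssf}\setminus\{\eword\}$, $r(\alpha)=F_\alpha$, $r(A,\alpha)=\{x\in\ssf\mid \alpha x\in A\}$, and—crucially—that for a circuit $\gamma$ the relevant sets $A\in\acf_{\gamma}=\acf\cap\powerset{F_\gamma}$ that make $(\gamma,A)$ a cycle are exactly those with $r(B,\gamma)=B$ for all $B\subseteq A$; I expect these to be built from follower sets $F_P$ with $\gamma^\infty\in F_P$. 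First I would show the backward direction (contrapositive): if $(\dgraph_{\ssf},\lbf_{\ssf},\acf_{\ssf})$ does not satisfy ($L_\acf$), there is a cycle $(\gamma,A)$ with no exit; since the graph has no sinks and $\lbf(B\dgraph^1)$ is always finite (as $\alf$ is finite), the no-exit condition forces $\lbf(B\dgraph^1)=\{\gamma_{k+1}\}$ for every nonempty $B\subseteq r(A,\gamma_{1,k})$ and every $k$, which pins down that every $y\in A$ has $y=\gamma^\infty$ after the relevant shift; chasing this back through the cycle relation $r(B,\gamma)=B$ one extracts a finite $P\subseteq L_{\ssf}$ (a finite list of words witnessing membership in $A$ via cylinder/follower intersections, as in the reduction argument in Proposition~\ref{prop:minimality.shift}) with $\gamma^\infty\in F_P$ and $F_P=\{\gamma^\infty\}$, contradicting condition (L).

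For the forward direction I would again argue the contrapositive: suppose $\ssf$ fails condition (L), so there is a circuit $\gamma$ and a finite $P\subseteq L_{\ssf}$ with $\gamma^\infty\in F_P$ and $F_P=\{\gamma^\infty\}$. Set $A:=r(F_P,\delta)$ for a suitable word $\delta$ aligning $P$ with a power of $\gamma$ (so that $A\subseteq F_\gamma=r(\gamma)$ and $r(A,\gamma)=A$); then show $(\gamma,A)$ is a cycle: for $\emptyset\neq B\subseteq A$ we have $B\subseteq\{$ tails of $\gamma^\infty\}$, and since shifting a tail of $\gamma^\infty$ by $\gamma$ returns a tail of $\gamma^\infty$ inside $A$, the weakly-left-resolving property plus $A=\{\gamma^\infty\text{-tail}\}$-type description gives $r(B,\gamma)=B$. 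Finally verify $(\gamma,A)$ has no exit: for each $0\le k\le|\gamma|$ and $\emptyset\neq B\subseteq r(A,\gamma_{1,k})$, every point of $B$ is the appropriate shift of $\gamma^\infty$, so the only edge leaving such a point carries the label $\gamma_{k+1}$, i.e. $\lbf(B\dgraph^1)=\{\gamma_{k+1}\}$; here one uses that $\dgraph_{\ssf}$ has no sinks so $\lbf(B\dgraph^1)\neq\emptyset$. Hence ($L_\acf$) fails.

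The main obstacle I anticipate is the bookkeeping in identifying, inside $\acf_{\ssf}$, precisely which sets $A$ arise as cycle supports and matching them to follower sets $F_P$: an arbitrary $A\in\acf_{\ssf}$ is only a finite Boolean combination of sets $C(\alpha,\beta)$, and one must run a reduction (exactly as in the proof of Proposition~\ref{prop:minimality.shift}, taking relative ranges $r(\cdot,z_{0,p})$ for large $p$ and intersecting with suitable cylinders) to bring $A$ into the form $F_P$ before the circuit analysis applies. The alignment of the period of $\gamma$ with the "width" of $P$ (choosing $\delta$ and the exponent of $\gamma$) is a further finite-combinatorial nuisance, but presents no conceptual difficulty. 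Once the dictionary $\gamma^\infty\in F_P$, $F_P=\{\gamma^\infty\}$ $\iff$ "$(\gamma,A)$ is a cycle with no exit" is established in both directions, the equivalence of condition (L) and condition ($L_\acf$) is immediate.
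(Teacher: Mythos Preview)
Your proposal is correct and follows essentially the same route as the paper: both directions are argued by contrapositive, the key observation in each case is that a cycle $(\gamma,A)$ without exit forces $A=\{\gamma^\infty\}$, and the reduction from an arbitrary $A\in\acf_{\ssf}$ to a follower set $F_P$ is handled exactly as you say, via the relative-range argument from Proposition~\ref{prop:minimality.shift}.

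One simplification: in the forward direction you introduce an alignment word $\delta$ and set $A=r(F_P,\delta)$, but this is unnecessary. If $F_P=\{\gamma^\infty\}$ then already $F_P\subseteq F_\gamma=r(\gamma)$ (since $\gamma\cdot\gamma^\infty=\gamma^\infty\in\ssf$), so $F_P\in\acf_\gamma$; moreover $r(F_P,\gamma)=\{x:\gamma x\in F_P\}=\{\gamma^\infty\}=F_P$, and since $F_P$ is a singleton every nonempty $B\subseteq F_P$ equals $F_P$. Thus the paper simply takes $(\gamma,F_P)$ itself as the cycle without exit, avoiding the bookkeeping with $\delta$ and powers of $\gamma$ that you anticipate as a nuisance.
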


\begin{proof}
	If $\ssf$ does not satisfy condition (L), then there exists $P\scj L_{\ssf}$ finite and $\gamma$ a circuit such that $F_P=\{\gamma^{\infty}\}$. In this case, $(\gamma,F_P)$ is a cycle without exit, so that $(\dgraph_{\ssf}, \lbf_{\ssf}, \acf_{\ssf})$ does not satisfy condition ($L_\acf$).
	
	Conversely, if $(\dgraph_{\ssf}, \lbf_{\ssf}, \acf_{\ssf})$ does not satisfy condition ($L_\acf$), then there exists a cycle without exit $(\alpha,A)$. However, for the shift labelled space, this implies that $\alpha$ is a circuit and $A=\{\alpha^{\infty}\}$. Arguing as in the proof of Proposition \ref{prop:minimality.shift}, for $n\in\nn$ large enough, there exists $P\scj L_{\ssf}$ finite such that $\alpha^{\infty}\in F_P\scj r(A,\alpha^n)=A=\{\alpha^{\infty}\}$. If follows that $\ssf$ does not satisfy condition (L).
\end{proof}

The following theorem is the equivalence of (i) and (ii) from \cite[Theorem 14.5]{MR3639522}, which we prove using the theory developed in this paper.

\begin{theorem}
	Let $\ssf$ be a subshift. Then, $C^*(\ssf)$ is simple if and only if $\ssf$ is hyper cofinal and satisfies condition (L).
\end{theorem}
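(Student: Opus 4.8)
The plan is to move the problem to the normal labelled space $(\dgraph_{\ssf},\lbf_{\ssf},\acf_{\ssf})$, for which $C^*(\ssf)\cong C^*(\dgraph_{\ssf},\lbf_{\ssf},\acf_{\ssf})$ by \cite[Proposition 5.7]{MR3614028}, and then to apply Theorem~\ref{theorem:simple.labelled.space.algebra}. The ``if'' direction is then immediate: if $\ssf$ is hyper cofinal and satisfies condition (L), then by Proposition~\ref{prop:minimality.shift} the only hereditary saturated subsets of $\acf_{\ssf}$ are $\{\emptyset\}$ and $\acf_{\ssf}$, and by Proposition~\ref{prop:top.principal.shift} the labelled space satisfies condition $(L_{\acf})$; hence the first part of Theorem~\ref{theorem:simple.labelled.space.algebra} yields that $C^*(\ssf)$ is simple.

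For the converse I would first verify the additional hypothesis in Theorem~\ref{theorem:simple.labelled.space.algebra}, namely that the groupoid $\mathcal{G}$ of the labelled space is second countable; this is the one place where the finiteness of $\alf$ enters essentially. Since $\alf$ is finite, $\alf^{\ast}$, and hence $L_{\ssf}=\awstar$, is countable, so the Boolean algebra $\acf_{\ssf}$ -- being generated by the countable family $\{C(\alpha,\beta):\alpha,\beta\in L_{\ssf}\}$ -- is itself countable. Consequently its Stone dual $X_{\eword}$ admits the countable basis $\{U_B:B\in\acf_{\ssf}\}$, so by Proposition~\ref{prop:tightStoneDualHomeo} the space $\ftight\cong X_{\eword}$ is second countable. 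As $\F$ is countable and discrete, $\ftight\times\F\times\ftight$ is second countable, and hence so is the subspace $\mathcal{G}$.

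With second countability in hand, assuming $C^*(\ssf)$ is simple I would apply the converse half of Theorem~\ref{theorem:simple.labelled.space.algebra} to conclude that $(\dgraph_{\ssf},\lbf_{\ssf},\acf_{\ssf})$ satisfies condition $(L_{\acf})$ and that $\{\emptyset\}$ and $\acf_{\ssf}$ are its only hereditary saturated subsets; Propositions~\ref{prop:top.principal.shift} and~\ref{prop:minimality.shift} then translate these into the statements that $\ssf$ satisfies condition (L) and that $\ssf$ is hyper cofinal, respectively. The main (and essentially only) obstacle is the verification that $\mathcal{G}$ is second countable, which is needed because the earlier simplicity characterizations for labelled spaces carry that standing assumption; once it is checked, the rest of the proof is just the assembly of the propositions established earlier in this section.
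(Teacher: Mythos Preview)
Your argument is correct and follows the same route as the paper: reduce to the labelled-space model and invoke Theorem~\ref{theorem:simple.labelled.space.algebra} together with Propositions~\ref{prop:minimality.shift} and~\ref{prop:top.principal.shift}. The paper's proof is the one-line citation of exactly these three results; your version is more careful in that you explicitly verify second countability of $\mathcal{G}$ (via countability of $\acf_{\ssf}$ and Proposition~\ref{prop:tightStoneDualHomeo}) before invoking the converse direction, a hypothesis the paper leaves implicit.
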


\begin{proof}
	If follows from Theorem \ref{theorem:simple.labelled.space.algebra} and Propositions \ref{prop:minimality.shift} and \ref{prop:top.principal.shift}.	
\end{proof}

We now  compare our results with   \cite[Example 11.4]{MR3606190}. First we need some definitions. Given $l\in\nn$, we say that $x,y\in\ssf$ are \emph{$l$-past equivalent}, written $x\sim_l y$, if
\[\{\alpha\in L_{\ssf}\mid x\in F_{\alpha},|\alpha|\leq l\}=\{\alpha\in L_{\ssf}\mid y\in F_{\alpha},|\alpha|\leq l\}.\]
The shift $\ssf$ is said to be \emph{cofinal in past equivalence} if for any $x,y\in\ssf$ and $l\in\nn$, there exist $z\in\ssf$, $m,n\in\nn$ such that $y\sim_l z$ and $\sigma^m(x)=\sigma^n(z)$. A point $x\in\ssf$ is said to be \emph{isolated in past equivalence} if there exists $l\in\nn$ such that $[x]_l=\{x\}$, and  \textit{cyclic} if $x=\gamma^{\infty}$ for some circuit $\gamma$. It is claimed in \cite[Example 11.4]{MR3606190} that $C^*(\ssf)$ is simple if and only if $\ssf$ is cofinal in past equivalence and there is no cyclic point isolated in past equivalence. However, we illustrate with a counterexample that a stronger condition than cofinality is needed.

First, observe that there is no cyclic point isolated in past equivalence if and only if $\ssf$ satisfies condition (L). Indeed, for a circuit $\gamma$, we just need to look at the set $P=\{\alpha\in L_{\ssf}\mid \gamma^{\infty}\in F_{\alpha},|\alpha|\leq l\}$ for $l\in\nn$. We now check that being cofinal in past equivalence is not equivalent to being hyper cofinal. For that, consider the shift over the alphabet $\{a,b,c\}$ with set of forbidden words:
\[\mathcal{F}=\{ba^kb\mid k\text{ is not a power of }2\}\cup\{ca^kb\mid k\text{ is not a power of }2\}.\]
The class of a point in $y\in\ssf_{\mathcal{F}}$ depends on whether the beginning of $y$ is of the form $a^kb$, for some $k$, or not, and one can check that each equivalence class with respect to $\sim_l$ is infinite,  so that there are no cyclic points isolated in past equivalence. Notice that given $x\in \ssf_{\mathcal{F}}$, there exists $m\in\nn$ such that we can glue $a^kb$ or $c$ at the beginning of $\sigma^m(x)$. This implies that $\ssf_{\mathcal{F}}$ is cofinal in past equivalence.  However,  we claim that $\ssf_{\mathcal{F}}$ is not hyper cofinal. To see this claim,  consider $P=\{c\}$ and $x=a^kb^{\infty}$, where $k=2^n+2^{n+1}$ for some $n\in\nn$. Then $\text{Cost}(P,x)=2^n$, which implies that $\ssf_{\mathcal{F}}$ is not hyper cofinal.

The issue with the proof in \cite[Example 11.4]{MR3606190} is the claim that to check minimality, using our terminology, it is sufficient to show that for each $x\in\ssf=F_{\eword}$, the orbit of each $\ft_x$, given in Lemma \ref{lemma:follower.set.dense}, is dense. However, we also need a boundedness condition on $m$ and $n$ in the definition of cofinal in past equivalence for it to be equivalent to minimality. Although we did not use the density of $\{\ft_x\mid x\in\ssf\}$ in $X_{\eword}$ in our proof of minimality, the proof of \cite[Theorem 13.6]{MR3639522} uses an analogous property where the necessity of a boundedness condition is evident.

\bibliographystyle{abbrv}
\bibliography{labelledpartcrossedprod_ref}

\end{document}